\begin{document}
%\sloppy
\newtheorem{Theo}{Theorem}[section]
\newtheorem{Prop}[Theo]{Proposition}
\newtheorem{Lem}[Theo]{Lemma}
\newtheorem{Koro}[Theo]{Corollary}
\theoremstyle{definition}
\newtheorem{Def}[Theo]{Definition}
\newtheorem{Rem}[Theo]{Remark}
\newtheorem{Bsp}[Theo]{Example}

\newcommand{\add}{{\rm add}}
\newcommand{\Gproj}{{\rm Gproj}}
\newcommand{\con}{{\rm con}}
\newcommand{\gd}{{\rm gl.dim}}
\newcommand{\dd}{{\rm dom.dim}}
\newcommand{\cdm}{{\rm codom.dim}}
\newcommand{\tdim}{{\rm dim}}
\newcommand{\E}{{\rm E}}
\newcommand{\Mor}{{\rm Morph}}
\newcommand{\End}{{\rm End}}
\newcommand{\ind}{{\rm ind}}
\newcommand{\rsd}{{\rm res.dim}}
\newcommand{\rd} {{\rm rep.dim}}
\newcommand{\ol}{\overline}
\newcommand{\overpr}{$\hfill\square$}
\newcommand{\rad}{{\rm rad}}
\newcommand{\soc}{{\rm soc}}
\renewcommand{\top}{{\rm top}}
\newcommand{\pd}{{\rm pdim}}
\newcommand{\id}{{\rm idim}}
\newcommand{\fld}{{\rm fdim}}
\newcommand{\Fac}{{\rm Fac}}
\newcommand{\Gen}{{\rm Gen}}
\newcommand{\fd} {{\rm findim}}
\newcommand{\Fd} {{\rm Findim}}
\newcommand{\Pf}[1]{{\mathscr P}^{<\infty}(#1)}
\newcommand{\DTr}{{\rm DTr}}
\newcommand{\cpx}[1]{#1^{\bullet}}
\newcommand{\D}[1]{{\mathscr D}(#1)}
\newcommand{\Dz}[1]{{\mathscr D}^+(#1)}
\newcommand{\Df}[1]{{\mathscr D}^-(#1)}
\newcommand{\Db}[1]{{\mathscr D}^b(#1)}
\newcommand{\C}[1]{{\mathscr C}(#1)}
\newcommand{\Cz}[1]{{\mathscr C}^+(#1)}
\newcommand{\Cf}[1]{{\mathscr C}^-(#1)}
\newcommand{\Cb}[1]{{\mathscr C}^b(#1)}
\newcommand{\Dc}[1]{{\mathscr D}^c(#1)}
\newcommand{\K}[1]{{\mathscr K}(#1)}
\newcommand{\Kz}[1]{{\mathscr K}^+(#1)}
\newcommand{\Kf}[1]{{\mathscr  K}^-(#1)}
\newcommand{\Kb}[1]{{\mathscr K}^b(#1)}
\newcommand{\DF}[1]{{\mathscr D}_F(#1)}

\newcommand{\Kac}[1]{{\mathscr K}_{\rm ac}(#1)}
\newcommand{\Keac}[1]{{\mathscr K}_{\mbox{\rm e-ac}}(#1)}

\newcommand{\modcat}{\ensuremath{\mbox{{\rm -mod}}}}
\newcommand{\Modcat}{\ensuremath{\mbox{{\rm -Mod}}}}
\newcommand{\Spec}{{\rm Spec}}

\newcommand{\stmc}[1]{#1\mbox{{\rm -{\underline{mod}}}}}
\newcommand{\Stmc}[1]{#1\mbox{{\rm -{\underline{Mod}}}}}
\newcommand{\prj}[1]{#1\mbox{{\rm -proj}}}
\newcommand{\inj}[1]{#1\mbox{{\rm -inj}}}
\newcommand{\Prj}[1]{#1\mbox{{\rm -Proj}}}
\newcommand{\Inj}[1]{#1\mbox{{\rm -Inj}}}
\newcommand{\PI}[1]{#1\mbox{{\rm -Prinj}}}
\newcommand{\GP}[1]{#1\mbox{{\rm -GProj}}}
\newcommand{\GI}[1]{#1\mbox{{\rm -GInj}}}
\newcommand{\gp}[1]{#1\mbox{{\rm -Gproj}}}
\newcommand{\gi}[1]{#1\mbox{{\rm -Ginj}}}

\newcommand{\opp}{^{\rm op}}
\newcommand{\otimesL}{\otimes^{\rm\mathbb L}}
\newcommand{\rHom}{{\rm\mathbb R}{\rm Hom}\,}
\newcommand{\pdim}{\pd}
\newcommand{\Hom}{{\rm Hom}}
\newcommand{\Coker}{{\rm Coker}}
\newcommand{ \Ker  }{{\rm Ker}}
\newcommand{ \Cone }{{\rm Con}}
\newcommand{ \Img  }{{\rm Im}}
\newcommand{\Ext}{{\rm Ext}}
\newcommand{\StHom}{{\rm \underline{Hom}}}
\newcommand{\StEnd}{{\rm \underline{End}}}

\newcommand{\KK}{I\!\!K}

\newcommand{\gm}{{\rm _{\Gamma_M}}}
\newcommand{\gmr}{{\rm _{\Gamma_M^R}}}

\def\vez{\varepsilon}\def\bz{\bigoplus}  \def\sz {\oplus}
\def\epa{\xrightarrow} \def\inja{\hookrightarrow}

\newcommand{\lra}{\longrightarrow}
\newcommand{\llra}{\longleftarrow}
\newcommand{\lraf}[1]{\stackrel{#1}{\lra}}
\newcommand{\llaf}[1]{\stackrel{#1}{\llra}}
\newcommand{\ra}{\rightarrow}
\newcommand{\dk}{{\rm dim_{_{k}}}}

\newcommand{\holim}{{\rm Holim}}
\newcommand{\hocolim}{{\rm Hocolim}}
\newcommand{\colim}{{\rm colim\, }}
\newcommand{\limt}{{\rm lim\, }}
\newcommand{\Add}{{\rm Add }}
\newcommand{\Prod}{{\rm Prod }}
\newcommand{\Tor}{{\rm Tor}}
\newcommand{\Cogen}{{\rm Cogen}}
\newcommand{\Tria}{{\rm Tria}}
\newcommand{\Loc}{{\rm Loc}}
\newcommand{\Coloc}{{\rm Coloc}}
\newcommand{\tria}{{\rm tria}}
\newcommand{\Con}{{\rm Con}}
\newcommand{\Thick}{{\rm Thick}}
\newcommand{\thick}{{\rm thick}}
\newcommand{\Sum}{{\rm Sum}}

{\Large \bf
\begin{center}
Bounded $t$-structures, finitistic dimensions, and singularity categories of triangulated categories
\end{center}}

\medskip\centerline{\textbf{Rudradip Biswas}, \textbf{Hongxing Chen}$^*$, \textbf{Kabeer Manali Rahul}, \textbf{Chris J. Parker}, and \textbf{Junhua Zheng}}

\renewcommand{\thefootnote}{\alph{footnote}}
\setcounter{footnote}{-1} \footnote{ $^*$ Corresponding author.
Email: chenhx@cnu.edu.cn.}
\renewcommand{\thefootnote}{\alph{footnote}}
\setcounter{footnote}{-1} \footnote{2020 Mathematics Subject
Classification: Primary 18G80, 14F08; Secondary 16G10, 18G20.}
\renewcommand{\thefootnote}{\alph{footnote}}
\setcounter{footnote}{-1} \footnote{Keywords: Bounded t-structure; Completion of triangulated categories; Finitistic dimension; Singularity category}

\begin{abstract}
Recently, Amnon Neeman settled a bold conjecture by Antieau, Gepner, and Heller regarding the relationship between the regularity of finite-dimensional noetherian schemes and the existence of bounded $t$-structures on their derived categories of perfect complexes.

In this paper, using different methods, we prove some very general results about the existence of bounded $t$-structures on (not necessarily algebraic or topological) triangulated categories and their invariance under completion. We show that if the opposite category of an essentially small triangulated category has finite finitistic dimension in our sense, then the existence of a bounded t-structure on it forces it to be equal to its completion. We also prove a parallel result regarding the equivalence of all bounded t-structures on any intermediate triangulated category between the starting category and its completion.

Our general treatment, when specialized to the case of schemes, immediately gives us Neeman's theorem as an application and significantly generalizes another remarkable theorem by Neeman about the equivalence of bounded $t$-structures on the bounded derived categories of coherent sheaves. When specialized to other cases like associative rings, nonpositive DG-rings, connective $\mathbb{E}_1$-rings, triangulated categories without models, etc., we get many other applications. Under mild finiteness assumptions, these results not only give a categorical obstruction (the singularity category in our sense) to the existence of bounded $t$-structures on a triangulated category, but also provide plenty of triangulated categories on which all bounded $t$-structures are equivalent. The strategy used in our treatment is introducing a new concept of finitistic dimension for triangulated categories and lifting $t$-structures along completions of triangulated categories.
\end{abstract}

{\footnotesize\tableofcontents\label{contents}}

\section{Introduction}\label{Introduction}
Bounded $t$-structures of triangulated categories (see \cite{BBD}) have attracted considerable attention in many branches of mathematics including algebraic geometry, algebraic topology, representation theory, and category theory. In particular, they have been widely applied to the representation theory of finite groups of Lie type on character sheaves, motivic homotopy theory, the theory of stability conditions on triangulated categories (for example, see \cite{DL,VV,TB}), etc. A fundamental problem regarding bounded $t$-structures is \emph{how to judge whether a general triangulated category admits a bounded t-structure}. Obviously, any easily computable obstruction to the existence of bounded $t$-structures would be helpful for understanding this problem.

An outstanding development, made by Antieau, Gepner and Heller, was finding $K$-theoretic obstructions to the existence of bounded $t$-structures (see \cite{AGH}). They proved that if a small, stable $\infty$-category has a bounded $t$-structure, then its negative $K$-group in degree $-1$ vanishes, and all negative $K$-groups vanish when additionally the heart of the $t$-structure is noetherian (see \cite[Theorems 1.1 and 1.2]{AGH}). Although there exist a lot of singular schemes with vanishing negative $K$-groups, they boldly conjectured that if $X$ is a finite-dimensional noetherian scheme, then the derived category  $\mathscr{D}^{\rm perf}(X)$ of perfect complexes on $X$ has a bounded $t$-structure if and only if $X$ is regular (see \cite[Conjecture 1.5]{AGH}). Smith proved the conjecture for affine $X$ (see \cite[Theorem 1.2]{HS}). More recently, Neeman has proved a generalization of this conjecture:

\begin{Theo}{\rm\cite[Theorem 0.1]{Neeman4}}\label{GC}
Let $X$ be a finite-dimensional, noetherian scheme and let $Z\subseteq X$ be a closed subset.
Let $\mathscr{D}^{{\rm perf}}_Z(X)$ be the derived category of perfect complexes on $X$ whose cohomology is supported on $Z$. Then $\mathscr{D}^{{\rm perf}}_Z(X)$  has a bounded $t$-structure if and only if $Z$ is contained in the regular locus of $X$.
\end{Theo}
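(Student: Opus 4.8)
The plan is to derive Theorem~\ref{GC} as a specialisation of the general machinery of the paper, applied to the essentially small triangulated category $\mathscr{T}:=\mathscr{D}^{{\rm perf}}_Z(X)$. First I would identify the completion $\widehat{\mathscr{T}}$ of $\mathscr{T}$ (in the sense used here) with $\mathscr{D}^{b}_{{\rm coh},Z}(X)$, the bounded derived category of complexes of quasi-coherent sheaves on $X$ whose cohomology sheaves are coherent and supported on $Z$. For $Z=X$ this is Neeman's computation of the completion of $\mathscr{D}^{{\rm perf}}(X)$ on a finite-dimensional noetherian scheme; the general case should follow either by the same approximation argument or from the localisation sequence $\mathscr{D}^{{\rm perf}}_Z(X)\to\mathscr{D}^{{\rm perf}}(X)\to\mathscr{D}^{{\rm perf}}(X\setminus Z)$ together with the compatibility of completions with such sequences. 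Once this is in place, one direction is immediate: $\widehat{\mathscr{T}}=\mathscr{D}^{b}_{{\rm coh},Z}(X)$ carries the standard $t$-structure, with heart the coherent sheaves supported on $Z$, and it is bounded; and if $Z$ is contained in the regular locus $X_{\rm reg}$, then every complex in $\mathscr{D}^{b}_{{\rm coh},Z}(X)$ is perfect --- it is acyclic on $X\setminus Z$ and admits a bounded locally free resolution over $X_{\rm reg}$, and these two opens cover $X$ because $Z\subseteq X_{\rm reg}$ (this is where $\tdim X<\infty$ and quasi-compactness are used). Hence $\mathscr{T}=\widehat{\mathscr{T}}$, and the standard bounded $t$-structure restricts to a bounded $t$-structure on $\mathscr{T}$.

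For the converse, suppose $\mathscr{T}$ has a bounded $t$-structure. The heart of the argument will be the paper's general theorem: if $\mathscr{T}\opp$ has finite finitistic dimension in our sense, then a bounded $t$-structure on $\mathscr{T}$ forces $\mathscr{T}=\widehat{\mathscr{T}}$, equivalently the singularity category $\widehat{\mathscr{T}}/\mathscr{T}$ in our sense is zero. So I would check that $\big(\mathscr{D}^{{\rm perf}}_Z(X)\big)\opp$ has finite finitistic dimension, with an explicit bound in terms of $\tdim X$. This is exactly the place where the noetherian, finite-dimensional hypotheses must be exploited: over an affine open $\Spec R\subseteq X$ the relevant finitistic dimension of $\mathscr{D}^{{\rm perf}}(\Spec R)\opp$ should be controlled by $\tdim R$ (a finitistic-dimension bound in the spirit of Bass and Raynaud--Gruson), and one then globalises using a finite affine cover and the good behaviour of our finitistic dimension along the localisation sequence above. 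Granting this, the general theorem yields $\mathscr{T}=\widehat{\mathscr{T}}=\mathscr{D}^{b}_{{\rm coh},Z}(X)$.

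It then remains to extract regularity along $Z$ from the equality $\mathscr{D}^{{\rm perf}}_Z(X)=\mathscr{D}^{b}_{{\rm coh},Z}(X)$. Here I would argue by contradiction: if some $x\in Z$ had $\mathcal{O}_{X,x}$ non-regular, the Auslander--Buchsbaum--Serre theorem gives $\pd_{\mathcal{O}_{X,x}}k(x)=\infty$; pushing forward the structure sheaf of the integral closed subscheme $\overline{\{x\}}\subseteq Z$ (with its reduced structure) produces an object of $\mathscr{D}^{b}_{{\rm coh},Z}(X)$ with stalk $k(x)$ at $x$, which is therefore not perfect at $x$ --- contradicting that it lies in $\mathscr{D}^{{\rm perf}}_Z(X)$. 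Hence $Z\subseteq X_{\rm reg}$, which finishes the proof. I would also note that the ``parallel'' theorem of the paper (equivalence of all bounded $t$-structures on any triangulated category sitting between $\mathscr{T}$ and $\widehat{\mathscr{T}}$) then recovers, in the same breath, Neeman's accompanying uniqueness statements.

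The step I expect to be the main obstacle is the finitistic-dimension input of the second paragraph. One has to calibrate the definition of finitistic dimension for triangulated categories so that, simultaneously, (a) $\mathscr{D}^{{\rm perf}}_Z(X)\opp$ provably has finite finitistic dimension on a finite-dimensional noetherian scheme, and (b) the abstract ``bounded $t$-structure implies completion'' theorem genuinely applies to it. By comparison, the identification of the completion and the classical perfect-versus-coherent dichotomy used above are either already in the literature or are routine local computations; the real work, and the novelty, is in routing them through the finitistic-dimension and completion formalism.
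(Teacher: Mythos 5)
Your proposal follows the same overall route as the paper: identify the completion of $\mathscr{D}^{{\rm perf}}_Z(X)$ with $\mathscr{D}^{b}_{{\rm coh},Z}(X)$, establish that $\big(\mathscr{D}^{{\rm perf}}_Z(X)\big)\opp$ has finite finitistic dimension, invoke the abstract theorem (Theorem~\ref{Main result}, via Corollary~\ref{CPG}) to conclude $\mathscr{D}^{{\rm perf}}_Z(X)=\mathscr{D}^{b}_{{\rm coh},Z}(X)$ whenever a bounded $t$-structure exists, and finally read off regularity along $Z$ from this equality via Auslander--Buchsbaum--Serre. This is exactly how the paper proceeds, except that it proves the more general Corollary~\ref{Scheme case} for quasicompact quasiseparated $X$ satisfying Definition~\ref{SMFFD}, of which the finite-dimensional noetherian case is a special instance because $\fd(R)\leqslant\dim(R)$ for commutative noetherian $R$.

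Where you diverge from the paper is precisely the step you flag as the main obstacle, the finitistic-dimension bound. You suggest bounding $\fd$ on each affine $\Spec R$ by $\dim R$ and then globalising "along the localisation sequence $\mathscr{D}^{{\rm perf}}_Z(X)\to\mathscr{D}^{{\rm perf}}(X)\to\mathscr{D}^{{\rm perf}}(X\setminus Z)$." The paper does not do this, and it would not be routine: the finitistic dimension of Definition~\ref{Def-FD} is computed at a classical generator, and there is no readily available statement about how it behaves under Verdier quotients or passage to a thick subcategory of a larger category. Instead, Proposition~\ref{SCMFD} argues directly on the $Z$-supported category. It takes a compact generator $G$ of $\mathscr{D}_{{\rm qc},Z}(X)$, observes that $G^\vee=\mathscr{R}\mathscr{H}om(G,\mathscr{O}_X)$ is also one (since $(-)^\vee$ is an antiequivalence of $\mathscr{D}^{{\rm perf}}_Z(X)$), checks locally on a finite affine cover $\{\Spec R_i\}$ that the dual of a perfect complex concentrated in nonnegative standard degrees lands in degrees $\leqslant m:=\max_i\fd(R_i)$, and then combines this with Lemma~\ref{Compact generator} to obtain $G(-\infty,-1]^{\bot}\cap\mathscr{D}^{{\rm perf}}_Z(X)\subseteq\langle G\rangle^{[0,\infty)}[r+s+m]$ for suitable $r,s$. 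The derived-duality argument, rather than any localisation sequence, is what closes this step. Also, for the identification of the completion with $\mathscr{D}^{b}_{{\rm coh},Z}(X)$ in the $Z$-supported case the paper just cites Neeman's Theorem 5.1 directly (Example~\ref{Perfect complex}), rather than bootstrapping from $Z=X$.
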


In the conclusion of Theorem \ref{GC}, one direction is clear because the condition that
$Z$ is contained in the regular locus of $X$ is equivalent to the equality $\mathscr{D}^{{\rm perf}}_Z(X)=\mathscr{D}^{b}_{{\rm coh}, Z}(X)$. Here $\mathscr{D}^{b}_{{\rm coh}, Z}(X)$ denotes the derived category of bounded complexes of sheaves with \emph{coherent} cohomology supported on $Z$, and this category always has an obvious bounded $t$-structure.

However, the other direction in the conclusion of Theorem \ref{GC}, which says that the existence of a bounded $t$-structure implies regularity, is highly nontrivial. One of the key points in Neeman's proof is the use of metric techniques in the theory of approximable triangulated categories. This language of approximability has proved to be very useful in settling several open problems and conjectures on strong generation of triangulated categories (see \cite{Neeman3}).

Another concept introduced by Neeman is the notion of a completion of a triangulated category. Roughly speaking, given a triangulated category equipped with a \textit{good metric}, there is a procedure to produce a new triangulated category, called its \emph{completion} with respect to this metric (see Section \ref{Completion-TC} for the relevant definitions and construction). Note that this completion only depends on the \textit{equivalence class} of the good metric. For example, with the hypothesis of Theorem \ref{GC}, the category $\mathscr{D}^{{\rm perf}}_Z(X)$ has an intrinsically defined equivalence class of good metrics. The completion of $\mathscr{D}^{{\rm perf}}_Z(X)$ with respect to any metric in this equivalence class, called a \textit{preferred good metric}  (see Example \ref{Induced}), is just the triangulated category $\mathscr{D}^{b}_{{\rm coh}, Z}(X)$. So, the assertion of Theorem \ref{GC} can be formulated equivalently: \emph{$\mathscr{D}^{{\rm perf}}_Z(X)$ has a bounded $t$-structure if and only if it is equal to its completion with respect to a preferred good metric.}

With similar techniques, Neeman has also proved the following theorem on the equivalence of all bounded $t$-structures on $\mathscr{D}^{b}_{{\rm coh}, Z}(X)$ in some particular cases.

\begin{Theo}{\rm\cite[9.3.1]{Neeman4}}\label{ETSCH}
Let $X$ be a finite-dimensional, noetherian scheme and let $Z\subseteq X$ be a closed subset. Then all the bounded $t$-structures on the category $\mathscr{D}^{b}_{{\rm coh}, Z}(X)$ are equivalent if any of the following conditions hold:
$(a)$ $Z$ is contained in the regular locus of $X$; $(b)$ $X$ admits a dualizing complex; $(c)$ $X$ is separated and quasiexcellent, and $Z=X$.
\end{Theo}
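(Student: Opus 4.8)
The plan is to obtain the theorem as a special case of the general comparison theorem for bounded $t$-structures on intermediate categories developed below. Set $\mathcal{S} := \mathscr{D}^{\rm perf}_Z(X)$, equipped with the preferred good metric of Example~\ref{Induced}; its completion is then $\mathcal{T} := \mathscr{D}^b_{{\rm coh},Z}(X)$. Our parallel result will assert that, provided the finitistic dimension in our sense of $\mathscr{D}^{\rm perf}_Z(X)$ and of its opposite are finite, all bounded $t$-structures on every intermediate triangulated category between $\mathcal{S}$ and $\mathcal{T}$ — in particular on $\mathcal{T}$ itself — are equivalent. Thus the entire statement reduces to checking this finiteness condition under each of the hypotheses $(a)$, $(b)$, $(c)$; in each case we verify it through the fact that $\mathscr{D}^{\rm perf}_Z(X)$ (and its opposite) is strongly generated, which forces the finitistic dimension to be finite.

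For $(a)$, the inclusion of $Z$ in the regular locus of $X$ is equivalent to $\mathcal{S} = \mathcal{T}$. Since $X$ is finite-dimensional and regular along $Z$, every relevant stalk is a regular local ring of global dimension at most $\dim X$, so every object of $\mathscr{D}^b_{{\rm coh},Z}(X)$ has uniformly bounded ``projective dimension'', which yields both strong generation and the finiteness of the finitistic dimensions of $\mathcal{S}$ and $\mathcal{S}\opp$, and the general theorem applies. For $(b)$, a dualizing complex $\omega_X$ provides a contravariant auto-equivalence $\mathbb{D}_X = \mathbb{R}\mathcal{H}om_{\mathcal{O}_X}(-,\omega_X)$ of $\mathscr{D}^b_{{\rm coh},Z}(X)$ that restricts to an anti-equivalence of $\mathscr{D}^{\rm perf}_Z(X)$ with itself; hence $\mathcal{S}\opp \simeq \mathcal{S}$, the opposite picture (with the good metric transported along $\mathbb{D}_X$) is equivalent to the original, and the two finitistic dimensions coincide. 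That common value is finite because a dualizing complex forces $\mathscr{D}^{\rm perf}_Z(X)$ to be strongly generated and bounds the ``singularity category in our sense'' — one may, for instance, bound the finitistic dimension in terms of the injective dimension of $\omega_X$ — so again the general theorem applies.

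For $(c)$, with $Z = X$ and $X$ separated, quasiexcellent and finite-dimensional, quasiexcellence is precisely the geometric hypothesis guaranteeing that $\mathscr{D}^{\rm perf}(X)$ admits a strong generator (via the strong-generation/approximability techniques available in this setting), so its finitistic dimension is finite; passing to the opposite category is harmless since the dual of a strong generator is again one, whence $\mathscr{D}^{\rm perf}(X)\opp$ is strongly generated too. Feeding this bound into the general theorem completes this last case, and with it the proof.

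Granting the general comparison theorem, the step I expect to be the main obstacle is case $(c)$: extracting the finitistic-dimension bound there is exactly where the deep input — strong generation of $\mathscr{D}^{\rm perf}(X)$ and $\mathscr{D}^b_{\rm coh}(X)$ for quasiexcellent separated finite-dimensional $X$, together with the openness and uniform control of singular loci that quasiexcellence supplies — is indispensable, whereas $(a)$ and $(b)$ reduce to comparatively soft homological and duality arguments. A secondary, essentially bookkeeping point is to confirm that the preferred good metric of Example~\ref{Induced}, and the passage to its equivalence class, satisfy the hypotheses of our general theorem; this is routine given the construction of Section~\ref{Completion-TC}.
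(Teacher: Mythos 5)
Your overall strategy — reduce to the general comparison theorem for bounded $t$-structures on intermediate categories (Theorem~\ref{Main result}(b), via Corollary~\ref{CPG}(2)), then verify the finitistic-dimension hypothesis — is the right one, and it is what the paper does. But your execution diverges from the paper's in a way that both complicates the argument unnecessarily and introduces genuine gaps.

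The paper does not verify the finiteness of $\fd(\mathscr{D}^{\rm perf}_Z(X)^{\opp})$ case-by-case under hypotheses $(a)$, $(b)$, $(c)$. Instead, Proposition~\ref{SCMFD} establishes it in one stroke for \emph{every} quasicompact, quasiseparated scheme $X$ of finite finitistic dimension (with $X-Z$ quasicompact), and hence for every finite-dimensional noetherian $X$: the only inputs are the Raynaud--Gruson bound $\fd(R_i)\leqslant\Fd(R_i)=\dim(R_i)$ for each noetherian $R_i$ in an affine open cover, and the structure-sheaf duality $(-)^{\vee}=\mathscr{R}\mathscr{H}om(-,\mathscr{O}_X)$, which is an anti-autoequivalence of $\mathscr{D}^{\rm perf}_Z(X)$ and transports the finiteness to the opposite category. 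Strong generation is nowhere needed. This is what lets the paper prove Theorem~\ref{ETSCH} in a generality ``greater than Neeman could prove'': conditions $(a)$--$(c)$ are simply discarded.

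Your case-by-case verification, besides being unnecessary, has concrete errors. In case $(b)$ you assert that $\mathbb{D}_X=\mathscr{R}\mathscr{H}om_{\mathscr{O}_X}(-,\omega_X)$ ``restricts to an anti-equivalence of $\mathscr{D}^{\rm perf}_Z(X)$ with itself''; this is false unless $\omega_X$ is itself perfect (i.e.\ $X$ is Gorenstein): in general $\mathbb{D}_X$ sends perfect complexes to bounded coherent ones but not to perfect ones. The anti-autoequivalence you want on $\mathscr{D}^{\rm perf}_Z(X)$ is $(-)^{\vee}=\mathscr{R}\mathscr{H}om(-,\mathscr{O}_X)$, which exists for any qcqs $X$ and has nothing to do with a dualizing complex. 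In cases $(a)$ and $(b)$ you also assert, without argument, that $\mathscr{D}^{\rm perf}_Z(X)$ and its opposite are strongly generated. This is not granted by regularity along $Z$ nor by the existence of a dualizing complex, and indeed the paper points out (in the Remark following Corollary~\ref{NSQS}) that for a general finite-dimensional noetherian $X$ the category $\mathscr{D}^b_{\rm coh}(X)$ need not even have a \emph{classical} generator. Only case $(c)$ — where quasiexcellence, separatedness, and finite-dimensionality give strong generation via Aoki's theorem, as in Example~\ref{quasiexcellent} and Corollary~\ref{NSQS} — is your strong-generation route actually available, and even there it is a detour compared to the uniform argument of Proposition~\ref{SCMFD}.
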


In the last paragraph of \cite[Section 9]{Neeman4}),  he further said that  ``\emph{presumably the equivalence of the bounded $t$-structures on $\mathscr{D}^{b}_{{\rm coh}, Z}(X)$ holds in a generality greater than we can prove now}''.
This can be related to a more general problem of classifying all triangulated categories with only one equivalence class of
bounded $t$-structures.

In this paper, we give a new obstruction to the existence of bounded $t$-structures on general triangulated categories satisfying a finiteness condition and establish related results about the equivalence of bounded $t$-structures. In particular, we show that,  for \emph{any} finite-dimensional, noetherian scheme $X$ and \emph{any} closed subset $Z$ of $X$, all bounded  $t$-structures on $\mathscr{D}^{b}_{{\rm coh}, Z}(X)$ are equivalent, achieving Neeman's expectation on a generalization of Theorem \ref{ETSCH}.

\subsection{Our main results on bounded $t$-structures and completions}\label{Section 1.1}

In this section, we state our main results, which establish a relationship between the existence of bounded $t$-structures and the invariance of triangulated categories under completion. These results can be viewed as categorical generalizations of Neeman's theorems to arbitrary triangulated categories, formulated in the language of completions. Beyond the case of noetherian schemes, our results can also be applied to many other contexts yielding a series of new corollaries.

Instead of working with preferred good metrics on a triangulated category, as Neeman does, we work with good metrics generated by objects, and complete the category with respect to such metrics. Assume that the opposite category of our triangulated category has finite finitistic dimension at a fixed object. Then we show that the existence of a bounded $t$-structure on the triangulated category implies the invariance of this category under taking the completion with respect to the good metric generated by the object. We also show that all bounded $t$-structures on any intermediate category between our triangulated category and its completion are equivalent.

To state our results more precisely, we first introduce some definitions and notation.
Throughout this section, let $\mathscr{S}$ be an essentially small triangulated category with the shift functor denoted by $[1]$.

Let $G$ be an object of $\mathscr{S}$. For each integer $n$, we define
$G(-\infty, n]:=\{G[-i]\mid i\leqslant n\}$ and denote by $\langle G\rangle ^{(-\infty, n]}$ (resp., $\langle G\rangle ^{[n,\infty)}$) the smallest full subcategory of $\mathscr{S}$ containing $G[-n]$ and closed under extensions, direct summands and positive (resp., negative) shifts. We refer to Definition \ref{coprod} for the construction of objects of these categories by iteration. Let $\langle G\rangle:=\bigcup_{n\in\mathbb{N}}\langle G\rangle ^{(-\infty, n]}$
be the \emph{thick subcategory of $\mathscr{S}$ generated by $G$}. If $\mathscr{S}=\langle G\rangle$, then $G$ is called a \emph{classical generator} of $\mathscr{S}$. For $\mathscr{X}\subseteq\mathscr{S}$ a full subcategory, we denote by $\mathscr{X}^{\bot}$ the full subcategory of $\mathscr{S}$ consisting of all objects $Y$ with $\Hom_\mathscr{S}(X,Y)=0$ for $X\in\mathscr{X}$.

In the representation theory of algebras, finitistic dimension and related concepts are ubiquitous, and it is from there that we derive our inspiration. Preceding our work, Krause has introduced an important notion of finitistic dimension for triangulated categories in terms of the generativity of objects (see \cite{Kr2} or Definition \ref{K-FD}), which supplies the first way to characterize the finiteness of the (small) finitistic dimension of a ring as a property of the derived category of perfect complexes. However, this notion of finitistic dimension is not connected/related to the good metrics, with respect to which we will complete a triangulated category to obtain our results, and is thus not suitable for our general treatment. For our aim, we introduce the following notion of finitistic dimension for triangulated categories which is new and of independent interest.

\begin{Def}\label{Def-FD}
The \emph{finitistic dimension} of $\mathscr{S}$ at an object $G$ is defined to be
$$\fd(\mathscr{S}, G):=\inf\big\{n\in\mathbb{N}\mid G(-\infty, -1]^{\bot}\subseteq\langle G\rangle^{[0,\infty)}[n]\big\}.$$
We say that $\mathscr{S}$ has \emph{finite finitistic dimension} if there is an object $G$
with $\fd(\mathscr{S}, G)<\infty$.
\end{Def}

The reason why we still use the terminology ``finitistic dimension" for Definition \ref{Def-FD} is that
the finitistic dimension of the derived category of perfect complexes over a ring at the regular module
is exactly the (small) finitistic dimension of the ring (see Lemma \ref{Finite}(5)). For more elementary properties of our definition of finitistic dimension, we refer to Lemma \ref{Finite}. In particular, in the presence of a bounded t-structure on a triangulated category, the only objects at which the category can possibly have finite finitistic dimension are classical generators. Moreover, the finiteness of finitistic dimension (at classical generators) will be shown for several classes of triangulated categories, for example:

$(i)$ a triangulated category with an algebraic $t$-structure or with a strong generator, including
the bounded derived category of certain abelian categories (e.g. the category of finitely generated modules over an Artin algebra or of coherent sheaves over a quasiexcellent scheme or a separated scheme of finite type over a field),

$(ii)$ the singularity category of a Gorenstein Artin algebra or of a self-injective differential graded (DG) algebra over a field,

$(iii)$ the derived category $\mathscr{D}^{{\rm perf}}_Z(X)$ of perfect complexes on a quasicompact, quasiseparated  scheme $X$ with cohomology supported on a closed subset $Z$, where $X$ has finite finitistic dimension (for example, $X$ is finite-dimensional and noetherian, see Definition \ref{SMFFD}) and the complement $X-Z$ is quasicompact,

$(iv)$ the perfect derived category of a DG algebra with some conditions on its cohomology.

\smallskip
Further, some other types of finitistic dimension in Appendix \ref{Section B}, such as the finitistic dimension of nonpositive DG rings introduced in [9], provide upper bounds for the finitistic dimension of particular triangulated categories (see Proposition \ref{Nonpositive} and Corollary B.5). Most importantly, the assumption that the finitistic dimension of certain relevant triangulated categories is finite allows us to implement our methods on bounded $t$-structures.

As we have said above, we are particularly interested in good metrics determined by objects. Recall that a \emph{good metric} on a triangulated category consists of countably many descending subcategories of the category which are closed under extensions and certain shifts (see Definition \ref{definition of metric} for details).

\begin{Def}\label{PEC}
A good metric $\mathscr{M}:=\{\mathscr{M}_n\}_{n\in\mathbb{N}}$ on $\mathscr{S}$ is called
a \emph{$G$-good metric} if it is equivalent to the good metric $\{\langle G\rangle^{(-\infty,-n]}\}_{n\in\mathbb{N}}$ on $\mathscr{S}$ generated by $G$, that is, for each
$n$, there exist nonnegative integers $a_n$ and $b_n$ (depending on $n$)
with $\mathscr{M}_{a_n}\subseteq \langle G\rangle^{(-\infty,-n]}$ and
$\langle G\rangle^{(-\infty,-{b_n}]}\subseteq \mathscr{M}_n$.

The \emph{completion} of $\mathscr{S}$ at $G$, denoted by $\mathfrak{S}_G(\mathscr{S})$, is defined to be the completion of $\mathscr{S}$  with respect to any $G$-good metric on $\mathscr{S}$.
\end{Def}

We denote by $\mathscr{S}\opp$ the \emph{opposite category} of $\mathscr{S}$ and by $\mathscr{S}\Modcat$ the abelian category of additive functors from $\mathscr{S}\opp$ to the category of abelian groups. For simplicity, $\mathscr{S}$ is identified with its essential image in
$\mathscr{S}\Modcat$ under the Yoneda functor. By definition, $\mathfrak{S}_G(\mathscr{S})\subseteq \mathscr{S}\Modcat$ consists of the colimits of Cauchy sequences in $\mathscr{S}$ (with respect to the metric $\mathscr{M}$) which vanish at $\mathscr{M}_n$ for some $n$. In general, $\mathscr{S}$ is \emph{not} a full subcategory of $\mathfrak{S}_G(\mathscr{S})$. Note that equivalent good metrics produce the same completion, and different classical generators of a triangulated category yield equivalent good metrics. Thus $\mathfrak{S}_H(\mathscr{S})=\mathfrak{S}_G(\mathscr{S})$ whenever $\langle H\rangle=\langle G\rangle\subseteq\mathscr{S}$. In particular, if $\mathscr{S}$ has a classical generator, then $\mathfrak{S}_G(\mathscr{S})$ is independent of the choices of classical generators $G$ of $\mathscr{S}$.

The main result of the paper reads as follows.

\begin{Theo}\label{Main result}
Let $\mathscr{S}$ be an essentially small triangulated category with an object $G$.
Suppose that the finitistic dimension of $\mathscr{S}\opp$ at $G$ is finite.
Then the following statements are true.

$(a)$ If $\mathscr{S}$ has a bounded $t$-structure, then $\mathscr{S}=\mathfrak{S}_G(\mathscr{S})$.

$(b)$ If $\mathscr{X}$ is a full triangulated subcategory of $\mathfrak{S}_G(\mathscr{S})$
with $\mathscr{S}\subseteq \mathscr{X}$, then all bounded $t$-structures on
$\mathscr{X}$ are equivalent. In particular, if $\mathscr{S}\subseteq\mathfrak{S}_G(\mathscr{S})$, then
all bounded $t$-structures on $\mathfrak{S}_G(\mathscr{S})$ are equivalent.
\end{Theo}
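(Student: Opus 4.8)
My strategy would be to derive both parts from a single ``metric comparison'': under the hypotheses, the $G$-good metric on $\mathscr{S}$ agrees, up to equivalence, with the metric cut out by the aisles of any bounded $t$-structure. Concretely, suppose $\mathscr{X}$ is essentially small triangulated with $\mathscr{S}$ a thick subcategory, carrying a bounded $t$-structure $(\mathscr{X}^{\leqslant 0},\mathscr{X}^{\geqslant 0})$, and choose $a\leqslant b$ with $G\in\mathscr{X}^{\leqslant b}\cap\mathscr{X}^{\geqslant a}$. One inclusion is automatic: $G[k]\in\mathscr{X}^{\leqslant b-k}$ and $\mathscr{X}^{\leqslant b-n}$ is closed under extensions, summands and positive shifts, so $\langle G\rangle^{(-\infty,-n]}\subseteq\mathscr{X}^{\leqslant b-n}$ for all $n$. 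The reverse inclusion is where $\fd(\mathscr{S}\opp,G)=d<\infty$ enters: unwinding the definition inside $\mathscr{S}\opp$, it says precisely that every $Y\in\mathscr{S}$ with $\Hom_{\mathscr{S}}(Y,G[-j])=0$ for all $j\geqslant 1$ lies in $\langle G\rangle^{(-\infty,d]}$; since $\Hom(\mathscr{S}^{\leqslant a},\mathscr{S}^{\geqslant a+1})=0$ and $G[-j]\in\mathscr{S}^{\geqslant a+1}$ for $j\geqslant 1$, we get $\mathscr{S}^{\leqslant a}\subseteq\langle G\rangle^{(-\infty,d]}$, and shifting by $a+n$ gives $\mathscr{S}^{\leqslant -n}\subseteq\langle G\rangle^{(-\infty,\,-n+c]}$ with the single constant $c:=d-a$, for all $n$. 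Taking $\mathscr{X}=\mathscr{S}$, the metrics $\{\langle G\rangle^{(-\infty,-n]}\}_n$ and $\{\mathscr{S}^{\leqslant -n}\}_n$ on $\mathscr{S}$ are equivalent; in particular $G$ is a classical generator (so $\mathfrak{S}_G(\mathscr{S})$ makes sense) and $\{\mathscr{S}^{\leqslant -n}\}_n$ is a $G$-good metric.

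For part $(a)$ I would compute $\mathfrak{S}_G(\mathscr{S})$ using the $t$-structure metric $\mathscr{M}_n:=\mathscr{S}^{\leqslant -n}$. Let $F=\colim_iE_i$, with $(E_i)$ Cauchy and $F$ vanishing on $\mathscr{M}_{N_0}=\mathscr{S}^{\leqslant -N_0}$ for some $N_0$. The Cauchy condition, together with $\Cone(E_i\to E_j)\in\mathscr{S}^{\leqslant -n}$ for $i,j\gg 0$, fed against an early term $E_0\in\mathscr{S}^{\leqslant b_0}$, forces (letting $n$ grow) a uniform upper bound $E_i\in\mathscr{S}^{\leqslant b_0}$ for all large $i$. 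Now truncate: $T_i:=\tau^{\geqslant -N_0}E_i\in\mathscr{S}^{[-N_0,\,b_0]}$. The same Cauchy bound makes the transition maps $T_i\to T_j$ isomorphisms once $\Cone(E_i\to E_j)$ lies below $-N_0-2$, so $(T_i)$ is eventually constant with value some $T_\infty\in\mathscr{S}$. Finally, from the triangles $\tau^{\leqslant -N_0-1}E_i\to E_i\to T_i\to$ and the vanishing $\Hom(\mathscr{S}^{\leqslant -N_0-1},\mathscr{S}^{\geqslant -N_0})=0$, one checks that the ``complementary'' colimit $K:=\colim_i\tau^{\leqslant -N_0-1}E_i$ satisfies $K(M)=\colim_i\Hom(M,E_i)=F(M)=0$ for every $M\in\mathscr{S}^{\leqslant -N_0-1}$; being a filtered colimit of objects $\tau^{\leqslant -N_0-1}E_i$ which themselves lie in $\mathscr{S}^{\leqslant -N_0-1}$, this forces $\mathrm{id}_K=0$, hence $K=0$ and $F\cong T_\infty\in\mathscr{S}$. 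An analogous truncation, applied to the Cauchy presentation of a morphism, shows the morphism groups of $\mathfrak{S}_G(\mathscr{S})$ coincide with those of $\mathscr{S}$; hence $\mathscr{S}=\mathfrak{S}_G(\mathscr{S})$.

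For part $(b)$, recall that two bounded $t$-structures are equivalent exactly when the filtrations by their aisles are cofinal in one another; so, for a full triangulated $\mathscr{X}$ with $\mathscr{S}\subseteq\mathscr{X}\subseteq\mathfrak{S}_G(\mathscr{S})$ and any bounded $t$-structure $t$ on $\mathscr{X}$, it suffices to show $\{\mathscr{X}^{\leqslant -n}_t\}_n$ is equivalent to a fixed metric on $\mathscr{X}$ independent of $t$ — the one induced on $\mathscr{X}$ by the $G$-good metric of $\mathfrak{S}_G(\mathscr{S})$. The metric comparison gives, on objects of $\mathscr{S}$ (where $\fd(\mathscr{S}\opp,G)<\infty$ applies), the two inclusions between $\mathscr{X}^{\leqslant -n}_t\cap\mathscr{S}$ and the $\langle G\rangle^{(-\infty,\,\cdot\,]}$; since $\mathscr{X}$ sits between $\mathscr{S}$ and its completion, objects of $\mathscr{S}$ are cofinal in $\mathscr{X}$ for each metric in play, and this propagates the comparison to all of $\mathscr{X}$. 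Applying this to two bounded $t$-structures on $\mathscr{X}$ and concatenating, they induce equivalent metrics, hence are equivalent; the case $\mathscr{X}=\mathfrak{S}_G(\mathscr{S})$ (when $\mathscr{S}\subseteq\mathfrak{S}_G(\mathscr{S})$) is the stated special case.

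The main obstacle I anticipate is the reverse metric inclusion: squeezing from the \emph{a priori} single-class statement ``$G(-\infty,-1]^{\bot}\subseteq\langle G\rangle^{[0,\infty)}[d]$ in $\mathscr{S}\opp$'' a constant $c$ that simultaneously controls \emph{all} truncation levels $\mathscr{S}^{\leqslant -n}$ requires care in how shifting the orthogonality estimate interacts with the $t$-structure. A related difficulty, specific to $(b)$, is that $\fd(\mathscr{S}\opp,G)$ constrains $\mathscr{S}$ whereas the $t$-structure lives on the larger $\mathscr{X}$ and need not restrict to $\mathscr{S}$ — bridging this is exactly where one must use that $\mathscr{S}$ is metrically cofinal in $\mathscr{X}$ inside the completion, and that a bounded $t$-structure still assigns finite cohomological width to each object of $\mathscr{S}$. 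Finally, checking that morphism groups (not just objects) are unchanged by the completion in $(a)$, and bookkeeping precisely which colimits the completion's filtration is closed under in $(b)$, are the points where the mechanics of Neeman's completion must be handled most carefully.
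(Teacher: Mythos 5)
Your metric comparison is correct and is essentially the paper's Lemma~\ref{Restriction}(3): the inclusion $\langle G\rangle^{(-\infty,-n]}\subseteq\mathscr{X}^{\leqslant b-n}$ comes from closure of aisles under positive shifts, extensions and summands, and the reverse inclusion comes from unwinding $\fd(\mathscr{S}\opp,G\opp)<\infty$ as $\mathscr{S}\cap{}^{\perp}(G[1,\infty))\subseteq\langle G\rangle^{(-\infty,d]}$, applied to $\mathscr{S}\cap\mathscr{X}^{\leqslant a}$ since $G[1,\infty)\subseteq\mathscr{X}^{\geqslant a+1}$. For part $(a)$ you specialize to $\mathscr{X}=\mathscr{S}$ and then compute $\mathfrak{S}_G(\mathscr{S})$ directly by truncating Cauchy sequences. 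This is a genuinely different, more hands-on route from the paper, which instead establishes the general lifting theorem for extendable $t$-structures (Theorem~\ref{ET}), derives Corollary~\ref{Unexpected}, and reads off $(a)$ from that. Your direct computation is correct; the one mild sloppiness is the stabilization of $T_i:=\tau^{\geqslant -N_0}E_i$, which needs a $3\times 3$-lemma argument because $\tau^{\geqslant -N_0}$ is not a triangle functor, but that argument does close (the cone of $T_i\to T_j$ is squeezed to zero by the orthogonality $\Hom(\mathscr{S}^{\leqslant -N_0-2},\mathscr{S}^{\geqslant -N_0-1})=0$). You also implicitly need the embeddability direction $\mathfrak{y}(\mathscr{S})\subseteq\mathfrak{S}(\mathscr{S})$, which follows from boundedness below and is worth a sentence.

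Part $(b)$, however, has a real gap. Your strategy reduces to showing that the metric $\{\mathscr{X}^{\leqslant -n}_t\}_n$ on $\mathscr{X}$ is determined (up to equivalence) by its trace $\{\mathscr{S}\cap\mathscr{X}^{\leqslant -n}_t\}_n$ on $\mathscr{S}$, and you assert this as ``objects of $\mathscr{S}$ are cofinal in $\mathscr{X}$ for each metric in play, and this propagates the comparison.'' But cofinality of $\mathscr{S}$ in $\mathscr{X}$ is an object-level fact about Cauchy presentations and does not, by itself, give the aisle comparison: you must show that if $F\in\mathscr{X}^{\leqslant 0}_t$ with Cauchy presentation $F\simeq\colim F_n$ in $\mathscr{S}$, then $F_n\in\mathscr{X}^{\leqslant 0}_t$ for $n\gg 0$, so that $\mathscr{X}^{\leqslant 0}_t=\mathscr{X}\cap\mathfrak{S}_G(\mathscr{S}\cap\mathscr{X}^{\leqslant 0}_t)$. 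This is precisely the paper's Lemma~\ref{Restriction}(1), and its proof is not formal: one first observes $\mathscr{M}_{c+1}\subseteq{}^{\perp}\mathscr{X}^{\geqslant 1}_t$ (using the inclusion $\mathscr{M}_c\subseteq\mathscr{X}^{\leqslant 0}_t$ from the easy direction of your metric comparison), then feeds the Cauchy cones $C_i\in\mathscr{M}_{c+1}$ into the triangles $C_i[-1]\to F_i\to F_{i+1}\to C_i$ to show $\Hom(F_i,Q)\cong\Hom(F,Q)=0$ for every $Q\in\mathscr{X}^{\geqslant 1}_t$. Without this step your two inclusions only control $\mathscr{S}\cap\mathscr{X}^{\leqslant -n}_t$, and comparing two $t$-structures on $\mathscr{X}$ by comparing their traces on $\mathscr{S}$ does not yet give an inclusion of aisles in $\mathscr{X}$. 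The fix is within reach of your own methods (it is a near-verbatim adaptation of the truncation-and-stabilization you used for $(a)$), but as written the proposal for $(b)$ is incomplete.
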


Theorem \ref{Main result}$(a)$ implies that if there is an object $G\in\mathscr{S}$ with $\fd(\mathscr{S}\opp, G\opp)<\infty$ and $\mathscr{S}\neq \mathfrak{S}_G(\mathscr{S})$, then $\mathscr{S}$ has no bounded $t$-structure. In light of this observation and some known examples of completions, one may reasonably define the \emph{almost singularity category} of $\mathscr{S}$ at the object $G$ as
$$\mathscr{S}_{\rm{sg}}(G):=\mathfrak{S}_G(\mathscr{S})/(\mathscr{S}\cap \mathfrak{S}_G(\mathscr{S})),$$
which is the Verdier quotient of $\mathfrak{S}_G(\mathscr{S})$ by the triangulated subcategory $\mathscr{S}\cap \mathfrak{S}_G(\mathscr{S})$. Further, we call it the \emph{singularity category} of $\mathscr{S}$ at $G$ in the case that $\mathscr{S}\subseteq \mathfrak{S}_G(\mathscr{S})$.  This always happens, for example, if $\mathscr{S}$ has a bounded $t$-structure (see Theorem \ref{BABB} (2)).  We might say that $\mathscr{S}$ is \emph{almost regular} at $G$ if $\mathscr{S}_{\rm{sg}}(G)=0$; \emph{regular} at $G$ if $\mathscr{S}=\mathfrak{S}_G(\mathscr{S})$. Then $\mathscr{S}$ itself is said to be \emph{(almost) regular} if it is (almost) regular at one (and thus also all) of its classical generators. Thus Theorem \ref{Main result}$(a)$ can be roughly phrased as, \emph{the existence of a bounded t-structure on a triangulated category implies the regularity of the category at objects}. In other words, under a finiteness assumption on the finitistic dimension, \textit{the singularity category is an obstruction to the existence of bounded $t$-structures.}

This concept of singularity category for triangulated categories encompasses both the algebraic and geometric notions of singularity categories, and is also of independent interest. We refer to Definition \ref{Singularity} and Example \ref{Ring and Scheme} for more details. In Appendix \ref{ECTC}, we also compare the notion of almost regularity to that of an \textit{almost regular} $\mathbb{E}_1$-ring $R$, and show that these notions agree when $\mathscr{S}$ is the homotopy category of the stable $\infty$-category  of perfect left $R$-module spectra (see Corollary \ref{Spectrum}).

Our strategy for proving Theorem \ref{Main result} is lifting (not necessarily bounded) $t$-structures from a general triangulated category to its completion with respect to any good metric. This is different from Neeman's strategy of the proof of Theorem \ref{GC}, which relies on lifting any bounded $t$-structure on $\mathscr{D}^{{\rm perf}}_Z(X)$ to the standard $t$-structure on the unbounded derived category $\mathscr{D}_{\rm qc, Z}(X)$  up to equivalence (see Theorem \ref{CCST} and \cite[Lemma 6.1]{Neeman4}). We are highlighting a part of our lifting result below.

\begin{Theo} (part of Theorem \ref{ET})\label{lift-introduction}
Let $\mathscr{S}$ be an essentially small triangulated category with a good metric $\mathscr{M}$, and let $(\mathscr{S}^{\leqslant 0},\mathscr{S}^{\geqslant 0})$ be a $t$-structure on $\mathscr{S}$ that is ``extendable" with respect to $\mathscr{M}$ (see Definition \ref{ETSC}). Then $\big(\mathfrak{S}(\mathscr{S}^{\leqslant 0}), \mathfrak{S}(\mathscr{S}^{\geqslant 0})\big)$ is a $t$-structure on $\mathfrak{S}(\mathscr{S})$ with its heart and its coaisle equivalent to the heart and the coaisle of $(\mathscr{S}^{\leqslant 0}, \mathscr{S}^{\geqslant 0})$, respectively. Here, for a full subcategory $\mathscr{A}$ of $\mathscr{S}$, the category $\mathfrak{S}(\mathscr{A})$ can be thought of as the completion of $\mathscr{S}$ with respect to $\mathscr{M}$ and $\mathscr{A}$ (see Definition \ref{Completion}).

\end{Theo}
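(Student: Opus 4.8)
\textbf{Proof proposal for Theorem~\ref{lift-introduction} (part of Theorem~\ref{ET}).}

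The plan is to work inside the ambient abelian category $\mathscr{S}\Modcat$ and to define the candidate aisle and coaisle on $\mathfrak{S}(\mathscr{S})$ by the naive recipe: $\mathfrak{S}(\mathscr{S}^{\leqslant 0})$ consists of those objects of $\mathfrak{S}(\mathscr{S})$ that arise as colimits of Cauchy sequences lying eventually in $\mathscr{S}^{\leqslant 0}$, and dually for $\mathfrak{S}(\mathscr{S}^{\geqslant 0})$. The ``extendability'' hypothesis is exactly what makes this recipe well behaved: it should guarantee (i) that $\mathscr{S}^{\leqslant 0}$ and $\mathscr{S}^{\geqslant 0}$ interact with the metric $\mathscr{M}$ so that the truncation triangles of objects in $\mathscr{S}$ can be chosen compatibly along Cauchy sequences, and (ii) that the relevant Cauchy sequences remain Cauchy after truncation. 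First I would verify that $\mathfrak{S}(\mathscr{S}^{\leqslant 0})$ and $\mathfrak{S}(\mathscr{S}^{\geqslant 0})$ are closed under the appropriate shifts (positive shift for the aisle, negative for the coaisle) — this is immediate from the corresponding closure of $\mathscr{S}^{\leqslant 0}$, $\mathscr{S}^{\geqslant 0}$ and the compatibility of shifts with colimits of Cauchy sequences — and that they are closed under extensions in $\mathfrak{S}(\mathscr{S})$, using that an extension of two Cauchy sequences can be realized as a Cauchy sequence of extensions.

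Next I would establish the Hom-vanishing $\Hom_{\mathfrak{S}(\mathscr{S})}\big(\mathfrak{S}(\mathscr{S}^{\leqslant 0})[1], \mathfrak{S}(\mathscr{S}^{\geqslant 0})\big)=0$. The point here is to reduce a $\Hom$ between two colimits to a limit of $\Hom$'s between the terms of the approximating sequences; since each such term-level $\Hom$ is between an object of $\mathscr{S}^{\leqslant 0}[1]$ and an object of $\mathscr{S}^{\geqslant 0}$, hence zero in $\mathscr{S}$, the obstruction is controlling the $\lim^1$ term that appears when one commutes $\Hom$ past the colimit. This is where I expect to invoke the vanishing of negative self-extensions of objects of $\mathscr{S}$ in $\mathscr{S}\Modcat$ (or the corresponding Mittag--Leffler condition built into the definition of completion), together with the fact that the approximating sequences can be taken with surjective (or at least ``eventually split on truncations'') transition maps coming from extendability.

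The heart of the argument — and the step I expect to be the main obstacle — is the existence of truncation triangles: for every $X\in\mathfrak{S}(\mathscr{S})$ I must produce a triangle $A\to X\to B\to A[1]$ with $A\in\mathfrak{S}(\mathscr{S}^{\leqslant 0})$ and $B\in\mathfrak{S}(\mathscr{S}^{\geqslant 1})$. Write $X$ as the colimit of a Cauchy sequence $(X_i)$ in $\mathscr{S}$; take the truncation triangles $\tau^{\leqslant 0}X_i\to X_i\to \tau^{\geqslant 1}X_i\to$ in $\mathscr{S}$. The difficulty is that the transition maps $X_i\to X_{i+1}$ need not be compatible with a fixed choice of truncations on the nose, only up to the non-functoriality inherent in triangulated categories; extendability must be used precisely to rigidify this, yielding genuine Cauchy sequences $(\tau^{\leqslant 0}X_i)$ and $(\tau^{\geqslant 1}X_i)$ whose colimits $A$ and $B$ fit into a triangle with $X$ (using that the completion functor, as a Verdier-type localization of a homotopy-colimit construction, is exact and commutes with the relevant colimits). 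Once the triangle is in hand, $A\in\mathfrak{S}(\mathscr{S}^{\leqslant 0})$ and $B\in\mathfrak{S}(\mathscr{S}^{\geqslant 1})$ by construction, so together with the previous two steps this proves $\big(\mathfrak{S}(\mathscr{S}^{\leqslant 0}),\mathfrak{S}(\mathscr{S}^{\geqslant 0})\big)$ is a $t$-structure.

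Finally, for the identification of hearts and coaisles I would argue that the inclusion $\mathscr{S}^{\geqslant 0}\hookrightarrow \mathfrak{S}(\mathscr{S}^{\geqslant 0})$ is an equivalence: every object of $\mathscr{S}^{\geqslant 0}$ is already in the completion, and conversely a Cauchy sequence in $\mathscr{S}^{\geqslant 0}$ with respect to a metric adapted to the coaisle must stabilize, because the metric subcategories $\mathscr{M}_n$ — being generated in negative degrees — eventually become $\Hom$-orthogonal to $\mathscr{S}^{\geqslant 0}$, so the Cauchy condition forces the transition maps to be eventually isomorphisms. The same orthogonality argument, applied now also on the aisle side, shows the heart $\mathscr{S}^{\leqslant 0}\cap\mathscr{S}^{\geqslant 0}$ is unchanged under completion. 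This last part is essentially formal given extendability; the genuine work is entirely in the construction of truncation triangles, and in checking that the colimit operation used there is the same exact functor $\mathfrak{S}(-)$ appearing in Definition~\ref{Completion}, so that ``$\mathfrak{S}(\mathscr{A})$ = completion of $\mathscr{S}$ with respect to $\mathscr{M}$ and $\mathscr{A}$'' is literally what the construction produces.
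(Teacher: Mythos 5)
Your high-level skeleton (verify T1, T2, T3, then identify coaisle and heart via a stabilization argument) matches the paper's, but two of your ``key difficulties'' are not where the actual difficulty lies, and your plan for circumventing them would not survive contact with the details.

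First, in the Hom-vanishing step (T2) you worry about a $\lim^1$ obstruction and propose to kill it using ``vanishing of negative self-extensions'' or a Mittag--Leffler condition. This is a phantom obstacle. The Hom in $\mathfrak{S}(\mathscr{S})$ is by definition the Hom in the abelian functor category $\mathscr{S}\Modcat$, where each Yoneda object $\mathfrak{y}(X_i)$ is finitely presented and projective. Hence $\Hom_{\mathscr{S}\Modcat}(\mathfrak{y}(X_i),-)$ commutes exactly with filtered colimits, and $\Hom_{\mathscr{S}\Modcat}(-,F)$ turns colimits into genuine limits; one gets $\Hom_{\mathscr{S}\Modcat}(\operatorname{colim}\mathfrak{y}(X_i),\operatorname{colim}\mathfrak{y}(Y_j))\simeq\lim_i\operatorname{colim}_j\Hom_{\mathscr{S}}(X_i,Y_j)$, termwise zero, hence zero. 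There is no derived functor of $\lim$ to control here, and no hypothesis on negative self-extensions is needed or available.

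Second, and more seriously, in (T3) you identify the main difficulty as ``the transition maps need not be compatible with a fixed choice of truncations, only up to the non-functoriality inherent in triangulated categories; extendability must be used precisely to rigidify this.'' That is a misdiagnosis. Truncation with respect to a $t$-structure \emph{is} functorial: by (T1) and (T2) the induced maps $f_{n+1}^{\leqslant -1}$ and $f_{n+1}^{\geqslant 0}$ on truncations exist and are \emph{unique}, so the truncated chains are honest chains with no rigidification needed. What is not automatic — and what extendability is actually for — is that these truncated chains are again \emph{Cauchy}. The paper's proof uses $\mathscr{M}_k\subseteq\mathscr{S}^{\leqslant 0}$ to place the cone $C_n$ of $f_{n+1}$ eventually in $\mathscr{M}_{k+2}\subseteq\mathscr{S}^{\leqslant -2}$, then runs the $3\times 3$ lemma and the orthogonality $\Hom(\mathscr{S}^{\leqslant -2},\mathscr{S}^{\geqslant -1})=0$ to force the cone $C_n''$ of the truncated map $f_{n+1}^{\geqslant 0}$ to vanish for $n$ large. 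The conclusion is stronger than Cauchyness: the coaisle truncation sequence eventually \emph{stabilizes} to isomorphisms, which is what feeds the identification of the coaisle with $\mathscr{S}^{\geqslant 0}$. Your plan does not contain this mechanism, and since you aimed extendability at the wrong target you have no argument that the truncated sequences are Cauchy at all.

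Third, a milder gap: you claim the heart is identified by ``the same orthogonality argument, applied now also on the aisle side.'' The aisle side does \emph{not} stabilize — $\mathfrak{S}(\mathscr{S}^{\leqslant 0})$ is genuinely larger than $\mathscr{S}^{\leqslant 0}$ in general — so the orthogonality/stability argument only works on the coaisle. To show $\mathfrak{S}(\mathscr{S}^{\leqslant 0})\cap\mathfrak{S}(\mathscr{S}^{\geqslant 0})\subseteq\mathfrak{y}(\mathscr{H})$ the paper instead uses that an object of the heart is $\mathfrak{y}(M)$ with $M\in\mathscr{S}^{\geqslant 0}$ (by the coaisle stabilization) and simultaneously a filtered colimit of Yoneda objects from $\mathscr{S}^{\leqslant 0}$; since $\mathfrak{y}(M)$ is finitely presented in $\mathscr{S}\Modcat$, the isomorphism factors through some finite stage, exhibiting $M$ as a direct summand of some $M_n\in\mathscr{S}^{\leqslant 0}$, whence $M\in\mathscr{S}^{\leqslant 0}$ because aisles are closed under summands. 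This finitely-presented/split-monomorphism step is the missing ingredient in your heart identification.
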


Theorem \ref{lift-introduction}, and the more elaborate Theorem \ref{ET}, answer a question by Neeman proposed in his \emph{ICM 2022 Proceedings} paper --``\emph{Are there similar theorems about $t$-structures in $\mathscr{S}$ going to $t$-structures in $\mathfrak{S}(\mathscr{S})$?}'' (see \cite[p.1653]{Neeman6}). We also point out that any bounded above $t$-structure is extendable with respect to a metric defined by an object (see Lemma \ref{tool}(3)), which lays the foundation for lifting bounded t-structures.

\subsection{Consequences of our main result}\label{Section 1.2}

The calculation of completions of triangulated categories is very crucial if we are to use Theorem \ref{Main result}$(a)$ to find a potential obstruction to the existence of bounded $t$-structures. Based on Neeman's work on good extensions of triangulated categories (see Theorem \ref{MCP}), we can apply Theorem \ref{Main result} to the category of compact objects of a compactly generated triangulated category.

Let $\mathscr{T}$ be a compactly generated triangulated category which has a single compact generator $G$. Let $\mathscr{T}^{c}\subseteq\mathscr{T}$ consist of all compact objects.  Given a $t$-structure $(\mathscr{T}^{\leqslant 0}, \mathscr{T}^{\geqslant 0})$ on $\mathscr{T}$ in the preferred equivalence class (Definition \ref{prefer}), there are two intrinsic categories $\mathscr{T}_c^{-}$ and $\mathscr{T}_c^{b}$ defined in Definition \ref{TC}, regarded as the \emph{closure} and \emph{bounded closure} of $\mathscr{T}^c$ in $\mathscr{T}$, respectively. They are thick, triangulated subcategories of $\mathscr{T}$ if $\Hom_\mathscr{T}(G, G[i])=0$ for $i\gg 0$, which often holds in practical applications. When the above $t$-structure restricts to a $t$-structure $\big(\mathscr{T}^{\leqslant 0}\cap\mathscr{T}_c^{-}, \mathscr{T}^{\geqslant 0}\cap\mathscr{T}_c^{-}\big)$ on
$\mathscr{T}_c^{-}$ (see \cite[Definition 5.1]{Neeman1}), the category
$\mathscr{T}_c^{b}$ has a bounded $t$-structure $\big(\mathscr{T}^{\leqslant 0}\cap\mathscr{T}_c^{b}, \mathscr{T}^{\geqslant 0}\cap\mathscr{T}_c^{b}\big)$. In this case, an easy conclusion is that if $\mathscr{T}^{c}=\mathscr{T}_c^b$, then $\mathscr{T}^{c}$ has a bounded $t$-structure. Our first corollary provides a converse of this conclusion.

\begin{Koro}\label{CPG}
Suppose that $\Hom_\mathscr{T}(G, G[i])=0$ for $i\gg 0$ and the category $(\mathscr{T}^{c})\opp$ has finite finitistic dimension.

$(1)$ If $\mathscr{T}^{c}$ has a bounded $t$-structure, then $\mathscr{T}^{c}=\mathscr{T}_c^b$.

$(2)$ If $\mathscr{X}$ is a full triangulated subcategory of $\mathscr{T}$ with $\mathscr{T}^{c}\subseteq \mathscr{X}\subseteq \mathscr{T}_c^b$, then all bounded $t$-structures on $\mathscr{X}$ are equivalent.
\end{Koro}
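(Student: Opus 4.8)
The plan is to realize Corollary \ref{CPG} as a direct application of Theorem \ref{Main result} with $\mathscr{S}:=\mathscr{T}^c$. The key bridge is the calculation of the completion $\mathfrak{S}_G(\mathscr{T}^c)$. The hypothesis $\Hom_\mathscr{T}(G,G[i])=0$ for $i\gg 0$ is exactly what makes $\mathscr{T}_c^-$ and $\mathscr{T}_c^b$ thick triangulated subcategories of $\mathscr{T}$ (this is recalled in the text preceding the statement, and rests on Neeman's work, Theorem \ref{MCP}), and moreover it is the condition under which one can identify the completion of $\mathscr{T}^c$ at its compact generator $G$ with $\mathscr{T}_c^b$. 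So the first step is to invoke that identification: $\mathfrak{S}_G(\mathscr{T}^c)=\mathscr{T}_c^b$, together with the fact that $\mathscr{T}^c\subseteq\mathscr{T}_c^b$ (since a bounded-above, bounded-below object in the preferred $t$-structure that is compact lies in $\mathscr{T}_c^b$ by its very definition). Once this is in place, since $G$ is a compact generator of $\mathscr{T}$, it is in particular a classical generator of $\mathscr{T}^c$, so the completion is independent of the choice of generator and the good metric generated by $G$ is a $G$-good metric in the sense of Definition \ref{PEC}.

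With the completion identified, part $(1)$ follows from Theorem \ref{Main result}$(a)$: the hypothesis that $(\mathscr{T}^c)\opp$ has finite finitistic dimension means there is some object at which it has finite finitistic dimension; but by the remark following Definition \ref{Def-FD} (``in the presence of a bounded $t$-structure, the only objects at which the category can possibly have finite finitistic dimension are classical generators'', recorded in Lemma \ref{Finite}), if $\mathscr{T}^c$ admits a bounded $t$-structure then $(\mathscr{T}^c)\opp$ also admits one (the opposite $t$-structure), and so finite finitistic dimension of $(\mathscr{T}^c)\opp$ is witnessed at a classical generator of $(\mathscr{T}^c)\opp$, equivalently at $G$ viewed in $(\mathscr{T}^c)\opp$. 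Hence $\fd\big((\mathscr{T}^c)\opp, G\big)<\infty$, and Theorem \ref{Main result}$(a)$ applied to $\mathscr{S}=\mathscr{T}^c$ gives $\mathscr{T}^c=\mathfrak{S}_G(\mathscr{T}^c)=\mathscr{T}_c^b$. Part $(2)$ is then immediate from Theorem \ref{Main result}$(b)$: any $\mathscr{X}$ with $\mathscr{T}^c\subseteq\mathscr{X}\subseteq\mathscr{T}_c^b=\mathfrak{S}_G(\mathscr{T}^c)$ is a full triangulated subcategory of the completion containing $\mathscr{S}$, so all bounded $t$-structures on it are equivalent — with no need to assume $\mathscr{T}^c$ itself has a bounded $t$-structure.

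The main obstacle, and the step that requires real care rather than bookkeeping, is the identification $\mathfrak{S}_G(\mathscr{T}^c)=\mathscr{T}_c^b$. This is where the structure theory of approximable/compactly generated triangulated categories enters: one must check that the intrinsic ``bounded closure'' $\mathscr{T}_c^b$, defined via the preferred $t$-structure on $\mathscr{T}$, coincides with the abstract completion of $\mathscr{T}^c$ built from Cauchy sequences in $\mathscr{T}^c\Modcat$ that vanish on some $\mathscr{M}_n$. This matching — that colimits of $G$-Cauchy sequences which are bounded below land precisely in $\mathscr{T}_c^b$ — uses that $G$ is a compact generator (so $\mathscr{T}=\Loc(G)$, representability is available), that the metric generated by $G$ agrees up to equivalence with the one coming from the preferred $t$-structure (here $\Hom_\mathscr{T}(G,G[i])=0$ for $i\gg 0$ is used again, to control the interaction of $\langle G\rangle^{(-\infty,-n]}$ with the truncation functors), and Neeman's comparison results on good extensions (Theorem \ref{MCP}). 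Everything after this identification is a formal translation; the delicate part is this input from the theory of completions, which I would handle by citing the relevant comparison in Section \ref{Completion-TC} together with Theorem \ref{MCP} rather than re-deriving it.
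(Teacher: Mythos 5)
Your overall structure matches the paper's: take $\mathscr{S}=\mathscr{T}^c$, identify $\mathfrak{S}_G(\mathscr{T}^c)$ (up to the triangle equivalence $\mathfrak{y}_F$) with $\mathscr{T}_c^b$ via Theorem~\ref{MCP}, and then apply Theorem~\ref{Main result}. But there is a genuine, if small, gap in how you pass from ``$(\mathscr{T}^c)\opp$ has finite finitistic dimension'' to ``$\fd\big((\mathscr{T}^c)\opp, G\opp\big)<\infty$'', and it undermines your treatment of part~$(2)$.

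Your argument for that passage invokes Lemma~\ref{Finite}(2): you assume $\mathscr{T}^c$ has a bounded $t$-structure, transfer it to $(\mathscr{T}^c)\opp$, and conclude that the unspecified witness of finite finitistic dimension must be a classical generator, hence comparable to $G$ via Corollary~\ref{Some-Any}. This is valid for part~$(1)$, where the bounded $t$-structure is assumed. But part~$(2)$ makes no such assumption on $\mathscr{T}^c$ (indeed, as you yourself note, it shouldn't), and yet you need $\fd\big((\mathscr{T}^c)\opp, G\opp\big)<\infty$ to invoke Theorem~\ref{Main result}(b). Writing ``Part~$(2)$ is then immediate'' quietly reuses a conclusion whose justification was conditional on the hypothesis of part~$(1)$. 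The fix is easy, and the paper's proof does it cleanly: Lemma~\ref{Smd} shows $G$ is a classical generator of $\mathscr{T}^c$, hence $G\opp$ is a classical generator of $(\mathscr{T}^c)\opp$, and Corollary~\ref{Some-Any} then gives $\fd\big((\mathscr{T}^c)\opp\big)<\infty$ if and only if $\fd\big((\mathscr{T}^c)\opp, G\opp\big)<\infty$, with no $t$-structure needed at all. You already observed that $G$ is a classical generator of $\mathscr{T}^c$ earlier in your write-up, so you have all the pieces; you just took a detour through Lemma~\ref{Finite}(2) when Corollary~\ref{Some-Any} alone suffices and works uniformly for both parts.

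One further minor point, just for hygiene: the equalities $\mathfrak{S}_G(\mathscr{T}^c)=\mathscr{T}_c^b$ and $\mathscr{T}^c=\mathfrak{S}_G(\mathscr{T}^c)$ should be read as holding up to the equivalence $\mathfrak{y}_F:\widehat{\mathfrak{S}}(\mathscr{S})\to\mathfrak{S}_G(\mathscr{S})$ from Theorem~\ref{MCP}(1), since the completion lives in $\mathscr{S}\Modcat$ while $\mathscr{T}_c^b$ lives in $\mathscr{T}$; the paper makes this explicit by noting that the restriction of $\mathfrak{y}_F$ to $\mathscr{S}$ is the Yoneda embedding, so $\mathfrak{y}(\mathscr{S})=\mathfrak{S}_G(\mathscr{S})$ if and only if $\mathscr{S}=\widehat{\mathfrak{S}}(\mathscr{S})$. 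You gesture at this, but it is worth stating precisely so the equality $\mathscr{T}^c=\mathscr{T}_c^b$ is seen to follow.
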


Next, we illustrate two special cases of Corollary \ref{CPG} related to schemes and ordinary rings.

Let $X$ be a quasicompact, quasiseparated scheme (for example, an affine scheme or a noetherian scheme).
Recall that a (cochain) complex $\cpx{M}$ of $\mathscr{O}_X$-modules with quasicoherent cohomology is said to be \emph{pseudocoherent} if, for any open immersion $i: U\to X$ with $U$ an affine open subset of $X$, the restriction complex $i^*(\cpx{M})$  of $\cpx{M}$ to $U$ has bounded above resolutions by finite-rank vector bundles, or equivalently, identifying $U$ with the spectrum of a commutative ring $A$, the complex $i^*(M)$ is isomorphic to a bounded above complex of finitely generated projective $A$-modules. Clearly, pseudocoherence is a local property. Further, let $Z$ be a closed subset of $X$ such that $X-Z$ is quasicompact. We denote by $\mathscr{D}^{p, b}_{{\rm qc}, Z}(X)$ the derived category of pseudocoherent complexes on $X$ with cohomology supported on $Z$ and with bounded cohomology. This category contains $\mathscr{D}^{{\rm perf}}_Z(X)$, and equals $\mathscr{D}^{b}_{{\rm coh}, Z}(X)$ for a noetherian scheme $X$.

Let $R$ be an associative ring with identity. We denote by $R\opp$ the opposite ring of $R$, and by $\prj{R}$ and $R\modcat$ the categories of finitely generated projective  and finitely presented left $R$-modules, respectively. As usual, $\mathscr{K}$, $\mathscr{D}$ and $b$ stand for \emph{homotopy category}, \emph{derived category} and \emph{bounded cohomology}, respectively. For instance, $\mathscr{K}^{-, b}(\prj{R})$ is the homotopy category of bounded above complexes of finitely generated projective left $R$-modules with bounded cohomolgy. We also denote by $\fd(R)$ the \emph{finitistic dimension} of $R$, which is by definition the supremum of projective dimensions of those left $R$-modules having a finite projective resolution by finitely generated projective $R$-modules (for example, see \cite{BASS, XC, Kr2}). When the supremum is taken over the projective dimensions of all left $R$-modules with finite projective dimension, the \emph{big finitistic dimension} $\Fd(R)$ of $R$ is defined. For a commutative noetherian ring $R$, it is known that $\fd(R)\leqslant \Fd(R)=\dim(R)$, the Krull dimension of $R$ (see \cite{RG}).

Corollary \ref{Scheme case}(1) generalizes Theorem \ref{GC} beyond noetherian schemes, and Corollary \ref{Scheme case}(2) extends Theorem \ref{ETSCH} in full generality. \emph{Our assumptions on the scheme $X$ and the finiteness of the finitistic dimension in Corollary \ref{Scheme case} is weaker than the scheme being noetherian and finite-dimensional}. Moreover, our proof of Corollary \ref{Scheme case} is independent of the weak approximability of $\mathscr{D}_{{\rm qc}, Z}(X)$ that was shown in \cite{Neeman4}.

\begin{Koro}\label{Scheme case}
Let $X$ be a quasicompact, quasiseparated scheme and let $Z$ be a closed subset of $X$ such that $X-Z$ is quasicompact. Suppose that $X$ has a finite affine open covering $X=\bigcup_{i=1}^{n}V_i$, where $V_i$ is isomorphic to the spectrum of $R_i$ for some commutative ring $R_i$ with $\fd(R_i)<\infty$ for each $i$.
Then:

$(1)$ If $\mathscr{D}^{{\rm perf}}_Z(X)$  has a bounded $t$-structure, then
$\mathscr{D}^{{\rm perf}}_Z(X)=\mathscr{D}^{p, b}_{{\rm qc}, Z}(X)$. In particular, if all $R_i$ are noetherian rings, then $\mathscr{D}^{{\rm perf}}_Z(X)$ has a bounded $t$-structure
if and only if $Z$ is contained in the regular locus of $X$.

$(2)$ All bounded $t$-structures on any triangulated category between $\mathscr{D}^{{\rm perf}}_Z(X)$ and $\mathscr{D}^{p, b}_{{\rm qc}, Z}(X)$ are equivalent. In particular, if all $R_i$ are noetherian rings, then
all bounded $t$-structures on $\mathscr{D}^{b}_{{\rm coh}, Z}(X)$ are equivalent.
\end{Koro}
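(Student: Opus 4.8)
The plan is to deduce Corollary \ref{Scheme case} from Corollary \ref{CPG} by finding the right compactly generated ambient category and verifying the two hypotheses of Corollary \ref{CPG}. The natural choice is $\mathscr{T}=\mathscr{D}_{{\rm qc},Z}(X)$, the derived category of complexes of $\mathscr{O}_X$-modules with quasicoherent cohomology supported on $Z$; since $X$ is quasicompact and quasiseparated and $X-Z$ is quasicompact, this category is compactly generated and its subcategory of compact objects is $\mathscr{T}^c=\mathscr{D}^{{\rm perf}}_Z(X)$, generated by a single perfect complex $G$ (a standard fact from Bondal--Van den Bergh / Thomason--Trobaugh / Neeman's work). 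First I would pin down the identifications $\mathscr{T}_c^- = \mathscr{D}^{p}_{{\rm qc},Z}(X)$ (pseudocoherent, cohomology bounded above) and $\mathscr{T}_c^b = \mathscr{D}^{p,b}_{{\rm qc},Z}(X)$ using the intrinsic description of $\mathscr{T}_c^-$ and $\mathscr{T}_c^b$ from Definition \ref{TC} together with the standard $t$-structure on $\mathscr{T}$ (which lies in the preferred equivalence class); this is exactly where the hypothesis that the $t$-structure restricts to $\mathscr{T}_c^-$ is checked, and it is known in this geometric setting.

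Next I would verify the two hypotheses of Corollary \ref{CPG}. The condition $\Hom_\mathscr{T}(G,G[i])=0$ for $i\gg 0$ holds because $G$ is a perfect complex and $X$ is quasicompact quasiseparated, so $\Hom$ out of $G$ computes a bounded-above amount of cohomology; more carefully, $\mathbf{R}\Hom(G,G)$ has bounded-above cohomology since $G$ is pseudocoherent and $X$ has finite cohomological dimension for quasicoherent sheaves on a qcqs scheme with a finite affine cover (Čech cohomology over $n$ affines), hence the negative self-extensions vanish in high degree. The real content is showing $(\mathscr{D}^{{\rm perf}}_Z(X))\opp$ has finite finitistic dimension. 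Here I would use the list item $(iii)$ in the introduction, which asserts precisely this for $\mathscr{D}^{{\rm perf}}_Z(X)$ whenever $X$ has finite finitistic dimension (in the sense of Definition \ref{SMFFD}) and $X-Z$ is quasicompact — and the hypothesis $\fd(R_i)<\infty$ for a finite affine cover is exactly the definition of $X$ having finite finitistic dimension. Strictly, item $(iii)$ gives finiteness of $\fd$ for $\mathscr{D}^{{\rm perf}}_Z(X)$ itself; to feed Corollary \ref{CPG} I need it for the opposite category, which should follow from the same circle of ideas (or from a duality/self-opposite property of $\mathscr{D}^{{\rm perf}}_Z(X)$, e.g. when a dualizing complex is present, and in general from the explicit local-to-global estimate), and I would cite the corresponding statement proven in the body of the paper.

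With these in place, Corollary \ref{CPG}(1) gives $\mathscr{D}^{{\rm perf}}_Z(X)=\mathscr{T}_c^b=\mathscr{D}^{p,b}_{{\rm qc},Z}(X)$ as soon as $\mathscr{D}^{{\rm perf}}_Z(X)$ carries a bounded $t$-structure, which is the first assertion. For the ``in particular'' clause, when all $R_i$ are noetherian one has $\mathscr{D}^{p,b}_{{\rm qc},Z}(X)=\mathscr{D}^{b}_{{\rm coh},Z}(X)$, and the equality $\mathscr{D}^{{\rm perf}}_Z(X)=\mathscr{D}^{b}_{{\rm coh},Z}(X)$ is classically equivalent to $Z$ lying in the regular locus of $X$ (perfect $=$ bounded coherent exactly at regular points); combined with the fact that $\mathscr{D}^b_{{\rm coh},Z}(X)$ always has its evident bounded $t$-structure, this yields the stated equivalence. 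For part $(2)$, since $\mathscr{D}^{{\rm perf}}_Z(X)=\mathscr{T}^c\subseteq\mathscr{X}\subseteq\mathscr{T}_c^b=\mathscr{D}^{p,b}_{{\rm qc},Z}(X)$ for any intermediate triangulated category $\mathscr{X}$, Corollary \ref{CPG}(2) immediately gives that all bounded $t$-structures on $\mathscr{X}$ are equivalent, and taking $\mathscr{X}=\mathscr{D}^{b}_{{\rm coh},Z}(X)$ in the noetherian case gives the final conclusion, which is Neeman's Theorem \ref{ETSCH} with all hypotheses removed.

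\medskip\noindent
The main obstacle I anticipate is not any single hard estimate but the bookkeeping of reductions: correctly matching the intrinsic categories $\mathscr{T}_c^-$, $\mathscr{T}_c^b$ of Definition \ref{TC} with the geometric categories of (bounded) pseudocoherent complexes, checking that the standard $t$-structure on $\mathscr{D}_{{\rm qc},Z}(X)$ is in the preferred equivalence class and restricts appropriately to $\mathscr{T}_c^-$, and — the genuinely substantive point — establishing finiteness of the finitistic dimension of $(\mathscr{D}^{{\rm perf}}_Z(X))\opp$ from the local hypothesis $\fd(R_i)<\infty$. That last step is where the finite affine cover is used to pass from a bound on each $R_i$ to a global bound, via a Mayer--Vietoris / Čech induction on the number of affines, and it is the only place where the specific finiteness assumption on the $R_i$ (rather than mere quasicompactness) enters.
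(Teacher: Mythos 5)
Your proposal matches the paper's proof of Corollary~\ref{Scheme case} essentially step by step: set $\mathscr{T}=\mathscr{D}_{{\rm qc},Z}(X)$, use Theorem~\ref{CCST} and Example~\ref{Perfect complex} to identify $\mathscr{T}^c=\mathscr{D}^{\rm perf}_Z(X)$ and $\mathscr{T}_c^b=\mathscr{D}^{p,b}_{{\rm qc},Z}(X)$, invoke Proposition~\ref{SCMFD} for $\fd\big((\mathscr{D}^{\rm perf}_Z(X))^{\rm op}\big)<\infty$, and feed this into Corollary~\ref{CPG}. One small clarification on the point you flagged as most substantive: the duality used in Proposition~\ref{SCMFD} is simply $(-)^\vee=\mathscr{R}\mathscr{H}om(-,\mathscr{O}_X)$, which is an equivalence $\mathscr{D}^{\rm perf}_Z(X)^{\rm op}\to\mathscr{D}^{\rm perf}_Z(X)$ for any qcqs scheme (no dualizing complex is needed), so finiteness of $\fd$ for the opposite category is immediate once the local estimate involving the $\fd(R_i)$ is in place.
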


Corollary \ref{Ring case}(1) generalizes \cite[Theorem 1.2]{HS} which deals with commutative noetherian rings of finite Krull dimension. Further generalizations of left coherent rings to left coherent $\mathbb{E}_1$-rings and nonpositive DG-rings are given in Corollaries \ref{LCR} and
\ref{b6-dg}.

\begin{Koro}\label{Ring case}
Let $R$ be a ring with identity. Suppose $\fd(R\opp)<\infty$. Then:

$(1)$ If $\Kb{\prj{R}}$ has a bounded $t$-structure, then $\Kb{\prj{R}}=\mathscr{K}^{-, b}(\prj{R})$. In particular, if $R$ is left coherent, then $\Kb{\prj{R}}$ has a bounded $t$-structure if and only if $\Kb{\prj{R}}=\Db{R\modcat}$.

$(2)$ All bounded $t$-structures on any triangulated category between $\Kb{\prj{R}}$ and $\mathscr{K}^{-, b}(\prj{R})$ are equivalent.
\end{Koro}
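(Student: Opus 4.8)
\textbf{Proof proposal for Corollary \ref{Ring case}.}

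The plan is to deduce this statement directly from Corollary \ref{CPG} by choosing the right compactly generated triangulated category and its compact subcategory, and then identifying the intrinsic categories $\mathscr{T}_c^-$ and $\mathscr{T}_c^b$ appearing there. First I would take $\mathscr{T}:=\mathscr{D}(R\Modcat)$, the unbounded derived category of all left $R$-modules. This is compactly generated by the single compact object $G:=R$, and $\mathscr{T}^c=\Kb{\prj{R}}$ under the standard identification. The hypothesis $\Hom_{\mathscr{T}}(G,G[i])=\Hom_{\mathscr{D}(R)}(R,R[i])=0$ for $i\neq 0$ is automatic, so in particular it holds for $i\gg 0$. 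The other hypothesis of Corollary \ref{CPG}, that $(\mathscr{T}^c)\opp=(\Kb{\prj{R}})\opp$ has finite finitistic dimension, must be extracted from the assumption $\fd(R\opp)<\infty$; here I would invoke Lemma \ref{Finite}(5), which (as announced in the text after Definition \ref{Def-FD}) says that the finitistic dimension of $\Kb{\prj{R}}$ at the regular module $R$ equals the small finitistic dimension $\fd(R)$. Applying this to $R\opp$ in place of $R$, and using that $\big(\Kb{\prj{R}}\big)\opp\simeq\Kb{\prj{R\opp}}$ via the duality $\Hom_R(-,R)$ on finitely generated projectives, gives $\fd\big((\Kb{\prj{R}})\opp, G\opp\big)=\fd(R\opp)<\infty$, which is exactly what Corollary \ref{CPG} needs.

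Next I would take the standard $t$-structure on $\mathscr{D}(R\Modcat)$, which lies in the preferred equivalence class, and identify the resulting closure and bounded closure from Definition \ref{TC}. The closure $\mathscr{T}_c^-$ should be $\mathscr{D}^-(R\Modcat)_{fg}$ in an appropriate sense, but what actually matters is that $\mathscr{T}_c^-$ is equivalent to $\mathscr{K}^-(\prj{R})$ — the homotopy category of bounded-above complexes of finitely generated projectives — since those are precisely the objects of $\mathscr{D}(R)$ built as (homotopy) limits of Cauchy sequences of perfect complexes, with truncations landing in $\langle R\rangle^{(-\infty,-n]}$. Intersecting with the bounded part then yields $\mathscr{T}_c^b\simeq\mathscr{K}^{-,b}(\prj{R})$. (I would double-check this identification against Example \ref{Induced} / the ring-theoretic instance discussed in the paper, so that the completion $\mathfrak{S}_G(\mathscr{T}^c)$ of $\Kb{\prj{R}}$ is indeed $\mathscr{K}^{-,b}(\prj{R})$.) With these identifications in hand, part (1) of Corollary \ref{CPG} says: if $\Kb{\prj{R}}$ has a bounded $t$-structure, then $\Kb{\prj{R}}=\mathscr{T}_c^b=\mathscr{K}^{-,b}(\prj{R})$, which is the first assertion. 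Part (2) of Corollary \ref{CPG} says all bounded $t$-structures on any triangulated category $\mathscr{X}$ with $\Kb{\prj{R}}\subseteq\mathscr{X}\subseteq\mathscr{K}^{-,b}(\prj{R})$ are equivalent, which is assertion (2).

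For the ``in particular'' clause of (1): when $R$ is left coherent, $R\modcat$ is an abelian category, and the standard fact is that $\mathscr{K}^{-,b}(\prj{R})\simeq\Db{R\modcat}$ — every object of $\Db{R\modcat}$ has a bounded-above projective resolution by finitely generated projectives with bounded cohomology, and conversely. So in that case the equality $\Kb{\prj{R}}=\mathscr{K}^{-,b}(\prj{R})$ becomes $\Kb{\prj{R}}=\Db{R\modcat}$; combined with the converse (the right-hand side always carries the obvious bounded $t$-structure coming from $R\modcat$, which restricts to $\Kb{\prj{R}}$ when the two coincide), this gives the stated equivalence. Similarly the ``in particular'' clause of (2) follows since $\mathscr{D}^b_{\mathrm{coh},Z}(X)$ in the ring case (namely $\Db{R\modcat}$ for $R$ left coherent) sits between $\Kb{\prj{R}}$ and $\mathscr{K}^{-,b}(\prj{R})$, in fact equals the latter.

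\textbf{Main obstacle.} The routine deductions are the duality $\big(\Kb{\prj{R}}\big)\opp\simeq\Kb{\prj{R\opp}}$ and the coherent-ring identification $\mathscr{K}^{-,b}(\prj{R})\simeq\Db{R\modcat}$. The step I expect to require the most care is the precise identification of the intrinsic categories $\mathscr{T}_c^-$ and $\mathscr{T}_c^b$ of Definition \ref{TC}, inside $\mathscr{D}(R\Modcat)$ with its standard $t$-structure, as $\mathscr{K}^-(\prj{R})$ and $\mathscr{K}^{-,b}(\prj{R})$ respectively — equivalently, verifying that the completion $\mathfrak{S}_G(\Kb{\prj{R}})$ with respect to the $R$-good metric is exactly $\mathscr{K}^{-,b}(\prj{R})$. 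This is where one must check that the standard $t$-structure on $\mathscr{D}(R)$ indeed lies in the preferred equivalence class, that it restricts appropriately to $\mathscr{T}_c^-$ (cf. \cite[Definition 5.1]{Neeman1}), and that Neeman's description of good extensions (Theorem \ref{MCP}) applies; all the pieces are in the paper, but assembling them correctly is the crux.
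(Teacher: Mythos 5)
Your proposal is correct and follows essentially the same route as the paper's own proof of this corollary: reduce to Corollary \ref{CPG} with $\mathscr{T}=\mathscr{D}(R)$ and $G=R$, use Example \ref{Ordinary ring} (via Theorem \ref{MCP}) to identify $\mathscr{T}_c^b$ with $\mathscr{K}^{-,b}(\prj{R})$, get $\fd((\Kb{\prj{R}})\opp)<\infty$ from $\fd(R\opp)<\infty$ via the duality $(\Kb{\prj{R}})\opp\simeq\Kb{\prj{R\opp}}$ and Lemma \ref{Finite}(5), and for the coherent case use $\mathscr{K}^{-,b}(\prj{R})\simeq\Db{R\modcat}$ together with its obvious bounded $t$-structure. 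The only slight divergence is that you raise the identification $\mathscr{T}_c^b\simeq\mathscr{K}^{-,b}(\prj{R})$ as a point to double-check, whereas the paper simply cites Example \ref{Ordinary ring} for it; that identification is indeed already established there.
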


\begin{Rem}\label{artin-corollary}

For an Artin algebra $R$, Corollary \ref{Ring case}(1) has an easy application: if $\fd(R^{\opp})<\infty$, then $\Kb{\prj{R}}$ has a bounded $t$-structure if and only if $R$ has finite global dimension. It is already known that all bounded $t$-structures on $\Db{R\modcat}$ are equivalent without any finiteness assumptions on the finitistic dimension of $R$ or $R^{\opp}$ (for example, see \cite[Lemma 3.22]{AMY}). However, by Corollary \ref{Ring case}(2), all bounded $t$-structures on any triangulated category between $\Kb{\prj{R}}$ and $\Db{R\modcat}$ are equivalent provided $\fd(R^{\opp})<\infty$. There are plenty of intermediate categories, for example, the bounded derived categories of \emph{resolving} subcategories (in the sense of Auslander and Reiten) of $R\modcat$ as exact categories. An example of a resolving subcategory is the category of finitely generated Gorenstein-projective left $R$-modules.

Corollary \ref{Ring case}(2) (and also Corollary \ref{Scheme case}(2)) does \emph{not} hold without the finiteness assumption on the finitistic dimension. Let $R:=k[x_1,x_2,x_3,\cdots]$ be the polynomial ring in countably many variables over a field $k$. We show that $\Kb{\prj{R}}$ admits two bounded $t$-structures that are \emph{not} equivalent (see Proposition \ref{CE}). In this example, $R$ is commutative, coherent and regular, but has infinite finitistic dimension.
\end{Rem}

Motivated by the main results in the paper, we propose the following open questions.

\smallskip
{\bf Question 1.} For any essentially small triangulated category $\mathscr{S}$ with a classical generator $G$, does the existence of a bounded $t$-structure on $\mathscr{S}$ imply that the singularity category of $\mathscr{S}$ at $G$ is trivial?

{\bf Question 2.} Let $R$ be a left coherent ring. If $\Kb{\prj{R}}$ has a bounded $t$-structure, is $R$ left regular?

{\bf Question 3.} When does the homotopy category of perfect modules over a left coherent $\mathbb{E}_1$-ring have finite finitistic dimension?

\smallskip
Question 2 is a special case of Question 1. In the case of Artin algebras, Question 2 has a positive answer if the well-known \emph{finitistic dimension conjecture} (that is, all Artin algebras have finite finitistic dimension) is true, due to Corollary \ref{Ring case} and Remark \ref{artin-corollary}.
The discussion relevant to Question 3 can be found in Appendix \ref{ECTC}.

\subsection{Outline of the contents}

The contents of this article are sketched as follows. In Section \ref{sect2}, we fix some notation, recall the definitions of $t$-structures and completions of triangulated categories, and recall some central theorems on completions. In Section \ref{example section}, we introduce the notion of an extendable $t$-structure on a triangulated category, and show that such a $t$-structure can be lifted to a $t$-structure on the completion of the triangulated category (see Theorem \ref{ET}). In Section \ref{CMAO}, we provide necessary conditions for the existence of bounded $t$-structures on triangulated categories in terms of their completions. The main result is Theorem \ref{BABB} which implies Theorem \ref{Main result}. We also discuss the lifting of bounded $t$-structures on the completion of a triangulated category to $t$-structures on a bigger triangulated category that has the given category as the full subcategory of compact objects (see Theorem \ref{Equivalent $t$-structure}). In Section \ref{FDTC}, we introduce the notion of finitistic dimension for triangulated categories at a fixed object of the category, and we show how this notion is very well behaved when the category admits a classical generator. We also prove the finiteness of finitistic dimension for several classes of triangulated categories. Finally, we give proofs of all corollaries mentioned in the Introduction.

There are two appendices. In Appendix \ref{ECTC}, we calculate the completion of the homotopy category of perfect complexes over a connective ring spectrum (see Theorem  \ref{Connective case}) and establish a connection between the existence of bounded $t$-structures on the homotopy category and the regularity of the ring spectrum (see Corollary \ref{LCR}). Note that Krause has introduced another notion of the completion of triangulated categories (see \cite{Kr3}). Incredibly, both kinds of completions produce the same triangulated categories in several typical cases (see Remark \ref{Known cases}(2)).

In Appendix \ref{Section B}, we discuss some other ways of defining finitistic dimension for particular triangulated categories that exist in the literature. This enables us to  bound the finitistic dimension of the perfect derived category of a nonpositive DG ring by its finitistic dimension (see Corollary \ref{nonpositive DG ring}).

\section{Preliminaries\label{sect2}}

In this section we briefly recall some notation, definitions, and basic facts used in this paper.

\subsection{General notation and facts}
Throughout the paper, let $\mathscr{T}$ be a triangulated category with the shift functor denoted by $[1]$.
The \emph{extension} of full subcategories $\mathscr{X}$ and $\mathscr{Y}$ of $\mathscr{T}$, denoted by $\mathscr{X} \ast \mathscr{Y}$, is by definition the full subcategory of $\mathscr{T}$ consisting of
objects $Z$ such that there exists a (distinguished) triangle $X\rightarrow Z \rightarrow Y \rightarrow X[1]$ in $\mathscr{T}$ with $X \in \mathscr{X}$ and $Y \in \mathscr{Y}$.
If $\mathscr{X} \ast \mathscr{X}\subseteq \mathscr{X}$, then $\mathscr{X}$ is said to be \emph{closed under extensions} in $\mathscr{T}$.  By $\Hom_{\mathscr{T}}(\mathscr{X},\mathscr{Y})=0$, we mean that $\Hom_{\mathscr{T}}(X,Y)=0$ for any $X\in \mathscr{X}$ and $Y\in \mathscr{Y}$. Further, we define
$\mathscr{X}^{\perp}:=\{Y\in\mathscr{T}\;|\; \Hom_\mathscr{T}(\mathscr{X}, Y)=0\}$ and $ ^{\perp}\mathscr{X}:=\{Y\in\mathscr{T}\;|\; \Hom_\mathscr{T}(Y, \mathscr{X})=0\}.$
For a morphism $f:X\to Y$ in $\mathscr{T}$, the third term $Z$ in a triangle  $X\lraf{f} Y\to Z\to X[1]$ in $\mathscr{T}$ is called the \emph{cone} of $f$ and denoted by ${\rm Cone}(f)$.
Note that ${\rm Cone}(f)$ is unique up to isomorphism. For a triangle functor $F:\mathscr{T}\to\mathscr{T}'$ of triangulated categories, we denote by $F(\mathscr{T})$ the \emph{essential image} of $\mathscr{T}$ under $F$, that is, $F(\mathscr{T}):=\{X'\in\mathscr{T}'\mid X'\simeq F(X),\; X\in\mathscr{T}\}$. The opposite category of $\mathscr{T}$ is denoted by $\mathscr{T}\opp$.

Suppose that $\mathscr{T}$ has (small) coproducts, that is, coproducts indexed over sets exist in $\mathscr{T}$. An object $X\in\mathscr{T}$ is said to be \emph{compact} if the functor $\Hom_{\mathscr{T}}(X,-)$ from $\mathscr{T}$ to the category of abelian groups commutes with coproducts. We denote by $\mathscr{T}^c$ the full subcategory of $\mathscr{T}$ consisting of all compact objects.
This is a full triangulated subcategory of $\mathscr{T}$ closed under direct summands. A \emph{chain} $\{X_{\bullet},f_{\bullet}\}$ in $\mathscr{T}$ consists of a countable collection of objects $\{{X_{n}}\}_{n \in \mathbb{N}}$ together with morphisms $\{f_{n+1}:X_{n} \rightarrow X_{n+1}\}_{n\in \mathbb{N}}$. The \emph{homotopy colimit} of a chain $\{X_{\bullet},f_{\bullet}\}$, denoted by $\mathop{\text{Hocolim}}\limits_{\longrightarrow}X_{n}$, is defined to be the cone of
the map $1-f_*: \mathop{\bigoplus}\limits_{n=0}^{\infty} X_{n} \rightarrow \mathop{\bigoplus}\limits_{n=0}^{\infty} X_{n}$, where $f_*$ stands for the direct sum of $f_{n+1}:X_n\rightarrow X_{n+1}$ for all $n$.

\begin{Def} \label{coprod}
We recall some notation and definitions from {\rm \cite[Reminders 1.1 and 0.1; Definition 1.3]{Neeman3}} (see also {\rm \cite[2.2]{BVB} and \cite{RR}}). If infinite coproducts of objects are involved in the following definitions of categories, $\mathscr{T}$ is assumed to have coproducts.

Let $\mathscr{A}$ be a class of objects in $\mathscr{T}$ and $G$ an object of $\mathscr{T}$.

$(1)$ $\text{smd}(\mathscr{A})$ (resp., $\text{add}(\mathscr{A})$, $\text{Add}(\mathscr{A})$) denotes the full subcategory of $\mathscr{T}$ consisting of all direct summands (resp., finite direct sums, coproducts) of objects in $\mathscr{A}$.

$(2)$ For $n>0$, the subcategories $\text{coprod}_{n}(\mathscr{A})$ and $\text{Coprod}_{n}(\mathscr{A})$ of $\mathscr{T}$ are defined inductively by
\[\text{coprod}_{1}(\mathscr{A}):=\text{add}(\mathscr{A}),\;\;
\text{coprod}_{n+1}(\mathscr{A}):=\text{coprod}_{1}(\mathscr{A}) \ast \text{coprod}_{n}(\mathscr{A}),\]
\[\text{Coprod}_{1}(\mathscr{A}):=\text{Add}(\mathscr{A}),\;\;
\text{Coprod}_{n+1}(\mathscr{A}):=\text{Coprod}_{1}(\mathscr{A}) \ast \text{Coprod}_{n}(\mathscr{A}).\]
Moreover, let $\text{coprod}(\mathscr{A}):=\bigcup_{n>0} \text{coprod}_{n}(\mathscr{A}).$

$(3)$ $\text{Coprod}(\mathscr{A})$ denotes the smallest full subcategory of $\mathscr{T}$ containing $\mathscr{A}$ and closed under coproducts and extensions. Clearly, if  $\mathscr{A}[1]\subseteq\mathscr{A}$ or $\mathscr{A}\subseteq\mathscr{A}[1]$, then $\text{Coprod}(\mathscr{A})$ is closed under \emph{direct summands} in $\mathscr{T}$ by the Eilenberg swindle argument.

$(4)$ For two integers $A\leqslant B$,
let $G[A,B]:=\{G[-i]\mid i\in\mathbb{Z},\; A\leqslant i \leqslant B\}.$
We also allow $A$ and $B$ to be infinite, for example,
$G(-\infty,B]:=\{G[-i]\mid i\in\mathbb{Z}, \; i \leqslant B\}$.
For $n>0$, let
$$
{\langle G \rangle}^{[A,B]}_n:=\text{smd}(\text{coprod}_{n}(G[A,B])),\;\; {\langle G \rangle}^{[A,B]}:=\bigcup_{n>0} {\langle G \rangle}^{[A,B]}_{n},\;\;{\langle G \rangle}_{n}:={\langle G \rangle}^{(-\infty, \infty)}_n,\;\;
\langle G \rangle:= \bigcup_{n>0}{\langle G \rangle}_n.
$$
This means that $\langle G \rangle$ is the smallest full triangulated subcategory of $\mathscr{T}$ containing $G$ and closed under direct summands.

$(5)$ Let $A\leqslant B$ be integers (possibly infinite). We define
$\overline{\langle G \rangle}^{[A,B]}_n:=\text{smd}(\text{Coprod}_n(G[A,B]))$ for $n>0$ and
$\overline{\langle G \rangle}^{[A,B]}:=\text{smd}(\text{Coprod}(G[A,B]))$.
For simplicity, we write $\overline{\langle G \rangle}$ for $\overline{\langle G \rangle}^{(-\infty, \infty)}$ that is the smallest full triangulated subcategory of $\mathscr{T}$ containing $G$ and closed under coproducts.

$(6)$ The object $G$ of $\mathscr{T}$ is called a \emph{classical generator} if $\mathscr{T}=\langle G \rangle$; a \emph{strong generator} if there exists a nonnegative integer $n$ such that $\mathscr{T}={\langle G \rangle}_{n+1}$; and a \emph{compact generator} if $G$ is compact in $\mathscr{T}$ and $\mathscr{T}=\overline{\langle G \rangle}$.

\end{Def}

The following result is immediate from {\rm \cite[Proposition 1.9]{Neeman3}}.

\begin{Lem}\label{Smd}
Let $\mathscr{T}$ be a triangulated category with coproducts and $\mathscr{A}$ a full subcategory of $\mathscr{T}^c$. Then  $\mathscr{T}^c\cap {\rm Coprod}(\mathscr{A})\subseteq {\rm smd(coprod}(\mathscr{A}))$. In particular, if $\mathscr{T}$ is compactly generated and has a compact generator $G$, then $G$ is a classical generator of $\mathscr{T}^c$.
\end{Lem}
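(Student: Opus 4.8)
\textbf{Proof proposal for Lemma \ref{Smd}.}

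The plan is to derive the statement directly from \cite[Proposition 1.9]{Neeman3}, which is the inclusion $\mathscr{T}^c \cap \mathrm{Coprod}(\mathscr{A}) \subseteq \mathrm{smd}(\mathrm{coprod}(\mathscr{A}))$ for $\mathscr{A}$ a full subcategory of $\mathscr{T}^c$ in a triangulated category $\mathscr{T}$ with coproducts. So the first statement is immediate once one checks the hypotheses of that proposition are met; I would simply note that $\mathscr{A}\subseteq\mathscr{T}^c$ by assumption and invoke the cited result. The content to be spelled out is really the ``in particular'' clause.

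For the second statement, suppose $\mathscr{T}$ is compactly generated with compact generator $G$, and set $\mathscr{A}:=\{G\}$ (more precisely the full subcategory on $G$ and its shifts, so that $\mathrm{Coprod}(\mathscr{A})$ is a triangulated subcategory; alternatively one works with $\mathscr{A}=\mathrm{add}\{G[n]\mid n\in\mathbb{Z}\}$, which lies in $\mathscr{T}^c$). First I would observe that $\overline{\langle G\rangle}=\mathrm{smd}\big(\mathrm{Coprod}(G(-\infty,\infty))\big)=\mathscr{T}$: this is the statement that $G$ is a compact generator, combined with the standard fact that the smallest localizing subcategory containing a compact object $G$ is already closed under direct summands and equals $\mathrm{smd}(\mathrm{Coprod}(G[\mathbb{Z}]))$ — indeed, since $G[\mathbb{Z}]$ is shift-stable, the Eilenberg swindle remark in Definition \ref{coprod}(3) shows $\mathrm{Coprod}(G[\mathbb{Z}])$ is already closed under summands, so $\overline{\langle G\rangle}=\mathrm{Coprod}(G[\mathbb{Z}])=\mathscr{T}$. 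Then, applying the first statement with $\mathscr{A}=G[\mathbb{Z}]$ (viewed appropriately as a subcategory of $\mathscr{T}^c$), we get
$$\mathscr{T}^c = \mathscr{T}^c\cap\mathscr{T} = \mathscr{T}^c\cap\mathrm{Coprod}(G[\mathbb{Z}])\subseteq \mathrm{smd}(\mathrm{coprod}(G[\mathbb{Z}])) = \langle G\rangle.$$
The reverse inclusion $\langle G\rangle\subseteq\mathscr{T}^c$ is clear because $\mathscr{T}^c$ is a thick subcategory containing $G$, and $\langle G\rangle$ is by Definition \ref{coprod}(4) the smallest such. Hence $\mathscr{T}^c=\langle G\rangle$, i.e. $G$ is a classical generator of $\mathscr{T}^c$.

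The only mild subtlety — and the one place I would be careful — is the bookkeeping around shifts: one must make sure that the class $\mathscr{A}$ fed into \cite[Proposition 1.9]{Neeman3} is shift-closed so that $\mathrm{Coprod}(\mathscr{A})$ is genuinely triangulated and equals $\overline{\langle G\rangle}$, and correspondingly that $\mathrm{coprod}(\mathscr{A})$ recovers $\langle G\rangle$ as in Definition \ref{coprod}(4). This is purely formal, so I do not expect a real obstacle; the heart of the argument is entirely carried by the cited proposition of Neeman.
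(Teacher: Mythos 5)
Your proof is correct and follows the route the paper intends: the first inclusion is exactly the cited \cite[Proposition 1.9]{Neeman3}, and the "in particular" clause follows by specializing to $\mathscr{A}=G(-\infty,\infty)$ and unwinding the definitions $\overline{\langle G\rangle}=\mathrm{Coprod}(G(-\infty,\infty))=\mathscr{T}$ and $\mathrm{smd}(\mathrm{coprod}(G(-\infty,\infty)))=\langle G\rangle$, with the reverse inclusion $\langle G\rangle\subseteq\mathscr{T}^c$ coming from thickness of $\mathscr{T}^c$. The paper leaves this deduction implicit (calling it "immediate"), and your spelled-out version, including the bookkeeping about shift-closure and the Eilenberg swindle remark, is exactly what is needed.
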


\subsection{$t$-structures on triangulated categories}\label{$T$-structure}

In this section, we recall the definition of $t$-structures on triangulated categories, as well as a method for constructing $t$-structures starting from a collection of compact objects.

\begin{Def}{\cite[Definition 1.3.1]{BBD}} \label{DTS}
Let $\mathscr{T}$ be a triangulated category. A pair of full subcategories $(\mathscr{T}^{\leqslant 0},\mathscr{T}^{\geqslant 0})$ in $\mathscr{T}$ is called a \emph{$t$-structure} on $\mathscr{T}$ if the following conditions are satisfied:

(T1) $\mathscr{T}^{\leqslant -1} \subseteq \mathscr{T}^{\leqslant 0}$ and $\mathscr{T}^{\geqslant 0} \subseteq  \mathscr{T}^{\geqslant -1}$, where $\mathscr{T}^{\leqslant n}:=\mathscr{T}^{\leqslant 0}[-n]$ and $\mathscr{T}^{\geqslant n}:=\mathscr{T}^{\geqslant 0}[-n]$ for $n\in\mathbb{Z}$;

(T2) $\Hom_{\mathscr{T}}(\mathscr{T}^{\leqslant -1},\mathscr{T}^{\geqslant 0})=0$;

(T3) for each object $X \in \mathscr{T}$, there exists a triangle $X^{\leqslant -1} \rightarrow X \rightarrow X^{\geqslant 0} \rightarrow X^{\leqslant -1}[1]$ in $\mathscr{T}$ with $X^{\leqslant -1}\in \mathscr{T}^{\leqslant -1}$ and $X^{\geqslant 0}\in \mathscr{T}^{\geqslant 0}$; in other words, $\mathscr{T}=\mathscr{T}^{\leqslant -1}\ast\mathscr{T}^{\geqslant 0}$.

The categories $\mathscr{T}^{\leqslant 0}$ and $\mathscr{T}^{\geqslant 0}$ are called the \emph{aisle} and the \emph{coaisle} of the $t$-structure, respectively.
\end{Def}

Let $(\mathscr{T}^{\leqslant 0},\mathscr{T}^{\geqslant 0})$ be a $t$-structure on $\mathscr{T}$.
Then $(\mathscr{T}^{\leqslant n})^{\perp}=\mathscr{T}^{\geqslant n+1}$ and
$^{\perp}(\mathscr{T}^{\geqslant n+1})=\mathscr{T}^{\leqslant n}$. Up to isomorphism, there exists a unique triangle
$X^{\leqslant n-1}\to X \to X^{\geqslant n} \to X^{\leqslant n-1}[1]$ in $\mathscr{T}$ with $X^{\leqslant n-1}\in \mathscr{T}^{\leqslant n-1}$ and $X^{\geqslant n}\in \mathscr{T}^{\geqslant n}$. Moreover, the category $\mathscr{H}:=\mathscr{T}^{\leqslant 0} \cap \mathscr{T}^{\geqslant 0}$ is an abelian category and called the \emph{heart} of $(\mathscr{T}^{\leqslant 0},\mathscr{T}^{\geqslant 0})$ (see  \cite[Theorem 1.3.6]{BBD}). Further, let
\[\mathscr{T}^{-}:=\mathop{\bigcup}\limits_{n\in \mathbb{N}} \mathscr{T}^{\leqslant n},\;\; \mathscr{T}^{+}:=\mathop{\bigcup}\limits_{n\in \mathbb{N}} \mathscr{T}^{\geqslant -n}\;\; \mbox{and}\;\;\mathscr{T}^{b}:=\mathscr{T}^{-}\cap \mathscr{T}^{+} .\]
We say that $(\mathscr{T}^{\leqslant 0},\mathscr{T}^{\geqslant 0})$ is \emph{bounded above}, \emph{bounded below} and \emph{bounded} if $\mathscr{T}=\mathscr{T}^{-}, \mathscr{T}^{+}$ and $\mathscr{T}^{b}$, respectively.  Two $t$-structures $\mathscr{T}_i:=(\mathscr{T}^{\leqslant 0}_i,\mathscr{T}^{\geqslant 0}_i)$ for $i=1,2$ on $\mathscr{T}$ are said to be \emph{equivalent} if there exists a natural number $n$ with $\mathscr{T}^{\leqslant -n}_1\subseteq \mathscr{T}^{\leqslant 0}_2\subseteq \mathscr{T}^{\leqslant n}_1$. Equivalent $t$-structures give rise to identical $\mathscr{T}^{-}, \mathscr{T}^{+}$ and $\mathscr{T}^{b}$.
Moreover,  $(\mathscr{T}^{\leqslant 0},\mathscr{T}^{\geqslant 0})$ restricts to a
bounded $t$-structure $\big(\mathscr{T}^{\leqslant 0}\cap\mathscr{T}^{b},\mathscr{T}^{\geqslant 0}\cap\mathscr{T}^{b}\big)$ on $\mathscr{T}^{b}$.

\begin{Bsp}\label{Typical example}
Let $\mathscr{T}$ be a triangulated category with coproducts. Let $\mathscr{A}$ be a set of compact objects in $\mathscr{T}$ with $\mathscr{A}[1]\subseteq \mathscr{A}$.  It is known that $(\text{Coprod}(\mathscr{A}), (\text{Coprod}(\mathscr{A})[1])^{\perp})$ is a $t$-structure on $\mathscr{T}$ (see {\rm \cite[Theorem A.1 and Proposition A.2]{AJS}} and \cite[Theorem 2.3.3]{CNS}). This is called the \emph{$t$-structure on $\mathscr{T}$ generated by $\mathscr{A}$}, and such $t$-structures are called \emph{compactly generated $t$-structures}. In particular, for any compact object $G$ of $\mathscr{T}$, there exists a unique $t$-structure $(\mathscr{T}^{\leqslant 0}_G, \mathscr{T}^{\geqslant 0}_G)$ on $\mathscr{T}$ generated by $G$, that is,
$\mathscr{T}^{\leqslant 0}_G= \overline{\langle G \rangle}^{(-\infty, 0]}$ and $\mathscr{T}^{\geqslant 0}_G=G(-\infty,-1]^{\bot}$. Both $\mathscr{T}^{\leqslant 0}_G$ and  $\mathscr{T}^{\geqslant 0}_G$ are closed under coproducts in $\mathscr{T}$. Moreover, if $H$ is another compact object of $\mathscr{T}$ with
$\langle H \rangle=\langle G \rangle$, then the $t$-structures on $\mathscr{T}$
generated by $G$ and by $H$ are equivalent.
\end{Bsp}

\begin{Def}{\rm \cite[Definition 0.14]{Neeman5}}\label{prefer}
Let $\mathscr{T}$ be a compactly generated triangulated category. If $\mathscr{T}$ has a compact generator $G$, then the \emph{preferred equivalence class} of $t$-structures on $\mathscr{T}$ is defined to be the one containing the $t$-structure $(\mathscr{T}^{\leqslant 0}_G, \mathscr{T}^{\geqslant 0}_G)$ generated by $G$.
\end{Def}

The restriction to $\mathscr{T}^c$ of the aisle of a $t$-structure on $\mathscr{T}$ in the preferred equivalence class can be controlled by the compact generator of $\mathscr{T}$.

\begin{Lem}\label{Compact generator}
Let $\mathscr{T}$ be a compactly generated triangulated category with a compact generator $G$. Then the following statements are true.

$(1)$ $\mathscr{T}^c\cap\mathscr{T}_G^{\leqslant 0}=\langle G\rangle^{(-\infty,0]}$.

$(2)$ Let $(\mathscr{T}^{\leqslant 0}, \mathscr{T}^{\geqslant 0})$ be a $t$-structure on $\mathscr{T}$ in the preferred equivalence class. Then there is a nonnegative integer $n$ with
$\langle G\rangle^{(-\infty,-n]} \subseteq \mathscr{T}^c\cap\mathscr{T}^{\leqslant 0}\subseteq\langle G\rangle^{(-\infty,n]}$.
\end{Lem}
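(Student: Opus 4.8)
The plan is to establish part (1) first, since it is the key computational ingredient, and then deduce part (2) by unraveling the definition of the preferred equivalence class. For (1), the inclusion $\langle G\rangle^{(-\infty,0]}\subseteq\mathscr{T}^c\cap\mathscr{T}_G^{\leqslant 0}$ is essentially formal: $\langle G\rangle^{(-\infty,0]}$ is built from $G[{-i}]$ with $i\geqslant 0$ using finite coproducts, extensions, direct summands and positive shifts, and each such operation preserves both compactness (finite coproducts and summands of compacts are compact, and $\mathscr{T}^c$ is a triangulated subcategory) and membership in the aisle $\mathscr{T}_G^{\leqslant 0}=\overline{\langle G\rangle}^{(-\infty,0]}$ (which by Example \ref{Typical example} is closed under coproducts, hence under the operations defining $\langle G\rangle^{(-\infty,0]}$, and which contains all the generators $G[{-i}]$, $i\geqslant 0$). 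First I would record this direction in one or two lines. The nontrivial inclusion is $\mathscr{T}^c\cap\overline{\langle G\rangle}^{(-\infty,0]}\subseteq\langle G\rangle^{(-\infty,0]}$: here I would apply Lemma \ref{Smd} with the set of objects $\mathscr{A}:=G(-\infty,0]=\{G[{-i}]\mid i\geqslant 0\}$. Since $\mathscr{A}\subseteq\mathscr{A}[1]$, Lemma \ref{Smd} gives $\mathscr{T}^c\cap\mathrm{Coprod}(\mathscr{A})\subseteq\mathrm{smd}(\mathrm{coprod}(\mathscr{A}))$; and $\mathrm{Coprod}(\mathscr{A})=\overline{\langle G\rangle}^{(-\infty,0]}$ while $\mathrm{smd}(\mathrm{coprod}(\mathscr{A}))=\langle G\rangle^{(-\infty,0]}$ by Definition \ref{coprod}(4)--(5), which finishes (1). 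The only subtle point is checking carefully that $\langle G\rangle^{(-\infty,0]}$ as defined in Section \ref{Section 1.1} (smallest subcategory containing $G$, closed under extensions, summands and positive shifts) coincides with $\mathrm{smd}(\mathrm{coprod}(G(-\infty,0]))$ of Definition \ref{coprod}; this is a routine reconciliation of the two presentations, closure under positive shifts matching the use of all $G[{-i}]$, $i\geqslant 0$, as generators.

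For part (2), let $(\mathscr{T}^{\leqslant 0},\mathscr{T}^{\geqslant 0})$ be in the preferred equivalence class. By Definition \ref{prefer} it is equivalent to $(\mathscr{T}^{\leqslant 0}_G,\mathscr{T}^{\geqslant 0}_G)$, so by the definition of equivalence of $t$-structures there is a natural number $n$ with $\mathscr{T}_G^{\leqslant -n}\subseteq\mathscr{T}^{\leqslant 0}\subseteq\mathscr{T}_G^{\leqslant n}$. Intersecting with $\mathscr{T}^c$ and using part (1) at the shifted aisles $\mathscr{T}_G^{\leqslant \pm n}=\mathscr{T}_G^{\leqslant 0}[\mp n]$ — together with the obvious identity $\mathscr{T}^c\cap(\mathscr{T}_G^{\leqslant 0}[\mp n])=(\mathscr{T}^c\cap\mathscr{T}_G^{\leqslant 0})[\mp n]=\langle G\rangle^{(-\infty,0]}[\mp n]=\langle G\rangle^{(-\infty,\mp n]}$, since $\mathscr{T}^c$ is shift-stable — yields
\[
\langle G\rangle^{(-\infty,-n]}\;\subseteq\;\mathscr{T}^c\cap\mathscr{T}^{\leqslant 0}\;\subseteq\;\langle G\rangle^{(-\infty,n]},
\]
as required.

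I do not anticipate a genuine obstacle here; the main care needed is bookkeeping — matching the two notations for $\langle G\rangle^{(-\infty,0]}$ (Definition \ref{coprod} versus the introduction), correctly identifying $\mathscr{T}_G^{\leqslant 0}$ with $\overline{\langle G\rangle}^{(-\infty,0]}$ as in Example \ref{Typical example}, and tracking the shift conventions so that $[-n]$ lands in the $\leqslant -n$ aisle. The one place where a reader might want more detail is the verification that $\mathscr{T}_G^{\leqslant 0}$ is closed under direct summands (needed so that all of $\langle G\rangle^{(-\infty,0]}$, including summands, lies in it); but this follows from the Eilenberg swindle remark in Definition \ref{coprod}(3) since $\mathscr{A}\subseteq\mathscr{A}[1]$ for $\mathscr{A}=G(-\infty,0]$.
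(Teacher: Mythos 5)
Your proof is correct and follows the same route as the paper: identify $\mathscr{T}_G^{\leqslant 0}$ with $\mathrm{Coprod}(G(-\infty,0])$, apply Lemma \ref{Smd} for the hard inclusion of (1) (the easy inclusion being formal), and deduce (2) by intersecting the defining inclusions of equivalence with $\mathscr{T}^c$ and invoking (1) at the shifted aisles. One small slip: for $\mathscr{A}=G(-\infty,0]$ the correct containment is $\mathscr{A}[1]\subseteq\mathscr{A}$, not $\mathscr{A}\subseteq\mathscr{A}[1]$; this is inconsequential since Definition \ref{coprod}(3) invokes the Eilenberg swindle under either inclusion.
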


\begin{proof}
Recall that $\mathscr{T}_G^{\leqslant 0}=\overline{\langle G\rangle}^{(-\infty,0]}={\rm Coprod}(G(-\infty,0])$. By Lemma \ref{Smd},
$\mathscr{T}^c\cap\mathscr{T}_G^{\leqslant 0}\subseteq\langle G\rangle^{(-\infty,0]}.$
Since $\langle G\rangle^{(-\infty,0]}\subseteq\mathscr{T}^c\cap\mathscr{T}_G^{\leqslant 0}$, we have $\mathscr{T}^c\cap\mathscr{T}_G^{\leqslant 0}=\langle G\rangle^{(-\infty,0]}$. This shows $(1)$.
Since $(\mathscr{T}^{\leqslant 0}, \mathscr{T}^{\geqslant 0})$ is equivalent to $(\mathscr{T}^{\leqslant 0}_G, \mathscr{T}^{\geqslant 0}_G)$,
there is a nonnegative integer $n$ such that $\mathscr{T}_G^{\leqslant -n}\subseteq\mathscr{T}^{\leqslant 0} \subseteq \mathscr{T}_G^{\leqslant n}$.
By $(1)$, we have
$\langle G\rangle^{(-\infty,-n]}=\mathscr{T}^c\cap\mathscr{T}_G^{\leqslant -n}\subseteq\mathscr{T}^c\cap\mathscr{T}^{\leqslant 0} \subseteq \mathscr{T}^c\cap\mathscr{T}_G^{\leqslant n}=\langle G\rangle^{(-\infty,n]}.$
This shows $(2)$.
\end{proof}

Let $\mathscr{S}$ be a triangulated category with a classical generator $G$. We define a thick subcategory of $\mathscr{S}$:
$$\mathscr{S}_{\rm tc}:=\{X\in\mathscr{S}\mid \Hom_\mathscr{S}(G[n], X)=0, \; n \gg 0\}.$$
The objects of $\mathscr{S}_{\rm tc}$ can be regarded as \emph{truncated} objects with respect to $G$, similar to truncated modules over connective $\mathbb{E}_1$-rings (see Definition \ref{truncated}). Clearly, different classical generators define the same $\mathscr{S}_{\rm tc}$. Moreover, $\mathscr{S}_{\rm tc}=\mathscr{S}$ if and only if $G\in\mathscr{S}_{\rm tc}$.

The following result is simple, but useful in practice. It can be used to show that certain triangulated categories cannot have any bounded $t$-structures. For example, for a connective $\mathbb{E}_1$-ring $R$,
the homotopy category of perfect $R$-modules has no bounded $t$-structure provided that there are infinitely many nonzero homotopy groups of $R$.

\begin{Lem}\label{HBTS}
If $\mathscr{S}$ has a bounded $t$-structure, then $\mathscr{S}_{\rm tc}=\mathscr{S}$.
\end{Lem}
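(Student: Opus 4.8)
The plan is to fix a bounded $t$-structure $(\mathscr{S}^{\leqslant 0},\mathscr{S}^{\geqslant 0})$ on $\mathscr{S}$ together with a classical generator $G$, and to prove directly that $G\in\mathscr{S}_{\rm tc}$, i.e.\ that $\Hom_\mathscr{S}(G[n],G)=0$ for $n\gg 0$. Since $\mathscr{S}_{\rm tc}$ is a thick subcategory of $\mathscr{S}$ and $\mathscr{S}=\langle G\rangle$, membership of $G$ in $\mathscr{S}_{\rm tc}$ immediately forces $\mathscr{S}_{\rm tc}=\mathscr{S}$, which is exactly the claim. So the whole argument reduces to the single generating object.

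To get the vanishing of $\Hom_\mathscr{S}(G[n],G)$ for large $n$, I would use boundedness of the $t$-structure. Because $G$ lies in $\mathscr{S}=\mathscr{S}^b$, there are integers $a\leqslant b$ with $G\in\mathscr{S}^{\leqslant b}\cap\mathscr{S}^{\geqslant a}$. Then for $n$ large enough that $a - n > b$ — concretely $n>b-a$ — the object $G[n]$ lies in $\mathscr{S}^{\geqslant a-n}=\mathscr{S}^{\geqslant 0}[\,n-a\,]$, while $G$ lies in $\mathscr{S}^{\leqslant b}$; rewriting, $G[n]\in\mathscr{S}^{\geqslant a-n}$ and $G\in\mathscr{S}^{\leqslant b}$ with $a-n>b$, so $G[n]\in\mathscr{S}^{\geqslant b+1}=(\mathscr{S}^{\leqslant b})^{\perp}$ sits in the right-orthogonal of $G$'s aisle. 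By (T2) (in the shifted form $\Hom_\mathscr{S}(\mathscr{S}^{\leqslant b},\mathscr{S}^{\geqslant b+1})=0$), we conclude $\Hom_\mathscr{S}(G[n],G)=0$ for all $n>b-a$. This completes the argument that $G\in\mathscr{S}_{\rm tc}$.

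The only point that needs a small amount of care is the bookkeeping of the direction of the Hom and the sign conventions: one must check that it is $G[n]$ that goes deep into the coaisle (co-connective part) as $n\to\infty$, so that $G$ (in a fixed aisle) and $G[n]$ (eventually in a far-enough coaisle) land on the correct sides of the orthogonality relation $\Hom_\mathscr{S}(\mathscr{T}^{\leqslant m},\mathscr{T}^{\geqslant m+1})=0$. With the conventions fixed in Definition \ref{DTS} and the discussion following it (where $\mathscr{T}^{\leqslant n}=\mathscr{T}^{\leqslant 0}[-n]$), this is routine: $G\in\mathscr{S}^{\geqslant a}$ means $G[n]\in\mathscr{S}^{\geqslant a}[-n]=\mathscr{S}^{\geqslant a+n}$? — here one simply has to be consistent, and either way a large shift of $G$ ends up orthogonal to the fixed $G$. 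There is no real obstacle; the statement is essentially a one-line consequence of (T2) plus the boundedness hypothesis, and the ``hard part'' is purely notational. I would phrase the final write-up so that the chosen shift direction matches the definition of $\mathscr{S}_{\rm tc}$ (which asks for $\Hom_\mathscr{S}(G[n],X)=0$ for $n\gg 0$) with $X=G$, and then invoke thickness of $\mathscr{S}_{\rm tc}$ together with $\mathscr{S}=\langle G\rangle$ to conclude $\mathscr{S}_{\rm tc}=\mathscr{S}$.
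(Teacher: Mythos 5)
Your overall strategy is precisely the paper's: reduce to showing $G\in\mathscr{S}_{\rm tc}$ (using thickness of $\mathscr{S}_{\rm tc}$ and $\mathscr{S}=\langle G\rangle$), then invoke boundedness together with (T2), and the target you identify --- $\Hom_\mathscr{S}(G[n],G)=0$ for $n\gg 0$ --- is the right one. But the middle computation has the direction reversed, and your closing caveat commits to the wrong reading. Under the convention $\mathscr{T}^{\leqslant n}=\mathscr{T}^{\leqslant 0}[-n]$, a positive shift $[n]$ pushes an object \emph{into the aisle}, not the coaisle: from $G\in\mathscr{S}^{\leqslant b}$ one gets $G[n]\in\mathscr{S}^{\leqslant b-n}\subseteq\mathscr{S}^{\leqslant a-1}$ for all $n>b-a$, and (T2) together with $G\in\mathscr{S}^{\geqslant a}$ then gives $\Hom_\mathscr{S}(G[n],G)=0$. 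The inclusion $G[n]\in\mathscr{S}^{\geqslant a-n}$ that you use instead is true but useless for large $n$, since $a-n\to-\infty$ and $\mathscr{S}^{\geqslant a-n}$ only grows; your rewriting of ``$a-n>b$'' as ``$n>b-a$'' also reverses the inequality (the former is $n<a-b$, incompatible with the latter when $a<b$). And even granting $G[n]\in\mathscr{S}^{\geqslant b+1}$, (T2) applied to $G\in\mathscr{S}^{\leqslant b}$ and $G[n]\in\mathscr{S}^{\geqslant b+1}$ produces $\Hom(G,G[n])=0$, with the arrow the other way round.

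The hedge that ``either way a large shift of $G$ ends up orthogonal to the fixed $G$'' is not safe, because $\Hom(G[n],G)$ and $\Hom(G,G[n])$ are genuinely different and only the former is what $\mathscr{S}_{\rm tc}$ is about. For instance, take $\mathscr{S}=\mathscr{D}^b(R\modcat)$ with $R=k[x]/(x^2)$ and $G=k$: then $\Hom(G[n],G)\cong\Ext^{-n}_R(k,k)=0$ for $n>0$, while $\Hom(G,G[n])\cong\Ext^n_R(k,k)\neq 0$ for every $n\geqslant 0$. Your two errors compensate to land on the correct final statement $\Hom_\mathscr{S}(G[n],G)=0$ for $n>b-a$, but the reasoning as written would not survive being turned into a proof. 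Once the shift and Hom directions are straightened out, the argument is exactly the one-liner you describe and agrees with the paper's.
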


\begin{proof}
Let $(\mathscr{S}^{\leqslant 0},\mathscr{S}^{\geqslant 0})$ be a bounded $t$-structure on $\mathscr{S}$. Since $G\in \mathscr{S}$, there is a positive integer $n$ such that $G[n]\in \mathscr{S}^{\leqslant 0}$ and $G[-n]\in\mathscr{S}^{\geqslant 0}$. As $\mathscr{S}^{\geqslant 0}\subseteq\mathscr{S}$ is closed under negative shifts, $G[i]\in \mathscr{S}^{\geqslant 0}$ for  $i\leqslant -n$. It follows from $\Hom_\mathscr{S}(\mathscr{S}^{\leqslant 0}, \mathscr{S}^{\geqslant 0}[-1])=0$ that $\Hom_\mathscr{S}(G,G[j])=0$ for $j\leqslant -2n-1$. This implies $G\in\mathscr{S}_{\rm tc}$ and
thus $\mathscr{S}_{\rm tc}=\mathscr{S}$.
\end{proof}

\subsection{Completions of triangulated categories}\label{Completion-TC}
The main approach in this paper is via the theory of completion of triangulated categories, introduced and developed by Neeman in a series of papers (see \cite{Neeman2,Neeman1,Neeman5}). In this section, we recall the relevant definitions, examples, and results (Theorems \ref{Completion-0} and \ref{MCP}) that we will need throughout the paper. Moreover, we propose the notion of singularity categories for triangulated categories with classical generators, in terms of completions. This simultaneously generalizes the algebraic and geometric notions of singularity categories.

Throughout this section, let $\mathscr{S}$ be an essentially small triangulated category with the shift functor denoted by $[1]$.

\begin{Def}{\rm \cite[Definition 10]{Neeman2}}\label{definition of metric}
A \emph{good metric} on $\mathscr{S}$ is a sequence $\mathscr{M}:=\{\mathscr{M}_{n}\}_{n\in \mathbb{N}}$
of full subcategories of $\mathscr{S}$ containing $0$ and satisfying the following conditions for all $n\in \mathbb{N}$:

(G1) $\mathscr{M}_{n} \ast \mathscr{M}_{n}= \mathscr{M}_{n}$, that is, $\mathscr{M}_n$ is closed under extensions in $\mathscr{S}$;

(G2) $\mathscr{M}_{n+1}[-1] \cup \mathscr{M}_{n+1} \cup \mathscr{M}_{n+1}[1] \subseteq \mathscr{M}_{n}$. This implies $\mathscr{M}_{n+|j|}\subseteq \mathscr{M}_n[j]$ for any $j\in\mathbb{Z}$.

The good metric $\mathscr{M}$ is said to be \emph{finer} than another good metric $\mathscr{N}:=\{\mathscr{N}_{n}\}_{n\in \mathbb{N}}$ if for each $n$, there exists $m\in\mathbb{N}$ such that $\mathscr{M}_m\subseteq \mathscr{N}_n$. We denote this partial order by $\mathscr{M}\preceq \mathscr{N}$. The good metrics $\mathscr{M}$ and $\mathscr{N}$ are said to be \emph{equivalent} if $\mathscr{M}\preceq \mathscr{N}\preceq\mathscr{M}$.
\end{Def}

Good metrics can be obtained from the aisles of $t$-structures by restriction.

\begin{Bsp}\label{Induced}
Let $\mathscr{T}$ be a triangulated category, $(\mathscr{T}^{\leqslant 0}, \mathscr{T}^{\geqslant 0})$ a $t$-structure on $\mathscr{T}$, and $\mathscr{S}$ a full triangulated subcategory of $\mathscr{T}$. Let $\mathscr{M}_{n}:=\mathscr{S}\cap\mathscr{T}^{\leqslant -n}$ for $n\in\mathbb{N}$. Then
$\{\mathscr{M}_{n}\}_{n\in \mathbb{N}}$ is a good metric on $\mathscr{S}$, which is called \emph{the good metric induced from the aisle of $(\mathscr{T}^{\leqslant 0}, \mathscr{T}^{\geqslant 0})$}.

If $\mathscr{T}$ is compactly generated by a compact generator $G$,  and $(\mathscr{T}^{\leqslant 0}, \mathscr{T}^{\geqslant 0})$ is any $t$-structure on $\mathscr{T}$ in the preferred equivalence class, then
$\{\mathscr{T}^c\cap\mathscr{T}^{\leqslant -n}\}_{n\in\mathbb{N}}$ is called a \emph{preferred good metric} on $\mathscr{T}^c$. This is a $G$-good metric (see {\rm Definition \ref{PEC}}) on $\mathscr{T}^c$, due to Lemma \ref{Compact generator}(2).
\end{Bsp}

\begin{Def}{\rm \cite[Definition 1.6]{Neeman1}}\label{definition of C.S.}
Let $\mathscr{M}:=\{\mathscr{M}_{n}\}_{n\in\mathbb{N}}$ be a good metric on $\mathscr{S}$. A chain
of morphisms $\{X_{\bullet},f_{\bullet}\}:\;\; X_0 \stackrel{f_1}{\longrightarrow} X_1 \stackrel{f_2}{\longrightarrow} X_2\stackrel{f_3}{\longrightarrow} X_3\longrightarrow \cdots$
in $\mathscr{S}$ is called a \emph{Cauchy sequence with respect to $\mathscr{M}$} (or a \emph{Cauchy sequence} for short when $\mathscr{M}$ is clear) if for any $i\geqslant 1$, there exists $n_i\geqslant 1$ such that $\text{Cone}(f_j) \in \mathscr{M}_{i}$ for all $j \geqslant n_i$. The Cauchy sequence $\{X_{\bullet},f_{\bullet}\}$ is \emph{stable} if there exists $m\geqslant 1$ such that $f_{j+1}:X_j\to X_{j+1}$ is an isomorphism for $j\geqslant m$.
\end{Def}

\begin{Lem}\label{stable lemma}
Let $\mathscr{A}$ be a full subcategory of $\mathscr{S}$ and let $\{X_{\bullet}, f_{\bullet}\}$ be a Cauchy sequence in $\mathscr{S}$ with respect to $\mathscr{M}$ such that $X_n \in\mathscr{A}$ for all $n\in \mathbb{N}$. If there exists a natural number $m$ such that $\mathscr{A}\subseteq \mathscr{M}_m^{\perp}$ or $\mathscr{A}\subseteq {^{\perp}}\mathscr{M}_m$, then the sequence $\{X_{\bullet},f_{\bullet}\}$ is stable.
\end{Lem}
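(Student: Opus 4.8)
The plan is to analyze a Cauchy sequence $\{X_{\bullet},f_{\bullet}\}$ lying entirely in $\mathscr{A}$ and exploit the orthogonality hypothesis to force the connecting cones to vanish eventually. First I would treat the case $\mathscr{A}\subseteq\mathscr{M}_m^{\perp}$. Since the sequence is Cauchy with respect to $\mathscr{M}$, applied to the index $i=m$ there is an $n_m\geqslant 1$ such that $\Cone(f_{j})\in\mathscr{M}_m$ for all $j\geqslant n_m$. Fix such a $j$ and consider the defining triangle $X_{j-1}\lraf{f_j}X_j\to\Cone(f_j)\to X_{j-1}[1]$. The key observation is that $\Cone(f_j)\in\mathscr{M}_m$ while $X_{j-1},X_j\in\mathscr{A}\subseteq\mathscr{M}_m^{\perp}$; I want to conclude from this that $\Cone(f_j)=0$, which immediately makes $f_j$ an isomorphism, and doing this for all $j\geqslant n_m$ (or $j\geqslant\max\{n_m,2\}$ to match the indexing of the chain) shows the sequence is stable with $m$ in Definition \ref{definition of C.S.} taken to be $n_m$.

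The one genuine point to check is the vanishing $\Cone(f_j)=0$. The cleanest route is to show the identity map on $\Cone(f_j)$ is zero. Rotate the triangle to $X_j\to\Cone(f_j)\lraf{g}X_{j-1}[1]\to X_j[1]$. Applying $\Hom_\mathscr{S}(\Cone(f_j),-)$ to the original triangle and using $\Hom_\mathscr{S}(\Cone(f_j),X_{j-1})=0=\Hom_\mathscr{S}(\Cone(f_j),X_j)$ (both hold since $\Cone(f_j)\in\mathscr{M}_m$ and $X_{j-1},X_j\in\mathscr{M}_m^{\perp}$) shows $\Hom_\mathscr{S}(\Cone(f_j),\Cone(f_j))=0$, whence $\Cone(f_j)=0$. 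For the dual hypothesis $\mathscr{A}\subseteq{^{\perp}}\mathscr{M}_m$ I would run the mirror-image argument: now $\Hom_\mathscr{S}(X_{j-1},\Cone(f_j))=0=\Hom_\mathscr{S}(X_j,\Cone(f_j))$, and applying $\Hom_\mathscr{S}(-,\Cone(f_j))$ to the triangle (using the map out of $X_{j-1}[1]$ shifted appropriately, i.e. $\Hom_\mathscr{S}(X_{j-1}[1],\Cone(f_j))\cong\Hom_\mathscr{S}(X_{j-1},\Cone(f_j)[-1])$, which also vanishes since $\Cone(f_j)[-1]\in\mathscr{M}_{m+1}\subseteq\mathscr{M}_m$ by (G2)) again forces $\mathrm{id}_{\Cone(f_j)}=0$. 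Alternatively, and perhaps more transparently, this second case is literally the first case applied in $\mathscr{S}\opp$, since passing to the opposite category interchanges $\mathscr{M}_m^{\perp}$ and ${^{\perp}}\mathscr{M}_m$ and the shift conditions (G2) are self-dual; I may just invoke this duality rather than rewrite the argument.

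I do not anticipate a serious obstacle here — the lemma is essentially the observation that a two-sided orthogonal object is zero — but the step requiring the most care is keeping the indexing consistent (the Cauchy condition refers to $\Cone(f_j)$ for $j\geqslant n_i$, while stability in Definition \ref{definition of C.S.} is phrased via isomorphisms $f_{j+1}$ for $j\geqslant m$), and making sure that in the dual case the relevant $\Hom$-groups involving shifts of $\Cone(f_j)$ still land inside $\mathscr{M}_m^{\perp}$ or ${^{\perp}}\mathscr{M}_m$ — this uses (G2), specifically that $\mathscr{M}_{m+1}\subseteq\mathscr{M}_m$ and more generally $\mathscr{M}_{m+|\ell|}\subseteq\mathscr{M}_m[\ell]$. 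Since $\Cone(f_j)\in\mathscr{M}_m$ only, and not necessarily in all the $\mathscr{M}_k$, I would if needed reindex by choosing the Cauchy bound at level $i=m+1$ instead of $i=m$ so that $\Cone(f_j)\in\mathscr{M}_{m+1}$, guaranteeing that both $\Cone(f_j)$ and $\Cone(f_j)[\pm 1]$ lie in $\mathscr{M}_m$; this harmless adjustment removes any worry about the shifted $\Hom$-groups.
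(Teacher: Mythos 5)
Your proof is correct and essentially matches the paper's: both arguments choose the Cauchy bound at level $m+1$ so that $\Cone(f_j)$ and $\Cone(f_j)[-1]$ lie in $\mathscr{M}_m$, and both then force $\Cone(f_j)=0$ from the same pair of vanishing $\Hom$-groups (the paper by first splitting the triangle and then killing the summand, you by reading $\mathrm{id}_{\Cone(f_j)}=0$ directly off the long exact sequence). Your middle paragraph misstates the second required group as $\Hom_\mathscr{S}(\Cone(f_j),X_{j-1})$ where the exact sequence actually calls for $\Hom_\mathscr{S}(\Cone(f_j),X_{j-1}[1])$, but your final paragraph identifies and repairs precisely this shift issue via (G2) and the reindexing to $\mathscr{M}_{m+1}$, so the argument as a whole is sound.
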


\begin{proof}
Suppose $\mathscr{A}\subseteq \mathscr{M}_m^{\perp}$. By the definition of Cauchy sequence, there exists a natural number $t$ such that $C_{n}:=\text{Cone}(f_{n+1})\in \mathscr{M}_{m+1}$ for $n\geqslant t$. By (G2) in Definition \ref{definition of metric}, $\mathscr{M}_{m+1}[-1] \subseteq \mathscr{M}_m$. This forces $C_n[-1]\in\mathscr{M}_m$. Since $X_n \in\mathscr{A}\subseteq\mathscr{M}_m^{\perp}$, we have $\Hom(C_{n}[-1], X_n)=0$. Thus $X_{n+1}\simeq X_n\oplus C_n$. Note that $\Hom(C_n, X_{n+1})=0$ since $C_n\in \mathscr{M}_{m+1}\subseteq\mathscr{M}_m$ and $X_{n+1}\in\mathscr{A}\subseteq \mathscr{M}_m^{\perp}$. Consequently, $C_{n}=0$ and $f_n:X_n\to X_{n+1}$ is an isomorphism for $n\geqslant t$. Thus $\{X_{\bullet},f_{\bullet}\}$ is stable. The stability of $\{X_{\bullet},f_{\bullet}\}$ under the condition $\mathscr{A}\subseteq {^{\perp}}\mathscr{M}_m$ can be shown similarly.
\end{proof}

Now, we denote by $\mathscr{S}\Modcat$ the abelian category of additive functors from $\mathscr{S}^{\text{op}}$ to the category of abelian groups. The Yoneda functor $$\mathfrak{y}:\mathscr{S}\lra \mathscr{S}\Modcat,\quad X\mapsto \Hom_\mathscr{S}(-,X)$$
is fully faithful, and the automorphism $[1]: \mathscr{S}\to\mathscr{S}$ induces an automorphism of abelian categories:
$$\Sigma:\mathscr{S}\Modcat\lra  \mathscr{S}\Modcat,\quad F\mapsto [\Sigma(F): X\mapsto F(X[-1])]$$
for $F\in \mathscr{S}\Modcat$ and $X\in\mathscr{S}$.
This implies that $\Sigma^j(\mathfrak{y}(X))\simeq\mathfrak{y}(X[j])$ for $j\in\mathbb{Z}$.

\begin{Def}{\rm \cite[Definition 1.10]{Neeman1}}\label{Completion}
Let $\mathscr{S}$ be a triangulated category with a good metric $\mathscr{M}:=\{\mathscr{M}_{n}\}_{n\in\mathbb{N}}$. Let $\mathscr{A}$ be a full subcategory of $\mathscr{S}$. We define full subcategories $\mathfrak{L}(\mathscr{A})$, $\mathfrak{C}(\mathscr{S})$ and $\mathfrak{S}(\mathscr{A})$ of $\mathscr{S}\Modcat$:

$(1)$ The objects of $\mathfrak{L}(\mathscr{A})$ are the functors $F$ in $\mathscr{S}\Modcat$ such that $F\simeq \mathop{\text{colim}}\limits_{\longrightarrow}\mathfrak{y}(X_{n})$, where $\{X_{\bullet}\}$ is a Cauchy sequence in $\mathscr{S}$ with respect to $\mathscr{M}$ and with $X_n\in\mathscr{A}$ for all $n\in\mathbb{N}$. Clearly, $\mathfrak{y}(\mathscr{A})\subseteq \mathfrak{L}(\mathscr{A})$.

$(2)$ The objects of $\mathfrak{C}(\mathscr{S})$ are the functors $F$ in $\mathscr{S}\Modcat$ with $F(\mathscr{M}_j)=0$ for some $j\geqslant 0$.

$(3)$ $\mathfrak{S}(\mathscr{A}):=\mathfrak{L}(\mathscr{A}) \cap \mathfrak{C}(\mathscr{S})$. In particular, $\mathfrak{S}(\mathscr{S})$ is called \emph{the completion of $\mathscr{S}$} with respect to $\mathscr{M}$.
\end{Def}

\begin{Theo}{\rm \cite[Theorem 15]{Neeman2}(see also \cite[Theorem 2.11]{Neeman1})} \label{Completion-0}
Let $\mathscr{S}$ be a triangulated category with a good metric $\mathscr{M}:=\{\mathscr{M}_{n}\}_{n\in\mathbb{N}}$.  Then the category $\mathfrak{S}(\mathscr{S})$ with the automorphism $\Sigma$ is triangulated, where the triangles are given by the sequences $A\stackrel{\alpha}{\longrightarrow} B\stackrel{\beta}{\longrightarrow}C\stackrel{\gamma}{\longrightarrow}\Sigma(A)$ in $\mathfrak{S}(\mathscr{S})$ which are isomorphic to the colimit of the image under $\mathfrak{y}$ of a Cauchy sequence (with respect to $\mathscr{M}$) of triangles in $\mathscr{S}$:
$$
\{A_{\bullet}\}\stackrel{\alpha_{\bullet}}{\longrightarrow}
\{B_{\bullet}\}\stackrel{\beta_{\bullet}}{\longrightarrow}\{C_{\bullet}\}
\stackrel{\gamma_{\bullet}}{\longrightarrow}\{A_{\bullet}\}[1].
$$
\end{Theo}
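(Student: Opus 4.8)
The plan is to carry out every construction inside the Grothendieck abelian category $\mathscr{S}\Modcat$, in which filtered colimits are exact and computed pointwise, and to deduce each triangulated-category axiom for $\mathfrak{S}(\mathscr{S})$ from the corresponding axiom in $\mathscr{S}$, applied termwise to Cauchy sequences of triangles. First I would record the elementary facts: a finite direct sum of Cauchy sequences is Cauchy and a finite sum of functors each killing some $\mathscr{M}_j$ kills a common one, so $\mathfrak{S}(\mathscr{S})=\mathfrak{L}(\mathscr{S})\cap\mathfrak{C}(\mathscr{S})$ is additive; and since $\Sigma$ is an automorphism of $\mathscr{S}\Modcat$ with $\Sigma\,\mathfrak{y}(X)\simeq\mathfrak{y}(X[1])$, while (G2) gives $\mathscr{M}_{n+1}\subseteq\mathscr{M}_n[1]\cap\mathscr{M}_n[-1]$ and ${\rm Cone}(f)[\pm1]\simeq{\rm Cone}(f[\pm1])$, one checks $\mathfrak{S}(\mathscr{S})$ is stable under $\Sigma^{\pm1}$. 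The workhorse is a \emph{sequence-representability} lemma: for Cauchy sequences $\{X_\bullet\},\{Y_\bullet\}$ with colimits $F,G$ one has $\Hom_{\mathscr{S}\Modcat}(F,G)=\lim_n G(X_n)=\lim_n\mathrm{colim}_m\Hom_{\mathscr{S}}(X_n,Y_m)$, and a standard interleaving argument then shows that, after replacing $\{X_\bullet\}$ and $\{Y_\bullet\}$ by cofinal subsequences (which leaves $F,G$ unchanged), every morphism $F\to G$ is the colimit of an honest chain map $\{X_\bullet\}\to\{Y_\bullet\}$; and two chain maps induce the same morphism iff they agree on some cofinal subsequence. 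This ``finite-stage'' principle is what transports (non-canonical) constructions from $\mathscr{S}$ up to $\mathfrak{S}(\mathscr{S})$.

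With this in place the easy axioms follow. The sequence $A\xrightarrow{\,1\,}A\to0\to\Sigma A$ is the colimit of the split triangles $X_n\xrightarrow{\,1\,}X_n\to0\to X_n[1]$ when $A=\mathrm{colim}\,\mathfrak{y}(X_\bullet)$, and rotation is immediate because rotating each triangle in a Cauchy sequence of triangles again gives one (cones only shift). For the existence of a cone on $f\colon A\to B$ in $\mathfrak{S}(\mathscr{S})$, represent $f$ by a chain map $\{f_\bullet\}\colon\{A_\bullet\}\to\{B_\bullet\}$, complete each $f_n$ to a triangle $A_n\xrightarrow{f_n}B_n\to C_n\to A_n[1]$ in $\mathscr{S}$, pick transition maps $C_n\to C_{n+1}$ (by the third-map axiom in $\mathscr{S}$) making morphisms of triangles, and check $\{C_\bullet\}$ is Cauchy: the ``cone on a morphism of triangles'' consequence of the octahedral axiom puts ${\rm Cone}(C_n\to C_{n+1})$ into ${\rm Cone}(B_n\to B_{n+1})\ast{\rm Cone}(A_n\to A_{n+1})[1]$, which lies in $\mathscr{M}_i$ for $n$ large by (G1)--(G2). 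Finally $C:=\mathrm{colim}\,\mathfrak{y}(C_\bullet)$ lies in $\mathfrak{C}(\mathscr{S})$, because taking the (exact) filtered colimit of the long exact Hom-sequences of the termwise triangles gives, for each $M\in\mathscr{S}$, a long exact sequence tying $A(M),B(M),C(M)$ and their $\Sigma$-shifts together, so $C$ kills $\mathscr{M}_j$ once $A,B$ kill a slightly larger $\mathscr{M}_{j_0}$.

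The real work sits in (TR3) and the octahedral axiom; these are the main obstacle. For (TR3), given two triangles realized as colimits of Cauchy sequences of triangles together with a commuting square of their outer maps, I would use sequence-representability to replace all the sequences by common cofinal subsequences along which the outer maps are chain maps and the square commutes \emph{at every stage} (it commutes in the colimit, hence at some stage, hence at all later ones). The third-map axiom in $\mathscr{S}$ yields stagewise fill-ins $c_n\colon C_n\to C'_n$; they need not commute with the transition maps, but the two composites $C_n\to C_{n+1}\to C'_{n+1}$ and $C_n\to C'_n\to C'_{n+1}$ complete the same square, so their difference dies in the colimit and, after one further subsequence, $\{c_\bullet\}$ is a chain map whose colimit $c$ does the job. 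The octahedral axiom is the same strategy with heavier bookkeeping: align (by repeated cofinal passage) Cauchy sequences of triangles realizing all three given triangles and the composable pair of maps, apply the octahedral axiom in $\mathscr{S}$ at each stage, repair compatibility of the chosen stagewise octahedra with the transition maps via the finite-stage principle, verify the fourth (mapping-cone) sequence it produces is Cauchy by the same (G1)--(G2)-plus-octahedron estimate used above, and pass to the colimit. The genuine difficulty throughout is purely organizational: arranging the nested subsequence passages so that every Cauchy estimate and every compatibility invoked holds simultaneously. Once the finite-stage principle is set up carefully, each axiom reduces cleanly to its counterpart in $\mathscr{S}$.
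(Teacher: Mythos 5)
The paper states this theorem as a citation to Neeman and gives no proof of its own, so the comparison must be with Neeman's argument. Your overall strategy --- realize everything termwise on Cauchy sequences inside $\mathscr{S}\Modcat$ and transport the axioms --- is broadly the right one, and the additivity, $\Sigma$-stability, the cone construction, the Cauchy estimate via ${\rm Cone}(C_n\to C_{n+1})\in{\rm Cone}(B_n\to B_{n+1})\ast{\rm Cone}(A_n\to A_{n+1})[1]$, and the $\mathfrak{C}(\mathscr{S})$-membership check are all sound. However the (TR3) step, and with it the octahedron, has a genuine gap. You assert that since the two composites $C_n\to C_{n+1}\to C'_{n+1}$ and $C_n\to C'_n\to C'_{n+1}$ fill in the same square, ``their difference dies in the colimit.'' That is not justified: fill-ins of a square in a triangulated category are far from unique, the difference is a morphism $C_n\to C'_{n+1}$ factoring through the connecting map $h_n\colon C_n\to A_n[1]$, and $h_n$ does not become small as $n$ grows --- indeed the colimit square itself may admit many distinct fill-ins, so there is no colimit uniqueness to appeal to. Re-choosing $c_n$ inductively so that each commutes with the previous one produces an obstruction that likewise has to be killed, and nothing in your finite-stage principle kills it.

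The missing ingredient is a \emph{stabilization} lemma that uses the $\mathfrak{C}(\mathscr{S})$-condition, which in your write-up appears only as an end condition to be re-verified for the cone but is in fact what makes the whole morphism bookkeeping collapse. If $F={\rm colim}\,\mathfrak{y}(X_\bullet)$ and $G\in\mathfrak{S}(\mathscr{S})$ with $G(\mathscr{M}_j)=0$, then $G$ is a filtered colimit of representables and hence cohomological; applying it to the triangles $X_n\to X_{n+1}\to Z_n\to X_n[1]$ and noting that (G2) and the Cauchy condition put $Z_n$ and $Z_n[-1]$ into $\mathscr{M}_j$ for all large $n$ forces the transition maps $G(X_{n+1})\to G(X_n)$ to be isomorphisms eventually. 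Hence $\Hom_{\mathscr{S}\Modcat}(F,G)\cong G(X_N)={\rm colim}_m\Hom_{\mathscr{S}}(X_N,Y_m)$ for any single sufficiently large $N$: a morphism $F\to G$ is represented by one morphism $X_N\to Y_m$, not merely by a chain map. This is strictly stronger than your sequence-representability lemma, and it is what makes (TR3) and the octahedron go through: choose $N$ large enough that every Hom-group in the diagram has already stabilized, realize all the given data by honest morphisms and triangles at stage $N$, invoke the corresponding axiom once in $\mathscr{S}$, and observe that every commutativity checked at stage $N$ is commutativity in $\mathfrak{S}(\mathscr{S})$. Without this lemma the subsequence bookkeeping in your final paragraph does not close.
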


In general, $\mathfrak{y}$ does \emph{not} restrict to a functor $\mathscr{S}\to\mathfrak{S}(\mathscr{S})$ because $\mathfrak{y}(\mathscr{S})$ may not be contained in $\mathfrak{C}(\mathscr{S})$. To make up for this deficiency, we introduce a thick subcategory
$\mathscr{S}(\mathscr{M}):=\bigcup_{n\in\mathbb{N}}\mathscr{M}_n^{\perp}$ of $\mathscr{S}$ associated with any good metric $\mathscr{M}$ which embeds into $\mathfrak{S}(\mathscr{S})$. The following result is elementary and we leave its proof to the reader.

\begin{Lem}\label{Properties}
$(1)$ For a full subcategory $\mathscr{A}$ of $\mathscr{S}$ and for any $j\in\mathbb{Z}$, there are equalities of additive categories:
$\mathfrak{L}(\mathscr{A}[j])=\Sigma^{j}(\mathfrak{L}(\mathscr{A}))$, $\mathfrak{C}(\mathscr{S})=\Sigma^{j}(\mathfrak{C}(\mathscr{S}))$ and
$\mathfrak{S}(\mathscr{A}[j])=\Sigma^{j}(\mathfrak{S}(\mathscr{A}))$.

$(2)$  The functor $\Sigma$ restricts to automorphisms of additive categories $\mathfrak{L}(\mathscr{S})$, $\mathfrak{C}(\mathscr{S})$ and $\mathfrak{S}(\mathscr{S})$.

$(3)$ The Yoneda functor $\mathfrak{y}$ restricts to a fully faithful triangle functor $\mathscr{S}(\mathscr{M})\to \mathfrak{S}(\mathscr{S})$. In particular, $\mathfrak{y}(\mathscr{S}(\mathscr{M}))=\mathfrak{y}(\mathscr{S})\cap \mathfrak{S}(\mathscr{S})$.

$(4)$ Equivalent good metrics of $\mathscr{S}$ produce the same $\mathfrak{L}(\mathscr{A})$, $\mathfrak{C}(\mathscr{S})$ and $\mathfrak{S}(\mathscr{S})$.
\end{Lem}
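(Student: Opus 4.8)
\textbf{Proof proposal for Lemma \ref{Properties}.}

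The plan is to verify the four statements essentially by unwinding the definitions, since each is a formal consequence of how $\mathfrak{L}$, $\mathfrak{C}$, $\mathfrak{S}$, and the shift $\Sigma$ interact. For part $(1)$, I would start from the relation $\Sigma^j(\mathfrak{y}(X))\simeq\mathfrak{y}(X[j])$ noted just before the lemma. A Cauchy sequence $\{X_\bullet, f_\bullet\}$ with terms in $\mathscr{A}$ gives, after applying $[j]$ to every object and morphism, a Cauchy sequence with terms in $\mathscr{A}[j]$ (using (G2), which says $\mathscr{M}_{n+|j|}\subseteq\mathscr{M}_n[j]$, to check the cone condition survives the shift); conversely every Cauchy sequence in $\mathscr{A}[j]$ arises this way. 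Since $\Sigma$ is an exact automorphism of $\mathscr{S}\Modcat$, it commutes with the colimit defining the objects of $\mathfrak{L}$, so $\mathfrak{L}(\mathscr{A}[j]) = \Sigma^j(\mathfrak{L}(\mathscr{A}))$. For $\mathfrak{C}(\mathscr{S})$: a functor $F$ kills $\mathscr{M}_k$ for some $k$ if and only if $\Sigma^j(F)$, which is $X\mapsto F(X[-j])$, kills $\mathscr{M}_{k+|j|}$ (again by (G2) applied both ways), so $\mathfrak{C}(\mathscr{S})$ is $\Sigma$-stable. Intersecting the two gives $\mathfrak{S}(\mathscr{A}[j]) = \Sigma^j(\mathfrak{S}(\mathscr{A}))$, and $(2)$ is just the $j=1$ (and $j=-1$) instance of $(1)$ applied with $\mathscr{A}=\mathscr{S}$.

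For part $(3)$, I would first observe that $\mathscr{S}(\mathscr{M}) = \bigcup_n \mathscr{M}_n^{\perp}$ is a thick subcategory: it is closed under shifts because $\mathscr{M}_{n+|j|}\subseteq\mathscr{M}_n[j]$ forces $\mathscr{M}_n^\perp[j] = (\mathscr{M}_n[j])^\perp \supseteq \mathscr{M}_{n+|j|}^\perp$, closed under extensions and cones by the long exact sequence of $\Hom(\mathscr{M}_n,-)$, and closed under summands trivially. For an object $X\in\mathscr{M}_n^\perp$, the constant Cauchy sequence at $X$ exhibits $\mathfrak{y}(X)$ as a colimit of a Cauchy sequence (so $\mathfrak{y}(X)\in\mathfrak{L}(\mathscr{S})$), and $\mathfrak{y}(X)(\mathscr{M}_n) = \Hom_\mathscr{S}(\mathscr{M}_n, X) = 0$ gives $\mathfrak{y}(X)\in\mathfrak{C}(\mathscr{S})$; hence $\mathfrak{y}(X)\in\mathfrak{S}(\mathscr{S})$. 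The Yoneda functor is fully faithful on all of $\mathscr{S}$, hence on $\mathscr{S}(\mathscr{M})$; that it is a triangle functor onto its image follows from Theorem \ref{Completion-0}, since the image under $\mathfrak{y}$ of a triangle in $\mathscr{S}$ (viewed as a constant Cauchy sequence of triangles) is a triangle in $\mathfrak{S}(\mathscr{S})$. Finally, for the identity $\mathfrak{y}(\mathscr{S}(\mathscr{M})) = \mathfrak{y}(\mathscr{S})\cap\mathfrak{S}(\mathscr{S})$: the inclusion ``$\subseteq$'' is what we just showed, and for ``$\supseteq$'', if $\mathfrak{y}(X)\in\mathfrak{S}(\mathscr{S})$ then $\mathfrak{y}(X)\in\mathfrak{C}(\mathscr{S})$ means $\Hom_\mathscr{S}(\mathscr{M}_j,X)=\mathfrak{y}(X)(\mathscr{M}_j)=0$ for some $j$, i.e. $X\in\mathscr{M}_j^\perp\subseteq\mathscr{S}(\mathscr{M})$.

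For part $(4)$, let $\mathscr{M}$ and $\mathscr{N}$ be equivalent good metrics. The condition $\mathscr{M}\preceq\mathscr{N}\preceq\mathscr{M}$ means the two filtrations are mutually cofinal. A sequence is Cauchy with respect to $\mathscr{M}$ if and only if it is Cauchy with respect to $\mathscr{N}$ (cofinality lets one translate the cone conditions in both directions), so $\mathfrak{L}(\mathscr{A})$ is unchanged; and $F$ vanishes on some $\mathscr{M}_j$ iff it vanishes on some $\mathscr{N}_k$ (pick $\mathscr{N}_k\subseteq\mathscr{M}_j$ and conversely), so $\mathfrak{C}(\mathscr{S})$ is unchanged; hence $\mathfrak{S}(\mathscr{S})$ is unchanged as well. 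I do not expect any genuine obstacle here — the whole lemma is ``elementary'' as the authors say — but the one point that requires a little care is making sure the shift-compatibility in $(1)$ and the thickness/shift-stability of $\mathscr{S}(\mathscr{M})$ in $(3)$ both use axiom (G2) in the sharpened form $\mathscr{M}_{n+|j|}\subseteq\mathscr{M}_n[j]$ rather than just the single-step inclusion, and that one keeps straight which direction of $\Sigma$ shifts the index up versus down.
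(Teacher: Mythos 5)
Your overall approach is the natural one, and since the paper explicitly leaves this lemma to the reader there is no competing argument to compare against; the decomposition into (1)--(4) and the systematic use of the sharpened form $\mathscr{M}_{n+|j|}\subseteq\mathscr{M}_n[j]$ of (G2) is exactly what one needs. Parts (1), (2) and (4) are correct as written.

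In part (3) there is one inclusion that points the wrong way. You write that $\mathscr{M}_{n+|j|}\subseteq\mathscr{M}_n[j]$ forces
$\mathscr{M}_n^\perp[j] = (\mathscr{M}_n[j])^\perp \supseteq \mathscr{M}_{n+|j|}^\perp$,
but taking right perps of an inclusion of subcategories reverses it, so the correct consequence is
$\mathscr{M}_n^\perp[j] = (\mathscr{M}_n[j])^\perp \subseteq \mathscr{M}_{n+|j|}^\perp \subseteq \mathscr{S}(\mathscr{M})$.
That $\subseteq$ is also the direction you actually need: to show $\mathscr{S}(\mathscr{M})$ is closed under shifts you must bound $\mathscr{M}_n^\perp[j]$ above by some $\mathscr{M}_m^\perp$, not exhibit something it contains. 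With the sign corrected the argument goes through. One further point in (3) that you gloss slightly: closure under cones via the long exact sequence of $\Hom(\mathscr{M}_\bullet,-)$ also needs a small index bump, since in the sequence $\Hom(M,X)\to\Hom(M,Y)\to\Hom(M,Z)\to\Hom(M,X[1])$ you need $\Hom(M,X[1])=0$ as well, which holds for $M\in\mathscr{M}_{k+1}$ if $X\in\mathscr{M}_k^\perp$; this is the same mechanism as the shift-closure just established, so it is a cosmetic omission rather than a gap.
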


In light of Lemma \ref{Properties}(3), if $\mathscr{S}(\mathscr{M})=\mathscr{S}$, then $\mathscr{M}$ is called an \emph{embeddable} metric. Note that a metric $\mathscr{M}$ is embeddable if and only if $\mathfrak{y}(\mathscr{S})\subseteq\mathfrak{S}(\mathscr{S})$. If $\mathscr{S}$ has a classical generator $G$ and $\mathscr{M}$ is a good metric on $\mathscr{S}$ in the preferred equivalence class, then $\mathscr{S}(\mathscr{M})=\mathscr{S}_{\rm tc}$. In this case, $\mathscr{M}$ is embeddable if and only if $\Hom_\mathscr{S}(G[i],G)=0$ for $i\gg 0$.

Next, we recall a method from  \cite{Neeman1,Neeman2} to calculate $\mathfrak{S}(\mathscr{S})$.

\begin{Def}{\rm \cite[Definition 3.5]{Neeman1}}\label{Good extension}
Let $\mathscr{S}$ and $\mathscr{T}$ be triangulated categories and let $F:\mathscr{S}\to \mathscr{T}$ be a fully faithful triangle functor. We define
$\mathfrak{y}_F:\mathscr{T}\to \mathscr{S}\Modcat$ by sending $X$ to $\Hom_\mathscr{T}(F(-), X)$
for each $X\in \mathscr{T}$. Then the Yoneda embedding of $\mathscr{S}$ is naturally isomorphic to the composition of $F$ with $\mathfrak{y}_F$, that is , $\mathfrak{y}\simeq\mathfrak{y}_F\circ F$.

Suppose that $\mathscr{T}$ has coproducts and $\mathscr{M}$ is a good metric on $\mathscr{S}$. The functor $F$ is called a \emph{good extension} with respect to $\mathscr{M}$ if for any Cauchy sequence $\{A_{\bullet}\}$ in $\mathscr{S}$ with respect to $\mathscr{M}$, the natural map $\mathop{\text{colim}}\limits_{\longrightarrow}\mathfrak{y}(A_{n})\to \mathfrak{y}_F(\mathop{\text{Hocolim}}\limits_{\longrightarrow}F(A_n))$ is an isomorphism in $\mathscr{S}\Modcat$.
\end{Def}

\begin{Bsp}
Let $\mathscr{T}$ be a triangulated category with coproducts and $\mathscr{S}$ a full subcategory of $\mathscr{T}^{{c}}$.
Then the inclusion $\mathscr{S}\subseteq\mathscr{T}$ is a good extension with respect to any good metric $\mathscr{M}$ on $\mathscr{S}$. This follows from the fact that the functor $\Hom_{\mathscr{T}}(X,-):\mathscr{T}\to\mathbb{Z}\Modcat$ for $X\in\mathscr{T}^c$ sends homotopy colimts to colimits.
\end{Bsp}

\begin{Def}{\rm \cite[Definition 19]{Neeman2}}\label{LS}
Let $F:\mathscr{S}\rightarrow \mathscr{T}$ be a good extension with respect to a good metric $\mathscr{M}$, and let $\mathscr{A}\subseteq\mathscr{S}$ be a full subcategory of $\mathscr{S}$. We denote by $\mathfrak{L}'(\mathscr{A})$ the full subcategory of $\mathscr{T}$ consisting of objects $X$ such that $X\simeq \mathop{\text{Hocolim}}\limits_{\longrightarrow}F(X_{n})$ in $\mathscr{T}$, where $\{X_{\bullet}\}$ is a Cauchy sequence in $\mathscr{S}$ with respect to $\mathscr{M}$ and with $X_n\in\mathscr{A}$ for all $n$.
Further, let $\widehat{\mathfrak{S}}(\mathscr{A}):=\mathfrak{L}'(\mathscr{A})\cap \mathfrak{y}_F^{-1}(\mathfrak{C}(\mathscr{S})).$
\end{Def}
Note that the functor $\mathfrak{y}_F$ in Definition \ref{Good extension} restricts to an essentially surjective (or dense in other terminology) functor $\mathfrak{L}'(\mathscr{A})\to \mathfrak{L}(\mathscr{A})$. Moreover, $\mathfrak{y}_F^{-1}(\mathfrak{C}(\mathscr{S}))=\bigcup_{n\in\mathbb{N}} F(\mathscr{M}_{n})^{\perp}$ by \cite[Observation 3.2]{Neeman1}. This implies $F(\mathscr{S})\cap\mathfrak{y}_F^{-1}(\mathfrak{C}(\mathscr{S}))=F(\mathscr{S}(\mathscr{M}))$.

For good metrics induced from $t$-structures, $\mathfrak{L}'(\mathscr{S})$ and $\widehat{\mathfrak{S}}(\mathscr{S})$ can be characterized in terms of intrinsic subcategories of $\mathscr{T}$.

\begin{Def}{\rm (see \cite{Neeman2})}\label{TC}
Let $\mathscr{T}$ be a triangulated category with coproducts and let $(\mathscr{T}^{\leqslant 0},\mathscr{T}^{\geqslant 0})$ be a $t$-structure on $\mathscr{T}$. The full subcategories $\mathscr{T}_c^{-}$ and $\mathscr{T}_c^{b}$ of $\mathscr{T}$ are defined by
$$\mathscr{T}_c^{-}:=\bigcap_{n\in\mathbb{N}}{(\mathscr{T}^{{c}}\ast \mathscr{T}^{\leqslant -n})}\;\;\mbox{and}\;\; \mathscr{T}_c^{b}:=\mathscr{T}_c^{-}\cap\mathscr{T}^{b}.$$
\end{Def}

The category $\mathscr{T}_c^{-}$ can be regarded as the \emph{closure} of $\mathscr{T}^c$ in $\mathscr{T}$, that is, its objects can be approximated by compacts to arbitrarily high order. Equivalent $t$-structures give rise to identical $\mathscr{T}_c^{-}$ and $\mathscr{T}_c^{b}$. Obviously, $\mathscr{T}^{{c}}\subseteq \mathscr{T}_c^{-}$. If $\mathscr{T}^{{c}}\subseteq \mathscr{T}^{-}$, then $\mathscr{T}_c^{-}\subseteq\mathscr{T}^{-}$. The assumption $\mathscr{T}^{{c}}\subseteq \mathscr{T}^{-}$ holds in the case that $\mathscr{T}$  has a compact generator and $(\mathscr{T}^{\leqslant 0}, \mathscr{T}^{\geqslant 0})$ is a $t$-structure on $\mathscr{T}$ in the preferred equivalence class. In this case, all of these subcategories are intrinsically defined, by taking any $t$-structure from the preferred equivalence class.

\begin{Theo}{\rm \cite{Neeman2,Neeman1}}\label{MCP}
Let $\mathscr{S}$ be a triangulated category with a good metric $\mathscr{M}:=\{\mathscr{M}_{n}\}_{n\in\mathbb{N}}$.

$(1)$ Let $\mathscr{T}$ be a triangulated category and let $F:\mathscr{S}\rightarrow \mathscr{T}$ be a good extension with respect to $\mathscr{M}$. Then $\widehat{\mathfrak{S}}(\mathscr{S})$ is a triangulated subcategory of $\mathscr{T}$ and the functor $\mathfrak{y}_F$ restricts to a triangle equivalence
$\widehat{\mathfrak{S}}(\mathscr{S})\rightarrow \mathfrak{S}(\mathscr{S})$.

$(2)$ Suppose that $\mathscr{T}$ is compactly generated and has a compact generator $G$. Let
$(\mathscr{T}^{\leqslant 0}, \mathscr{T}^{\geqslant 0})$ be a $t$-structure on $\mathscr{T}$ in the preferred equivalence class, $\mathscr{S}:=\mathscr{T}^{c}$, $\mathscr{M}_n:=\mathscr{S}\cap \mathscr{T}^{\leqslant -n}$ for each $n\in\mathbb{N}$ and $F:\mathscr{S}\to\mathscr{T}$ the canonical inclusion. Then:

\smallskip
$(a)$ $\mathfrak{y}_F^{-1}(\mathfrak{C}(\mathscr{S}))=\mathscr{T}^+$ and $\mathfrak{L}'(\mathscr{S})\subseteq \mathscr{T}_c^{-}\subseteq \mathscr{T}^{-}$. Thus $\mathscr{S}(\mathscr{M})=\mathscr{S}\cap\mathscr{T}^b=\mathscr{S}_{\rm tc}$ and $\widehat{\mathfrak{S}}(\mathscr{S})\subseteq\mathscr{T}_c^b$.

$(b)$ If $\Hom_\mathscr{T}(G,G[i])=0$ for $i\gg 0$, then $\mathscr{T}_c^b\subseteq\mathscr{T}_c^{-}$ are thick subcategories of $\mathscr{T}$, $\mathfrak{L}'(\mathscr{S})=\mathscr{T}_c^{-}$ and $\widehat{\mathfrak{S}}(\mathscr{S})=\mathscr{T}_c^b$.
\end{Theo}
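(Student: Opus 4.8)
This statement combines theorems of Neeman (\cite{Neeman2,Neeman1}); the plan is to reconstruct it in three layers — $(1)$ first, then $(2)(a)$, then $(2)(b)$ — with the genuine difficulty concentrated in $(2)(b)$.

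\emph{Part $(1)$.} The aim is to compare the \emph{internal} completion $\mathfrak{S}(\mathscr{S})\subseteq\mathscr{S}\Modcat$, formed from colimits $\colim\mathfrak{y}(X_n)$ of Cauchy sequences, with its \emph{external} model $\widehat{\mathfrak{S}}(\mathscr{S})\subseteq\mathscr{T}$, formed from homotopy colimits $\hocolim F(X_n)$. First I would record that, because $F$ is a good extension, the comparison map $\colim\mathfrak{y}(X_n)\to\mathfrak{y}_F\!\big(\hocolim F(X_n)\big)$ is an isomorphism for every Cauchy sequence; hence $\mathfrak{y}_F$ sends $\mathfrak{L}'(\mathscr{S})$ essentially surjectively onto $\mathfrak{L}(\mathscr{S})$, and since $\mathfrak{y}_F^{-1}(\mathfrak{C}(\mathscr{S}))$ is by definition the preimage of $\mathfrak{C}(\mathscr{S})$, the restricted functor $\mathfrak{y}_F\colon\widehat{\mathfrak{S}}(\mathscr{S})\to\mathfrak{S}(\mathscr{S})$ is essentially surjective. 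For full faithfulness, given $X=\hocolim F(X_n)$ and $Y=\hocolim F(Y_n)$ in $\widehat{\mathfrak{S}}(\mathscr{S})$, I would use that $Y\in\mathfrak{y}_F^{-1}(\mathfrak{C}(\mathscr{S}))=\bigcup_m F(\mathscr{M}_m)^{\perp}$: fixing $m$ with $\Hom_{\mathscr{T}}(F(\mathscr{M}_m),Y)=0$ and combining the Cauchy condition with (G2), the towers $\{\Hom_{\mathscr{T}}(F(X_n),Y)\}_n$ and $\{\Hom_{\mathscr{T}}(F(X_n),Y[-1])\}_n$ are eventually constant, so $\lim^1$ vanishes and $\Hom_{\mathscr{T}}(X,Y)\cong\lim_n\Hom_{\mathscr{T}}(F(X_n),Y)$; the same limit computes $\Hom_{\mathscr{S}\Modcat}(\mathfrak{y}_F X,\mathfrak{y}_F Y)=\lim_n(\mathfrak{y}_F Y)(X_n)$, so the two agree. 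Finally, $\widehat{\mathfrak{S}}(\mathscr{S})$ is a triangulated subcategory of $\mathscr{T}$: a morphism between two of its objects can, after refining the Cauchy sequences, be represented by a Cauchy sequence of triangles in $\mathscr{S}$; taking homotopy colimits produces a triangle in $\mathscr{T}$ whose third term again lies in $\mathfrak{L}'(\mathscr{S})$ and, by the same pairing argument, in $\mathfrak{y}_F^{-1}(\mathfrak{C}(\mathscr{S}))$. The triangulated structure on $\mathfrak{S}(\mathscr{S})$ from Theorem \ref{Completion-0} is then matched on the nose, so $\mathfrak{y}_F$ is a triangle equivalence.

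\emph{Part $(2)(a)$.} Now $\mathscr{M}_n=\mathscr{T}^c\cap\mathscr{T}^{\leqslant -n}$, so $\mathfrak{y}_F^{-1}(\mathfrak{C}(\mathscr{S}))=\bigcup_n(\mathscr{T}^c\cap\mathscr{T}^{\leqslant -n})^{\perp}$. To identify this with $\mathscr{T}^+=\bigcup_m\mathscr{T}^{\geqslant -m}$, I would invoke Lemma \ref{Compact generator}: the chosen $t$-structure being preferred, there is $k$ with $\langle G\rangle^{(-\infty,-n-k]}\subseteq\mathscr{T}^c\cap\mathscr{T}^{\leqslant -n}\subseteq\langle G\rangle^{(-\infty,-n+k]}$ for all $n$; the left inclusion gives $(\mathscr{T}^c\cap\mathscr{T}^{\leqslant -n})^{\perp}\subseteq\big(\langle G\rangle^{(-\infty,-n-k]}\big)^{\perp}=\big(G(-\infty,-n-k]\big)^{\perp}=\mathscr{T}_G^{\geqslant -n-k+1}\subseteq\mathscr{T}^+$, while $\mathscr{T}^{\geqslant -m}=(\mathscr{T}^{\leqslant -m-1})^{\perp}\subseteq(\mathscr{T}^c\cap\mathscr{T}^{\leqslant -m-1})^{\perp}$ gives the reverse containment. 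For $\mathfrak{L}'(\mathscr{S})\subseteq\mathscr{T}_c^{-}$: if $X=\hocolim F(X_n)$ with $\{X_\bullet\}$ Cauchy and all $X_n$ compact, then for each $i$ there is $n_i$ with ${\rm Cone}(X_{n_i}\to X_j)\in\mathscr{M}_i\subseteq\mathscr{T}^{\leqslant -i}$ for $j\geqslant n_i$ (using (G1) and an octahedron), whence ${\rm Cone}(X_{n_i}\to X)$, a homotopy colimit of such cones, lies in $\mathscr{T}^{\leqslant -i}$ (aisles of compactly generated $t$-structures being closed under coproducts and cones); so $X\in\mathscr{T}^c\ast\mathscr{T}^{\leqslant -i}$ for every $i$, i.e. $X\in\mathscr{T}_c^{-}$. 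Since $G$ is a compact generator, $\mathscr{T}^c=\langle G\rangle\subseteq\mathscr{T}^-$ (Lemma \ref{Smd}), so $\mathscr{T}_c^{-}\subseteq\mathscr{T}^-$ and $\widehat{\mathfrak{S}}(\mathscr{S})=\mathfrak{L}'(\mathscr{S})\cap\mathfrak{y}_F^{-1}(\mathfrak{C}(\mathscr{S}))\subseteq\mathscr{T}_c^{-}\cap\mathscr{T}^+\subseteq\mathscr{T}^b$, i.e. $\widehat{\mathfrak{S}}(\mathscr{S})\subseteq\mathscr{T}_c^b$; also $\mathscr{S}(\mathscr{M})\cong F(\mathscr{S})\cap\mathfrak{y}_F^{-1}(\mathfrak{C}(\mathscr{S}))=\mathscr{T}^c\cap\mathscr{T}^+=\mathscr{T}^c\cap\mathscr{T}^b=\mathscr{S}\cap\mathscr{T}^b$, which equals $\mathscr{S}_{\rm tc}$ as already recorded for preferred metrics.

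\emph{Part $(2)(b)$ and the main obstacle.} Assume $\Hom_{\mathscr{T}}(G,G[i])=0$ for $i\gg 0$. Closure of $\mathscr{T}_c^{-}$ under both shifts is immediate (from $\mathscr{T}^{\leqslant -n-1}[-1]=\mathscr{T}^{\leqslant -n}$ and a reindexing), but closure under direct summands and extensions is the genuinely hard step: it needs Neeman's approximation techniques for approximable triangulated categories, and the hypothesis $\Hom_{\mathscr{T}}(G,G[i])=0$ for $i\gg 0$ is precisely what lets one truncate a compact object living in $\mathscr{T}^c\ast\mathscr{T}^{\leqslant -n}$ down to its ``expected'' size. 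Granting thickness of $\mathscr{T}_c^{-}$ (hence of $\mathscr{T}_c^b=\mathscr{T}_c^{-}\cap\mathscr{T}^b$), the reverse inclusion $\mathscr{T}_c^{-}\subseteq\mathfrak{L}'(\mathscr{S})$ is proved by an inductive approximation: for $X\in\mathscr{T}_c^{-}$ one builds a chain $X_1\to X_2\to\cdots$ of compact objects with compatible maps to $X$ so that ${\rm Cone}(X_n\to X)\in\mathscr{T}^{\leqslant -n}$ and $X\simeq\hocolim X_n$, enlarging $X_n$ to $X_{n+1}$ at each stage by a compact piece extracted, via an octahedron and the $\Hom$-finiteness, from a presentation $X\in\mathscr{T}^c\ast\mathscr{T}^{\leqslant -n-1}$; this is again where the approximable-category machinery of \cite{Neeman2} does the real work and is the part I expect to be hardest. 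Together with $\mathfrak{L}'(\mathscr{S})\subseteq\mathscr{T}_c^{-}$ from $(a)$ this yields $\mathfrak{L}'(\mathscr{S})=\mathscr{T}_c^{-}$, and then, using $\mathfrak{y}_F^{-1}(\mathfrak{C}(\mathscr{S}))=\mathscr{T}^+$ and $\mathscr{T}_c^{-}\subseteq\mathscr{T}^-$ from $(a)$, $\widehat{\mathfrak{S}}(\mathscr{S})=\mathfrak{L}'(\mathscr{S})\cap\mathfrak{y}_F^{-1}(\mathfrak{C}(\mathscr{S}))=\mathscr{T}_c^{-}\cap\mathscr{T}^+=\mathscr{T}_c^{-}\cap\mathscr{T}^b=\mathscr{T}_c^b$, completing the proof.
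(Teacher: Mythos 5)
Your sketch correctly reconstructs the arguments that the paper itself defers to Neeman via citation: part $(1)$ is the standard good-extension plus Mittag-Leffler comparison from \cite[Theorem 20]{Neeman2} and \cite[Theorem 3.15]{Neeman1}; part $(2)(a)$ is the pinching argument via Lemma \ref{Compact generator} together with the homotopy-colimit computation, matching \cite[Example 3.3]{Neeman1} and the proof of \cite[Lemma 7.5(iii)]{Neeman5}; and you correctly identify part $(2)(b)$ --- thickness of $\mathscr{T}_c^{-}$ and $\mathscr{T}_c^b$, and the reverse inclusion $\mathscr{T}_c^{-}\subseteq\mathfrak{L}'(\mathscr{S})$ --- as the step requiring the full approximability machinery of \cite{Neeman1,Neeman5}, which is exactly what the paper (like you) leaves to those references rather than reproving. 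Nothing in your outline for $(1)$ and $(2)(a)$ deviates from the cited sources, and honestly flagging the $(2)(b)$ gap is the appropriate move for a theorem the paper cites rather than proves.
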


Theorem \ref{MCP}(1) is exactly \cite[Theorem 20]{Neeman2}; see \cite[Theorem 3.15]{Neeman1} for its proof.
In Theorem \ref{MCP}(2)$(a)$, the first equality is shown in \cite[Example 3.3]{Neeman1}
and other inclusions of categories can be concluded from the proof of \cite[Lemma 7.5(iii)]{Neeman5}, where
the condition $\Hom_\mathscr{T}(G,G[i])=0$ for $i\gg 0$ is not needed. Under this condition, Theorem \ref{MCP}(2)$(b)$ is given in \cite[Example 3.10 and Proposition 0.15(i)]{Neeman1}; see also \cite[Proposition 0.19]{Neeman5} and \cite[Lemma 7.5]{Neeman5} for proofs.

\subsection{Examples of completions of triangulated categories}\label{EXCT}

In this section, we give several examples of completions of triangulated categories that consist of all compact objects of derived categories.

\begin{Bsp}\label{Ordinary ring}
Let $R$ be an associative ring with identity. We consider $\mathscr{T}:=\D{R}$. Throughout the paper,
we always identify $\mathscr{T}^c$ with $\Kb{\prj{R}}$, up to triangle equivalence. By Theorem \ref{MCP}(2) (see also \cite[Example 3.1]{Neeman5}), there are equalities
$\mathscr{T}_c^{-}=\mathscr{K}^{-}(\prj{R})$ and $ \widehat{\mathfrak{S}}(\mathscr{T}^c)=\mathscr{T}_c^{b}=\mathscr{K}^{-, b}(\prj{R})$.
If $R$ is left coherent, then $\widehat{\mathfrak{S}}(\mathscr{T}^c)$ is equivalent to $\Db{R\modcat}$,
the bounded derived category of finitely presented left $R$-modules.
\end{Bsp}

Before giving an example from schemes, we recall several standard notation from algebraic geometry
and a theorem from \cite{Neeman4}.

Throughout this section, let $X$ be a quasicompact, quasiseparated scheme and let $Z$ be a closed subset of $X$ such that $X-Z$ is quasicompact. We denote by $\mathscr{D}_{\rm qc}(X)$ the full subcategory of the unbounded derived category of $\mathscr{O}_X$-modules consisting of (cochain) complexes of $\mathscr{O}_X$-modules with \emph{quasicoherent} cohomology, and by $\mathscr{D}_{{\rm qc}, Z}(X)$ the full subcategory of $\mathscr{D}_{\rm qc}(X)$ consisting of complexes whose cohomology is supported on $Z$ (that is, the restriction of those complexes to $X-Z$ is acyclic).
Note that $\mathscr{D}_{{\rm qc}, Z}(X)$ has a standard $t$-structure $(\mathscr{D}_{{\rm qc}, Z}(X)^{\leqslant 0}, \mathscr{D}_{{\rm qc}, Z}(X)^{\geqslant 0})$, where objects of $\mathscr{D}_{{\rm qc}, Z}(X)^{\leqslant 0}$ and $\mathscr{D}_{{\rm qc}, Z}(X)^{\geqslant 0}$ have nonzero cohomologies concentrated in nonpositive and nonnegative degrees, respectively. Further, we define
$\mathscr{D}^{\rm perf}_Z(X):=\mathscr{D}^{\rm perf}(X)\cap\mathscr{D}_{{\rm qc}, Z}(X)$, the full subcategory of $\mathscr{D}_{\rm qc}(X)$ consisting of all perfect complexes supported on $Z$, where a complex is said to be \emph{perfect} if it is locally isomorphic to a bounded complex of finite-rank vector bundles. We also denote by
$\mathscr{D}^p_{{\rm qc}, Z}(X)$ (respectively, $\mathscr{D}^{p, b}_{{\rm qc}, Z}(X)$) the full subcategory of
$\mathscr{D}_{{\rm qc}, Z}(X)$ consisting of \emph{pseudocoherent} complexes (see Section \ref{Section 1.2}) on $X$, whose cohomology is supported on $Z$ (respectively, and vanishes in all but finitely many degrees). When $X$ is noetherian,
$\mathscr{D}^{p}_{{\rm qc}, Z}(X)=\mathscr{D}^-_{{\rm coh}, Z}(X)$ and
$\mathscr{D}^{p, b}_{{\rm qc}, Z}(X)=\mathscr{D}^{b}_{{\rm coh}, Z}(X),$ where {\rm coh} denotes \emph{coherent cohomology}.  In our discussions, when $Z=X$, the subindex $Z$ in the above categories is always omitted.

\begin{Theo}{\rm \cite[Theorem 3.2(i)-(iii)]{Neeman4}}\label{CCST}
The category $\mathscr{D}_{{\rm qc}, Z}(X)$ is a compactly generated triangulated category with a single compact generator, $\big(\mathscr{D}_{{\rm qc}, Z}(X)\big)^c=\mathscr{D}^{{\rm perf}}_{Z}(X)$, and the standard $t$-structure $(\mathscr{D}_{{\rm qc}, Z}(X)^{\leqslant 0}, \mathscr{D}_{{\rm qc}, Z}(X)^{\geqslant 0})$ on $\mathscr{D}_{{\rm qc}, Z}(X)$ is in the preferred equivalence class.
\end{Theo}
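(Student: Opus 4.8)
The plan is to reduce Theorem \ref{CCST} to known foundational facts about quasicompact quasiseparated (qcqs) schemes together with Neeman's general machinery on compactly generated categories. First I would establish that $\mathscr{D}_{{\rm qc}, Z}(X)$ is compactly generated. One starts from the classical fact (Bökstedt--Neeman, Thomason--Trobaugh, or \cite{Neeman2}) that $\mathscr{D}_{\rm qc}(X)$ is compactly generated for $X$ qcqs, with $\big(\mathscr{D}_{\rm qc}(X)\big)^c=\mathscr{D}^{\rm perf}(X)$. The subcategory $\mathscr{D}_{{\rm qc}, Z}(X)$ is the kernel of the restriction functor $j^*:\mathscr{D}_{\rm qc}(X)\to\mathscr{D}_{\rm qc}(X-Z)$ to the quasicompact open complement; this is a smashing localization (a Bousfield localization preserving coproducts), so the kernel is generated by compact objects of $\mathscr{D}_{\rm qc}(X)$ that lie in it. The finiteness input ``$X-Z$ quasicompact'' is exactly what is needed to produce a \emph{single} perfect complex $K$ on $X$ supported on $Z$ (a Koszul-type complex built from a finite set of generators of a finitely generated quasicoherent ideal with vanishing locus $Z$) that generates $\mathscr{D}_{{\rm qc}, Z}(X)$ as a localizing subcategory. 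Thus $\mathscr{D}_{{\rm qc}, Z}(X)$ is compactly generated with a single compact generator $G=K$.

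Next I would identify the compact objects. By Neeman's localization theorem, the compact objects of $\mathscr{D}_{{\rm qc}, Z}(X)$ are precisely the direct summands of objects of $\mathscr{D}^{\rm perf}(X)$ lying in $\mathscr{D}_{{\rm qc}, Z}(X)$; since perfect complexes are closed under summands, this gives $\big(\mathscr{D}_{{\rm qc}, Z}(X)\big)^c=\mathscr{D}^{\rm perf}(X)\cap\mathscr{D}_{{\rm qc}, Z}(X)=\mathscr{D}^{\rm perf}_Z(X)$, which is the stated identification. (One should be slightly careful here: the general statement that compacts in the kernel come from compacts upstairs uses that the localization is compactly generated and the quotient $\mathscr{D}_{\rm qc}(X-Z)$ is as well, which is again the fact that $X-Z$ is qcqs; this is Thomason's theorem on the Verdier quotient.)

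Finally I would check that the standard $t$-structure $(\mathscr{D}_{{\rm qc}, Z}(X)^{\leqslant 0}, \mathscr{D}_{{\rm qc}, Z}(X)^{\geqslant 0})$ lies in the preferred equivalence class, i.e.\ is equivalent to the $t$-structure generated by the compact generator $G$ in the sense of Example \ref{Typical example}. Concretely, one must exhibit an integer $n$ with $\mathscr{D}_{{\rm qc}, Z}(X)_G^{\leqslant -n}\subseteq\mathscr{D}_{{\rm qc}, Z}(X)^{\leqslant 0}\subseteq\mathscr{D}_{{\rm qc}, Z}(X)_G^{\leqslant n}$. One inclusion is cheap: since the perfect generator $G$ has Tor-amplitude in some finite window $[-a,b]$, every object of $\overline{\langle G\rangle}^{(-\infty,0]}$ has cohomology sheaves concentrated in degrees $\leqslant a$, giving one containment after a shift. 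For the reverse, one uses that on a qcqs scheme any quasicoherent sheaf, and hence any object of the standard aisle, can be built (as a homotopy colimit / iterated extension of coproducts) from shifts $G[-i]$, $i\geqslant -n$, for a uniform $n$ — this is where one invokes that $X$ is covered by finitely many affines and on each affine the structure sheaf, tensored with $G$, detects the standard truncation up to a bounded shift; equivalently, $G$ has both finite Tor-amplitude and ``finite generation degree'' for the standard heart. The main obstacle is precisely this last step — pinning down the uniform bound $n$ relating the geometrically defined standard $t$-structure to the categorically defined preferred class — which is the technical heart of \cite[Theorem 3.2]{Neeman4}; everything else is an assembly of standard localization theory for qcqs schemes.
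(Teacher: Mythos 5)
The paper does not prove Theorem \ref{CCST}; it is quoted verbatim from Neeman (\cite[Theorem 3.2(i)-(iii)]{Neeman4}) and used as an imported input, so there is no internal proof to compare against. That said, here is an assessment of your outline.

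Parts (i) and (ii) are a correct assembly of standard localization theory for quasicompact quasiseparated schemes: $\mathscr{D}_{\rm qc}(X)$ is compactly generated with compacts the perfect complexes (B\"okstedt--Neeman, Thomason--Trobaugh); $\mathscr{D}_{\rm qc,Z}(X)$ is the kernel of the restriction functor to the quasicompact open $X-Z$; a single Koszul-type compact generator supported on $Z$ exists (this is where quasicompactness of $X-Z$ enters); and Thomason--Neeman localization identifies $(\mathscr{D}_{\rm qc,Z}(X))^c$ with $\mathscr{D}^{\rm perf}(X)\cap\mathscr{D}_{\rm qc,Z}(X)=\mathscr{D}^{\rm perf}_Z(X)$, which is already thick so no further idempotent completion is needed. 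One could quibble about the order of logic in your second sentence (the kernel is not ``smashing hence compactly generated''; rather one exhibits compact generators and concludes it is smashing), but this does not affect the outcome.

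Part (iii) has a genuine gap, which you acknowledge yourself. The easy inclusion $\mathscr{T}_G^{\leqslant -n}\subseteq\mathscr{D}_{\rm qc,Z}(X)^{\leqslant 0}$ does follow from the boundedness of the cohomology of the perfect generator $G$ (mind the sign: if $G$ has cohomology in degrees $[-a,b]$, then $\overline{\langle G\rangle}^{(-\infty,0]}$ has cohomology in degrees $\leqslant b$, not $\leqslant a$). But the reverse inclusion $\mathscr{D}_{\rm qc,Z}(X)^{\leqslant 0}\subseteq\mathscr{T}_G^{\leqslant n}$ is nowhere established. Saying ``on each affine the structure sheaf detects the truncation up to a bounded shift'' does not automatically globalize; what one actually needs is the nontrivial fact that the standard aisle is compactly generated in the precise sense $\mathscr{D}_{\rm qc,Z}(X)^{\leqslant 0}={\rm Coprod}\big(\mathscr{D}^{\rm perf}_Z(X)\cap\mathscr{D}_{\rm qc,Z}(X)^{\leqslant 0}\big)$, together with a uniform two-sided comparison $\langle G\rangle^{(-\infty,-n]}\subseteq\mathscr{D}^{\rm perf}_Z(X)\cap\mathscr{D}_{\rm qc,Z}(X)^{\leqslant 0}\subseteq\langle G\rangle^{(-\infty,n]}$. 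Obtaining these bounds in the general non-noetherian qcqs setting requires the \v{C}ech/Mayer--Vietoris bookkeeping that makes up the bulk of Neeman's proof of \cite[Theorem 3.2]{Neeman4}; your sketch points at the right place but stops short of providing it.
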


\begin{Bsp}\label{Perfect complex}
Let $\mathscr{T}:=\mathscr{D}_{{\rm qc}, Z}(X)$. Combing Theorems \ref{CCST} and \ref{MCP}(2) with \cite[Theorem 5.1]{Neeman4}, we have
$$
\mathscr{T}^c=\mathscr{D}^{{\rm perf}}_Z(X),\;\; \mathscr{T}_c^{-}=\mathscr{D}^{p}_{{\rm qc}, Z}(X)\;\;\mbox{and}\;\; \widehat{\mathfrak{S}}(\mathscr{T}^c)=\mathscr{T}_c^{b}=\mathscr{D}^{p, b}_{{\rm qc}, Z}(X);$$
see also \cite[Section 10]{Neeman4} for details. In particular,
$\mathscr{T}_c^{-}=\mathscr{D}^{-}_{{\rm coh}, Z}(X)$ and $\widehat{\mathfrak{S}}(\mathscr{T}^c)=\mathscr{D}^{b}_{{\rm coh}, Z}(X)$ for a noetherian scheme $X$.
\end{Bsp}

Next, we give an example from DG (that is, \emph{differential graded}) algebras.

Let $R:=\bigoplus_{i\in\mathbb{Z}}R^i$ be a DG algebra over a commutative ring $k$. We denote by $\D{R}$ the \emph{unbounded derived category} of left DG $R$-modules (for example, see \cite{keller1}). This is a compactly generated triangulated category with ${_R}R$ as a compact generator. Suppose that $R$ is \emph{left noetherian, nonpositive} and \emph{bounded}, that is, $H^0(R)$ is a left noetherian ring with $H^i(R)\in H^0(R)\modcat$ for $i<0$, $R^i=0$ for $i>0$, and $H^i(R)=0$ for $i\ll 0$ (see Appendix \ref{Section B} for more discussions). We denote by $\mathscr{D}^b_f(R)$ (resp., $\mathscr{D}^-_f(R)$) the full subcategory of $\D{R}$ consisting of all objects $X$ with $H^i(X)=0$ whenever $|i|\gg 0$ (resp., $i\gg 0$), and $H^i(X)\in H^0(R)\modcat$ for all $i\in\mathbb{Z}$. Then $\mathscr{D}(R)^c$ is a full triangulated subcategory of $\mathscr{D}^b_f(R)$. Let $\mathscr{D}_{\rm sg}(R):=\mathscr{D}^b_f(R)/\mathscr{D}(R)^c$, the Verdier quotient of $\mathscr{D}^b_f(R)$ by $\mathscr{D}(R)^c$.

\begin{Bsp}\label{nonpositive}
Let $R$ be a left noetherian, nonpositive and bounded DG $k$-algebra.
We consider $\mathscr{T}:=\D{R}$. By Theorem \ref{Connective case} and Corollary \ref{Spectrum}(1)
in the case of DG rings, there are equalities $\mathscr{T}^c={\langle R\rangle}$, $\mathscr{T}_c^-=\mathscr{D}^-_f(R)$ and $\widehat{\mathfrak{S}}(\mathscr{T}^c)=\mathscr{T}_c^{b}=\mathscr{D}^b_f(R)$.
\end{Bsp}

Lemma \ref{Properties}(3) provides an intrinsic way to define the (almost) singularity category for triangulated categories with classical generators.

\begin{Def}\label{Singularity}
Let $\mathscr{S}$ be a triangulated category with a classical generator $G$. The \emph{almost singularity category} of $\mathscr{S}$ is defined as  $$\mathscr{S}_{\rm{sg}}:=\mathfrak{S}_G(\mathscr{S})/\mathfrak{y}(\mathscr{S}_{\rm tc}),$$
the Verdier quotient of  $\mathfrak{S}_G(\mathscr{S})$ by the triangulated subcategory $\mathfrak{y}(\mathscr{S}_{\rm tc})$, where $\mathfrak{S}_G(\mathscr{S})$ denotes the completion of $\mathscr{S}$ with respect to a $G$-good metric on $\mathscr{S}$ (see Definition \ref{PEC}). If $\mathscr{S}_{\rm tc}=\mathscr{S}$ (or equivalently, $\Hom_\mathscr{S}(G[i],G)=0$ for $i\gg 0$), then $\mathscr{S}_{\rm{sg}}$ is further called the
\emph{singularity category} of $\mathscr{S}$.
This is consistent with the singularity category introduced at the end of Section \ref{Section 1.1}.
\end{Def}

\begin{Bsp} \label{Ring and Scheme}
By the calculations in Examples \ref{Ordinary ring}, \ref{Perfect complex} and \ref{nonpositive}, we have the following facts.

$(1)$ If $\mathscr{S}=\Kb{\prj{R}}$ for a left coherent ring $R$, then $\mathscr{S}_{\rm{sg}}=\Db{R\modcat}/\mathscr{K}^{b}(\prj{R})$ which is the singularity category of $R$. In this case, $\mathscr{S}$ is regular if and only if $R$ is \emph{left regular}, that is, each finitely presented left $R$-module has finite projective dimension. This can be generalized to connective ring spectra.
Precisely, if $\mathscr{S}$ is the homotopy category of the stable $\infty$-category of perfect modules over a left coherent $\mathbb{E}_1$-ring $R$, then $\mathscr{S}$ is almost regular if and only if $R$ is almost regular (see Corollary \ref{Spectrum}).

$(2)$ If $\mathscr{S}=\mathscr{D}^{{\rm perf}}_Z(X)$ for a noetherian scheme $X$ with a closed subset $Z$, then $\mathscr{S}_{\rm{sg}}=\mathscr{D}^{b}_{{\rm coh, Z}}(X)/\mathscr{D}^{\rm perf}_Z(X)$, which is the singularity category of $X$ in the case $Z=X$ (in the sense of Buchweitz and Orlov, see \cite{Buchweitz,Orlov}).

$(3)$ If $\mathscr{S}=\D{R}^c$ for a left noetherian, nonpositive and bounded DG $k$-algebra $R$, then
$\mathscr{S}_{\rm{sg}}=\mathscr{D}_{\rm sg}(R)$ which coincides with the usual \emph{singularity category} of $R$ in the literature (for example, see \cite{HB} for some cases).
In particular, if $R$ is a left noetherian (ordinary) ring, then
$\mathscr{D}_{\rm sg}(R)=\Db{R\modcat}/\mathscr{K}^{b}(\prj{R})$.

Finally, we mention a simultaneous generalization of $(1)\mbox{-}(3)$.

Let $\mathscr{T}$ be a compactly generated triangulated category with a compact generator $G$ and let  $\mathscr{S}:=\mathscr{T}^c$. Suppose $\Hom_\mathscr{T}(G,G[i])=0$ for $i\gg 0$. Then  $\mathscr{S}_{\rm{sg}}=\mathscr{T}_c^b/(\mathscr{T}^c\cap \mathscr{T}^b)$ by Theorem \ref{MCP}(2).
Thus $\mathscr{S}$ is almost regular if and only if  $\mathscr{T}^c\cap \mathscr{T}^b=\mathscr{T}_c^b$; $\mathscr{S}$ is regular if and only if $\mathscr{S}=\mathscr{T}_c^b$.
\end{Bsp}

Finally, we mention two results which are related to decompositions of morphisms in triangulated categories.
They will be used in the discussions of the finiteness of finitistic dimension for triangulated categories with strong generators.

\begin{Lem}{\rm \cite{Neeman3}} \label{last lemma}
Suppose that $\mathscr{T}$ is a triangulated category with coproducts and $(\mathscr{T}^{\leq0},\mathscr{T}^{\geq0})$ is a $t$-structure on $\mathscr{T}$ such that $\mathscr{T}^{\geq0}$ is closed under coproducts in $\mathscr{T}$. Let $H$ be an object of $\mathscr{T}^b$ and $m$ a positive integer. The following statements are true.

$(1)$ There exists a positive integer $t$ (only depending on $H$ and $m$) such that, for any integer $n$, $$\mathscr{T}^{\geq n}\cap{\rm Coprod}_m\big(H(-\infty, \infty)\big)\subseteq {\rm smd}\big({\rm Coprod}_m(H[n-t,\infty))\big).$$

$(2)$ If $\mathscr{S}$ is a full triangulated subcategory of $\mathscr{T}$ with $H\in\mathscr{S}\subseteq\mathscr{T}_c^-$, then for any integer $n$,
any morphism $E\to F$ with $E\in\mathscr{S}$ and $F\in{\rm Coprod}_m\big(H[n,\infty)\big)$ factors through an object $F'\in {\rm coprod}_m\big(H[n,\infty)\big)$.

$(3)$ If $\mathscr{S}$ is a full triangulated subcategory of $\mathscr{T}$ with $H\in\mathscr{S}\subseteq\mathscr{T}_c^b$, then for any integer $n$,
$$\mathscr{S}\cap\mathscr{T}^{\geq n}\cap{\rm Coprod}_m(H(-\infty, \infty))\subseteq{\langle H \rangle}_m^{[n-t,\infty)},$$
and thus $\mathscr{S}\cap{\rm Coprod}_m\big(H(-\infty,\infty)\big)\subseteq {\langle H \rangle}_m$.
If, in addition, $\mathscr{S}\subseteq{\rm Coprod}_m\big(H(-\infty,\infty)\big)$ and $\mathscr{S}\subseteq\mathscr{T}$ is closed under direct summands, then $\mathscr{S}={\langle H \rangle}_m$.
\end{Lem}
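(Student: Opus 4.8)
The plan is to prove $(1)$, deduce $(2)$ from it together with the compact‑approximability built into $\mathscr{T}_c^-$, and then obtain $(3)$ by combining $(1)$ and $(2)$ via a retraction trick.

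\smallskip
\emph{For $(1)$.} Since $H\in\mathscr{T}^b$, fix integers $a\leqslant b-1$ with $H\in\mathscr{T}^{\geqslant a}\cap\mathscr{T}^{\leqslant b}$, so $H[-i]\in\mathscr{T}^{\geqslant a+i}\cap\mathscr{T}^{\leqslant b+i}$ for all $i$; set $c:=b-a-1\geqslant 0$. The engine is: if $i\leqslant n-1-b$ then $H[-i]\in\mathscr{T}^{\leqslant n-1}$, hence $\Hom_{\mathscr{T}}(H[-i],\mathscr{T}^{\geqslant n})=0$, so \emph{any morphism out of a coproduct of such ``too connective'' shifts of $H$ into an object of $\mathscr{T}^{\geqslant n}$ vanishes}. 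To deal with direct summands uniformly I would recast $(1)$ as a morphism statement and prove, by induction on $m$: there is $t_m$ (depending only on $H,m$) such that every $\psi\colon Z\to Y$ with $Z\in{\rm Coprod}_m(H(-\infty,\infty))$ and $Y\in\mathscr{T}^{\geqslant n}$ factors as $Z\to Z'\to Y$ with $Z'\in{\rm Coprod}_m(H[n-t_m,\infty))$. This gives $(1)$: if $Y\in\mathscr{T}^{\geqslant n}$ is a summand of such a $Z$ via $\iota\colon Y\to Z$, $\pi\colon Z\to Y$ with $\pi\iota=1$, apply the statement to $\pi$; then $Y$ is a summand of $Z'$, i.e. $Y\in{\rm smd}({\rm Coprod}_m(H[n-t_m,\infty)))$. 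For $m=1$, split $Z=Z'\oplus Z''$ into the ``good'' sub‑coproduct ($i\geqslant n-b$) and the too‑connective one; the engine forces $\psi|_{Z''}=0$, so $\psi$ factors through the split epimorphism $Z\twoheadrightarrow Z'\in{\rm Coprod}_1(H[n-b,\infty))$. For $m>1$, split a presentation $A\xrightarrow{f}Z\to B\to A[1]$ ($A\in{\rm Add}$, $B\in{\rm Coprod}_{m-1}$) with $A=A'\oplus A''$ as above; since $\psi f$ kills $A''$, $\psi$ factors through $Z\to W:={\rm Cone}(f|_{A''}\colon A''\to Z)$, and an octahedron supplies a presentation $A'\to W\to B\to A'[1]$ with the outermost ${\rm Add}$‑layer now ``clean''. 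Iterating this to clean the remaining layers of $B$ — each time applying the induction hypothesis against a target whose connectivity has dropped by $c$ (note $A'[1]\in\mathscr{T}^{\geqslant n-c}$) — yields the factorisation with $t_m=b+(m-1)c$.

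\smallskip
\emph{For $(2)$.} Every object of ${\rm coprod}_m(H[n,\infty))$ lies in $\mathscr{S}\subseteq\mathscr{T}_c^-$ and, since $H\in\mathscr{T}^{\geqslant a}$, in the coconnective part $\mathscr{T}^{\geqslant a+n}$; the coaisle being closed under coproducts and extensions, the same holds for ${\rm Coprod}_m(H[n,\infty))$ and its sub‑presentations. First reduce $\phi\colon E\to F$ to a map out of a \emph{compact} object: since $E\in\mathscr{T}_c^-=\bigcap_r(\mathscr{T}^c\ast\mathscr{T}^{\leqslant-r})$, for $r\gg0$ pick a triangle $P\xrightarrow{\alpha}E\to D\to P[1]$ with $P$ compact and $D\in\mathscr{T}^{\leqslant-r}$; then $D$ and $D[-1]$ are orthogonal to every coconnective $\mathscr{T}^{\geqslant a+n}$‑object, so $\alpha^{*}$ is bijective on $\Hom_{\mathscr{T}}(-,F)$ and surjective on $\Hom_{\mathscr{T}}(-,F')$ for any $F'\in{\rm coprod}_m(H[n,\infty))$. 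Hence $\phi$ is the unique extension along $\alpha$ of $\phi\alpha\colon P\to F$; if $\phi\alpha$ factors through some $F'\in{\rm coprod}_m(H[n,\infty))$, extending the resulting map $P\to F'$ along $\alpha$ and composing back produces, by injectivity of $\alpha^{*}$, a factorisation of $\phi$ through $F'$. It remains to factor a map from a compact $P$ into $F\in{\rm Coprod}_m(H[n,\infty))$ through some $F'\in{\rm coprod}_m(H[n,\infty))$; this I would prove by induction on $m$ (the case $m=1$ being that a map from a compact into a coproduct factors through a finite sub‑coproduct), peeling $L\to F\to M$, factoring $P\to M$ through some $F_1'\in{\rm coprod}_{m-1}(H[n,\infty))$ by induction, then — crucially using that $F_1'\in\mathscr{S}\subseteq\mathscr{T}_c^-$ is itself compact‑approximable while $L[1]$ is coconnective — showing the composite $F_1'\to M\to L[1]$ factors through a finite sub‑coproduct $L_0[1]$ of $L[1]$, and taking $F':={\rm Cone}(F_1'\to L_0[1])\oplus(\text{a finite sub‑coproduct of }L)\in{\rm coprod}_m(H[n,\infty))$.

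\smallskip
\emph{For $(3)$.} Let $Y\in\mathscr{S}\cap\mathscr{T}^{\geqslant n}\cap{\rm Coprod}_m(H(-\infty,\infty))$. By $(1)$, $Y$ is a summand of some $Z\in{\rm Coprod}_m(H[n-t,\infty))$. As $\mathscr{T}_c^b\subseteq\mathscr{T}_c^-$, part $(2)$ applies to the split monomorphism $Y\hookrightarrow Z$ and factors it through some $Z'\in{\rm coprod}_m(H[n-t,\infty))$; composing with the retraction $Z\to Y$ exhibits $Y$ as a summand of $Z'$, i.e. $Y\in{\langle H\rangle}_m^{[n-t,\infty)}$. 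Since $\mathscr{S}\subseteq\mathscr{T}_c^b\subseteq\mathscr{T}^+$, any $Y\in\mathscr{S}\cap{\rm Coprod}_m(H(-\infty,\infty))$ lies in $\mathscr{T}^{\geqslant-N}$ for some $N$, whence $Y\in{\langle H\rangle}_m^{[-N-t,\infty)}\subseteq{\langle H\rangle}_m$; under the extra hypotheses the reverse inclusion ${\langle H\rangle}_m\subseteq\mathscr{S}$ is immediate ($\mathscr{S}$ is triangulated, contains $H$, and is closed under summands, hence under ${\rm coprod}_m$ and ${\rm smd}$), so $\mathscr{S}=\mathscr{S}\cap{\rm Coprod}_m(H(-\infty,\infty))\subseteq{\langle H\rangle}_m\subseteq\mathscr{S}$. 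The genuine difficulty sits in $(1)$ and in the ``compact‑into‑${\rm Coprod}_m$'' input to $(2)$: in $(1)$ the main obstacle is that ${\rm Coprod}_m$ for fixed $m$ is \emph{not} closed under direct summands, which forces the morphism‑factoring reorganisation above and a careful check that each of the $m$ layer‑cleanings loses only the fixed constant $c=b-a-1$; in $(2)$ the crux is factoring a map out of a compact object through an $m$‑fold \emph{finite} extension, which is exactly where both uses of $\mathscr{S}\subseteq\mathscr{T}_c^-$ (approximating $E$, and approximating the intermediate ${\rm coprod}$‑objects) come in.
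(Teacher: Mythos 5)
The paper's own ``proof'' here is essentially a citation: part $(1)$ is attributed to the proof of Lemma~2.4 in Neeman's strong-generation paper, part $(2)$ to Lemma~2.5 there (with the remark that the argument works for any $\mathscr{S}\subseteq\mathscr{T}_c^-$, not just the scheme case), and $(3)$ is said to follow from $(1)$ and $(2)$. Your proposal instead reconstructs $(1)$ and $(2)$ from scratch, so there is no ``route'' in the paper to compare against for those two; your deduction of $(3)$ from $(1)$ and $(2)$ is exactly what the paper has in mind, and your reconstruction of $(2)$ is correct and matches Neeman's argument in spirit (reduce the source to a compact using $\mathscr{T}_c^-$-approximability plus orthogonality of $\mathscr{T}^{\leqslant -r}$ against the coconnective target, then peel off one $\textrm{Coprod}_1$ layer, factor the ${\rm Coprod}_{m-1}$ part by induction, and use $\mathscr{T}_c^-$-approximability of $F_1'$ to squeeze the connecting map into a finite sub-coproduct). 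Two small slips in $(2)$: the intermediate object should be ${\rm Cone}(F_1'\to L_0[1])[-1]$, i.e.\ the fibre rather than the cone, and the extra finite sub-coproduct of $L$ is needed to absorb the component of $P\to F$ that lands in $L$.

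There is, however, a genuine gap in your inductive step for $(1)$. You recast $(1)$ as the morphism statement ``every $\psi\colon Z\to Y$ with $Z\in{\rm Coprod}_m(H(-\infty,\infty))$, $Y\in\mathscr{T}^{\geqslant n}$ factors through some $Z'\in{\rm Coprod}_m(H[n-t_m,\infty))$'', which is the right reformulation, and your $m=1$ case is fine. But after cleaning the bottom layer $A$ to $A'$ and passing to $W$ with triangle $A'\to W\to B\to A'[1]$, you propose to ``clean $B$'' by applying the inductive hypothesis to $\partial\colon B\to A'[1]$. The IH as you state it gives a factorisation $B\xrightarrow{s}B'\to A'[1]$ with $B'\in{\rm Coprod}_{m-1}(H[n'-t_{m-1},\infty))$, but says nothing about ${\rm fib}(s)$. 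Forming $W''={\rm fib}(B'\to A'[1])$ gives a comparison map $W\to W''$ whose cone is ${\rm Cone}(s)$; without control on ${\rm fib}(s)$ you cannot conclude that $\psi'\colon W\to Y$ kills it, so you cannot factor $\psi'$ through $W''$. To close the induction you need the stronger statement that the factoring morphism $Z\to Z'$ can be chosen with ${\rm fib}(Z\to Z')\in\mathscr{T}^{\leqslant n-1}$ — equivalently, you should iterate the $m=1$ split-epi cleanup through the $m$ layers one at a time, noting that the $m=1$ construction produces precisely a split epimorphism with fibre in $\mathscr{T}^{\leqslant n-1}$ and that such fibres compose (the aisle is closed under extensions). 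Your own construction supplies this, and your ``iterating this'' phrasing and formula $t_m = b+(m-1)c$ suggest you have the layer-by-layer strategy in mind, but the inductive hypothesis as written is too weak and the appeal to ``the induction hypothesis'' in the iterative step is where the argument does not close as stated. A final, minor point: the per-layer loss should be $c=b-a+1$ rather than $b-a-1$, since the newly cleaned layer lives in $\mathscr{T}^{\geqslant a+n-t}$ and one more shift is lost in passing to its $[1]$-translate.
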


\begin{proof}
$(1)$ was given in the proof of \cite[Lemma 2.4]{Neeman3}. $(2)$ was shown in \cite[Lemma 2.5]{Neeman3}
for the special case that $\mathscr{T}=\mathscr{D}_{\rm qc}(X)$ and $\mathscr{S}=\mathscr{D}^{b}_{\rm coh}(X)$ for a noetherian scheme $X$, but its proof also applies to the general case of $(2)$ under the assumption $\mathscr{S}\subseteq\mathscr{T}_c^-$. Further, a combination of $(1)$ and $(2)$ yields $(3)$.
\end{proof}

\begin{Bsp} \label{quasiexcellent}
We give an example of a triangulated category satisfying the assumptions of Lemma \ref{last lemma}.

Let $X$ be a noetherian, separated, finite-dimensional, quasiexcellent scheme. We define $\mathscr{T}:=\mathscr{D}_{\rm qc}(X)$. By Theorem \ref{CCST}, $\mathscr{T}$ has a compact generator $G$. So, we can consider the $t$-structure $(\mathscr{T}^{\leqslant 0}_G, \mathscr{T}^{\geqslant 0}_G)$ on $\mathscr{T}$ generated by $G$. Moreover, by Example \ref{Perfect complex}, $\mathscr{T}^c=\mathscr{D}^{{\rm perf}}(X)$ and $\mathscr{T}_c^{b}=\mathscr{D}^{b}_{\rm coh}(X)$. Now, it follows from \cite[Theorem 5.1 and Proof of Main Theorem]{KA} that there exists an object $H\in \mathscr{D}^{b}_{\rm coh}(X)$ and a positive integer $m$ with $\mathscr{T}={\rm Coprod}_m(H(-\infty,\infty))$ and $\mathscr{D}^{b}_{\rm coh}(X)={\langle H \rangle}_m$.
\end{Bsp}

\begin{Lem}\label{Strong generator}
Let $\mathscr{S}$ be a triangulated category with an object $G$ such that $\Hom_{\mathscr{S}}(G[i],G)=0$ for $i\geqslant d+1$ with $d\in\mathbb{N}$. Then, given an integer $n\in\mathbb{N}$, any morphism $D\ra F$ in $\mathscr{S}$ with $D\in\langle G\rangle_{n+1}$ and $F\in G(-\infty,-1]^{\bot}$ factors through an object of $\langle G\rangle_{n+1}^{[-n(d+1),\infty)}.$
\end{Lem}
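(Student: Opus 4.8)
The plan is to argue by induction on $n$. Write $N_n:=n(d+1)$, so that the target category $\langle G\rangle_{n+1}^{[-N_n,\infty)}=\text{smd}\big(\text{coprod}_{n+1}(G[-N_n,\infty))\big)$ consists of direct summands of $(n+1)$-fold extensions of finite sums of shifts $G[j]$ with $j\leqslant N_n$. Throughout I will use the trivial observation that if $X$ is a direct summand of $X'$ and a morphism $X'\to F$ factors through an object $Z$, then the induced morphism $X\to F$ also factors through $Z$; this lets me replace objects presented as summands of $\text{coprod}$-objects — both $D$ itself and the objects furnished by the induction hypothesis — by the honest $\text{coprod}$-objects. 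The base case $n=0$ is then immediate: there $D$ is a finite sum $\bigoplus_k G[j_k]$, and since $F\in G(-\infty,-1]^{\bot}$ means precisely that $\Hom_{\mathscr{S}}(G[m],F)=0$ for all $m\geqslant 1$, any morphism $D\to F$ annihilates the summands with $j_k\geqslant 1$ and hence factors through $\bigoplus_{j_k\leqslant 0}G[j_k]$, an object of $\langle G\rangle_1^{[0,\infty)}=\langle G\rangle_1^{[-N_0,\infty)}$.

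For the inductive step, assume the statement for $n$ and take $D\in\langle G\rangle_{n+2}$ with $g\colon D\to F$. By associativity of $\ast$ we have $\text{coprod}_{n+2}=\text{coprod}_{n+1}\ast\text{coprod}_1$, so (after the summand reduction) $D$ sits in a triangle $E\xrightarrow{u}D\xrightarrow{v}P\xrightarrow{w}E[1]$ with $E\in\text{coprod}_{n+1}(G(-\infty,\infty))$ and $P=\bigoplus_k G[j_k]$ a finite sum of shifts of $G$. Apply the induction hypothesis to $gu\colon E\to F$ to obtain a factorization $E\xrightarrow{\alpha}Q\xrightarrow{\beta}F$ with $Q\in\text{coprod}_{n+1}(G[-N_n,\infty))$; thus $Q[1]$ is built in $n+1$ extension layers from shifts $G[m]$ with $m\leqslant N_n+1$. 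The crucial point is the vanishing $\Hom_{\mathscr{S}}(G[j],Q[1])=0$ for every $j>N_{n+1}=(n+1)(d+1)$: indeed, the hypothesis $\Hom_{\mathscr{S}}(G[i],G)=0$ for $i\geqslant d+1$ is the same as $\Hom_{\mathscr{S}}(G,G[s])=0$ for $s\leqslant-d-1$, which gives $\Hom_{\mathscr{S}}(G[j],G[m])=0$ for $m\leqslant N_n+1$ and $j>N_{n+1}$, and then an induction on the number of extension layers — using that $\Hom_{\mathscr{S}}(G[j],-)$ is cohomological — propagates this to $Q[1]$. Consequently the composite $\alpha[1]\circ w\colon P\to Q[1]$ kills every summand $G[j_k]$ with $j_k>N_{n+1}$, so it factors as $P\xrightarrow{\text{pr}}P^{\leqslant}\xrightarrow{\delta}Q[1]$, where $P^{\leqslant}:=\bigoplus_{j_k\leqslant N_{n+1}}G[j_k]$ and $\text{pr}$ is the split projection.

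Now complete $\delta$ to a triangle $Q\xrightarrow{\alpha'}C\xrightarrow{v'}P^{\leqslant}\xrightarrow{\delta}Q[1]$; then $C\in\text{coprod}_{n+1}(G[-N_{n+1},\infty))\ast\text{coprod}_1(G[-N_{n+1},\infty))\subseteq\langle G\rangle_{n+2}^{[-N_{n+1},\infty)}$, since the shifts occurring in $Q$ and in $P^{\leqslant}$ are all $\leqslant N_{n+1}$. The commuting square $\alpha[1]\circ w=\delta\circ\text{pr}$ extends, by axiom (TR3), to a morphism of triangles whose middle component is a map $\phi\colon D\to C$ with $\phi u=\alpha'\alpha$ and $v'\phi=\text{pr}\circ v$. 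It remains to factor $g$ through $\phi$. First, $\beta$ lifts along $\alpha'$ to some $\psi_0\colon C\to F$: the obstruction lies in $\Hom_{\mathscr{S}}(P^{\leqslant}[-1],F)$ and, up to sign, equals $\beta\circ\delta[-1]$; precomposing with the split epimorphism $\text{pr}[-1]$ turns this into $\beta\alpha\circ w[-1]=gu\circ w[-1]=0$ — the last equality because $u\circ w[-1]=0$ in the triangle defining $P$ — so the obstruction already vanishes. Next, $g-\psi_0\phi$ kills $E$ (as $(g-\psi_0\phi)u=gu-\beta\alpha=0$), hence factors through $v$ as $\rho\circ v$ for some $\rho\colon P\to F$; as in the base case $\rho$ annihilates the summands $G[j_k]$ with $j_k\geqslant 1$, so $\rho=\rho'\circ\text{pr}$ for some $\rho'\colon P^{\leqslant}\to F$. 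Using $v'\phi=\text{pr}\circ v$ we conclude $g=\psi_0\phi+\rho'\circ v'\circ\phi=(\psi_0+\rho' v')\circ\phi$, which factors $g$ through $C\in\langle G\rangle_{n+2}^{[-N_{n+1},\infty)}$, completing the induction.

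The main obstacle is the vanishing step $\Hom_{\mathscr{S}}(G[j],Q[1])=0$ for $j>(n+1)(d+1)$: one must track carefully how the single hypothesis $\Hom_{\mathscr{S}}(G[i],G)=0$ for $i\geqslant d+1$ propagates through one shift and through $n+1$ layers of extensions, and verify that $(n+1)(d+1)$ is exactly the threshold needed to keep $C$ inside $\langle G\rangle_{n+2}^{[-(n+1)(d+1),\infty)}$ — i.e.\ that one loses precisely $d+1$ in the shift bound per extension layer. Everything else — the two split-idempotent reductions, the sign bookkeeping in completing $\delta$ to a triangle, and the three short diagram chases producing $\phi$, $\psi_0$ and $\rho$ — is routine once this is in place.
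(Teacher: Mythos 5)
Your proof is correct, and it takes a genuinely different route from the paper's. The paper applies the dual of Neeman's factorization lemma \cite[Lemma 1.6]{Neeman3} as a black box, after choosing the decomposition $\text{coprod}_{n+1}=\text{coprod}_1\ast\text{coprod}_n$ (peeling the $\text{add}$-layer off the \emph{front} of the triangle: $Z\to D\to X$ with $Z$ a finite sum of shifts) and checking that lemma's hypotheses $(a)$ and $(b)$. You instead peel the $\text{add}$-layer off the \emph{back}: $E\to D\to P$ with $P\in\text{coprod}_1$ and $E\in\text{coprod}_{n+1}$, apply the induction hypothesis to the map from $E$, and then hand-roll the rest — completing the truncated connecting map $\delta$ to a triangle, producing $\phi\colon D\to C$ by TR3, lifting $\beta$ across $\alpha'$ by showing the obstruction $\beta\circ\delta[-1]$ vanishes (because $\text{pr}[-1]$ is a split epi and $u\circ w[-1]=0$), and absorbing the residual $g-\psi_0\phi$ into $\phi$ via a second use of $G(-\infty,-1]^{\perp}$. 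Where the paper verifies Neeman's hypothesis $(b)$ by the inclusion $\mathscr{Y}\ast(\mathscr{C}[1])\subseteq\mathscr{Y}[d+1]$, you directly establish the vanishing $\Hom_{\mathscr{S}}(G[j],Q[1])=0$ for $j>(n+1)(d+1)$ using that $\Hom_{\mathscr{S}}(G[j],-)$ is cohomological; these are the same arithmetic wearing different clothes. Your argument is longer and more explicit, but self-contained and does not depend on the rather technical statement of \cite[Lemma 1.6]{Neeman3}; the two yield the identical bound $n(d+1)$ on the shift loss per extension layer.
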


\begin{proof}
Let $D\in\langle G\rangle_{n+1}$ be an arbitrary object. Then $D$ is a direct summand of an object $D'$ that lies in $\text{coprod}_{n+1}\big(G(-\infty,\infty)\big)$, and further there exists a projection morphism from $D'$ to $D$. Thus, to show Lemma \ref{Strong generator}, it suffices to prove the following:

$(\natural)$ For any $D\in\text{coprod}_{n+1}\big(G(-\infty,\infty)\big)$ and $F\in G(-\infty,-1]^{\bot}$, any morphism $f:D\rightarrow F$ in $\mathscr{S}$ factors through an object of $\text{coprod}_{n+1}\big(G[-n(d+1),\infty)\big)$.

We will prove $(\natural)$  by induction on $n$ and start from $n=0$. By Definition \ref{coprod}(1)(2),
each object of $ \text{coprod}_1\big(G(-\infty,\infty)\big)$ is a direct sum of an object of $\text{coprod}_1\big(G(-\infty,-1)\big)$ with an object of $\text{coprod}_1\big(G[0,\infty)\big)$.
Since $F\in G(-\infty,-1]^{\bot}$, we have $\Hom_{\mathscr{S}}\big(\text{coprod}_1\big(G(-\infty,-1)\big), F\big)=0$. This implies that the morphism $f:D\to F$ with $D\in\text{coprod}_1\big(G(-\infty,\infty)\big)$ factors through an object of $\text{coprod}_1\big(G[0,\infty)\big)$.
Thus $(\natural)$ holds for $n=0$.

Suppose that $(\natural)$ holds for an integer $n-1\geqslant 0$. We now need to show that $(\natural)$ holds for $n$. This will be done by using the dual of {\rm \cite[Lemma 1.6]{Neeman3}}:

Let $\mathscr{T}$ be a triangulated category with full subcategories $\mathscr{A}$, $\mathscr{C}$, $\mathscr{X}$, $\mathscr{Y}$ and $\mathscr{Z}$. Assume $\text{add}(\mathscr{A})=\mathscr{A}$ and $\text{add}(\mathscr{C})=\mathscr{C}$. Suppose that

$(a)$ for any morphisms $X\rightarrow Y$ and $Z\rightarrow Y$ in $\mathscr{T}$, with $X\in\mathscr{X}$, $Y\in\mathscr{Y}$ and $Z\in\mathscr{Z}$, factors as $$X\rightarrow A\rightarrow Y,\quad Z\rightarrow C\rightarrow Y$$ with $A\in\mathscr{A}$ and $C\in\mathscr{C}$;

$(b)$ any morphism $X\rightarrow D$ in $\mathscr{T}$, with $X\in\mathscr{X}$ and $D\in\mathscr{Y}\ast(\mathscr{C}[1])$, factors as $X\rightarrow A\rightarrow D$ with $A\in\mathscr{A}$.

\noindent Then any morphism $E\rightarrow Y$ in $\mathscr{T}$, with $E\in\mathscr{Z}\ast\mathscr{X}$ and $Y\in\mathscr{Y}$, must factor as $E\rightarrow B\rightarrow Y$ with $B\in\mathscr{C}\ast\mathscr{A}$.

\smallskip
Now, we apply the above result to our case by taking $$\mathscr{T}=\mathscr{S},\;\; \mathscr{A}=\text{coprod}_{n}\big(G[-n(d+1),\infty)\big),\;\;
\mathscr{C}=\text{coprod}_{1}\big(G[0,\infty)\big),$$
$$\mathscr{X}=\text{coprod}_{n}\big(G(-\infty,\infty)\big),\;\;
\mathscr{Y}=G(-\infty,-1]^{\bot},\;\;\mathscr{Z}=\text{coprod}_{1}\big(G[-\infty,\infty)\big).$$
Then $$\mathscr{Z}\ast\mathscr{X}=\text{coprod}_{n+1}\big(G(-\infty,\infty)\big),\;\;
\mathscr{C}\ast\mathscr{A}\subseteq\text{coprod}_{n+1}\big(G[-n(d+1),\infty)\big),$$
$$\mathscr{A}[-d-1]=\text{coprod}_{n}\big(G[-(n-1)(d+1),\infty)\big)\subseteq\mathscr{A}.$$
It remains to show that $(a)$ and $(b)$ are true.

By induction, any morphism from $\mathscr{X}$ to $\mathscr{Y}$ factors through an object of $\mathscr{A}[-d-1]$ and thus also of $\mathscr{A}$. By the $n=0$ case, any morphism from $\mathscr{Z}$ to $\mathscr{Y}$ factors through an object of $\mathscr{C}$. So,
$(a)$ is true. Since $\Hom_{\mathscr{S}}(G[i],G)=0$ for $i\geqslant d+1$, we have $\Hom_{\mathscr{S}}(G(-\infty,-1], \mathscr{C}[-d])=0$; in other words,  $\mathscr{C}\subseteq\mathscr{Y}[d]$. As $\mathscr{Y}\subseteq\mathscr{Y}[1]$ and $\mathscr{X}=\mathscr{X}[1]$, we have $\mathscr{Y}\ast(\mathscr{C}[1])\subseteq\mathscr{Y}[d+1]$ and $\mathscr{X}=\mathscr{X}[d+1]$. Now, by the inductive hypothesis, any morphism from $\mathscr{X}$ to $\mathscr{Y}\ast(\mathscr{C}[1])$ factors through an object of $\mathscr{A}$.
This verifies $(b)$. Consequently, any morphism from $\mathscr{Z}\ast\mathscr{X}$ to $\mathscr{Y}$ factor through an object of $\mathscr{C}\ast\mathscr{A}$ and thus also of $\text{coprod}_{n+1}\big(G[-n(d+1),\infty)\big)$. This shows that $(\natural)$ holds for $n$.
\end{proof}

\section{Bounded $t$-structures and completion-invariant triangulated categories}\label{BTSSC}
In this section, we first establish a lifting theorem for (not necessarily bounded) $t$-structures along completions of triangulated categories. Then we discuss the completions of a triangulated category with respect to good metrics associated to objects. Assuming the existence of a bounded $t$-structure on the category and the finiteness of finitistic dimension, we apply the lifting theorem to show that taking completions at those good metrics does not yield a new triangulated category.

\subsection{Lifting $t$-structures along completions of triangulated categories}\label{example section}

In this section we prove that under mild conditions a $t$-structure on a triangulated category can be lifted to a $t$-structure on its completion. The former $t$-structure is said to be \emph{extendable}.
The main result of this section is Theorem \ref{ET}, which provides
a key technique for showing Theorem \ref{Main result}.

Throughout this section, let $\mathscr{S}$ be an essentially small triangulated category with a good metric $\mathscr{M}:=\{\mathscr{M}_{n}\}_{n\in\mathbb{N}}$. We denote by $\mathfrak{S}(\mathscr{S})$ the completion of $\mathscr{S}$ with respect to $\mathscr{M}$. Recall from Definition \ref{Completion} that, for each full subcategory $\mathscr{A}$ of $\mathscr{S}$, we have defined two full subcategories $\mathfrak{L}(\mathscr{A})$  and $\mathfrak{S}(\mathscr{A})$ of $\mathscr{S}\Modcat$ via the Yoneda functor $\mathfrak{y}:\mathscr{S}\to\mathscr{S}\Modcat$.

In general, it may happen that $\mathscr{S}$ has a bounded $t$-structure, but $\mathfrak{S}(\mathscr{S})$ has no bounded $t$-structure.

\begin{Bsp}
Let $R$ be a left noetherian ring and let $\mathscr{S}=\Db{R\modcat}\opp$.
By {\rm \cite[Proposition 0.15(ii)]{Neeman2}}, there exists a good metric on $\mathscr{S}$ such that the completion $\mathfrak{S}(\mathscr{S})$ of $\mathscr{S}$ with respect to the metric is equivalent to $\Kb{\prj{R}}\opp$. Note that $\mathscr{S}$ has an obvious bounded $t$-structure and $\Kb{\prj{R}}\opp\simeq \Kb{\prj{R\opp}}$ as triangulated categories. By {\rm \cite[Theorem 1.2]{HS}}, if $R$ is commutative, singular and has finite Krull dimension, then $\Kb{\prj{R}}$ has no bounded $t$-structure. In this case, $\mathfrak{S}(\mathscr{S})$ has no bounded $t$-structure. For further examples of noetherian schemes and noncommutative rings, we refer to  \cite[page 18]{Neeman6} and Corollary \ref{Ring case}, respectively.
\end{Bsp}

To lift $t$-structures from $\mathscr{S}$ to $\mathfrak{S}(\mathscr{S})$, we introduce a special class of $t$-structures.

\begin{Def} \label{ETSC}
A $t$-structure $(\mathscr{S}^{\leqslant 0}, \mathscr{S}^{\geqslant 0})$ on $\mathscr{S}$ is \emph{extendable} (with respect to the metric $\mathscr{M}$) if there exists a natural number $n$ such that $\mathscr{M}_n\subseteq\mathscr{S}^{\leqslant 0}$. This is equivalent to saying that $\mathscr{M}$ is finer than the good metric $\{\mathscr{S}^{\leqslant -n}\}_{n\in\mathbb{N}}$ on $\mathscr{S}$.
\end{Def}

Obviously, any $t$-structure is extendable with respect to the good metric induced from its aisle. Lemma \ref{tool} below and Theorem \ref{ET} after it contain more examples and properties of extendable $t$-structures.

\begin{Lem}\label{tool}
$(1)$ Let $\mathscr{S}_i:=(\mathscr{S}^{\leqslant 0}_i, \mathscr{S}^{\geqslant 0}_i)$ for $i=1,2$ be equivalent $t$-structures on $\mathscr{S}$. Then $\mathscr{S}_1$ is extendable if and only if so is $\mathscr{S}_2$. In particular, if $\mathscr{S}_1$ is extendable, then so is $\mathscr{S}_1[j]$
for any $j\in \mathbb{Z}$.

$(2)$ Suppose that $(\mathscr{S}^{\leqslant 0}, \mathscr{S}^{\geqslant 0})$ is an extendable $t$-structure on $\mathscr{S}$. Then $\mathscr{S}^{+}\subseteq \mathscr{S}(\mathscr{M})$. In particular, if the $t$-structure is bounded below, then the metric $\mathscr{M}$ on $\mathscr{S}$ is embeddable.

$(3)$ Let $G$ be an object of $\mathscr{S}$. Then any bounded above t-structure on $\mathscr{S}$ is extendable with respect to a $G$-good metric on $\mathscr{S}$ (see Definition \ref{PEC}).
\end{Lem}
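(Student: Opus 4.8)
The plan is to unwind the definitions and carry out shift bookkeeping using property (G2), which gives $\mathscr{M}_{n+|j|}\subseteq\mathscr{M}_n[j]$ for all $j\in\mathbb{Z}$, together with axioms (T1)--(T2) and the elementary fact that an aisle $\mathscr{S}^{\leqslant 0}={}^{\perp}\mathscr{S}^{\geqslant 1}$ is closed under extensions, direct summands and positive shifts. For $(1)$, by Definition \ref{ETSC} the $t$-structure $(\mathscr{S}^{\leqslant 0}_i,\mathscr{S}^{\geqslant 0}_i)$ is extendable exactly when $\mathscr{M}\preceq\{\mathscr{S}^{\leqslant -n}_i\}_{n\in\mathbb{N}}$. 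I would then observe that equivalent $t$-structures induce equivalent good metrics: from $\mathscr{S}^{\leqslant -m}_1\subseteq\mathscr{S}^{\leqslant 0}_2\subseteq\mathscr{S}^{\leqslant m}_1$, shifting by $[-n]$ yields $\mathscr{S}^{\leqslant -m-n}_1\subseteq\mathscr{S}^{\leqslant -n}_2\subseteq\mathscr{S}^{\leqslant m-n}_1$ for every $n$, so $\{\mathscr{S}^{\leqslant -n}_1\}_n$ and $\{\mathscr{S}^{\leqslant -n}_2\}_n$ are equivalent, and the claim follows from transitivity of $\preceq$. (Equivalently, directly: if $\mathscr{M}_n\subseteq\mathscr{S}^{\leqslant 0}_1$ and $\mathscr{S}^{\leqslant -m}_1\subseteq\mathscr{S}^{\leqslant 0}_2$, then $\mathscr{M}_{n+m}\subseteq\mathscr{M}_n[m]\subseteq\mathscr{S}^{\leqslant 0}_1[m]=\mathscr{S}^{\leqslant -m}_1\subseteq\mathscr{S}^{\leqslant 0}_2$.) For the ``in particular'', note that $\mathscr{S}_1[j]$ has aisle $\mathscr{S}^{\leqslant -j}_1$ and, taking $n=|j|$ in the definition of equivalence, is equivalent to $\mathscr{S}_1$, so the first part applies.

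For $(2)$, take $X\in\mathscr{S}^{+}$, so $X\in\mathscr{S}^{\geqslant -k}$ for some $k\geqslant 0$, and fix $n$ with $\mathscr{M}_n\subseteq\mathscr{S}^{\leqslant 0}$. Property (G2) with $j=k+1$ gives $\mathscr{M}_{n+k+1}\subseteq\mathscr{M}_n[k+1]\subseteq\mathscr{S}^{\leqslant 0}[k+1]=\mathscr{S}^{\leqslant -k-1}$, and since $\Hom_{\mathscr{S}}(\mathscr{S}^{\leqslant -k-1},\mathscr{S}^{\geqslant -k})=0$ (a shifted instance of (T2)), we get $\Hom_{\mathscr{S}}(\mathscr{M}_{n+k+1},X)=0$, i.e. $X\in\mathscr{M}_{n+k+1}^{\perp}\subseteq\mathscr{S}(\mathscr{M})$. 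Hence $\mathscr{S}^{+}\subseteq\mathscr{S}(\mathscr{M})$. If the $t$-structure is moreover bounded below, then $\mathscr{S}=\mathscr{S}^{+}\subseteq\mathscr{S}(\mathscr{M})\subseteq\mathscr{S}$, so $\mathscr{S}(\mathscr{M})=\mathscr{S}$, which is precisely the embeddability of $\mathscr{M}$ as recorded after Lemma \ref{Properties}.

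For $(3)$, the key point is that extendability depends only on the equivalence class of the metric: if $\mathscr{M}\equiv\mathscr{N}$ and $\mathscr{M}$ is extendable, then $\mathscr{N}\preceq\mathscr{M}\preceq\{\mathscr{S}^{\leqslant -n}\}_n$ by transitivity, so $\mathscr{N}$ is extendable too. By Definition \ref{PEC} every $G$-good metric is equivalent to $\mathscr{N}_\star:=\{\langle G\rangle^{(-\infty,-n]}\}_{n\in\mathbb{N}}$, so it suffices to show that $\mathscr{N}_\star$ is extendable with respect to a given bounded above $t$-structure $(\mathscr{S}^{\leqslant 0},\mathscr{S}^{\geqslant 0})$, i.e. that $\langle G\rangle^{(-\infty,-k]}\subseteq\mathscr{S}^{\leqslant 0}$ for some $k$. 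Since $\mathscr{S}=\mathscr{S}^{-}$, there is $k\geqslant 0$ with $G\in\mathscr{S}^{\leqslant k}$, hence $G[k]\in\mathscr{S}^{\leqslant 0}$, and therefore $G[j]\in\mathscr{S}^{\leqslant 0}$ for all $j\geqslant k$ by closure under positive shifts. As $\langle G\rangle^{(-\infty,-k]}$ is, by Definition \ref{coprod}, the smallest full subcategory of $\mathscr{S}$ built from $\{G[j]\mid j\geqslant k\}$ under finite direct sums, extensions, direct summands and positive shifts --- all operations under which $\mathscr{S}^{\leqslant 0}$ is closed --- we conclude $\langle G\rangle^{(-\infty,-k]}\subseteq\mathscr{S}^{\leqslant 0}$, as wanted.

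I do not anticipate a genuine obstacle here: the statement is entirely a matter of definition-chasing. The only places requiring care are getting the shift indices right in the (G2)-estimates for $(1)$ and $(2)$, and, in $(3)$, first reducing to the model metric $\mathscr{N}_\star$ by exploiting that extendability is an invariant of the equivalence class of a metric, rather than attempting to verify it directly for an arbitrary $G$-good metric.
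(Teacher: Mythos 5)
Your proof is correct and follows essentially the same route as the paper: in (1) the direct (G2)-shift computation in your parenthetical is the paper's argument up to using the other half of the equivalence inclusion; in (2) you compute the orthogonality by hand where the paper invokes that $\mathscr{S}(\mathscr{M})$ is a thick subcategory, but the content is identical; and in (3) your reduction to the standard metric $\{\langle G\rangle^{(-\infty,-n]}\}_n$ followed by the observation $\langle G\rangle^{(-\infty,-r]}\subseteq\mathscr{S}^{\leqslant 0}$ is exactly what the paper does, your invariance-under-metric-equivalence remark being a harmless (correct) elaboration.
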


\begin{proof}
$(1)$ Suppose that  $\mathscr{S}_{1}$ is extendable. Then there exists a natural number $s$ such that $\mathscr{M}_s\subseteq\mathscr{S}_1^{\leqslant 0}$. Since $\mathscr{S}_1$ and $\mathscr{S}_2$ are equivalent, there exists a positive integer $t$ such that $\mathscr{S}_1^{\leqslant 0}\subseteq \mathscr{S}_2^{\leqslant t}$. This implies $\mathscr{M}_s\subseteq\mathscr{S}_2^{\leqslant t}=\mathscr{S}_2^{\leqslant 0}[-t]$, and therefore $\mathscr{M}_s[t]\subseteq \mathscr{S}_2^{\leqslant 0}$.
It follows from $\mathscr{M}_{s+t}\subseteq \mathscr{M}_s[t]$ that $\mathscr{M}_{s+t}\subseteq \mathscr{S}_2^{\leqslant 0}$. Thus $\mathscr{S}_{2}$ is extendable.

$(2)$ Since $(\mathscr{S}^{\leqslant 0}, \mathscr{S}^{\geqslant 0})$ is extendable, there exists a natural number $n$ such that $\mathscr{M}_n\subseteq\mathscr{S}^{\leqslant 0}$. Then $\mathscr{S}^{\geqslant 1}=(\mathscr{S}^{\leqslant 0})^{\perp}\subseteq{\mathscr{M}_n}^{\perp}$.
Since $\mathscr{S}(\mathscr{M})\subseteq\mathscr{S}$ is a  triangulated subcategory, $\mathscr{S}^{+}\subseteq \mathscr{S}(\mathscr{M})$. If $(\mathscr{S}^{\leqslant 0}, \mathscr{S}^{\geqslant 0})$ is bounded below, then $\mathscr{S}=\mathscr{S}^{+}$ and thus $\mathscr{S}=\mathscr{S}(\mathscr{M})$.
This means that $\mathscr{M}$ is embeddable.

$(3)$ Let $\mathscr{D}:=(\mathscr{S}^{\leqslant 0},\mathscr{S}^{\geqslant 0})$ be a bounded above $t$-structure on $\mathscr{S}$. Then there is a positive integer $r$ with $G\in\mathscr{S}^{\leqslant r}$.
Since $\mathscr{S}^{\leqslant r}\subseteq\mathscr{S}$ is closed under extensions, positive shifts and direct summands, $\langle G\rangle^{(-\infty,0]}\subseteq\mathscr{S}^{\leqslant r}$.
This implies $\mathscr{M}_r:=\langle G\rangle^{(-\infty,-r]}\subseteq\mathscr{S}^{\leqslant 0}$, and therefore $\mathscr{D}$ is extendable.
\end{proof}

\begin{Rem}\label{bounded above extendable} Although not all extendable $t$-structures have to be bounded above, if a $t$-structure $(\mathscr{S}^{\leqslant 0}, \mathscr{S}^{\geqslant 0})$ on $\mathscr{S}$ is extendable with respect to a $G$-good metric $\{\langle G \rangle ^{(-\infty, -n]}\}_{n\in\mathbb{N}}$ where $G$ is a classical generator of $\mathscr{S}$, then $(\mathscr{S}^{\leqslant 0}, \mathscr{S}^{\geqslant 0})$ is bounded above.
\end{Rem}

Our main result on lifting $t$-structures along completions of triangulated categories is the following which also relates to \cite[Problem 7.5]{Neeman6}.

\begin{Theo}\label{ET}
Let $(\mathscr{S}^{\leqslant 0}, \mathscr{S}^{\geqslant 0})$ be an extendable $t$-structure on $\mathscr{S}$ with respect to a good metric $\mathscr{M}$.  Then the following statements are true.

$(1)$ The pair $(\mathfrak{S}(\mathscr{S}^{\leqslant 0}), \mathfrak{S}(\mathscr{S}^{\geqslant 0}))$ is a $t$-structure on $\mathfrak{S}(\mathscr{S})$ with the heart given by $\mathfrak{y}(\mathscr{H})$, where $\mathscr{H}$ denotes the heart of $(\mathscr{S}^{\leqslant 0}, \mathscr{S}^{\geqslant 0})$.

$(2)$ Let $\mathscr{R}$ be any full subcategory of $\mathscr{S}^{\geqslant 0}$. Then the restriction of the Yoneda functor $\mathfrak{y}:\mathscr{S}\to\mathscr{S}\Modcat$ to $\mathscr{R}$ yields an equivalence $\mathfrak{y}|_{\mathscr{R}}:\mathscr{R}\lraf{\simeq}\mathfrak{S}(\mathscr{R})$ of additive categories. Moreover,
$\mathfrak{y}|_{\mathscr{H}}:\mathscr{H}\to\mathfrak{S}(\mathscr{H})$ is an equivalence of abelian categories.

$(3)$ If the $t$-structure $(\mathscr{S}^{\leqslant 0}, \mathscr{S}^{\geqslant 0})$ is bounded above, then so is the $t$-structure $(\mathfrak{S}(\mathscr{S}^{\leqslant 0}), \mathfrak{S}(\mathscr{S}^{\geqslant 0}))$.

$(4)$ If the $t$-structure  $(\mathscr{S}^{\leqslant 0}, \mathscr{S}^{\geqslant 0})$ is bounded below, then the functor $\mathfrak{y}$ restricts to a fully faithful triangle functor $\mathscr{S} \rightarrow \mathfrak{S}(\mathscr{S})$ satisfying that $\mathfrak{y}(\mathscr{S}^{\leqslant 0}) \subseteq \mathfrak{S}(\mathscr{S}^{\leqslant 0})$ and $\mathfrak{y}(\mathscr{S}^{\geqslant 0})=\mathfrak{S}(\mathscr{S}^{\geqslant 0})$.
\end{Theo}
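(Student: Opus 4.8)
The plan is to build everything on two facts: the structural description of triangles in $\mathfrak{S}(\mathscr{S})$ from Theorem \ref{Completion-0} (they are colimits of Cauchy sequences of triangles in $\mathscr{S}$), and Lemma \ref{stable lemma}, which says that a Cauchy sequence lying entirely in a subcategory right-orthogonal (or left-orthogonal) to some $\mathscr{M}_m$ must stabilize. First I would verify the $t$-structure axioms of Definition \ref{DTS} for $(\mathfrak{S}(\mathscr{S}^{\leqslant 0}), \mathfrak{S}(\mathscr{S}^{\geqslant 0}))$ inside $\mathfrak{S}(\mathscr{S})$. Axiom (T1) is immediate since $\mathfrak{L}$ and $\mathfrak{S}$ respect shifts by Lemma \ref{Properties}(1), and $\mathscr{S}^{\leqslant -1}\subseteq\mathscr{S}^{\leqslant 0}$, $\mathscr{S}^{\geqslant 0}\subseteq\mathscr{S}^{\geqslant -1}$ in $\mathscr{S}$ pass through. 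For the Hom-vanishing (T2), take $F=\colim\mathfrak{y}(X_n)$ with $X_n\in\mathscr{S}^{\leqslant -1}$ and $F'=\colim\mathfrak{y}(Y_n)$ with $Y_n\in\mathscr{S}^{\geqslant 0}$; using the extendability hypothesis $\mathscr{M}_n\subseteq\mathscr{S}^{\leqslant 0}$, the sequence $\{Y_n\}$ lies in $\mathscr{S}^{\geqslant 0}\subseteq\mathscr{M}_n^{\perp}[\text{something}]$ — more precisely $\mathscr{S}^{\geqslant 1}\subseteq\mathscr{M}_n^{\perp}$ — so by Lemma \ref{stable lemma} it is eventually constant, and $\Hom_{\mathfrak{S}(\mathscr{S})}(F,F')$ reduces to a colimit of $\Hom_{\mathscr{S}}(X_n, Y)=0$. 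The decomposition axiom (T3): given an object $W=\colim\mathfrak{y}(W_n)\in\mathfrak{S}(\mathscr{S})$, apply the truncation triangles $W_n^{\leqslant -1}\to W_n\to W_n^{\geqslant 0}\to$ in $\mathscr{S}$; functoriality of truncation makes these into a Cauchy sequence of triangles (one checks the cones of the maps between consecutive truncations lie in the right $\mathscr{M}$-terms, using (G2) and that $\mathscr{M}$ is closed under the truncation functors up to a shift in index — this needs a small argument), whose colimit by Theorem \ref{Completion-0} is the desired triangle in $\mathfrak{S}(\mathscr{S})$ with the first term in $\mathfrak{S}(\mathscr{S}^{\leqslant -1})$ and the third in $\mathfrak{S}(\mathscr{S}^{\geqslant 0})$.

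For part (2), the key observation is that for $\mathscr{R}\subseteq\mathscr{S}^{\geqslant 0}$, any Cauchy sequence in $\mathscr{R}$ automatically stabilizes by Lemma \ref{stable lemma} (again because $\mathscr{S}^{\geqslant 1}\subseteq\mathscr{M}_n^{\perp}$ for the $n$ from extendability, and then $\mathscr{R}\subseteq\mathscr{S}^{\geqslant 0}\subseteq\mathscr{M}_{n+1}^{\perp}$ after shifting the index). Hence $\mathfrak{L}(\mathscr{R})=\mathfrak{y}(\mathscr{R})$, and one checks $\mathscr{R}\subseteq\mathscr{S}(\mathscr{M})$ so that $\mathfrak{y}(\mathscr{R})\subseteq\mathfrak{C}(\mathscr{S})$, giving $\mathfrak{S}(\mathscr{R})=\mathfrak{y}(\mathscr{R})$; combined with full faithfulness of $\mathfrak{y}$ on $\mathscr{S}(\mathscr{M})$ (Lemma \ref{Properties}(3)), $\mathfrak{y}|_{\mathscr{R}}$ is an equivalence of additive categories. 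Applying this with $\mathscr{R}=\mathscr{H}$ identifies the heart $\mathfrak{S}(\mathscr{S}^{\leqslant 0})\cap\mathfrak{S}(\mathscr{S}^{\geqslant 0})$: an object in the intersection is simultaneously of the form $\mathfrak{y}(H)$ with $H\in\mathscr{H}$, so the heart is $\mathfrak{y}(\mathscr{H})$, and full faithfulness upgrades this to an equivalence of abelian categories since exact sequences in a heart are detected by triangles.

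Parts (3) and (4) are comparatively short. For (3), boundedness above of $(\mathscr{S}^{\leqslant 0},\mathscr{S}^{\geqslant 0})$ means $\mathscr{S}=\bigcup_n\mathscr{S}^{\leqslant n}$; for $W=\colim\mathfrak{y}(W_n)$ in $\mathfrak{S}(\mathscr{S})$, the Cauchy condition forces the cones $\Cone(W_n\to W_{n+1})$ eventually into $\mathscr{M}_1\subseteq\mathscr{S}^{\leqslant 0}$, and since each $W_n\in\mathscr{S}^{\leqslant k_n}$ lies in some fixed $\mathscr{S}^{\leqslant k}$ after bounding — one uses that adding an object of $\mathscr{S}^{\leqslant 0}$ via a triangle keeps us in $\mathscr{S}^{\leqslant\max}$ — so $W\in\mathfrak{S}(\mathscr{S}^{\leqslant k})$ for some $k$, i.e.\ $(\mathfrak{S}(\mathscr{S}^{\leqslant 0}),\mathfrak{S}(\mathscr{S}^{\geqslant 0}))$ is bounded above. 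For (4), boundedness below gives $\mathscr{S}=\mathscr{S}(\mathscr{M})$ by Lemma \ref{tool}(2), so $\mathfrak{y}$ restricts to a fully faithful triangle functor $\mathscr{S}\to\mathfrak{S}(\mathscr{S})$ by Lemma \ref{Properties}(3); the inclusion $\mathfrak{y}(\mathscr{S}^{\leqslant 0})\subseteq\mathfrak{S}(\mathscr{S}^{\leqslant 0})$ is formal, and $\mathfrak{y}(\mathscr{S}^{\geqslant 0})=\mathfrak{S}(\mathscr{S}^{\geqslant 0})$ follows from part (2) applied with $\mathscr{R}=\mathscr{S}^{\geqslant 0}$ (which is legitimate since it is a full subcategory of $\mathscr{S}^{\geqslant 0}$). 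The main obstacle I anticipate is (T3): verifying carefully that the pointwise truncation of a Cauchy sequence is again a Cauchy sequence of triangles — this requires controlling how the truncation functors interact with the metric subcategories $\mathscr{M}_n$, using only (G1), (G2) and the extendability bound, and is where a genuinely technical lemma (perhaps isolating that $X\in\mathscr{M}_{n}$ implies $X^{\leqslant -1}, X^{\geqslant 0}\in\mathscr{M}_{n-c}$ for a constant $c$ depending on the extendability index) will be needed.
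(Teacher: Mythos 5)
Your proposal follows the paper's overall strategy — verify the $t$-structure axioms directly on $\mathfrak{S}(\mathscr{S})$ using colimits of Cauchy sequences of triangles and the stabilization Lemma~\ref{stable lemma} — but there are two genuine gaps, both of which you partially anticipate.

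The first is in~(T3). You correctly identify the hard point as showing that pointwise truncation of a Cauchy sequence gives Cauchy sequences, but the lemma you suggest isolating (that $X\in\mathscr{M}_n$ implies $X^{\leqslant -1}, X^{\geqslant 0}\in\mathscr{M}_{n-c}$) is not available: the $\mathscr{M}_n$ are only required to be closed under extensions, not under $t$-structure truncations, and nothing in (G1)--(G2) or in extendability gives such closure. The paper's argument goes a different, sharper route. Writing $C_n$ for the cone of $f_{n+1}$, it constructs the $3\times 3$ diagram relating the aisle- and coaisle-truncation triangles; membership of the truncated objects in $\mathscr{S}^{\leqslant -1}$ and $\mathscr{S}^{\geqslant 0}$ forces the connecting map $c_n\colon C_n\to C''_n$ to vanish as soon as $C_n\in\mathscr{M}_{k+2}\subseteq\mathscr{S}^{\leqslant -2}$ (where $k$ is the extendability index), so $C'_n\simeq C_n\oplus C''_n[-1]$, and a second $\Hom$-vanishing forces $C''_n=0$. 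Thus the coaisle-truncation sequence is \emph{eventually constant}, and the aisle-truncation cone $C'_n$ is eventually \emph{isomorphic to the original $C_n$}, which directly transmits the Cauchy property. No compatibility between $\mathscr{M}$ and the truncation functors is invoked. You should also verify — as the paper does — that once the truncated colimits lie in the appropriate $\mathfrak{L}$-categories, they also lie in $\mathfrak{C}(\mathscr{S})$; for the coaisle this comes from $\mathscr{S}^{\geqslant 0}\subseteq\mathscr{M}_{k+1}^\perp$, and for the aisle the paper runs a long-exact-sequence comparison against the stabilized coaisle term.

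The second gap is in identifying the heart. Applying part~(2) with $\mathscr{R}=\mathscr{H}$ gives $\mathfrak{S}(\mathscr{H})=\mathfrak{y}(\mathscr{H})$, but that does not yet show that the intersection $\mathfrak{S}(\mathscr{S}^{\leqslant 0})\cap\mathfrak{S}(\mathscr{S}^{\geqslant 0})$ is contained in $\mathfrak{y}(\mathscr{H})$. The paper closes this by taking $F$ in the intersection, using~(2) to write $F\simeq\mathfrak{y}(M)$ with $M\in\mathscr{S}^{\geqslant 0}$, and then, from a Cauchy presentation of $F$ with terms in $\mathscr{S}^{\leqslant 0}$, using the fact that $\Hom_{\mathscr{S}\Modcat}(\mathfrak{y}(X),-)$ commutes with filtered colimits (the Yoneda lemma) to split $M$ off as a direct summand of some $M_n\in\mathscr{S}^{\leqslant 0}$, whence $M\in\mathscr{H}$. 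Your sketch does not reach this step. The remaining parts of your outline — the stabilization argument for~(2), and parts~(3) and~(4) — are in line with the paper (in~(3) replace $\mathscr{M}_1$ by $\mathscr{M}_k$ with $k$ the extendability index; the paper actually derives~(3) more quickly from~(2), but your inductive bounding argument would also work after that fix).
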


\begin{proof} Let $\mathscr{S}_1:=(\mathscr{S}^{\leqslant 0}, \mathscr{S}^{\geqslant 0})$ and $\mathfrak{S}(\mathscr{S}_1):=(\mathfrak{S}(\mathscr{S}^{\leqslant 0}), \mathfrak{S}(\mathscr{S}^{\geqslant 0}))$.

$(1)$ We first show that $\mathfrak{S}(\mathscr{S}_1)$ satisfies (T1) and (T2) in Definition \ref{DTS}.

By Lemma \ref{Properties}(1) and by (T1) for $\mathscr{S}_1$, we have $\Sigma(\mathfrak{S}(\mathscr{S}^{\leqslant 0}))=\mathfrak{S}(\mathscr{S}^{\leqslant 0}[1])\subseteq \mathfrak{S}(\mathscr{S}^{\leqslant 0})$ and
$\mathfrak{S}(\mathscr{S}^{\geqslant 0})\subseteq\mathfrak{S}(\mathscr{S}^{\geqslant 0}[1])=\Sigma(\mathfrak{S}(\mathscr{S}^{\geqslant 0})).$ This verifies (T1) for $\mathfrak{S}(\mathscr{S}_1)$.

Let $\{X_{\bullet}\}$ and $\{Y_{\bullet}\}$ be Cauchy sequences in $\mathscr{S}$ with $X_n\in\mathscr{S}^{\leqslant -1}$ and $Y_n\in\mathscr{S}^{\geqslant 0}$ for $n\in\mathbb{N}$. Recall that $\Hom_{\mathscr{S}}(\mathscr{S}^{\leqslant -1},\mathscr{S}^{\geqslant 0})=0$ by (T2) for  $\mathscr{S}_1$. Then $$\Hom_{\mathscr{S}\Modcat}(\mathop{\text{colim}}\limits_{i\longrightarrow}\mathfrak{y}(X_i), \mathop{\text{colim}}\limits_{j\longrightarrow}\mathfrak{y}(Y_j))\simeq \mathop{\text{lim}}\limits_{\longleftarrow i}\mathop{\text{colim}}\limits_{j\longrightarrow}\Hom_{\mathscr{S}}(X_i, Y_j)=0.$$
This implies $\Hom_{\mathscr{S}\Modcat}(\mathfrak{L}(\mathscr{S}^{\leqslant -1}), \mathfrak{L}(\mathscr{S}^{\geqslant 0}))=0.$ Clearly, $\mathfrak{S}(\mathscr{S}^{\geqslant 0})\subseteq\mathfrak{L}(\mathscr{S}^{\geqslant 0}))$ and $\Sigma(\mathfrak{S}(\mathscr{S}^{\leqslant 0}))\subseteq\Sigma(\mathfrak{L}(\mathscr{S}^{\leqslant 0}))=\mathfrak{L}(\mathscr{S}^{\leqslant -1})$ by Lemma \ref{Properties}(1). It follows that $\Hom_{\mathscr{S}\Modcat}(\Sigma(\mathfrak{S}(\mathscr{S}^{\leqslant 0})), \mathfrak{S}(\mathscr{S}^{\geqslant 0}))=0$. Thus (T2) holds for $\mathfrak{S}(\mathscr{S}_1)$.

In the following, we verify (T3) in Definition \ref{DTS} for $\mathfrak{S}(\mathscr{S}_1)$.

Let $A\in\mathfrak{L}(\mathscr{S})$. There exists a Cauchy sequence $\{A_{\bullet}, f_{\bullet}\}$ in $\mathscr{S}$ such that $A\simeq\mathop{\text{colim}}\limits_{\longrightarrow}\mathfrak{y}(A_{n})$. By (T3) for $\mathscr{S}_1$, we obtain two chains $\{A_{\bullet}^{\leqslant -1}, f_{\bullet}^{\leqslant -1}\}$ and $\{A_{\bullet}^{\geqslant 0}, f_{\bullet}^{\geqslant 0}\}$ in $\mathscr{S}$ with $A_n^{\leqslant -1}\in \mathscr{S}^{\leqslant -1}$ and $A_n^{\geqslant 0}\in \mathscr{S}^{\geqslant 0}$ for $n\in\mathbb{N}$ and with commutative squares of morphisms:
$$
\xymatrix{
A_n^{\leqslant -1}\ar[r]\ar[d]_-{f_{n+1}^{\leqslant -1}}& A_n\ar[r]\ar[d]_-{f_{n+1}}& A_n^{\geqslant 0}\ar[r]\ar[d]_-{f_{n+1}^{\geqslant 0}}& A_n^{\leqslant -1}[1]\ar[d]_-{f_{n+1}^{\leqslant -1}[1]}\\
A_{n+1}^{\leqslant -1}\ar[r]& A_{n+1}\ar[r]& A_{n+1}^{\geqslant 0}\ar[r]& A_{n+1}^{\leqslant -1}[1].}
$$
In this diagram, if two arrows of triangles are given, then $f_{n+1}^{\leqslant -1}$ and $f_{n+1}^{\geqslant 0}$ exist uniquely by (T1) and (T2) in Definition \ref{$T$-structure}. Taking first the functor $\mathfrak{y}$ and then colimts, we obtain a long exact sequence in $\mathscr{S}\Modcat$:
\[(\ast)\quad
\mathop{\text{colim}}\limits_{\longrightarrow}\mathfrak{y}(A_{n}^{\leqslant -1}) \longrightarrow \mathop{\text{colim}}\limits_{\longrightarrow}\mathfrak{y}(A_{n}) \longrightarrow\mathop{\text{colim}}\limits_{\longrightarrow}\mathfrak{y}(A_{n}^{\geqslant 0}) \longrightarrow \Sigma(\mathop{\text{colim}}\limits_{\longrightarrow}\mathfrak{y}(A_{n}^{\leqslant -1})).
\]
Our next aim is to prove that both $\{A_{\bullet}^{\leqslant -1}, f_{\bullet}^{\leqslant -1}\}$ and $\{A_{\bullet}^{\geqslant 0}, f_{\bullet}^{\geqslant 0}\}$ are Cauchy sequences in $\mathscr{S}$.

Since $\mathscr{S}_1$ is extendable (with respect to $\mathscr{M}$),  there exists a natural number $k$ such that $\mathscr{M}_k\subseteq\mathscr{S}^{\leqslant 0}$. Define $C_n:=\text{Cone}(f_{n+1})$. By the $3\times3$ lemma of triangles in triangulated category, there exist dotted morphisms making the following diagram of triangles in $\mathscr{S}$ commutative:
\[\begin{tikzcd}
{A_{n}^{\geqslant 0}[-1]} \arrow[d] \arrow[r, dotted]                  & {A_{n+1}^{\geqslant 0}[-1]} \arrow[d] \arrow[r, dotted] & {C''_{n}[-1]} \arrow[d, "a_{n}", dotted]    &      \\
A_{n}^{\leqslant -1} \arrow[r, "f_{n+1}^{\leqslant -1}"] \arrow[d] & A_{n+1}^{\leqslant -1} \arrow[r] \arrow[d]       & C'_{n} \arrow[d, "b_{n}", dotted] \arrow[r] & {A_{n}^{\leqslant -1}[1]} \arrow[d] \\
A_{n} \arrow[r, "f_{n+1}"] \arrow[d]                             & A_{n+1} \arrow[r] \arrow[d]                     & C_{n} \arrow[d, "c_{n}", dotted] \arrow[r]  & {A_{n}[1]} \arrow[d]               \\
A_{n}^{\geqslant 0} \arrow[r, "d_{n+1}", dotted]           & A_{n+1}^{\geqslant 0} \arrow[r, dotted]                 & C''_{n} \arrow[r, dotted]                   & {A_{n}^{\geqslant 0}[1]}
\end{tikzcd}\]
Since $\mathscr{S}_1$ is a $t$-structure, $d_{n+1}$ is determined by $f_{n+1}$, that is, $d_{n+1}=f_{n+1}^{\geqslant 0}$. Note that $A_{n+1}^{\leqslant -1}\in\mathscr{S}^{\leqslant -1}$, $A_n^{\leqslant -1}[1]\in\mathscr{S}^{\leqslant -1}[1]\subseteq \mathscr{S}^{\leqslant -1}$ and $\mathscr{S}^{\leqslant -1}$
is closed under extensions in $\mathscr{S}$. This forces $C'_n \in \mathscr{S}^{\leqslant -1}$.
Similarly, $C''_n \in \mathscr{S}^{\geqslant -1}$ due to
$A_{n}^{\geqslant 0},  A_{n+1}^{\geqslant 0}\in\mathscr{S}^{\geqslant 0}$.

By the Cauchy sequence$\{A_{\bullet}, f_{\bullet}\}$, for each $p\in\mathbb{N}$, there exists a nonnegative integer $N_p$ such that $C_n\in\mathscr{M}_p$ for all $n\geqslant N_p$. We consider
$p\geqslant k+2$ and $n\geqslant N_p$. Then $C_n \in\mathscr{M}_p\subseteq \mathscr{M}_{k+2}\subseteq\mathscr{M}_k[2]\subseteq \mathscr{S}^{\leqslant 0}[2]=\mathscr{S}^{\leqslant -2}$. It follows from $\Hom_\mathscr{S}(\mathscr{S}^{\leqslant -2}, \mathscr{S}^{\geqslant -1})=0$ that $\Hom_\mathscr{S}(C_n, C''_n )=0$. This yields $c_n=0$ and therefore
$C_n'\simeq C_n\oplus C''_n[-1]$. Note that $C'_n \in \mathscr{S}^{\leqslant -1}$, $C''_n[-1]\in\mathscr{S}^{\geqslant 0}$ and  $\Hom_\mathscr{S}(\mathscr{S}^{\leqslant -1}, \mathscr{S}^{\geqslant 0})=0$.  Thus $\Hom_\mathscr{S}(C_n', C_n''[-1])=0$. Consequently,
$C_n''=0$, and $b_n$ and $f_{n+1}^{\geqslant 0}$ are isomorphisms.
Now, it is clear that $\{A_{\bullet}^{\leqslant -1}\}$ and $\{A_{\bullet}^{\geqslant 0}\}$ are Cauchy sequences in $\mathscr{S}$, $\mathop{\text{colim}}\limits_{\longrightarrow}\mathfrak{y}(A_n^{\leqslant -1})\in\Sigma(\mathfrak{L}(\mathscr{S}^{\leqslant 0}))$ and
$\mathop{\text{colim}}\limits_{\longrightarrow}\mathfrak{y}(A_n^{\geqslant 0})
\in\mathfrak{L}(\mathscr{S}^{\geqslant 0})$.  Let $m:=N_{k+2}$. Since $C_j''=0$ for each $j\geqslant m$, the sequence
$\{A_{\bullet}^{\geqslant 0}\}$ is stable (see Definition \ref{definition of C.S.}) and there is an isomorphism
$$
(\ast\ast)\quad \mathop{\text{colim}}\limits_{\longrightarrow}\mathfrak{y}(A_n^{\geqslant 0})\simeq \mathfrak{y}(A_j^{\geqslant 0})\in\mathfrak{y}(\mathscr{S}^{\geqslant 0}).
$$
\noindent Further, we claim that if $A\in\mathfrak{S}(\mathscr{S})$, then $\mathop{\text{colim}}\limits_{\longrightarrow}\mathfrak{y}(A_{n}^{\leqslant -1})\in \Sigma(\mathfrak{S}(\mathscr{S}^{\leqslant 0}))$ and
$ \mathop{\text{colim}}\limits_{\longrightarrow}\mathfrak{y}(A_n^{\geqslant 0})\in\mathfrak{S}(\mathscr{S}^{\geqslant 0})$.

Since $\mathscr{M}_{k+1}\subseteq \mathscr{M}_k[1]\subseteq \mathscr{S}^{\leqslant -1}$ and $(\mathscr{S}^{\leqslant -1})^{\perp}=\mathscr{S}^{\geqslant 0}$, we have $\mathscr{S}^{\geqslant 0}\subseteq{\mathscr{M}_{k+1}}^{\perp}$. This implies that $\mathfrak{y}(\mathscr{S}^{\geqslant 0}))\subseteq \mathfrak{C}(\mathscr{S})$ and further $\mathfrak{y}(\mathscr{S}^{\geqslant 0})\subseteq\mathfrak{S}(\mathscr{S}^{\geqslant 0})$. Together with $(\ast\ast)$,  $\mathop{\text{colim}}\limits_{\longrightarrow}\mathfrak{y}(A_n^{\geqslant 0})\simeq \mathfrak{y}(A_m^{\geqslant 0})\in\mathfrak{S}(\mathscr{S}^{\geqslant 0})$.
Moreover, the evaluation of the sequence $(\ast)$ at an object $S\in\mathscr{S}$ yields a long exact sequence:
$$
\Hom_\mathscr{S}(S,A_m^{\geqslant 0}[-1])\longrightarrow
\mathop{\text{colim}}\limits_{\longrightarrow}\Hom_\mathscr{S}(S, A_{n}^{\leqslant -1})\longrightarrow \mathop{\text{colim}}\limits_{\longrightarrow}\Hom_\mathscr{S}(S, A_{n}) \longrightarrow\Hom_\mathscr{S}(S, A_m^{\geqslant 0}).
$$
Since $\mathscr{S}^{\geqslant 0}\subseteq{\mathscr{M}_{k+1}}^{\perp}$ and $A_m^{\geqslant 0}\in \mathscr{S}^{\geqslant 0}\supseteq \mathscr{S}^{\geqslant 1}$, we obtain $\Hom_\mathscr{S}(S,A_m^{\geqslant 0}[-1])=0=\Hom_\mathscr{S}(S, A_m^{\geqslant 0})$ for any $S\in \mathscr{M}_{k+1}$. In this case, $\mathop{\text{colim}}\limits_{\longrightarrow}\Hom_\mathscr{S}(S, A_{n}^{\leqslant -1})\simeq \mathop{\text{colim}}\limits_{\longrightarrow}\Hom_\mathscr{S}(S, A_{n}).$
It follows that $A\in \mathfrak{C}(\mathscr{S})$ if and only if $\mathop{\text{colim}}\limits_{\longrightarrow}\mathfrak{y}(A_n^{\leqslant -1})\in \mathfrak{C}(\mathscr{S})$; equivalently, $A\in\mathfrak{S}(\mathscr{S})$ if and only if $\mathop{\text{colim}}\limits_{\longrightarrow}\mathfrak{y}(A_n^{\leqslant -1})\in\Sigma(\mathfrak{S}(\mathscr{S}^{\leqslant 0}))$.
This shows the claim. Thus, the sequence $(\ast)$ for each $A\in\mathfrak{S}(\mathscr{S})$ is required in (T3) of Definition \ref{DTS} for $\mathfrak{S}(\mathscr{S}_1)$. This shows that $\mathfrak{S}(\mathscr{S}_1)$ is a $t$-structure on $\mathfrak{S}(\mathscr{S})$.

By $(\ast)$ and $(\ast\ast)$, each object of $\mathfrak{S}(\mathscr{S}^{\geqslant 0})$ is isomorphic to $\mathfrak{y}(Y)$ for some object $Y\in \mathscr{S}^{\geqslant 0}$. This implies that $\mathfrak{y}(\mathscr{S}^{\geqslant 0})=\mathfrak{S}(\mathscr{S}^{\geqslant 0})$ and the functor $\mathfrak{y}$ restricts to an equivalence
$\mathfrak{y}_1:\mathscr{S}^{\geqslant 0}\to\mathfrak{S}(\mathscr{S}^{\geqslant 0}).$
Let $\mathscr{A}:=\mathfrak{S}(\mathscr{S}^{\leqslant 0})\cap\mathfrak{S}(\mathscr{S}^{\geqslant 0})$, the heart of the $t$-structure $\mathfrak{S}(\mathscr{S}_1)$. Then $
\mathscr{A}=\mathfrak{L}(\mathscr{S}^{\leqslant 0})\cap\mathfrak{C}(\mathscr{S})\cap\mathfrak{L}(\mathscr{S}^{\geqslant 0})$ and $ \mathfrak{S}(\mathscr{H})=\mathfrak{L}(\mathscr{H})\cap\mathfrak{C}(\mathscr{S}).$
Since $\mathscr{H}=\mathscr{S}^{\leqslant 0}\cap\mathscr{S}^{\geqslant 0}$ and $\mathfrak{y}(\mathscr{S}^{\geqslant 0})=\mathfrak{S}(\mathscr{S}^{\geqslant 0})\subseteq\mathfrak{C}(\mathscr{S})$, we have $\mathfrak{S}(\mathscr{H})\subseteq\mathscr{A}$ and $\mathfrak{y}(\mathscr{H})\subseteq\mathfrak{S}(\mathscr{H})$. To show $\mathscr{A}\subseteq\mathfrak{S}(\mathscr{H})$, we now take an object $F\in\mathscr{A}$. Since $\mathfrak{y}_1$ is an equivalence, there exists an object $M\in \mathscr{S}^{\geqslant 0}$ such that $F\simeq \mathfrak{y}_1(M)=\mathfrak{y}(M)$. Further, by $F\in\mathfrak{S}(\mathscr{S}^{\leqslant 0})$, we can find a Cauchy sequence $\{M_{\bullet}\}$ in $\mathscr{S}$ with $M_{n}\in \mathscr{S}^{\leqslant 0}$ for all $n\in\mathbb{N}$, such that $F\simeq \mathop{\text{colim}}\limits_{\rightarrow}\mathfrak{y}(M_n)$.
Consequently, there exists an isomorphism $\theta:\mathfrak{y}(M)\to\mathop{\text{colim}}\limits_{\rightarrow}\mathfrak{y}(M_n)$. Since filtered colimits of objects in $\mathscr{S}\Modcat$ are calculated pointwisely, the Hom-functor $\Hom_{\mathscr{S}\Modcat}(\mathfrak{y}(X),-)$ for each $X\in\mathscr{S}$ commutes with filtered colimits by the Yoneda lemma. Thus $\theta$ is the composition of a morphism $\theta':\mathfrak{y}(M)\to\mathfrak{y}(M_n)$ with the canonical morphism $\mathfrak{y}(M_n)\to \mathop{\text{colim}}\limits_{\rightarrow}\mathfrak{y}(M_n)$ for some positive integer $n$. Clearly, $\theta'$ is a split monomorphism. Still by the Yoneda lemma, $M$ is a direct summand of $M_n$. Since $\mathscr{S}^{\leqslant 0}\subseteq\mathscr{S}$ is closed under direct summands, $M\in \mathscr{S}^{\leqslant 0}$, forcing $M\in\mathscr{H}$. This shows $\mathscr{A}\subseteq \mathfrak{y}(\mathscr{H})$. Up to now, we have
$\mathfrak{S}(\mathscr{H})\subseteq \mathscr{A}\subseteq\mathfrak{y}(\mathscr{H})\subseteq\mathfrak{S}(\mathscr{H}).$ Thus $\mathscr{A}=\mathfrak{S}(\mathscr{H})=\mathfrak{y}(\mathscr{H})$ and the functor $\mathfrak{y}$ (and also $\mathfrak{y}_1$) restricts to an equivalence
$\mathfrak{y}_0:\mathscr{H}\to\mathfrak{S}(\mathscr{H})$. The exactness of $\mathfrak{y}_0$ can be checked directly following the definitions of the exact structures of $\mathscr{H}$ and $\mathfrak{S}(\mathscr{H})$ as well as the triangles of $\mathfrak{S}(\mathscr{S})$ defined in Definition \ref{Completion}.

$(2)$ Clearly, $\mathfrak{y}(\mathscr{R})\subseteq \mathfrak{S}(\mathscr{R})$ if and only if $\mathfrak{y}(\mathscr{R})\subseteq \mathfrak{C}(\mathscr{S})$. Since $\mathscr{R}\subseteq\mathscr{S}^{\geqslant 0}$ and $\mathfrak{y}(\mathscr{S}^{\geqslant 0}))\subseteq \mathfrak{C}(\mathscr{S})$, we have $\mathfrak{y}(\mathscr{R})\subseteq \mathfrak{S}(\mathscr{R})$.
By $\mathscr{S}^{\geqslant 0}\subseteq{\mathscr{M}_{k+1}}^{\perp}$, it follows from  Lemma \ref{stable lemma} that each Cauchy sequence in $\mathscr{S}$ with all terms in $\mathscr{S}^{\geqslant 0}$
is stable. This implies $\mathfrak{L}(\mathscr{R})\subseteq\mathfrak{y}(\mathscr{R})$, and therefore $\mathfrak{S}(\mathscr{R})\subseteq\mathfrak{y}(\mathscr{R})$. Thus $\mathfrak{y}(\mathscr{R})= \mathfrak{S}(\mathscr{R})$. Since $\mathfrak{y}$ is fully faithful, its restriction $\mathfrak{y}|_{\mathscr{R}}:\mathscr{R}\rightarrow \mathfrak{S}(\mathscr{R})$ to $\mathscr{R}$
is an equivalence.

$(3)$ Assume that $\mathscr{S}_1$ is bounded above, that is, $\mathscr{S}_1^{-}=\mathscr{S}$. To show that
$\mathfrak{S}(\mathscr{S}_1)$ is bounded above, it suffices to show that   $\mathfrak{S}(\mathscr{S}^{\geqslant 0}) \subseteq\mathfrak{S}(\mathscr{S}_1)^{-}$. Let $U\in \mathfrak{S}(\mathscr{S}^{\geqslant 0})$. By $(2)$, there is an object $V\in\mathscr{S}^{\geqslant 0}$ such that $U\simeq\mathfrak{y}(V)$. Since $\mathscr{S}_1$ is bounded above, there exists a positive integer $n$ such that $V\in \mathscr{S}^{\leqslant n}$. Then
\[
U\simeq \mathfrak{y}(V) \in \mathfrak{S}(\mathscr{S})\cap\mathfrak{L}(\mathscr{S}^{\leqslant n})=  \mathfrak{S}(\mathscr{S}^{\leqslant n})=\mathfrak{S}(\mathscr{S}^{\leqslant 0}[-n]) =\Sigma^{-n}(\mathfrak{S}(\mathscr{S}^{\leqslant 0})) \subseteq \mathfrak{S}(\mathscr{S})^{-}.
\]

$(4)$ Assume that $\mathscr{S}_1$ is bounded below, that is, $\mathscr{S}_1^{+}=\mathscr{S}$. By Lemma \ref{tool}(2), the metric $\mathscr{M}$ on $\mathscr{S}$ is embeddable (see Definition \ref{Completion}). Now, $(4)$ follows from $(2)$ and Lemma \ref{Properties}(3).
\end{proof}

A useful consequence of Theorem \ref{ET} is the following result. This and Lemma \ref{Restriction} will play an important role in the proof of Theorem \ref{Main result}(1) (see also Theorem \ref{BABB}).

\begin{Koro}\label{Unexpected}
Let $(\mathscr{S}^{\leqslant 0}, \mathscr{S}^{\geqslant 0})$ be a $t$-structure on $\mathscr{S}$.
Suppose that the good metric $\mathscr{M}$ is equivalent to the good metric $\{\mathscr{S}^{\leqslant -n}\}_{n\in \mathbb{N}}$ on $\mathscr{S}$. Then the restriction of the Yoneda functor $\mathfrak{y}:\mathscr{S}\to\mathscr{S}\Modcat$ to $\mathscr{S}^{+}:=\bigcup_{n\in \mathbb{N}} \mathscr{S}^{\geqslant -n}$ yields an equivalence of triangulated categories: $$\mathscr{S}^{+}\lraf{\simeq}\mathfrak{S}(\mathscr{S}).$$
In particular, if the $t$-structure $(\mathscr{S}^{\leqslant 0}, \mathscr{S}^{\geqslant 0})$ is bounded below, then $\mathscr{S}$ and $\mathfrak{S}(\mathscr{S})$ are equivalent as triangulated categories.
\end{Koro}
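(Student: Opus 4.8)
The plan is to deduce this corollary directly from Theorem \ref{ET} by observing that the hypothesis on $\mathscr{M}$ makes the given $t$-structure $(\mathscr{S}^{\leqslant 0},\mathscr{S}^{\geqslant 0})$ extendable. Indeed, since $\mathscr{M}$ is equivalent to $\{\mathscr{S}^{\leqslant -n}\}_{n\in\mathbb{N}}$, in particular $\mathscr{M}$ is finer than $\{\mathscr{S}^{\leqslant -n}\}_{n\in\mathbb{N}}$, which is precisely the definition of extendability (Definition \ref{ETSC}); moreover, by Lemma \ref{Properties}(4) and the remark after Definition \ref{definition of metric}, equivalent good metrics produce the same completion, so we may as well compute $\mathfrak{S}(\mathscr{S})$ using either metric. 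The first step is to invoke Theorem \ref{ET}(1), giving a $t$-structure $\big(\mathfrak{S}(\mathscr{S}^{\leqslant 0}),\mathfrak{S}(\mathscr{S}^{\geqslant 0})\big)$ on $\mathfrak{S}(\mathscr{S})$, together with the identification $\mathfrak{y}(\mathscr{S}^{\geqslant 0})=\mathfrak{S}(\mathscr{S}^{\geqslant 0})$ established there.

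The core step is to show that the Yoneda functor restricts to a fully faithful triangle functor on $\mathscr{S}^{+}$ landing in $\mathfrak{S}(\mathscr{S})$, and that this restriction is essentially surjective. For full faithfulness and well-definedness: each object $X\in\mathscr{S}^{+}$ lies in $\mathscr{S}^{\geqslant -n}$ for some $n$, and Theorem \ref{ET}(2), applied to the full subcategory $\mathscr{R}:=\mathscr{S}^{\geqslant -n}$ (which is contained in $\mathscr{S}^{\geqslant -n}$, the coaisle of the shifted $t$-structure, itself extendable by Lemma \ref{tool}(1)), shows $\mathfrak{y}$ restricts to an equivalence $\mathscr{R}\xrightarrow{\simeq}\mathfrak{S}(\mathscr{R})$; in particular $\mathfrak{y}(X)\in\mathfrak{S}(\mathscr{S})$ and $\mathfrak{y}$ is fully faithful on $\mathscr{S}^{+}$. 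Since $\mathfrak{y}$ sends the triangles of $\mathscr{S}$ to triangles of $\mathfrak{S}(\mathscr{S})$ (these being colimits of constant Cauchy sequences, which are trivially Cauchy), the restricted functor is a triangle functor. For essential surjectivity, take any $F\in\mathfrak{S}(\mathscr{S})$; applying the $t$-structure $\big(\mathfrak{S}(\mathscr{S}^{\leqslant 0}),\mathfrak{S}(\mathscr{S}^{\geqslant 0})\big)$ to $F$ truncates it, and by boundedness considerations together with $\mathfrak{y}(\mathscr{S}^{\geqslant 0})=\mathfrak{S}(\mathscr{S}^{\geqslant 0})$ one writes $F$ as a finite iterated extension of shifts of objects in $\mathfrak{y}(\mathscr{S}^{\geqslant 0})$, hence $F\cong\mathfrak{y}(X)$ for some $X\in\mathscr{S}^{+}$. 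More carefully: since $\mathfrak{y}(\mathscr{S}^{\geqslant 0})=\mathfrak{S}(\mathscr{S}^{\geqslant 0})$ and $F\in\mathfrak{S}(\mathscr{S})=\mathfrak{L}(\mathscr{S})\cap\mathfrak{C}(\mathscr{S})$, the definition of $\mathfrak{C}(\mathscr{S})$ forces $F(\mathscr{M}_j)=0$ for some $j$, so in the standard truncation triangle $F^{\leqslant -1}\to F\to F^{\geqslant 0}$ for $\mathfrak{S}(\mathscr{S}_1)$, iterating the truncations and using that the pieces are in $\mathfrak{y}(\mathscr{S}^{\geqslant 0})$ together with $\mathscr{M}$-vanishing shows the negative part terminates after finitely many steps; one then assembles $F$ from objects of $\mathscr{S}^{\geqslant -n}$ via the cone construction in $\mathscr{S}$, which commutes with $\mathfrak{y}$.

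The main obstacle I anticipate is the essential surjectivity argument: showing that an arbitrary object of $\mathfrak{S}(\mathscr{S})$, a priori only a colimit of a Cauchy sequence vanishing on some $\mathscr{M}_j$, is actually isomorphic to $\mathfrak{y}$ of an honest object of $\mathscr{S}^{+}$. The key leverage is the combination of two facts from Theorem \ref{ET}: first, that $\mathfrak{S}(\mathscr{S}^{\geqslant 0})=\mathfrak{y}(\mathscr{S}^{\geqslant 0})$ exactly (not merely up to closure), so the "positive half" of the lifted $t$-structure already lives inside the image of $\mathscr{S}$; and second, that the proof of Theorem \ref{ET}(1) shows the Cauchy sequence defining any $F$ decomposes into a Cauchy sequence of $\mathscr{S}^{\geqslant 0}$-objects (which stabilizes, by Lemma \ref{stable lemma}, since $\mathscr{S}^{\geqslant 0}\subseteq\mathscr{M}_{k+1}^{\perp}$) and one of $\mathscr{S}^{\leqslant -1}$-objects. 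Running the truncation finitely many times—using that $F$ vanishes on some $\mathscr{M}_j$ to bound the number of steps—reduces $F$ to a finite extension of shifts of objects already in $\mathfrak{y}(\mathscr{S}^{+})$, and since $\mathfrak{y}|_{\mathscr{S}^{+}}$ is a fully faithful triangle functor onto a strictly full subcategory closed under the relevant extensions, $F$ itself lies in $\mathfrak{y}(\mathscr{S}^{+})$. Finally, the "in particular" clause is immediate: if the $t$-structure is bounded below then $\mathscr{S}^{+}=\mathscr{S}$, so the equivalence reads $\mathscr{S}\xrightarrow{\simeq}\mathfrak{S}(\mathscr{S})$.
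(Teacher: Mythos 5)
Your overall plan coincides with the paper's: work with the metric $\mathscr{M}_n=\mathscr{S}^{\leqslant -n}$, invoke Theorem \ref{ET} to lift the $t$-structure to $\mathfrak{S}(\mathscr{S})$ with $\mathfrak{y}(\mathscr{S}^{\geqslant 0})=\mathfrak{S}(\mathscr{S}^{\geqslant 0})$, and then reduce to showing the lifted $t$-structure is bounded below. The paper gets the fully faithful triangle functor $\mathscr{S}^{+}\to\mathfrak{S}(\mathscr{S})$ in one step from Lemma \ref{Properties}(3), via the observation that once $\mathscr{M}_n=\mathscr{S}^{\leqslant -n}$, one has $\mathscr{S}(\mathscr{M})=\bigcup_n\mathscr{M}_n^{\perp}=\bigcup_n\mathscr{S}^{\geqslant -n+1}=\mathscr{S}^{+}$; your piecemeal application of Theorem \ref{ET}(2) to each shifted coaisle $\mathscr{S}^{\geqslant -n}$ also works but is less economical.

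The genuine gap is in your essential surjectivity argument. The ``iterated truncation'' narrative — peel off $F^{\geqslant 0}$, then iterate on $F^{\leqslant -1}$, claim the negative tail terminates by ``$\mathscr{M}$-vanishing'' — does not, as written, explain \emph{why} the tail terminates, and in fact no iteration is needed. The precise mechanism, which you do not state, is a single step: $F\in\mathfrak{C}(\mathscr{S})$ gives $F(\mathscr{S}^{\leqslant -n})=0$ for some $n$, i.e.\ $\Hom_{\mathscr{S}\Modcat}(\mathfrak{y}(\mathscr{S}^{\leqslant -n}),F)=0$; since every object of $\mathfrak{S}(\mathscr{S}^{\leqslant -n})$ is a colimit of objects of $\mathfrak{y}(\mathscr{S}^{\leqslant -n})$ and $\Hom_{\mathscr{S}\Modcat}(-,F)$ sends colimits to limits, this upgrades to $\Hom_{\mathscr{S}\Modcat}(\mathfrak{S}(\mathscr{S}^{\leqslant -n}),F)=0$, so $F\in\mathfrak{S}(\mathscr{S}^{\leqslant -n})^{\perp}\cap\mathfrak{S}(\mathscr{S})=\mathfrak{S}(\mathscr{S}^{\geqslant -n+1})=\mathfrak{y}(\mathscr{S}^{\geqslant -n+1})$. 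Without the colimits-to-limits observation the argument does not close: knowing $F$ kills $\mathfrak{y}(\mathscr{S}^{\leqslant -n})$ says nothing directly about the truncations $F^{\leqslant -k}$ in $\mathfrak{S}(\mathscr{S})$, which live in the larger class $\mathfrak{S}(\mathscr{S}^{\leqslant -k})$. There is also no need for the closing ``assemble $F$ from cones of objects in $\mathscr{S}^{\geqslant -n}$''; once $F\in\mathfrak{y}(\mathscr{S}^{\geqslant -n+1})$ it is $\mathfrak{y}$ of a single object.
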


\begin{proof}
Since equivalent good metrics on $\mathscr{S}$ produce the same completion, we can assume $\mathscr{M}_n=\mathscr{S}^{\leqslant -n}$ for $n\in\mathbb{N}$.
Clearly, $\mathscr{S}(\mathscr{M}):=\bigcup_{n\in\mathbb{N}}\mathscr{M}_n^{\perp}=\bigcup_{n\in \mathbb{N}} \mathscr{S}^{\geqslant -n+1}=\mathscr{S}^+$.
By Lemma \ref{Properties}(3), the functor $\mathfrak{y}$ restricts to a fully faithful triangle functor $\mathfrak{y}':\mathscr{S}^+\to \mathfrak{S}(\mathscr{S})$. It suffices to show that $\mathfrak{y}'$ is dense.

Let $\mathscr{D}:=(\mathscr{S}^{\leqslant 0}, \mathscr{S}^{\geqslant 0})$. Then $\mathscr{D}$ is extendable with respect to $\mathscr{M}$, due to Definition \ref{ETSC}. By Theorem \ref{ET}(1)(2), the pair
$\mathfrak{S}(\mathscr{D}):=\big(\mathfrak{S}(\mathscr{S}^{\leqslant 0}), \mathfrak{S}(\mathscr{S}^{\geqslant 0})\big)$ is a $t$-structure on $\mathfrak{S}(\mathscr{S})$ and the functor $\mathfrak{y}$ restricts to an additive equivalence
$\mathscr{S}^{\geqslant 0}\to\mathfrak{S}(\mathscr{S}^{\geqslant 0})$. Moreover, for
any $X\in\mathscr{S}$ and $j\in\mathbb{Z}$, we have $\Sigma^j(\mathfrak{y}(X))\simeq\mathfrak{y}(X[j])$, and $\mathfrak{S}(\mathscr{S}^{\geqslant -j})=\Sigma^{j}(\mathfrak{S}(\mathscr{S}^{\geqslant 0}))$ by Lemma \ref{Properties}(1). It follows that the functor $\mathfrak{y}$ restricts to a series of additive equivalences $\mathscr{S}^{\geqslant j}\to\mathfrak{S}(\mathscr{S}^{\geqslant j})$ for all $j$.
Thus, to show the denseness of $\mathfrak{y}'$, we only need to show that $\mathfrak{S}(\mathscr{S})=\bigcup_{n\in \mathbb{N}}\mathfrak{S}(\mathscr{S}^{\geqslant -n})$, that is, $\mathfrak{S}(\mathscr{D})$ is bounded below.

For this aim, we take $F\in \mathfrak{S}(\mathscr{S})$. Recall from Definition \ref{Completion}(3) that $\mathfrak{S}(\mathscr{S}):=\mathfrak{L}(\mathscr{S}) \cap \mathfrak{C}(\mathscr{S})$. Then there exists a nonnegative integer $n$ with $F(\mathscr{S}^{\leqslant -n})=0$. In other words, $\Hom_{\mathscr{S}\Modcat}\big(\mathfrak{y}(\mathscr{S}^{\leqslant -n}), F\big)=0$.
Note that, by Definition \ref{Completion}(1), each object of $\mathfrak{S}(\mathscr{S}^{\leqslant -n})$ is a colimit in $\mathscr{S}\Modcat$ of objects that belong to
$\mathfrak{y}\big(\mathscr{S}^{\leqslant -n}\big)$. Since $\Hom_{\mathscr{S}\Modcat}(-, F)$ sends colimits to limits, $\Hom_{\mathscr{S}\Modcat}(\mathfrak{S}\big(\mathscr{S}^{\leqslant -n}), F\big)=0$.
It follows that $F\in \mathfrak{S}(\mathscr{S}^{\leqslant -n})^{\bot}\cap \mathfrak{S}(\mathscr{S})$, where
the right orthogonal subcategory associated to a full subcategory of $\mathscr{S}\Modcat$ is calculated in $\mathscr{S}\Modcat$.
Since $\mathfrak{S}(\mathscr{D})$ is a $t$-structure on $\mathfrak{S}(\mathscr{S})$ and $\mathfrak{S}(\mathscr{S}^{\leqslant -n})=\Sigma^n\big(\mathfrak{S}(\mathscr{S}^{\leqslant 0})\big)$, we have $\mathfrak{S}(\mathscr{S}^{\leqslant -n})^{\bot}\cap\mathfrak{S}(\mathscr{S})
=\Sigma^n\big(\mathfrak{S}(\mathscr{S}^{\geqslant 1})\big)=\mathfrak{S}(\mathscr{S}^{\geqslant -n+1})$.
Thus $F\in \mathfrak{S}(\mathscr{S}^{\geqslant -n+1})$. This shows that $\mathfrak{S}(\mathscr{D})$ is bounded below, and therefore the functor $\mathfrak{y}'$ is a triangle equivalence.
\end{proof}

\begin{Rem}
Just Corollary 3.6 alone can also be obtained with other methods - as done in a very recent preprint by Cummings and Gratz (see \cite[Theorem 1.1]{CG}). To compare the two methods, we would like to refer the reader to their interesting work on metric completions of discrete cluster categories.
\end{Rem}

Equivalences between $t$-structures can be characterized by the finiteness of the distance of
$t$-structures. Recall that the \emph{distance} between $t$-structures $\mathscr{S}_i:=(\mathscr{S}^{\leqslant 0}_i,\mathscr{S}^{\geqslant 0}_i)$ for $i=1,2$ on $\mathscr{S}$ is defined as
$$
d(\mathscr{S}_1,\mathscr{S}_2):=\text{inf}\{n_{2}-n_{1}\;|\; n_{1},n_{2} \in \mathbb{Z}\;\; \text{with}\;\; n_{1} \leqslant n_{2} \;\;\text{and}\;\;\mathscr{S}_{1}^{\leqslant n_{1}} \subseteq \mathscr{S}_{2}^{\leqslant 0} \subseteq \mathscr{S}_{1}^{\leqslant n_{2}}\}.
$$
Then $\mathscr{S}_1$ and $\mathscr{S}_2$ are equivalent if and only if $d(\mathscr{S}_1,\mathscr{S}_2)<\infty$. We denote by
$\mathcal{T}_{\mathscr{S}}(\mathscr{S}_1)$ the \emph{equivalence class of $t$-structures} on $\mathscr{S}$ containing $\mathscr{S}_1$, and define
$\mathfrak{S}(\mathscr{S}_1):=(\mathfrak{S}(\mathscr{S}^{\leqslant 0}_1),
\mathfrak{S}(\mathscr{S}^{\geqslant 0}_1)).$
For a $t$-structure $\mathscr{D}:=(\mathscr{D}^{\leqslant 0},\mathscr{D}^{\geqslant 0})$ on $\mathfrak{S}(\mathscr{S})$, we define $\mathfrak{y}^{-1}(\mathscr{D}):=\big(\mathfrak{y}^{-1}(\mathscr{D}^{\leqslant 0}),\mathfrak{y}^{-1}(\mathscr{D}^{\geqslant 0})\big)$, a pair of full subcategories of $\mathscr{S}$ consisting of objects $X$ such that $\mathfrak{y}(X)$ is in $\mathscr{D}^{\leqslant 0}$ and $\mathscr{D}^{\geqslant 0}$, respectively.

The following result conveys that lifting extendable $t$-structures from $\mathscr{S}$ to $\mathfrak{S}(\mathscr{S})$ preserves both the equivalence and the distance of $t$-structures.

\begin{Koro}
Let $\mathscr{S}_1:=(\mathscr{S}^{\leqslant 0}_1, \mathscr{S}^{\geqslant 0}_1)$ be an extendable $t$-structure on $\mathscr{S}$ with respect to a good metric $\mathscr{M}$. The following statements are true.

$(1)$ The map $\mathfrak{S}(-):\mathcal{T}_{\mathscr{S}}(\mathscr{S}_1)\to \mathcal{T}_{\mathfrak{S}(\mathscr{S})}(\mathfrak{S}(\mathscr{S}_1))$ is injective and
$d(\mathscr{S}_1, \mathscr{S}_2)=d(\mathfrak{S}(\mathscr{S}_1),\mathfrak{S}(\mathscr{S}_2))$ for any
$\mathscr{S}_{2}\in\mathcal{T}_{\mathscr{S}}(\mathscr{S}_1)$.

$(2)$ Suppose that the metric $\mathscr{M}$ is embeddable. Then the map $\mathfrak{S}(-)$ in $(1)$
is bijective and its inverse is given by
$\mathfrak{y}^{-1}(-):\mathcal{T}_{\mathfrak{S}(\mathscr{S})}(\mathfrak{S}(\mathscr{S}_1))
\to\mathcal{T}_{\mathscr{S}}(\mathscr{S}_1)$.
\end{Koro}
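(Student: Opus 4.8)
The plan is to reduce every assertion to a statement about \emph{coaisles}, where the behaviour of the completion is transparent. Three observations drive the argument. (a) Any $t$-structure equivalent to $\mathscr{S}_1$ is again extendable (Lemma \ref{tool}(1)), so Theorem \ref{ET}(1) applies to it; moreover $\mathfrak{L}(-)$, and hence $\mathfrak{S}(-)$, is monotone under inclusions of subcategories and commutes with the shift $\Sigma$ (Lemma \ref{Properties}(1)), whence $\mathfrak{S}(\mathscr{S}_i)^{\leqslant n}=\mathfrak{S}(\mathscr{S}_i^{\leqslant n})$ and $\mathfrak{S}(\mathscr{S}_i)^{\geqslant n}=\mathfrak{S}(\mathscr{S}_i^{\geqslant n})$ for every $\mathscr{S}_i\in\mathcal{T}_{\mathscr{S}}(\mathscr{S}_1)$ and every $n\in\mathbb{Z}$ (cf. the proof of Corollary \ref{Unexpected}). (b) Theorem \ref{ET}(2) gives $\mathfrak{S}(\mathscr{S}_i^{\geqslant 0})=\mathfrak{y}(\mathscr{S}_i^{\geqslant 0})$, hence $\mathfrak{S}(\mathscr{S}_i^{\geqslant n})=\mathfrak{y}(\mathscr{S}_i^{\geqslant n})$ for all $n$: the lifted coaisle is simply the Yoneda image of the original coaisle. (c) $\mathfrak{y}\colon\mathscr{S}\to\mathscr{S}\Modcat$ is fully faithful, so an inclusion between Yoneda images of subcategories of $\mathscr{S}$ reflects to an inclusion in $\mathscr{S}$, and a $t$-structure is recovered from its coaisle by taking left orthogonals.

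For part (1), a chain $\mathscr{S}_1^{\leqslant n_1}\subseteq\mathscr{S}_2^{\leqslant 0}\subseteq\mathscr{S}_1^{\leqslant n_2}$ becomes, on applying $\mathfrak{S}(-)$ and using (a), the chain $\mathfrak{S}(\mathscr{S}_1)^{\leqslant n_1}\subseteq\mathfrak{S}(\mathscr{S}_2)^{\leqslant 0}\subseteq\mathfrak{S}(\mathscr{S}_1)^{\leqslant n_2}$, so $d(\mathfrak{S}(\mathscr{S}_1),\mathfrak{S}(\mathscr{S}_2))\leqslant d(\mathscr{S}_1,\mathscr{S}_2)$. For the reverse inequality I would take a chain between the aisles of $\mathfrak{S}(\mathscr{S}_1)$ and $\mathfrak{S}(\mathscr{S}_2)$, pass to right orthogonals in $\mathfrak{S}(\mathscr{S})$ to get a chain between the coaisles, rewrite each coaisle as $\mathfrak{y}(\mathscr{S}_i^{\geqslant m})$ by (b), reflect the inclusions down to $\mathscr{S}$ by (c), and take left orthogonals in $\mathscr{S}$ to return to a chain between the aisles of $\mathscr{S}_1$ and $\mathscr{S}_2$; this gives $d(\mathscr{S}_1,\mathscr{S}_2)\leqslant d(\mathfrak{S}(\mathscr{S}_1),\mathfrak{S}(\mathscr{S}_2))$ and hence equality. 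Injectivity of $\mathfrak{S}(-)$ is the same reflection argument applied to $\mathfrak{S}(\mathscr{S}_2)=\mathfrak{S}(\mathscr{S}_3)$: equal $t$-structures have equal coaisles, so $\mathfrak{y}(\mathscr{S}_2^{\geqslant 1})=\mathfrak{y}(\mathscr{S}_3^{\geqslant 1})$, hence $\mathscr{S}_2^{\geqslant 1}=\mathscr{S}_3^{\geqslant 1}$, hence $\mathscr{S}_2=\mathscr{S}_3$.

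For part (2), embeddability of $\mathscr{M}$ lets $\mathfrak{y}$ restrict to a fully faithful triangle functor $\mathscr{S}\to\mathfrak{S}(\mathscr{S})$ (Lemma \ref{Properties}(3)). Given a $t$-structure $\mathscr{D}$ on $\mathfrak{S}(\mathscr{S})$ equivalent to $\mathfrak{S}(\mathscr{S}_1)$, the equivalence bound together with (b) forces $\mathscr{D}^{\geqslant 0}\subseteq\mathfrak{S}(\mathscr{S}_1)^{\geqslant m}=\mathfrak{y}(\mathscr{S}_1^{\geqslant m})\subseteq\mathfrak{y}(\mathscr{S})$ for some $m$. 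Using this I would show $\mathfrak{y}^{-1}(\mathscr{D})$ is a $t$-structure on $\mathscr{S}$ equivalent to $\mathscr{S}_1$: conditions (T1), (T2) of Definition \ref{DTS} are immediate from full faithfulness; for (T3), the truncation triangle of $\mathfrak{y}(X)$ with respect to $\mathscr{D}$ has its $(\geqslant 0)$-part in $\mathfrak{y}(\mathscr{S})$, say equal to $\mathfrak{y}(Y)$, so completing the induced morphism $X\to Y$ to a triangle in $\mathscr{S}$ and applying $\mathfrak{y}$ — which, being fully faithful and triangulated, reflects any triangle with vertices in its essential image — shows the $(\leqslant -1)$-part is $\mathfrak{y}(W_0)$ with $W_0$ in $\mathfrak{y}^{-1}(\mathscr{D}^{\leqslant -1})$, so the triangle $W_0\to X\to Y\to W_0[1]$ witnesses (T3); the membership $\mathfrak{y}^{-1}(\mathscr{D})\in\mathcal{T}_{\mathscr{S}}(\mathscr{S}_1)$ then follows by pulling the equivalence bound of $\mathscr{D}$ back, using $\mathfrak{y}^{-1}(\mathfrak{S}(\mathscr{S}_1)^{\leqslant m})=\mathscr{S}_1^{\leqslant m}$ (a short orthogonality computation, via (b) and full faithfulness). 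Finally $\mathfrak{S}(\mathfrak{y}^{-1}(\mathscr{D}))=\mathscr{D}$: by (b) the coaisle of the left-hand side is $\mathfrak{y}(\mathfrak{y}^{-1}(\mathscr{D}^{\geqslant 0}))$, which equals $\mathscr{D}^{\geqslant 0}$ because $\mathscr{D}^{\geqslant 0}\subseteq\mathfrak{y}(\mathscr{S})$, and a $t$-structure is determined by its coaisle. Together with the injectivity from part (1), this shows $\mathfrak{S}(-)$ is bijective with inverse $\mathfrak{y}^{-1}(-)$.

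The genuine work will be the verification of (T3) for $\mathfrak{y}^{-1}(\mathscr{D})$ in part (2). It relies on the a priori containment $\mathscr{D}^{\geqslant 0}\subseteq\mathfrak{y}(\mathscr{S})$ that is forced by $\mathscr{D}$ lying in the equivalence class of $\mathfrak{S}(\mathscr{S}_1)$, and on the elementary but indispensable fact that a fully faithful triangle functor reflects triangles whose three vertices lie in its essential image; everything else reduces to routine bookkeeping with orthogonals, shifts, and the monotonicity of $\mathfrak{S}$.
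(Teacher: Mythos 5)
Your proposal is correct and follows essentially the same route as the paper: both reduce everything to the behaviour of coaisles via Theorem \ref{ET}(1)--(2), Lemma \ref{Properties}, and Lemma \ref{tool}(1), and for part (2) both hinge on the containment $\mathscr{D}^{\geqslant 0}\subseteq\mathfrak{y}(\mathscr{S})$ forced by the equivalence bound together with $\mathfrak{y}(\mathscr{S})$ being a full triangulated subcategory. The only cosmetic difference is that the paper packages your (T1)--(T3) check into the intermediate $t$-structure $\mathscr{D}':=(\mathfrak{y}(\mathscr{S})\cap\mathscr{D}^{\leqslant 0},\,\mathscr{D}^{\geqslant 0})$ on $\mathfrak{y}(\mathscr{S})$ and reads the distance equalities directly off the coaisle equivalences $\mathscr{S}_i^{\geqslant n}\simeq\mathfrak{S}(\mathscr{S}_i)^{\geqslant n}$ rather than passing explicitly through orthogonals.
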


\begin{proof}
$(1)$ We first show that the map $\mathfrak{S}(-)$ is well defined.
In the proof, the formula will be used freely: $\mathfrak{S}(\mathscr{A}[j])=\Sigma^{j}(\mathfrak{S}(\mathscr{A}))$ for any subcategory $\mathscr{A}$ of $\mathscr{S}$ and for any $j\in\mathbb{Z}$ (see Lemma \ref{Properties}(1)).
We also set
$\mathfrak{S}(\mathscr{S}_1)^{\leqslant 0}:=\mathfrak{S}(\mathscr{S}^{\leqslant 0}_1)$ and
$\mathfrak{S}(\mathscr{S}_1)^{\geqslant 0}:=\mathfrak{S}(\mathscr{S}^{\geqslant 0}_1).
$

Let $\mathscr{S}_{2}\in \mathcal{T}_{\mathscr{S}}(\mathscr{S}_1)$. Then $\mathscr{S}_1$ and $\mathscr{S}_2$ are equivalent. Since $\mathscr{S}_1$ is extendable, it follows from Lemma \ref{tool}(1) that $\mathscr{S}_2$ is extendable. By Theorem \ref{ET}(1), $\mathfrak{S}(\mathscr{S}_1)$ and $\mathfrak{S}(\mathscr{S}_2)$ are $t$-structures on $\mathfrak{S}(\mathscr{S})$. Let $d:=d(\mathscr{S}_1, \mathscr{S}_2)$. Then there is an integer $g$ with $\mathscr{S}^{\geqslant d+g}_1\subseteq\mathscr{S}^{\geqslant 0}_2\subseteq \mathscr{S}^{\geqslant g}_1$. Applying $\mathfrak{S}(-)$ to the inclusions, we obtain $\mathfrak{S}(\mathscr{S}_1)^{\geqslant d+g}\subseteq\mathfrak{S}(\mathscr{S}_2)^{\geqslant 0}\subseteq \mathfrak{S}(\mathscr{S}_1)^{\geqslant g}$. This implies that $\mathfrak{S}(\mathscr{S}_1)$ and  $\mathfrak{S}(\mathscr{S}_2)$ are equivalent. Thus the map $\mathfrak{S}(-)$ is well defined. Moreover, by Theorem \ref{ET}(2), the Yoneda functor $\mathfrak{y}$ restricts to additive equivalences
$\mathscr{S}^{\geqslant n}_1\to \mathfrak{S}(\mathscr{S}_1)^{\geqslant n}$ and $\mathscr{S}^{\geqslant n}_2\to \mathfrak{S}(\mathscr{S}_2)^{\geqslant n}$ for $n\in\mathbb{N}$. It follows that $d=d(\mathfrak{S}(\mathscr{S}_1),\mathfrak{S}(\mathscr{S}_2))$.  Clearly, the map $\mathfrak{S}(-)$ is injective since each $t$-structure is determined by its coaisle.

$(2)$ We show that the map $\mathfrak{y}^{-1}(-)$ is well defined.

Since $\mathscr{M}$ is embeddable, $\mathfrak{y}$ restricts to a fully faithful triangle functor $\mathscr{S}\to\mathfrak{S}(\mathscr{S})$ by Lemma \ref{Properties}(3). In particular, $\mathfrak{y}(\mathscr{S})$ is full triangulated subcategory of $\mathfrak{S}(\mathscr{S})$.
Let $\mathscr{D}:=(\mathscr{D}^{\leqslant 0},\mathscr{D}^{\geqslant 0})\in \mathcal{T}_{\mathfrak{S}(\mathscr{S})}(\mathfrak{S}(\mathscr{S}_1))$. There exists an integer $m$ with $\mathscr{D}^{\geqslant 0}\subseteq \mathfrak{S}(\mathscr{S}_1)^{\geqslant m}.$
Since $\mathfrak{y}$ restricts to an equivalence
$\mathscr{S}^{\geqslant m}_1\to \mathfrak{S}(\mathscr{S}_1)^{\geqslant m}$ by Theorem \ref{ET}(2), we have $\mathfrak{y}(\mathscr{S}^{\geqslant m}_1)=\mathfrak{S}(\mathscr{S}_1)^{\geqslant m}$ and $\mathscr{S}^{\geqslant m}_1=\mathfrak{y}^{-1}(\mathfrak{S}(\mathscr{S}_1)^{\geqslant m})$. This forces $\mathscr{D}^{\geqslant 0}\subseteq \mathfrak{y}(\mathscr{S}^{\geqslant m}_1)\subseteq\mathfrak{y}(\mathscr{S})$. Let  $\mathscr{D}':=\big(\mathfrak{y}(\mathscr{S})\cap\mathscr{D}^{\leqslant 0},\, \mathscr{D}^{\geqslant 0}\big)$. Since $\mathscr{D}$ is a $t$-structure on $\mathfrak{S}(\mathscr{S})$ and $\mathscr{D}^{\geqslant 0}\subseteq\mathfrak{y}(\mathscr{S})$, it is easy to show that $\mathscr{D}'$ is a $t$-structure on $\mathfrak{y}(\mathscr{S})$.
Clearly,
$\mathfrak{y}^{-1}(\mathscr{D}')=\mathfrak{y}^{-1}(\mathscr{D})$. By the triangle equivalence $\mathscr{S}\simeq\mathfrak{y}(\mathscr{S})$, $\mathfrak{y}^{-1}(\mathscr{D})$ is a $t$-structure on $\mathscr{S}$. Since $\mathscr{D}^{\geqslant 0}\subseteq \mathfrak{y}(\mathscr{S}^{\geqslant m}_1)$, we see from Theorem \ref{ET}(2) that
$\mathfrak{y}^{-1}(\mathscr{D}^{\geqslant 0})\subseteq\mathfrak{y}^{-1}(\mathfrak{y}(\mathscr{S}^{\geqslant m}_1))=\mathscr{S}^{\geqslant m}_1$ and $\mathfrak{y}$ restricts to an equivalence $\mathfrak{y}^{-1}(\mathscr{D}^{\geqslant 0})\simeq\mathscr{D}^{\geqslant 0}$. Moreover, $\mathfrak{y}$ also restricts to an equivalence $\mathscr{S}^{\geqslant 0}_1
\simeq\mathfrak{S}(\mathscr{S}_1)^{\geqslant 0}$. Thus
$d(\mathscr{S}_1, \mathfrak{y}^{-1}(\mathscr{D}))=d(\mathfrak{S}(\mathscr{S}_1), \mathscr{D})<\infty$.
This implies $\mathfrak{y}^{-1}(\mathscr{D})\in \mathcal{T}_{\mathscr{S}}(\mathscr{S}_1)$, and therefore the map $\mathfrak{y}^{-1}(-)$ is well defined. It is easy to check that $\mathfrak{S}(-)$ and $\mathfrak{y}^{-1}(-)$ are inverse bijections.
\end{proof}

\subsection{Completing triangulated categories at objects}\label{CMAO}
In this section, we prove our main results - $(a)$ and $(b)$ of Theorem \ref{Main result}. More generally, we provide necessary conditions for the existence of bounded $t$-structures in terms of the completions of a triangulated category. In particular, we show Theorem \ref{BABB}(2) which implies Theorem \ref{Main result}$(a)$. The proofs of these results are based on lifting $t$-structures along completions in Theorem \ref{ET}, and are thus \emph{different} from Neeman's proof of Theorem \ref{GC}. Further, we discuss the equivalence of bounded $t$-structures on the completions of triangulated categories and show Theorem \ref{BABB}(3). This leads to a proof of Theorem \ref{Main result}$(b)$. For a compactly generated triangulated category, we also lift bounded $t$-structures from the completion of the category of its compact objects to itself (see Theorem \ref{Equivalent $t$-structure}).

%In this section, we discuss the completions of a triangulated category with respect to
%good metrics associated to objects. Assuming the existence of a bounded $t$-structure on the category and
%the finiteness of finitistic dimension, we show that taking completions at those good metrics does not yield a new triangulated category.

Throughout this section, \emph{let $\mathscr{S}$ be an essentially small triangulated category with an object $G$.} We take a $G$-good metric $\mathscr{M}:=\{\mathscr{M}_n\}_{n\in\mathbb{N}}$ on $\mathscr{S}$ (see Definition \ref{PEC}). Recall from Lemma \ref{Properties}(4) that equivalent good metrics produce the same completion. So, we assume $\mathscr{M}_n=\langle G\rangle^{(-\infty,-n]}$ for any $n\in\mathbb{N}$. Following Definition \ref{Completion}, we have defined full subcategories of $\mathscr{S}\Modcat$: $\mathfrak{L}_G(\mathscr{A})$, $\mathfrak{C}(\mathscr{S})$ and $\mathfrak{S}_G(\mathscr{A})$, where $\mathscr{A}\subseteq\mathscr{S}$ is a full subcategory and the subscript $G$ reminds us of the good metric determined by $G$. In particular, $\mathfrak{S}_G(\mathscr{S})$ is the completion of $\mathscr{S}$ with respect to the metric $\mathscr{M}$.
As mentioned in the Introduction, we identify $\mathscr{S}$ (or any full subcategory of $\mathscr{S}$) with its essential image in $\mathscr{S}\Modcat$ under the Yoneda functor $\mathfrak{y}:\mathscr{S}\to \mathscr{S}\Modcat$. Moreover, we make a convention: \emph{the left (or right) orthogonal subcategory associated to a full subcategory of $\mathfrak{S}_G(\mathscr{S})$ is calculated in $\mathfrak{S}_G(\mathscr{S})$.}

The following result is very crucial in the proof of our main results from two aspects. One is that the aisle of a bounded $t$-structure on an intermediate triangulated category between $\mathscr{S}$ and $\mathfrak{S}_G(\mathscr{S})$ can be recovered by first restricting to $\mathscr{S}$ and then lifting to $\mathfrak{S}_G(\mathscr{S})$. The other is that assuming the existence of a bounded $t$-structure on the intermediate category and the finiteness of finitistic dimension forces the good metric on $\mathscr{S}$ induced from the $t$-structure to be a $G$-good metric.

\begin{Lem}\label{Restriction}
Suppose that $\mathscr{X}$ is a full triangulated subcategory of $\mathfrak{S}_G(\mathscr{S})$ with $\mathscr{S}\subseteq \mathscr{X}$. Let $(\mathscr{X}^{\leqslant 0},\mathscr{X}^{\geqslant 0})$ be a $t$-structure on $\mathscr{X}$. Then the following statements are true.

$(1)$ If $(\mathscr{X}^{\leqslant 0},\mathscr{X}^{\geqslant 0})$ is bounded above, then $\mathscr{X}\cap\mathfrak{S}_G(\mathscr{S}\cap \mathscr{X}^{\leqslant 0})=\mathscr{X}^{\leqslant 0}$.

$(2)$ If $(\mathscr{X}^{\leqslant 0},\mathscr{X}^{\geqslant 0})$ is bounded and $\mathscr{A}$ is a full subcategory of $\mathscr{S}$ satisfying
$$\mathscr{S}\cap{^{\perp}}\big(G[a, \infty)\big)\subseteq \mathscr{A} \subseteq \langle G\rangle^{(-\infty,\, b]}$$ for some integers $a$ and $b$, then there are nonnegative integers $r$ and $s$ with $\mathscr{A}[r]\subseteq\mathscr{S}\cap \mathscr{X}^{\leqslant 0}\subseteq \mathscr{A}[-s]$.

$(3)$ If $(\mathscr{X}^{\leqslant 0},\mathscr{X}^{\geqslant 0})$ is bounded and  $\fd(\mathscr{S}\opp, G\opp)<\infty$, then there are nonnegative integers $r$ and $s$ with
$\langle G\rangle^{(-\infty,-r]}\subseteq\mathscr{S}\cap \mathscr{X}^{\leqslant 0}\subseteq \langle G\rangle^{(-\infty, s]}$, and thus $\{\mathscr{S}\cap \mathscr{X}^{\leqslant -n}\}_{n\in\mathbb{N}}$ is a $G$-good metric on $\mathscr{S}$.
\end{Lem}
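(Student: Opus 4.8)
The plan is to prove (2) by a direct computation, deduce (3) from (2) by unwinding the definition of $\fd(\mathscr{S}\opp,G\opp)$, and prove (1) by lifting the $t$-structure from $\mathscr{X}$ to the completion $\mathfrak{S}_G(\mathscr{S})$ through Theorem \ref{ET}. For (2) the idea is that boundedness of $(\mathscr{X}^{\leqslant 0},\mathscr{X}^{\geqslant 0})$ pins $G$ into a bounded band of aisles: there are $N,N'\geqslant 0$ with $G\in\mathscr{X}^{\leqslant N}\cap\mathscr{X}^{\geqslant -N'}$, and since the aisle $\mathscr{X}^{\leqslant 0}$ (resp.\ the shifted coaisle $\mathscr{X}^{\geqslant 1}$) is closed under extensions, direct summands and positive (resp.\ negative) shifts, one gets $G[\ell]\in\mathscr{X}^{\leqslant 0}$ for all $\ell\geqslant N$ and $G[-j]\in\mathscr{X}^{\geqslant 1}$ for all $j\geqslant N'+1$. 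For the first inclusion I would pick $r\geqslant b+N$, note that $\langle G\rangle^{(-\infty,b]}[r]=\langle G\rangle^{(-\infty,\,b-r]}$ is generated by $G[r-b]\in\mathscr{X}^{\leqslant 0}$ under the above operations, hence lies in $\mathscr{X}^{\leqslant 0}$, and combine this with $\mathscr{A}\subseteq\langle G\rangle^{(-\infty,b]}\cap\mathscr{S}$. For the second, pick $s\geqslant 0$ with $a+s\geqslant N'+1$; then $(\mathscr{S}\cap{}^\perp(G[a,\infty)))[-s]=\mathscr{S}\cap{}^\perp(G[a+s,\infty))$, and any $Y\in\mathscr{S}\cap\mathscr{X}^{\leqslant 0}$ satisfies $\Hom_\mathscr{S}(Y,G[-j])=\Hom_\mathscr{X}(\mathscr{X}^{\leqslant 0},\mathscr{X}^{\geqslant 1})=0$ for $j\geqslant a+s$, so $Y\in\mathscr{A}[-s]$.

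For (3): unwinding Definition \ref{Def-FD} in $\mathscr{S}\opp$ — whose shift functor is $[-1]$, and in which $\langle G\opp\rangle^{[0,\infty)}$ corresponds to $\langle G\rangle^{(-\infty,0]}\subseteq\mathscr{S}$ (compare Lemma \ref{Finite}) — the hypothesis $\fd(\mathscr{S}\opp,G\opp)<\infty$ is precisely the statement that $\mathscr{S}\cap{}^\perp(G[1,\infty))\subseteq\langle G\rangle^{(-\infty,m]}$ for some $m\in\mathbb{N}$. Applying (2) with $\mathscr{A}=\langle G\rangle^{(-\infty,m]}$, $a=1$, $b=m$ gives $r,s\geqslant 0$ with $\langle G\rangle^{(-\infty,\,m-r]}\subseteq\mathscr{S}\cap\mathscr{X}^{\leqslant 0}\subseteq\langle G\rangle^{(-\infty,\,m+s]}$, and replacing $r$ by $\max(0,r-m)$ and $s$ by $m+s$ puts the exponents into the required form. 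Finally $\{\mathscr{S}\cap\mathscr{X}^{\leqslant -n}\}_{n\in\mathbb{N}}$ is a good metric on $\mathscr{S}$ by Example \ref{Induced}, and as $\mathscr{S}\cap\mathscr{X}^{\leqslant -n}=(\mathscr{S}\cap\mathscr{X}^{\leqslant 0})[n]$ the displayed sandwich makes it equivalent to $\{\langle G\rangle^{(-\infty,-n]}\}_{n\in\mathbb{N}}$, i.e.\ a $G$-good metric.

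For (1): note first that $\mathscr{S}\subseteq\mathfrak{S}_G(\mathscr{S})$ forces $\mathscr{M}$ to be embeddable. By Lemma \ref{tool}(3) the bounded above $t$-structure on $\mathscr{X}$ is extendable with respect to a $G$-good metric $\mathscr{M}_\mathscr{X}$ on $\mathscr{X}$, and since $\mathscr{S}\subseteq\mathscr{X}\subseteq\mathfrak{S}_G(\mathscr{S})$ and completion is idempotent, completing $\mathscr{X}$ at $\mathscr{M}_\mathscr{X}$ reproduces $\mathfrak{S}_G(\mathscr{S})$. Theorem \ref{ET}(1),(3) then endow $\mathfrak{S}_G(\mathscr{S})$ with a bounded above $t$-structure whose aisle is $\mathfrak{S}(\mathscr{X}^{\leqslant 0})$, and Theorem \ref{ET}(2) identifies $\mathfrak{S}(\mathscr{X}^{\geqslant n})$ with $\mathscr{X}^{\geqslant n}$ for every $n$. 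This lifted $t$-structure restricts on $\mathscr{X}$ to the given one: for $W\in\mathscr{X}$ the original truncation triangle $W_0\to W\to W_1\to W_0[1]$ has $W_0\in\mathscr{X}^{\leqslant 0}\subseteq\mathfrak{S}(\mathscr{X}^{\leqslant 0})$ and $W_1\in\mathscr{X}^{\geqslant 1}\subseteq\mathfrak{S}(\mathscr{X}^{\geqslant 1})$, so by uniqueness of $t$-structure truncations it is also the lifted one, whence $\mathscr{X}\cap\mathfrak{S}(\mathscr{X}^{\leqslant 0})=\mathscr{X}^{\leqslant 0}$. The inclusion ``$\subseteq$'' of the claim now follows since $\mathscr{S}\cap\mathscr{X}^{\leqslant 0}\subseteq\mathscr{X}^{\leqslant 0}$ gives $\mathfrak{S}_G(\mathscr{S}\cap\mathscr{X}^{\leqslant 0})\subseteq\mathfrak{S}(\mathscr{X}^{\leqslant 0})$. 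For ``$\supseteq$'', take $Y\in\mathscr{X}^{\leqslant 0}$ and write $Y=\colim\mathfrak{y}(X_n)$ for a Cauchy sequence $\{X_\bullet\}$ in $\mathscr{S}$ with ${\rm Cone}(f_{n+1})\in\mathscr{M}_n=\langle G\rangle^{(-\infty,-n]}$ (after reindexing); the cone $Z_n$ of the canonical map $\mathfrak{y}(X_n)\to Y$ is a colimit of a Cauchy sequence with terms in $\mathscr{M}_n$, so — using $G\in\mathscr{X}^{\leqslant M}$, hence $\mathscr{M}_n\subseteq\mathscr{S}\cap\mathscr{X}^{\leqslant M-n}$, together with Theorem \ref{ET} and the identification of the categories $\mathfrak{C}(-)$ — both $Z_n$ and $Z_n[-1]$ lie in $\mathfrak{S}(\mathscr{X}^{\leqslant M-n})\subseteq\mathfrak{S}(\mathscr{X}^{\leqslant 0})$ once $n$ is large. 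Then the triangle $Z_n[-1]\to X_n\to Y\to Z_n$ exhibits $X_n$ as an extension of objects of $\mathfrak{S}(\mathscr{X}^{\leqslant 0})$, so $X_n\in\mathscr{X}\cap\mathfrak{S}(\mathscr{X}^{\leqslant 0})=\mathscr{X}^{\leqslant 0}$, i.e.\ $X_n\in\mathscr{S}\cap\mathscr{X}^{\leqslant 0}$ for all large $n$; hence a tail of $\{X_\bullet\}$ realizes $Y$ as an object of $\mathfrak{S}_G(\mathscr{S}\cap\mathscr{X}^{\leqslant 0})$.

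The bulk of the work, and the main obstacle, lies in (1). There are two delicate points: the compatibility statement that completing $\mathscr{X}$ at the $G$-good metric reproduces $\mathfrak{S}_G(\mathscr{S})$ compatibly with all the Yoneda functors and the subcategories $\mathfrak{C}(-)$; and the bookkeeping that places $\mathfrak{S}_G(\mathscr{M}_n)$ inside the appropriate shift of the lifted aisle, so that the cones $Z_n$ really are ``deep''. What makes the argument non-circular is that one establishes $\mathscr{X}\cap\mathfrak{S}(\mathscr{X}^{\leqslant 0})=\mathscr{X}^{\leqslant 0}$ abstractly, via uniqueness of truncations, \emph{before} invoking it to control those cones. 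By contrast, parts (2) and (3) are elementary once the definitions are in place.
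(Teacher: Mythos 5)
Parts (2) and (3) of your proposal essentially reproduce the paper's argument and are correct: you pin $G$ in a bounded band of aisles, compare the hypothesized brackets on $\mathscr{A}$ to shifted aisles and coaisles, and unwind $\fd(\mathscr{S}^{\rm op},G^{\rm op})$ exactly as the paper does. Part (1), however, takes a genuinely different route and it hinges on a step that is never justified: ``since $\mathscr{S}\subseteq\mathscr{X}\subseteq\mathfrak{S}_G(\mathscr{S})$ and completion is idempotent, completing $\mathscr{X}$ at $\mathscr{M}_\mathscr{X}$ reproduces $\mathfrak{S}_G(\mathscr{S})$.'' No idempotency result for completions appears in the paper, and a priori the completion of $\mathscr{X}$ lives in $\mathscr{X}\Modcat$ while $\mathfrak{S}_G(\mathscr{S})$ lives in $\mathscr{S}\Modcat$; identifying the two compatibly with both Yoneda embeddings, with the subcategories $\mathfrak{C}(-)$, and with the triangulated structures from Theorem \ref{Completion-0} is precisely the ``delicate compatibility'' you flag at the end but never supply. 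A further wrinkle is that $\langle G\rangle^{(-\infty,-n]}$ computed inside $\mathscr{X}$ need not agree with the one computed inside $\mathscr{S}$ (the ${\rm smd}(-)$ in Definition \ref{coprod} is taken in the ambient category), so it is not even automatic that your metric $\mathscr{M}_\mathscr{X}$ on $\mathscr{X}$ is compatible with $\mathscr{M}$ on $\mathscr{S}$. As written the reduction of (1) to Theorem \ref{ET} therefore has a real gap.

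All of this can be avoided: the paper's proof of (1) is a short direct argument carried out entirely inside $\mathscr{S}\Modcat$. For $\mathscr{X}\cap\mathfrak{S}_G(\mathscr{S}\cap\mathscr{X}^{\leqslant 0})\subseteq\mathscr{X}^{\leqslant 0}$, write $P\simeq\colim\mathfrak{y}(P_n)$ with $P_n\in\mathscr{S}\cap\mathscr{X}^{\leqslant 0}$ and, for $Q\in\mathscr{X}^{\geqslant 1}$, compute $\Hom(P,Q)\simeq\lim\Hom(P_n,Q)=0$. For the reverse inclusion, choose $c$ with $G\in\mathscr{X}^{\leqslant c}$, so $\mathscr{M}_c\subseteq\mathscr{S}\cap\mathscr{X}^{\leqslant 0}$; write $F\in\mathscr{X}^{\leqslant 0}$ as $\colim\mathfrak{y}(F_n)$ of a Cauchy sequence whose cones $C_i$ lie in $\mathscr{M}_{c+1}$ for $i$ large, so that $C_i$ and $C_i[-1]$ both lie in ${^\perp}\mathscr{X}^{\geqslant 1}$; then the tower $\Hom(F_n,Q)$ stabilizes and equals $\Hom(F,Q)=0$, whence $F_i\in\mathscr{S}\cap{^\perp}\mathscr{X}^{\geqslant 1}=\mathscr{S}\cap\mathscr{X}^{\leqslant 0}$ for large $i$, i.e.\ $F\in\mathfrak{S}_G(\mathscr{S}\cap\mathscr{X}^{\leqslant 0})$. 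This settles (1) without invoking Theorem \ref{ET} at all, and I'd recommend replacing your argument for (1) by it.
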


\begin{proof}
$(1)$ By assumption, $\mathscr{S}\subseteq \mathscr{X}\subseteq \mathfrak{S}_G(\mathscr{S})$.
By Lemma \ref{Properties}(3), $\mathscr{S}$ is a full triangulated subcategory of $\mathfrak{S}_G(\mathscr{S})$. Let $\mathscr{D}:=(\mathscr{X}^{\leqslant 0},\mathscr{X}^{\geqslant 0})$. 
As $\mathscr{D}$ is at least bounded above in the hypotheses of $(1)$-$(3)$, there exists an integer $c$ with
$G\in\mathscr{X}^{\leqslant c}$.
Since $\mathscr{X}^{\leqslant c}\subseteq\mathscr{X}$ is closed under extensions, positive shifts and direct summands, $\langle G\rangle^{(-\infty,0]}\subseteq\mathscr{S}\cap\mathscr{X}^{\leqslant c}$.
This implies the inclusion
$$(\ast): \quad \mathscr{M}_c:=\langle G\rangle^{(-\infty,-c]}\subseteq \mathscr{S}\cap\mathscr{X}^{\leqslant 0}.$$

Clearly, $\mathscr{X}^{\leqslant 0}=\mathscr{X}\cap{^\bot}\mathscr{X}^{\geqslant 1}$. Let $P\in\mathscr{X}\cap\mathfrak{S}_G(\mathscr{S}\cap \mathscr{X}^{\leqslant 0})$. Then
$P\simeq \mathop{\text{colim}}\limits_{\longrightarrow}P_{n}$, where $\{P_{\bullet}\}$ is a Cauchy sequence in $\mathscr{S}$ such that $P_n\in\mathscr{S}\cap\mathscr{X}^{\leqslant 0}$ for $n\in\mathbb{N}$.
For any $Q\in \mathscr{X}^{\geqslant 1}$,
$\Hom_\mathscr{X}(P, Q)\simeq \Hom_\mathscr{X}(\mathop{\text{colim}}\limits_{\longrightarrow}P_{n}, Q)\simeq \mathop{\text{lim}}\limits_{\longleftarrow}\Hom_\mathscr{X}(P_n, Q)=0.$ We have the last equality because $P_n\in \mathscr{X}^{\leqslant 0}$ for all $n$ and $Q \in \mathscr{X}^{\geqslant 1}$. Thus, $P\in \mathscr{X}^{\leqslant 0}$, and therefore $\mathscr{X}\cap\mathfrak{S}_G(\mathscr{S}\cap \mathscr{X}^{\leqslant 0})\subseteq \mathscr{X}^{\leqslant 0}$.

To show $\mathscr{X}^{\leqslant 0} \subseteq \mathscr{X}\cap\mathfrak{S}_G(\mathscr{S}\cap \mathscr{X}^{\leqslant 0})$, we first take $F\in\mathscr{X}^{\leqslant 0}$.
Since $\mathscr{X}^{\leqslant 0}\subseteq \mathscr{X}\subseteq \mathfrak{S}_G(\mathscr{S})$,
there is an isomorphism $F\simeq\mathop{\text{colim}}\limits_{\longrightarrow}F_{n}$ in $\mathscr{S}\Modcat$, where $\{F_{\bullet}, f_{\bullet}\}$ is a Cauchy sequence in $\mathscr{S}$ with respect to the $G$-good metric $\mathscr{M}$. Then there is a positive integer $m$ (only depending on $c$) with  $C_i:=\text{Cone}(f_{i+1})\in\mathscr{M}_{c+1}$ for all $i\geqslant m$. In the following, we consider $i\geqslant m$. Note that $\mathscr{M}_c\subseteq \mathscr{X}^{\leqslant 0}\subseteq{^\bot}\mathscr{X}^{\geqslant 1}$ by $(\ast)$, and consequently, $\mathscr{M}_{c+1}=\mathscr{M}_c[1]\subseteq \mathscr{X}^{\leqslant -1}\subseteq
{^\bot}\mathscr{X}^{\geqslant 1}$. This implies $C_i, C_i[-1] \in {^\bot}\mathscr{X}^{\geqslant 1}$. Now, for any $Q\in \mathscr{X}^{\geqslant 1}$, we can apply the functor $\Hom_\mathscr{X}(-, Q)$ to the triangle $C_i[-1]\to F_i\to F_{i+1}\to C_i$ in $\mathscr{S}$ and obtain isomorphisms $\Hom_\mathscr{X}(f_{i+1}, Q): \Hom_\mathscr{X}(F_{i+1}, Q) \to \Hom_\mathscr{X}(F_i, Q)$, as $\Hom_{\mathscr{X}}(C_i,Q)=\Hom_{\mathscr{X}}(C_i[-1],Q)=0$. Thus
$$
0=\Hom_\mathscr{X}(F, Q)\simeq \Hom_\mathscr{X}(\mathop{\text{colim}}\limits_{\longrightarrow}F_{n}, Q)\simeq\mathop{\text{lim}}\limits_{\longleftarrow}\Hom_\mathscr{X}(F_n, Q)\simeq \Hom_\mathscr{X}(F_i, Q).
$$
It follows that $F_i\in \mathscr{S}\cap{^\bot}\mathscr{X}^{\geqslant 1}\subseteq\mathscr{X}\cap{^\bot}\mathscr{X}^{\geqslant 1}=\mathscr{X}^{\leqslant 0}$.  Therefore  $F\in\mathscr{X}\cap\mathfrak{S}_G(\mathscr{S}\cap \mathscr{X}^{\leqslant 0})$. This shows $\mathscr{X}^{\leqslant 0}\subseteq\mathscr{X}\cap\mathfrak{S}_G(\mathscr{S}\cap \mathscr{X}^{\leqslant 0})$.

$(2)$ Now, $\mathscr{D}$ is bounded and assume that the category $\mathscr{A}$ in $(2)$ exists. In $(\ast)$, we can assume $c\geqslant -b$ because as long as $c$ is larger, $\mathscr{M}_c$ is smaller. Since $\langle G\rangle^{(-\infty,b]}=\langle G\rangle^{(-\infty, -c]}[-b-c]$, the inclusions $(\ast)$ and $\mathscr{A}\subseteq\langle G\rangle^{(-\infty,b]}$ imply that $\mathscr{A}\subseteq(\mathscr{S}\cap\mathscr{X}^{\leqslant 0})[-b-c]$. Let $r:=b+c\geqslant 0$.
Then $\mathscr{A}[r]\subseteq \mathscr{S}\cap\mathscr{X}^{\leqslant 0}$.

As $\mathscr{D}$ is bounded below, there exists an integer $d$ with $G\in\mathscr{X}^{\geqslant -d}$; in other words, $G[-d-1]\in\mathscr{X}^{\geqslant 1}$. Similarly, we can choose $d$ bigger enough such that $d+1\geqslant a$. Let $\mathscr{Y}:=\langle G\rangle^{[d+1,\infty)}\subseteq\mathscr{S}$. Clearly, $\mathscr{X}^{\geqslant 1}\subseteq\mathscr{X}$ is closed under extensions, negative shifts and direct summands. This forces $\mathscr{Y}\subseteq\mathscr{X}^{\geqslant 1}$. Since $\mathscr{X}^{\leqslant 0}=\mathscr{X}\cap{^{\bot}\mathscr{X}^{\geqslant 1}}$, we obtain $\mathscr{X}^{\leqslant 0}\subseteq\mathscr{X}\cap{^{\bot}\mathscr{Y}}$. Further, by Definition \ref{coprod}(4), the objects of $\mathscr{Y}$ are constructed from $G[d+1,\infty)$ by taking extensions and direct summands. Thus ${^{\bot}\mathscr{Y}}={^{\bot}\big(G[d+1,\infty)\big)}$. It follows from $\mathscr{S}\subseteq\mathscr{X}$ that $$\mathscr{S}\cap\mathscr{X}^{\leqslant 0}\subseteq\mathscr{S}\cap{^{\bot}\mathscr{Y}}=\mathscr{S}\cap{^{\bot}\big(G[d+1,\infty)\big)}.$$
Let $s:=d+1-a\geqslant 0$. Clearly, $G[a,\infty)=\big(G[d+1,\infty)\big)[s]$. Since  $\mathscr{S}\cap{^{\bot}\big(G[a,\infty)\big)}\subseteq\mathscr{A}$ by the assumptions on $\mathscr{A}$,
we have
$$\mathscr{S}\cap{^{\bot}\big(G[d+1,\infty)\big)}=\big(\mathscr{S}\cap{^\bot}(G[a,\infty))\big)[-s]
\subseteq\mathscr{A}[-s].$$
It follows that $\mathscr{S}\cap\mathscr{X}^{\leqslant 0}\subseteq\mathscr{A}[-s]$.
Thus $\mathscr{A}[r]\subseteq\mathscr{S}\cap \mathscr{X}^{\leqslant 0}\subseteq \mathscr{A}[-s]$.

$(3)$ Now, $\mathscr{D}$ is bounded and assume that $e:=\fd(\mathscr{S}\opp, G\opp)<\infty$.
By Definition \ref{Def-FD}, the inclusion $\mathscr{S}\opp\cap G\opp(-\infty,-1]^{\bot}\subseteq\langle G\opp\rangle^{[0,\infty)}[e]$ holds in $\mathscr{S}\opp$, and therefore taking opposite category yields the inclusion $\mathscr{S}\cap{^{\bot}}(G[1,\infty))\subseteq\langle G\rangle^{(-\infty,0]}[-e]=\langle G\rangle^{(-\infty, e]}$. In $(2)$, we take $a=1$, $b=e$ and $\mathscr{A}=\langle G\rangle^{(-\infty, e]}$.
Then there are nonnegative integers $r_1$ and $s_1$ with $\mathscr{A}[r_1]\subseteq\mathscr{S}\cap \mathscr{X}^{\leqslant 0}\subseteq \mathscr{A}[-s_1]$.
Thus $$\langle G\rangle^{(-\infty,\, e-r_1]}\subseteq\mathscr{S}\cap \mathscr{X}^{\leqslant 0}\subseteq \langle G\rangle^{(-\infty,\, e+s_1]}.$$
Let $r:=\max\{0, r_1-e\}$ and $s:=e+s_1$. Then $\langle G\rangle^{(-\infty,\, -r]}\subseteq\mathscr{S}\cap \mathscr{X}^{\leqslant 0}\subseteq \langle G\rangle^{(-\infty,\, s]}.$
Taking the $n$-th shift to these inclusions leads to
$\langle G\rangle^{(-\infty,-r-n]}\subseteq\mathscr{S}\cap\mathscr{X}^{\leqslant -n}\subseteq \langle G\rangle^{(-\infty, s-n]}$ for $n\in\mathbb{N}$. By Definition \ref{PEC}, $\{\mathscr{S}\cap \mathscr{X}^{\leqslant -n}\}_{n\in\mathbb{N}}$ is a $G$-good metric on $\mathscr{S}$.
\end{proof}

\begin{Rem}
Note that the condition $\mathscr{S}\cap{^{\perp}}\big(G[a, \infty)\big) \subseteq \langle G\rangle^{(-\infty,\, b]}$ for some integers $a$ and $b$ is equivalent to $\fd(\mathscr{S}\opp, G\opp)<\infty$. Moreover, in Lemma \ref{Restriction}(2), if additionally $\mathscr{A}\subseteq\mathscr{S}$ is closed under extensions and positive shifts, then $\{\mathscr{A}[n]\}_{n\in\mathbb{N}}$ is a $G$-good metric on $\mathscr{S}$.

\end{Rem}

We now state the first main result of this section.

\begin{Theo}\label{BABB}
$(1)$ Suppose that $(\mathscr{S}^{\leqslant 0}, \mathscr{S}^{\geqslant 0})$ is a bounded above $t$-structure on $\mathscr{S}$. Then the pair $\big(\mathfrak{S}_G(\mathscr{S}^{\leqslant 0}), \mathfrak{S}_G(\mathscr{S}^{\geqslant 0})\big)$ is a bounded above $t$-structure on $\mathfrak{S}_G(\mathscr{S})$ and has the same heart as
$(\mathscr{S}^{\leqslant 0}, \mathscr{S}^{\geqslant 0})$.

$(2)$ Suppose that $(\mathscr{S}^{\leqslant 0}, \mathscr{S}^{\geqslant 0})$ is a bounded $t$-structure on $\mathscr{S}$. Then $\mathscr{S}\subseteq\mathfrak{S}_G(\mathscr{S})$. Further, if $\fd(\mathscr{S}\opp, G\opp)<\infty$, then $\mathscr{S}=\mathfrak{S}_G(\mathscr{S})$.

$(3)$ Suppose that $\fd(\mathscr{S}\opp, G\opp)<\infty$ and $\mathscr{X}$ is a full triangulated subcategory of $\mathfrak{S}_G(\mathscr{S})$ with $\mathscr{S}\subseteq \mathscr{X}$.
Then all bounded $t$-structures on $\mathscr{X}$ are equivalent.
\end{Theo}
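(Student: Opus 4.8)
\textbf{Proof plan for Theorem \ref{BABB}.}

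The plan is to prove the three parts in sequence, using Theorem \ref{ET} and Lemma \ref{Restriction} as the main engines. For part $(1)$: since $(\mathscr{S}^{\leqslant 0},\mathscr{S}^{\geqslant 0})$ is bounded above, Lemma \ref{tool}(3) tells us it is extendable with respect to the $G$-good metric $\mathscr{M}=\{\langle G\rangle^{(-\infty,-n]}\}_{n\in\mathbb{N}}$. Theorem \ref{ET}(1)(3) then immediately gives that $\big(\mathfrak{S}_G(\mathscr{S}^{\leqslant 0}),\mathfrak{S}_G(\mathscr{S}^{\geqslant 0})\big)$ is a bounded above $t$-structure on $\mathfrak{S}_G(\mathscr{S})$, and Theorem \ref{ET}(1)(2) identifies its heart with $\mathfrak{y}(\mathscr{H})\simeq\mathscr{H}$. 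So part $(1)$ is essentially a direct citation.

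For part $(2)$: a bounded $t$-structure is in particular bounded below, so by Lemma \ref{tool}(2) the metric $\mathscr{M}$ is embeddable, i.e. $\mathscr{S}=\mathscr{S}(\mathscr{M})$, which by Lemma \ref{Properties}(3) means $\mathfrak{y}$ restricts to a fully faithful triangle functor $\mathscr{S}\to\mathfrak{S}_G(\mathscr{S})$; this gives the inclusion $\mathscr{S}\subseteq\mathfrak{S}_G(\mathscr{S})$. For the equality under $\fd(\mathscr{S}\opp,G\opp)<\infty$: apply Lemma \ref{Restriction}(3) with $\mathscr{X}=\mathscr{S}$ to conclude that $\mathscr{N}:=\{\mathscr{S}\cap\mathscr{S}^{\leqslant -n}\}_{n\in\mathbb{N}}=\{\mathscr{S}^{\leqslant -n}\}_{n\in\mathbb{N}}$ is a $G$-good metric on $\mathscr{S}$; hence the completion $\mathfrak{S}_G(\mathscr{S})$ of $\mathscr{S}$ with respect to $\mathscr{M}$ equals the completion with respect to $\mathscr{N}$ (equivalent metrics give the same completion, Lemma \ref{Properties}(4) / Definition \ref{PEC}). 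But the completion of $\mathscr{S}$ with respect to the metric induced from the aisle of a \emph{bounded below} $t$-structure is $\mathscr{S}$ itself by Corollary \ref{Unexpected}. So $\mathscr{S}=\mathfrak{S}_G(\mathscr{S})$.

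For part $(3)$: let $(\mathscr{X}^{\leqslant 0}_1,\mathscr{X}^{\geqslant 0}_1)$ and $(\mathscr{X}^{\leqslant 0}_2,\mathscr{X}^{\geqslant 0}_2)$ be two bounded $t$-structures on $\mathscr{X}$. For $i=1,2$, apply Lemma \ref{Restriction}(3) to the $t$-structure $(\mathscr{X}^{\leqslant 0}_i,\mathscr{X}^{\geqslant 0}_i)$ on $\mathscr{X}$: since $\fd(\mathscr{S}\opp,G\opp)<\infty$, there are nonnegative integers $r_i,s_i$ with $\langle G\rangle^{(-\infty,-r_i]}\subseteq\mathscr{S}\cap\mathscr{X}^{\leqslant 0}_i\subseteq\langle G\rangle^{(-\infty,s_i]}$. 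Chaining these two double inclusions together and shifting gives, for all $n$, an inclusion of the form $\mathscr{S}\cap\mathscr{X}^{\leqslant -n-t}_1\subseteq\mathscr{S}\cap\mathscr{X}^{\leqslant 0}_2\subseteq\mathscr{S}\cap\mathscr{X}^{\leqslant n+t}_1$ for a suitable constant $t=r_1+s_2$ (resp. $r_2+s_1$). This says the good metrics on $\mathscr{S}$ induced by the two $t$-structures are equivalent; equivalently, restricting both $t$-structures to $\mathscr{S}$ gives ``equivalent'' data in the sense of Definition \ref{PEC}. The remaining step is to lift this back up to $\mathscr{X}$: both $t$-structures on $\mathscr{X}$ are bounded above, so by Lemma \ref{Restriction}(1) each aisle satisfies $\mathscr{X}^{\leqslant 0}_i=\mathscr{X}\cap\mathfrak{S}_G(\mathscr{S}\cap\mathscr{X}^{\leqslant 0}_i)$. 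Since the operation $\mathscr{A}\mapsto\mathfrak{S}_G(\mathscr{A})$ is visibly monotone in $\mathscr{A}$ and commutes with shifts (Lemma \ref{Properties}(1)), the inclusions $\mathscr{S}\cap\mathscr{X}^{\leqslant -t}_1\subseteq\mathscr{S}\cap\mathscr{X}^{\leqslant 0}_2\subseteq\mathscr{S}\cap\mathscr{X}^{\leqslant t}_1$ yield $\mathscr{X}^{\leqslant -t}_1\subseteq\mathscr{X}^{\leqslant 0}_2\subseteq\mathscr{X}^{\leqslant t}_1$ after intersecting with $\mathscr{X}$ and applying the Lemma \ref{Restriction}(1) identity. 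Hence the two bounded $t$-structures on $\mathscr{X}$ are equivalent. The ``in particular'' statement follows by taking $\mathscr{X}=\mathfrak{S}_G(\mathscr{S})$, which contains $\mathscr{S}$ by part $(2)$ (or by hypothesis if one prefers).

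The main obstacle I expect is getting the bookkeeping in part $(3)$ exactly right: one has to be careful that Lemma \ref{Restriction}(3) applies to $t$-structures on $\mathscr{X}$ (not just on $\mathscr{S}$), that the restriction $\mathscr{S}\cap\mathscr{X}^{\leqslant 0}_i$ really does sit between two shifts of $\langle G\rangle^{(-\infty,0]}$, and above all that the ``lifting back'' step via Lemma \ref{Restriction}(1) is legitimate — this requires knowing that $\mathscr{X}\subseteq\mathfrak{S}_G(\mathscr{S})$ so that $\mathfrak{S}_G(\mathscr{S}\cap\mathscr{X}^{\leqslant 0}_i)$ makes sense inside $\mathscr{S}\Modcat$ and contains the relevant objects of $\mathscr{X}$, and that intersecting a chain of inclusions of $\mathfrak{S}_G(-)$-subcategories with $\mathscr{X}$ preserves the chain. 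None of these is deep, but the argument only works because Lemma \ref{Restriction} was stated for the intermediate category $\mathscr{X}$ rather than just for $\mathscr{S}$, and one should double-check the boundedness hypotheses are available at each invocation (bounded $\Rightarrow$ bounded above, used for Lemma \ref{Restriction}(1); bounded, used for Lemma \ref{Restriction}(3)).
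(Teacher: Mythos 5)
Your proposal is correct and follows essentially the same route as the paper: part $(1)$ by Lemma~\ref{tool}(3) and Theorem~\ref{ET}, part $(2)$ by Lemma~\ref{tool}(2), Lemma~\ref{Properties}(3), Lemma~\ref{Restriction}(3) applied with $\mathscr{X}=\mathscr{S}$, and Corollary~\ref{Unexpected}, and part $(3)$ by applying Lemma~\ref{Restriction}(3) to sandwich each restricted aisle between shifts of $\langle G\rangle^{(-\infty,0]}$, chaining, and lifting back to $\mathscr{X}$ via Lemma~\ref{Restriction}(1) together with the monotonicity and shift-compatibility of $\mathfrak{S}_G(-)$. The bookkeeping caveats you flag are handled exactly as you anticipate, using that $\mathscr{X}$ is a full triangulated (hence shift-closed) subcategory of $\mathfrak{S}_G(\mathscr{S})$.
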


\begin{proof}
$(1)$ Since $(\mathscr{S}^{\leqslant 0},\mathscr{S}^{\geqslant 0})$ is bounded above, it is extendable by Lemma \ref{tool}(3). Now, $(1)$ follows from Theorem \ref{ET}.

$(2)$ Since $(\mathscr{S}^{\leqslant 0},\mathscr{S}^{\geqslant 0})$ is bounded below, we see from Lemma \ref{tool}(2) that
$\mathscr{M}$ is embeddable. By Lemma \ref{Properties}(3), the functor $\mathfrak{y}$ restricts to a fully faithful triangle functor $\mathscr{S}\to\mathfrak{S}_G(\mathscr{S})$. This implies $\mathscr{S}\subseteq\mathfrak{S}_G(\mathscr{S})$.

Suppose $\fd(\mathscr{S}\opp, G\opp)<\infty$. In Lemma \ref{Restriction}(3), we take $\mathscr{X}=\mathscr{S}$ and $(\mathscr{X}^{\leqslant 0},\mathscr{X}^{\geqslant 0})=(\mathscr{S}^{\leqslant 0},\mathscr{S}^{\geqslant 0})$, and thus the good metrics $\mathscr{M}$ and $\{\mathscr{S}^{\leqslant -n}\}_{n\in\mathbb{N}}$ on $\mathscr{S}$ are equivalent. Note that equivalent good metrics produce the same completion. Since the $t$-structure $(\mathscr{S}^{\leqslant 0},\mathscr{S}^{\geqslant 0})$ is bounded below, we see from Corollary \ref{Unexpected} that $\mathscr{S}=\mathfrak{S}_G(\mathscr{S})$.

$(3)$ Let $(\mathscr{X}_i^{\leqslant 0},\mathscr{X}_i^{\geqslant 0})$ for $i=1, 2$ be bounded $t$-structures on $\mathscr{X}$. By Lemma \ref{Restriction}(3), there exist nonnegative integers $r_i$ and $s_i$
with $\langle G\rangle^{(-\infty,-r_i]}\subseteq\mathscr{S}\cap \mathscr{X}_i^{\leqslant 0}\subseteq \langle G\rangle^{(-\infty, s_i]}$. This implies
$$\mathscr{S}\cap \mathscr{X}_1^{\leqslant -(s_1+r_2)}\subseteq \langle G\rangle^{(-\infty,-r_2]}\subseteq \mathscr{S}\cap \mathscr{X}_2^{\leqslant 0}\subseteq \langle G\rangle^{(-\infty, s_2]}\subseteq \mathscr{S}\cap \mathscr{X}_1^{\leqslant s_2+r_1}.$$
Let $t:=\max\{s_1+r_2, s_2+r_1\}$. Then
$$(\mathscr{S}\cap \mathscr{X}_1^{\leqslant 0})[t]=\mathscr{S}\cap \mathscr{X}_1^{\leqslant -t}\subseteq \mathscr{S}\cap \mathscr{X}_2^{\leqslant 0}\subseteq \mathscr{S}\cap \mathscr{X}_1^{\leqslant t}=(\mathscr{S}\cap \mathscr{X}_1^{\leqslant 0})[-t].$$
By Lemma \ref{Properties}(1), $\Sigma^t\big(\mathfrak{S}_G(\mathscr{S}\cap\mathscr{X}_1^{\leqslant 0})\big)\subseteq \mathfrak{S}_G(\mathscr{S}\cap\mathscr{X}_2^{\leqslant 0})\subseteq \Sigma^{-t}\big(\mathfrak{S}_G(\mathscr{S}\cap\mathscr{X}_1^{\leqslant 0})\big),$ and therefore
$$
\mathscr{X}\cap\Sigma^t\big(\mathfrak{S}_G(\mathscr{S}\cap\mathscr{X}_1^{\leqslant 0})\big)\subseteq \mathscr{X}\cap\mathfrak{S}_G(\mathscr{S}\cap\mathscr{X}_2^{\leqslant 0})\subseteq \mathscr{X}\cap\Sigma^{-t}\big(\mathfrak{S}_G(\mathscr{S}\cap\mathscr{X}_1^{\leqslant 0})\big).
$$
Since $\mathscr{X}$ is a full triangulated subcategory of $\mathfrak{S}_G(\mathscr{S})$, we have
{\small
$$
\mathscr{X}\cap\Sigma^t\big(\mathfrak{S}_G(\mathscr{S}\cap\mathscr{X}_1^{\leqslant 0})\big)=\Sigma^t\big(\mathscr{X}\cap\mathfrak{S}_G(\mathscr{S}\cap\mathscr{X}_1^{\leqslant 0})\big),\; \mathscr{X}\cap\Sigma^{-t}\big(\mathfrak{S}_G(\mathscr{S}\cap\mathscr{X}_1^{\leqslant 0})\big)=\Sigma^{-t}\big(\mathscr{X}\cap\mathfrak{S}_G(\mathscr{S}\cap\mathscr{X}_1^{\leqslant 0})\big).
$$}

\noindent It follows from Lemma \ref{Restriction}(1) that $\mathscr{X}_1^{\leqslant -t}\subseteq \mathscr{X}_2^{\leqslant 0}\subseteq \mathscr{X}_1^{\leqslant t}$. Thus $(\mathscr{X}_1^{\leqslant 0},\mathscr{X}_1^{\geqslant 0})$ and $(\mathscr{X}_2^{\leqslant 0},\mathscr{X}_2^{\geqslant 0})$
are equivalent.
\end{proof}

\smallskip
{\bf Proof of Theorem \ref{Main result}.} Note that Theorem \ref{Main result}$(a)$ follows from Theorem \ref{BABB}(2), while Theorem \ref{Main result}$(b)$ is exactly Theorem \ref{BABB}$(3)$. $\square$

\medskip
In the rest of this section, we consider a special class of the completions of triangulated categories that frequently occur in practice.

\emph{Let $\mathscr{S}$ be the full subcategory of a compactly generated triangulated category $\mathscr{T}$ consisting of all compact objects, $G\in\mathscr{T}$ a compact generator, and $(\mathscr{T}^{\leqslant 0}, \mathscr{T}^{\geqslant 0})$ a $t$-structure on $\mathscr{T}$ in the preferred equivalence class.}
We consider the good metric $\mathscr{M}:=\{\mathscr{M}_{n}\}_{n\in\mathbb{N}}$ with $\mathscr{M}_{n}:=\mathscr{S}\cap\mathscr{T}^{\leqslant -n}$ on $\mathscr{S}$,
and denote by $\widehat{\mathscr{S}}$ the completion of $\mathscr{S}$ with respect to $\mathscr{M}$.
By Lemma \ref{Compact generator}(2), $\mathscr{M}$ is a $G$-good metric on $\mathscr{S}$ and thus $\widehat{\mathscr{S}}=\mathfrak{S}_G(\mathscr{S})$. This also implies that $\widehat{\mathscr{S}}$ is
independent of the choices of compact generators of $\mathscr{T}$ (or equivalently, classical generators of $\mathscr{S}$).

We will provide two sufficient conditions for any bounded $t$-structure on $\widehat{\mathscr{S}}$ to be lifted to a $t$-structure on $\mathscr{T}$ in the preferred equivalence class.

\begin{Lem}\label{Inclusion}
The inclusion $\mathscr{S} \subseteq \widehat{\mathscr{S}}$ holds if and only if $\Hom_\mathscr{T}(G[i],G)=0$ for $i\gg 0$.
\end{Lem}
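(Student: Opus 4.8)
The plan is to use the characterization of the embeddable metric and Theorem \ref{MCP}(2). Recall that $\mathscr{M}$ is a $G$-good metric, so $\widehat{\mathscr{S}} = \mathfrak{S}_G(\mathscr{S})$, and by Lemma \ref{Properties}(3) we have $\mathfrak{y}(\mathscr{S}) \subseteq \mathfrak{S}(\mathscr{S})$ if and only if $\mathscr{S}(\mathscr{M}) = \mathscr{S}$, i.e.\ $\mathscr{M}$ is embeddable. Thus the inclusion $\mathscr{S} \subseteq \widehat{\mathscr{S}}$ is precisely the statement that $\mathscr{M}$ is embeddable. So the task reduces to showing that $\mathscr{M}$ is embeddable if and only if $\Hom_\mathscr{T}(G[i],G) = 0$ for $i \gg 0$.

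First I would recall from the discussion following Lemma \ref{Properties} (the paragraph defining embeddable metrics) that if $\mathscr{S}$ has a classical generator $G$ and $\mathscr{M}$ is a good metric in the preferred equivalence class, then $\mathscr{S}(\mathscr{M}) = \mathscr{S}_{\rm tc}$, and in this case $\mathscr{M}$ is embeddable if and only if $\Hom_\mathscr{S}(G[i],G) = 0$ for $i \gg 0$. By Lemma \ref{Smd}, $G$ is a classical generator of $\mathscr{S} = \mathscr{T}^c$, and by Example \ref{Induced} our metric $\mathscr{M}$ is a preferred good metric, hence in the preferred equivalence class. Since $G \in \mathscr{T}^c = \mathscr{S}$, the Hom-groups $\Hom_\mathscr{S}(G[i],G)$ and $\Hom_\mathscr{T}(G[i],G)$ coincide, so the stated condition is exactly the embeddability criterion. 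Alternatively, and perhaps more transparently, I would invoke Theorem \ref{MCP}(2)(a), which gives $\mathscr{S}(\mathscr{M}) = \mathscr{S} \cap \mathscr{T}^b = \mathscr{S}_{\rm tc}$; then $\mathscr{S}(\mathscr{M}) = \mathscr{S}$ amounts to $G \in \mathscr{S}_{\rm tc}$, which by the definition of $\mathscr{S}_{\rm tc}$ means exactly $\Hom_\mathscr{T}(G[n],G) = 0$ for $n \gg 0$ (using that $\mathscr{S}_{\rm tc}=\mathscr{S}$ iff $G\in\mathscr{S}_{\rm tc}$, as noted just before Lemma \ref{HBTS}).

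Assembling these: for the forward direction, if $\mathscr{S} \subseteq \widehat{\mathscr{S}}$, then $\mathfrak{y}(\mathscr{S}) \subseteq \mathfrak{S}(\mathscr{S})$, so $\mathscr{M}$ is embeddable by Lemma \ref{Properties}(3), hence $\mathscr{S} = \mathscr{S}(\mathscr{M}) = \mathscr{S}_{\rm tc}$ by Theorem \ref{MCP}(2)(a), so $G \in \mathscr{S}_{\rm tc}$, which gives $\Hom_\mathscr{T}(G[i],G) = 0$ for $i \gg 0$. For the converse, if $\Hom_\mathscr{T}(G[i],G) = 0$ for $i \gg 0$, then $G \in \mathscr{S}_{\rm tc}$, so $\mathscr{S}_{\rm tc} = \mathscr{S}$, whence $\mathscr{S}(\mathscr{M}) = \mathscr{S}$ by Theorem \ref{MCP}(2)(a); then Lemma \ref{Properties}(3) says $\mathfrak{y}$ restricts to a fully faithful triangle functor $\mathscr{S} = \mathscr{S}(\mathscr{M}) \to \mathfrak{S}(\mathscr{S}) = \widehat{\mathscr{S}}$, giving $\mathscr{S} \subseteq \widehat{\mathscr{S}}$ under our standing identification of $\mathscr{S}$ with its essential image under $\mathfrak{y}$. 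There is no real obstacle here; the only mild point of care is bookkeeping the identifications (Yoneda image, $\mathscr{T}^c$ versus $\mathscr{S}$, and the equality $\mathscr{S}(\mathscr{M})=\mathscr{S}_{\rm tc}$), all of which are already recorded in the cited statements.
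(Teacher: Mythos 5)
Your proof is correct and follows essentially the same chain as the paper's: Lemma \ref{Properties}(3) reduces the inclusion to embeddability, i.e.\ $\mathscr{S}(\mathscr{M})=\mathscr{S}$; the identification $\mathscr{S}(\mathscr{M})=\mathscr{S}_{\rm tc}$ (which the paper gets directly from Lemma \ref{Compact generator}(2) and the definition of $\mathscr{S}_{\rm tc}$, and which you alternatively pull from Theorem \ref{MCP}(2)(a)); and finally $\mathscr{S}=\mathscr{S}_{\rm tc}$ iff $G\in\mathscr{S}_{\rm tc}$ since $G$ is a classical generator of $\mathscr{S}$ by Lemma \ref{Smd}. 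The only superficial difference is which statement you cite for $\mathscr{S}(\mathscr{M})=\mathscr{S}_{\rm tc}$; both are valid and the argument is otherwise the same.
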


\begin{proof}
By Lemma \ref{Compact generator}(2),
the good metrics $\mathscr{M}$ and $\mathscr{N}:=\{\langle G\rangle^{(-\infty,-n]}\}_{n\in\mathbb{N}}$ on $\mathscr{S}$ are equivalent. This implies $\mathscr{S}(\mathscr{N})=\mathscr{S}(\mathscr{M})$; see the paragraph before Lemma \ref{Properties} for notation. Observe that
$$
\mathscr{S}(\mathscr{N})=\mathscr{S}_{\rm tc}:=\{X\in\mathscr{S}\mid \Hom_\mathscr{S}(G[n], X)=0, \; n \gg 0\}.
$$ By Lemma \ref{Properties}(3), the inclusion $\mathscr{S} \subseteq \widehat{\mathscr{S}}$ holds if and only if $\mathscr{S}=\mathscr{S}(\mathscr{M})$. Clearly, the latter is also equivalent to $\mathscr{S}=\mathscr{S}_{\rm tc}$. Since $G$ is a compact generator of $\mathscr{T}$, it is a classical generator of $\mathscr{S}$ by Lemma \ref{Smd}. It follows that $\mathscr{S}=\mathscr{S}_{\rm tc}$ if and only if $\Hom_\mathscr{T}(G[i],G)=0$ for $i\gg 0$. Thus Lemma \ref{Inclusion} holds.
\end{proof}

We now state the second main result of this section.

\begin{Theo}\label{Equivalent $t$-structure}
Let $(\mathscr{D}^{\leqslant 0}, \mathscr{D}^{\geqslant 0})$ be a bounded $t$-structure on $\widehat{\mathscr{S}}$. Suppose $\Hom_\mathscr{T}(G[i],G)=0$ for $i\gg 0$.
Then the following statements are true.

$(1)$ There exists a full subcategory $\mathscr{B}$ of $\mathscr{S}$ with $\mathscr{B}[1] \subseteq \mathscr{B}$ such that $\text{Coprod}(\mathscr{D}^{\leqslant 0})=\text{Coprod}(\mathscr{B})$. Thus $\text{Coprod}(\mathscr{D}^{\leqslant 0})$ is the aisle of a compactly generated $t$-structure on $\mathscr{T}$.

$(2)$ The $t$-structure $\big(\text{Coprod}(\mathscr{D}^{\leqslant 0}), (\text{Coprod}(\mathscr{D}^{\leqslant 0})[1])^{\perp}\big) $ on $\mathscr{T}$ belongs to the preferred equivalence class if either of the following conditions holds:

\quad $(a)$ $\fd(\mathscr{S}^{\text{op}})<\infty$.

\quad $(b)$ $\mathscr{T}=\text{Coprod}_m(H(-\infty, \infty))$ for some object $H\in \widehat{\mathscr{S}}$ and some positive integer $m$.
\end{Theo}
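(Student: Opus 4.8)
The plan is to build the candidate aisle inside $\mathscr{T}$ from a set of compact objects and then verify the equivalence-class assertion under each of the two hypotheses. For part $(1)$, I would start from the bounded $t$-structure $(\mathscr{D}^{\leqslant 0},\mathscr{D}^{\geqslant 0})$ on $\widehat{\mathscr{S}}=\mathfrak{S}_G(\mathscr{S})$ and restrict it to $\mathscr{S}$. By Lemma \ref{Inclusion}, the hypothesis $\Hom_\mathscr{T}(G[i],G)=0$ for $i\gg 0$ guarantees $\mathscr{S}\subseteq\widehat{\mathscr{S}}$, so the restriction makes sense; set $\mathscr{B}:=\mathscr{S}\cap\mathscr{D}^{\leqslant 0}$. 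Since $\mathscr{D}^{\leqslant 0}$ is closed under positive shifts we get $\mathscr{B}[1]\subseteq\mathscr{B}$, and since $\mathscr{B}$ consists of compact objects of $\mathscr{T}$, Example \ref{Typical example} shows $(\text{Coprod}(\mathscr{B}),(\text{Coprod}(\mathscr{B})[1])^{\perp})$ is a compactly generated $t$-structure on $\mathscr{T}$. The content of $(1)$ is then the equality $\text{Coprod}(\mathscr{D}^{\leqslant 0})=\text{Coprod}(\mathscr{B})$ — here I interpret $\text{Coprod}(\mathscr{D}^{\leqslant 0})$ as the coproduct–extension closure computed inside $\mathscr{T}$ of the objects of $\mathscr{D}^{\leqslant 0}$ viewed as objects of $\widehat{\mathfrak{S}}(\mathscr{S})\subseteq\mathscr{T}$ via Theorem \ref{MCP}(2)$(b)$. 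The inclusion $\supseteq$ is clear. For $\subseteq$, every object of $\mathscr{D}^{\leqslant 0}$ is, by Definition \ref{Completion} and Theorem \ref{Completion-0}, a homotopy colimit of a Cauchy sequence whose terms can be taken in $\mathscr{S}\cap\mathscr{D}^{\leqslant 0}=\mathscr{B}$ — this is exactly the place where one invokes Lemma \ref{Restriction}(1) (applicable because a bounded $t$-structure is in particular bounded above) to recover the aisle by first restricting to $\mathscr{S}$ and then re-completing; a homotopy colimit of a chain lives in $\text{Coprod}(\mathscr{B})$ by its very construction as a cone of a map between coproducts. This yields $(1)$.

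For part $(2)$, the task is to show that the compactly generated $t$-structure with aisle $\text{Coprod}(\mathscr{B})$ is equivalent to the preferred one, i.e.\ (by Lemma \ref{Compact generator}(2) and the definition of equivalence) that there is a nonnegative integer $n$ with $\langle G\rangle^{(-\infty,-n]}\subseteq\mathscr{T}^c\cap\text{Coprod}(\mathscr{B})\subseteq\langle G\rangle^{(-\infty,n]}$, equivalently $\mathscr{M}_n=\langle G\rangle^{(-\infty,-n]}\subseteq\mathscr{B}$ and $\mathscr{B}\subseteq\langle G\rangle^{(-\infty,n]}$ for a common $n$. The first inclusion is automatic: since $G\in\widehat{\mathscr{S}}$ and $(\mathscr{D}^{\leqslant 0},\mathscr{D}^{\geqslant 0})$ is bounded, there is $c$ with $G\in\mathscr{D}^{\leqslant c}$, whence $\langle G\rangle^{(-\infty,-c]}\subseteq\mathscr{S}\cap\mathscr{D}^{\leqslant 0}=\mathscr{B}$ (this is precisely the inclusion $(\ast)$ from the proof of Lemma \ref{Restriction}). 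So the real work is the reverse bound $\mathscr{B}\subseteq\langle G\rangle^{(-\infty,s]}$ for some $s$.

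Under hypothesis $(a)$, $\fd(\mathscr{S}^{\text{op}})<\infty$ means there is an object at which $\mathscr{S}^{\text{op}}$ has finite finitistic dimension; since $G$ is a classical generator of $\mathscr{S}=\mathscr{T}^c$ (Lemma \ref{Smd}) and, in the presence of the bounded $t$-structure obtained in $(1)$ on the intermediate category, finite finitistic dimension can only occur at classical generators, we may take that object to be $G$, so $\fd(\mathscr{S}^{\text{op}},G^{\text{op}})<\infty$. Then Lemma \ref{Restriction}(3), applied with $\mathscr{X}=\widehat{\mathscr{S}}$ and the given bounded $t$-structure, produces nonnegative integers $r,s$ with $\langle G\rangle^{(-\infty,-r]}\subseteq\mathscr{S}\cap\mathscr{D}^{\leqslant 0}=\mathscr{B}\subseteq\langle G\rangle^{(-\infty,s]}$, which is exactly the two-sided containment needed; hence the $t$-structure is in the preferred equivalence class. (I expect this to be the cleanest of the two arguments, being a near-immediate corollary of Lemma \ref{Restriction}(3).) Under hypothesis $(b)$, $\mathscr{T}=\text{Coprod}_m(H(-\infty,\infty))$ with $H\in\widehat{\mathscr{S}}=\mathscr{T}_c^b$; here I would invoke Lemma \ref{last lemma}(3) with $\mathscr{S}$ there taken to be $\widehat{\mathscr{S}}=\mathscr{T}_c^b$ (legitimate since $H\in\mathscr{T}_c^b$ and $\mathscr{T}_c^b$ is thick by Theorem \ref{MCP}(2)$(b)$, using $\Hom_\mathscr{T}(G,G[i])=0$ for $i\gg 0$) to conclude $\widehat{\mathscr{S}}=\langle H\rangle_m$, so $\widehat{\mathscr{S}}$ is strongly generated by $H$; combined with the bounded $t$-structure on $\widehat{\mathscr{S}}$, Lemma \ref{last lemma}(1) then bounds, for each fixed $N$, the subcategory $\mathscr{D}^{\geqslant N}\cap\langle H\rangle_m$ inside $\text{smd}(\text{coprod}_m(H[N-t,\infty)))$ for a $t$ depending only on $H$ and $m$ — this uniform bound transfers, via the change of generators between $H$ and $G$ (both classical generators of the thick closure, so $\langle H\rangle=\langle G\rangle$ inside $\widehat{\mathscr{S}}$ up to finite shift), into the desired containment $\mathscr{B}\subseteq\langle G\rangle^{(-\infty,s]}$. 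The hard part will be exactly this last step in case $(b)$: converting the strong-generation bound phrased in terms of $H$ and the $t$-structure cohomological degree into a two-sided metric bound phrased in terms of the compact generator $G$, keeping track that all the integers involved ($t$, the shift comparing $H$ and $G$, the bound $c$ above) can be amalgamated into a single $n$; this is a bookkeeping argument with the partial order $\preceq$ on good metrics, but it is where the proof has to be written carefully.
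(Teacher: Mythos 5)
Your approach to parts $(1)$ and $(2)(a)$ is correct and in fact somewhat more economical than the paper's. For $(1)$, the paper constructs an enlarged $\mathscr{B}:=\mathscr{S}\cap\big(\mathscr{T}^{\leqslant -b}\ast\mathscr{D}^{\leqslant 0}\big)$ and proves $\text{Coprod}(\mathscr{B})=\text{Coprod}(\mathscr{D}^{\leqslant 0})$ by a direct two-sided extension argument, whereas your simpler choice $\mathscr{B}=\mathscr{S}\cap\mathscr{D}^{\leqslant 0}$ also works: Lemma \ref{Restriction}$(1)$ gives $\mathfrak{S}_G(\mathscr{S}\cap\mathscr{D}^{\leqslant 0})=\mathscr{D}^{\leqslant 0}$, so under the identification of $\widehat{\mathscr{S}}$ with $\widehat{\mathfrak{S}}(\mathscr{S})\subseteq\mathscr{T}$ each $A\in\mathscr{D}^{\leqslant 0}$ is a homotopy colimit in $\mathscr{T}$ of a Cauchy sequence in $\mathscr{B}$, and such a homotopy colimit is the cone of a morphism between coproducts of objects of $\mathscr{B}$, hence lies in $\text{Coprod}(\mathscr{B})$ since $\mathscr{B}[1]\subseteq\mathscr{B}$. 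For $(2)(a)$, your invocation of Lemma \ref{Restriction}$(3)$ (via Corollary \ref{Some-Any}) gives $\langle G\rangle^{(-\infty,-r]}\subseteq\mathscr{B}\subseteq\langle G\rangle^{(-\infty,s]}$ immediately, packaging the index bookkeeping that the paper carries out by hand from the definition of $\fd(\mathscr{S}\opp, G\opp)$.

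Case $(2)(b)$, however, has two genuine gaps. First, your claim that ``$\langle H\rangle=\langle G\rangle$ inside $\widehat{\mathscr{S}}$ up to finite shift'' because both are classical generators is false: inside $\widehat{\mathscr{S}}$, the object $G$ classically generates only $\mathscr{S}=\mathscr{T}^c$ (Lemma \ref{Smd}), whereas your application of Lemma \ref{last lemma}$(3)$ shows $H$ classically (even strongly) generates all of $\widehat{\mathscr{S}}$; these thick subcategories differ precisely when $\mathscr{S}\neq\widehat{\mathscr{S}}$, which is the interesting case. Second, you propose to apply Lemma \ref{last lemma}$(1)$ (or $(3)$) with $\mathscr{D}^{\geqslant N}$ in place of $\mathscr{T}^{\geqslant n}$, but those lemmas are stated for the coaisle of a $t$-structure \emph{on $\mathscr{T}$} whose coaisle is closed under coproducts; $\mathscr{D}^{\geqslant N}$ lives only on $\widehat{\mathscr{S}}$, and relating it to $\mathscr{T}^{\geqslant n}$ is exactly what the theorem asserts, so substituting one for the other is circular. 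The paper sidesteps both issues through the intermediary ${^\bot}\text{Coprod}\big(H[d,\infty)\big)$: choosing $d$ so that $H\in\mathscr{D}^{\geqslant -d+1}$, one gets $H[d,\infty)\subseteq\mathscr{D}^{\geqslant 1}\subseteq(\mathscr{D}^{\leqslant 0})^{\bot}=\mathscr{B}^{\bot}$, and since $\mathscr{B}^{\bot}$ is closed under coproducts (as $\mathscr{B}$ is compact) this forces $\text{Coprod}(\mathscr{D}^{\leqslant 0})\subseteq{^\bot}\text{Coprod}\big(H[d,\infty)\big)$; independently, Lemma \ref{last lemma}$(1)$ applied to the $\mathscr{T}$-$t$-structure in the preferred class and to $\mathscr{T}=\text{Coprod}_m(H(-\infty,\infty))$ yields $\mathscr{T}^{\geqslant t+d}\subseteq\text{Coprod}\big(H[d,\infty)\big)$, and passing to left orthogonals gives $\text{Coprod}(\mathscr{D}^{\leqslant 0})\subseteq\mathscr{T}^{\leqslant t+d-1}$, which together with $\mathscr{T}^{\leqslant -b}\subseteq\text{Coprod}(\mathscr{D}^{\leqslant 0})$ completes the sandwich. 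This orthogonal-complement bridge is the idea missing from your outline for case $(b)$.
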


\begin{proof}
By Lemma \ref{Inclusion}, $\mathscr{S}\subseteq\widehat{\mathscr{S}}$. Let $\mathscr{A}:=\mathscr{D}^{\leqslant 0}$. Since $(\mathscr{A},\mathscr{D}^{\geqslant 0})$ is a bounded above $t$-structure on $\widehat{\mathscr{S}}$, there is a natural number $a$ such that $G[a]\in\mathscr{A}$.
Recall that $G\in\mathscr{S}$ and $\mathscr{T}_{G}^{\leqslant 0}=\overline{\langle G\rangle}^{(-\infty,0]}$. Then $\mathscr{T}_{G}^{\leqslant -a}\subseteq\text{Coprod}(\mathscr{S}\cap\mathscr{A})$. Since $(\mathscr{T}^{\leqslant 0},\mathscr{T}^{\geqslant 0})$ and $(\mathscr{T}_G^{\leqslant 0},\mathscr{T}_G^{\geqslant 0})$ are equivalent, there is an integer $b$ with $b\geqslant a$ such that $\mathscr{T}^{\leqslant -b}\subseteq\mathscr{T}_G^{\leqslant -a}$. Thus $\mathscr{T}^{\leqslant -b}\subseteq\text{Coprod}(\mathscr{S}\cap\mathscr{A})\subseteq\text{Coprod}(\mathscr{A})$.

$(1)$ The idea of the proof of $(1)$ is very similar to the one of \cite[Lemma 8.1]{Neeman4}.

By Theorem \ref{MCP}$(2)(a)$, $\widehat{\mathscr{S}}\subseteq\mathscr{T}_{c}^{-}$. Since $\mathscr{A}\subseteq \widehat{\mathscr{S}}$, each object $A$ of $\mathscr{A}$ is endowed with a triangle $C\rightarrow B\rightarrow A\rightarrow C[1]$ such that $C\in\mathscr{T}^{\leqslant -b}$ and $B\in\mathscr{S}$. Let $\mathscr{B}:=\mathscr{S}\cap(\mathscr{T}^{\leqslant -b}\ast\mathscr{A})$. Then $B\in\mathscr{B}$ and $A\in\mathscr{B}\ast\mathscr{T}^{\leqslant -b-1}\subseteq\mathscr{B}\ast\mathscr{T}^{\leqslant -b}$. Clearly, $\mathscr{S}\cap\mathscr{A}\subseteq\mathscr{B}\subseteq\mathscr{T}^{\leqslant-b}\ast\mathscr{A}$ and $\mathscr{S}\cap\mathscr{A}\subseteq\mathscr{A}\subseteq\mathscr{B}\ast\mathscr{T}^{\leqslant -b}$. Since $\mathscr{T}^{\leqslant -b}\subseteq\text{Coprod}(\mathscr{S}\cap\mathscr{A})$, it follows that  $$\mathscr{B}\subseteq\text{Coprod}(\mathscr{S}\cap\mathscr{A})\ast\mathscr{A}\subseteq\text{Coprod}(\mathscr{A})\ast\mathscr{A}=\text{Coprod}(\mathscr{A}),$$
$$\mathscr{A}\subseteq\mathscr{B}\ast\text{Coprod}(\mathscr{S}\cap\mathscr{A})\subseteq\mathscr{B}\ast\text{Coprod}(\mathscr{B})=\text{Coprod}(\mathscr{B}).$$ Thus $\text{Coprod}(\mathscr{A})=\text{Coprod}(\mathscr{B})$. As $\mathscr{B}$ consists of compact objects, $\text{Coprod}(\mathscr{B})$ (and thus also $\text{Coprod}(\mathscr{A})$) is the aisle of the compactly generated $t$-structure on $\mathscr{T}$ generated by $\mathscr{B}$ (see Example \ref{Typical example}).

$(2)$ Recall that, for a full subcategory of $\mathscr{X}$ of $\mathscr{T}$, the category $\text{Coprod}(\mathscr{X})$ is the smallest full subcategory of $\mathscr{T}$ containing $\mathscr{X}$ and  closed under coproducts and extensions. This implies $\mathscr{X}^{\bot}=\text{Coprod}(\mathscr{X})^{\bot}$.
By the proof of $(1)$,
$\mathscr{T}^{\leqslant -b}\subseteq\text{Coprod}(\mathscr{A})=\text{Coprod}(\mathscr{B})$ with $\mathscr{B}\subseteq\mathscr{S}$. Since $\text{Coprod}(\mathscr{A})=\text{Coprod}(\mathscr{B})$, we have $\mathscr{A}^{\bot}=\text{Coprod}(\mathscr{A})^{\bot}=\text{Coprod}(\mathscr{B})^{\bot}=\mathscr{B}^{\bot}$.

{\bf Case $(a)$}: Since $(\mathscr{A},\mathscr{D}^{\geqslant 0})$ is bounded below and $G\in\mathscr{S}\subseteq\widehat{\mathscr{S}}$, we have $\mathscr{D}^{\geqslant 1}=\mathscr{A}^{\bot}\cap\widehat{\mathscr{S}}$ and there exists $d\in\mathbb{N}$ with $G\in\mathscr{D}^{\geqslant -d+1}$. This implies $G[d,\infty)\subseteq\mathscr{A}^{\bot}=\mathscr{B}^{\bot}$ and therefore $\mathscr{B}\subseteq\mathscr{S}\cap{^\bot}(\mathscr{B}^{\bot})\subseteq\mathscr{S}\cap{^{\bot}}(G[d,\infty))$. Suppose $\fd(\mathscr{S}^{\opp})<\infty$. By Corollary \ref{Some-Any}, $e:=\fd(\mathscr{S}^{\opp}, G\opp)<\infty$. This means that the inclusion $(G\opp(-\infty,-1])^{\bot}\subseteq\langle G\opp\rangle^{[0,\infty)}[e]$ holds in $\mathscr{S}\opp$. Taking opposite category leads to the inclusion $\mathscr{S}\cap{^{\bot}}(G[1,\infty))\subseteq\langle G\rangle^{(-\infty,0]}[-e]$ in $\mathscr{S}$.
Moreover, by Lemma \ref{Compact generator}(2), there exists $a\in\mathbb{N}$ with
$\langle G\rangle^{(-\infty,-a]}\subseteq\mathscr{S}\cap\mathscr{T}^{\leqslant 0}$. Then $\mathscr{S}\cap{^{\bot}}(G[1,\infty))\subseteq(\mathscr{S}\cap\mathscr{T}^{\leqslant 0})[-a-e]$.
Let $f:=1-a-e$. Then
$$\mathscr{S}\cap{^{\bot}}(G[f,\infty))\subseteq\big(\mathscr{S}\cap{^{\bot}}(G[1,\infty))\big)[a+e]
\subseteq\mathscr{S}\cap\mathscr{T}^{\leqslant 0}\subseteq\mathscr{T}^{\leqslant 0}.$$ Since $\mathscr{S}$ is a triangulated subcategory of $\mathscr{T}$, we have
$$\mathscr{B}\subseteq\mathscr{S}\cap{^{\bot}}(G[d,\infty))=\big(\mathscr{S}\cap{^{\bot}}(G[f,\infty))\big)[-d+f]\subseteq\mathscr{T}^{\leqslant 0}[-d+f]=\mathscr{T}^{\leqslant d-f}.$$
Clearly, $\mathscr{T}^{\leqslant d-f}\subseteq\mathscr{T}$ is closed under extensions and coproducts.
This implies $\text{Coprod}(\mathscr{B})\subseteq \mathscr{T}^{\leqslant d-f}$.
Thus $\mathscr{T}^{\leqslant -b}\subseteq\text{Coprod}(\mathscr{A})\subseteq\mathscr{T}^{\leqslant d-f}$,
which shows that $(\text{Coprod}(\mathscr{A}),\text{Coprod}(\mathscr{A})[1]^{\bot})$ is in the preferred equivalence class.

{\bf Case $(b)$}: Since $H\in\widehat{\mathscr{S}}$ and $(\mathscr{A},\mathscr{D}^{\geqslant 0})$ is a bounded below $t$-structure on $\widehat{\mathscr{S}}$, there exists a natural number $d$ with $H\in\mathscr{D}^{\geqslant -d+1}$ and  $\mathscr{D}^{\geqslant 1}=\mathscr{A}^{\bot}\cap\widehat{\mathscr{S}}$. It follows that $H[d,\infty)\subseteq\mathscr{A}^{\bot}=\text{Coprod}(\mathscr{A})^{\bot}=\mathscr{B}^{\bot}.$ Since the objects of $\mathscr{B}$ are compact in $\mathscr{T}$, the category $\mathscr{B}^{\bot}$ is closed under coproducts in $\mathscr{T}$. This forces $\text{Coprod}(H[d,\infty))\subseteq\text{Coprod}(\mathscr{A})^{\bot}$, and therefore
$
\text{Coprod}(\mathscr{A})\subseteq{^{\bot}}(\text{Coprod}(\mathscr{A})^{\bot})
\subseteq{^{\bot}}\text{Coprod}(H[d,\infty)).
$ Note that $\mathscr{T}_c^{-}\subseteq\mathscr{T}^{-}$ by $G\in\mathscr{T}_G^{\leqslant 0}$ and that $\widehat{\mathscr{S}}\subseteq\mathscr{T}_c^b\subseteq \mathscr{T}^b$ by Theorem \ref{MCP}$(2)(a)$. Then
$H\in\mathscr{T}^b$. Recall that $\widehat{\mathscr{S}}$ and $\mathscr{T}_c^-$ are determined by $\mathscr{T}$ and the preferred equivalence class of $t$-structures on $\mathscr{T}$. Thus we can take $\mathscr{T}^{\geqslant 0}=\mathscr{T}_G^{\geqslant 0}$, which is closed under coproducts in $\mathscr{T}$. By Lemma \ref{last lemma}(1), there exists a positive integer $t$ (only depending on $H$ and $m$) such that, for each $n\in\mathbb{Z}$,
$$
\mathscr{T}^{\geqslant n}\cap\text{Coprod}_m(H(-\infty,\infty))
\subseteq\text{smd}(\text{Coprod}_m(H[n-t,\infty)))
$$ Since $H[n-t,\infty)\subseteq(H[n-t,\infty))[1]$, we see that $\text{Coprod}(H[n-t,\infty))\subseteq\mathscr{T}$ is closed under direct summands.
Consequently, $\mathscr{T}^{\geqslant n}\cap\text{Coprod}_m(H(-\infty,\infty))\subseteq\text{Coprod}(H[n-t,\infty))$. Taking $n=t+d$ leads to $\mathscr{T}^{\geqslant t+d}\cap\text{Coprod}_m(H(-\infty,\infty))\subseteq\text{Coprod}(H[d,\infty))$. Since $\mathscr{T}=\text{Coprod}_m(H(-\infty,\infty))$ by assumption, $\mathscr{T}^{\geqslant t+d}\subseteq\text{Coprod}(H[d,\infty))$. It follows that $^{\bot}\text{Coprod}(H[d,\infty))\subseteq{^{\bot}}(\mathscr{T}^{\geqslant t+d})=\mathscr{T}^{\leqslant t+d-1}$, and further, $\text{Coprod}(\mathscr{A})\subseteq{^{\bot}}\text{Coprod}(H[d,\infty))\subseteq\mathscr{T}^{\leqslant t+d-1}$. Thus $\mathscr{T}^{\leqslant -b}\subseteq\text{Coprod}(\mathscr{A})\subseteq\mathscr{T}^{\leqslant t+d-1}$. This means that $(\text{Coprod}(\mathscr{A}),\text{Coprod}(\mathscr{A})[1]^{\bot})$ is in the preferred equivalence class of $t$-structures on $\mathscr{T}$.
\end{proof}

Theorem \ref{Equivalent $t$-structure}(1) generalizes both \cite[Lemma 3.1]{MZ} and \cite[Lemma 8.1]{Neeman4}, which deal with $\Db{R\modcat}$ for a (left) coherent ring $R$ and $\mathscr{D}^{b}_{{\rm coh},\, Z}(X)$ for a noetherian scheme $X$ with a closed subset $Z$, respectively.  Moreover, in Theorem \ref{Equivalent $t$-structure}(2), the case $(a)$ is new, while the case $(b)$ is a categorical version of \cite[Theorem 9.2]{Neeman4} that is focused on bounded $t$-structures on $\mathscr{D}^b_{\rm coh}(X)$ for a noetherian, separated, finite-dimensional, quasiexcellent scheme $X$.

By Theorem \ref{Equivalent $t$-structure}(2) and Theorem \ref{BABB}(2), we also obtain the following result, of which a special case regarding $\mathscr{D}^{{\rm perf}}_Z(X)$ was shown in \cite[Lemma 6.1]{Neeman4}.

\begin{Koro}
Suppose that $\fd(\mathscr{S}^{\text{op}})<\infty$ and $(\mathscr{S}^{\leqslant 0}, \mathscr{S}^{\geqslant 0})$ is a bounded $t$-structure on $\mathscr{S}$. Then the $t$-structure $\big(\text{Coprod}(\mathscr{S}^{\leqslant 0}), (\text{Coprod}(\mathscr{S}^{\leqslant 0})[1])^{\perp}\big) $ on $\mathscr{T}$ belongs to the preferred equivalence class.
\end{Koro}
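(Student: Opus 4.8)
The plan is to reduce this corollary to a direct application of Theorem \ref{BABB}(2) together with Theorem \ref{Equivalent $t$-structure}(2), specialized to the concrete setup fixed before Lemma \ref{Inclusion}: namely $\mathscr{S}=\mathscr{T}^c$ for a compactly generated $\mathscr{T}$ with compact generator $G$ and a $t$-structure $(\mathscr{T}^{\leqslant 0},\mathscr{T}^{\geqslant 0})$ in the preferred equivalence class, with $\widehat{\mathscr{S}}=\mathfrak{S}_G(\mathscr{S})$ the completion with respect to the induced $G$-good metric. First I would note that the hypothesis $\fd(\mathscr{S}\opp)<\infty$ gives, via Corollary \ref{Some-Any}, that $\fd(\mathscr{S}\opp,G\opp)<\infty$; in particular this finiteness hypothesis is what lets us invoke both the ``invariance under completion'' conclusion and case $(a)$ of Theorem \ref{Equivalent $t$-structure}. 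A second preliminary observation: since $\mathscr{S}$ has a bounded $t$-structure, Lemma \ref{HBTS} forces $\mathscr{S}_{\rm tc}=\mathscr{S}$, which by the discussion after Lemma \ref{Properties} means $\Hom_\mathscr{T}(G[i],G)=0$ for $i\gg 0$ — exactly the standing hypothesis needed to apply Theorem \ref{Equivalent $t$-structure} and Lemma \ref{Inclusion}.

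The main step is then the following chain. By Theorem \ref{BABB}(2), the existence of a bounded $t$-structure $(\mathscr{S}^{\leqslant 0},\mathscr{S}^{\geqslant 0})$ on $\mathscr{S}$ together with $\fd(\mathscr{S}\opp,G\opp)<\infty$ yields $\mathscr{S}=\mathfrak{S}_G(\mathscr{S})=\widehat{\mathscr{S}}$. Hence the given bounded $t$-structure on $\mathscr{S}$ is literally a bounded $t$-structure $(\mathscr{D}^{\leqslant 0},\mathscr{D}^{\geqslant 0})$ on $\widehat{\mathscr{S}}$, with $\mathscr{D}^{\leqslant 0}=\mathscr{S}^{\leqslant 0}$. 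Now apply Theorem \ref{Equivalent $t$-structure}: part (1) tells us $\text{Coprod}(\mathscr{D}^{\leqslant 0})=\text{Coprod}(\mathscr{S}^{\leqslant 0})$ is the aisle of a compactly generated $t$-structure on $\mathscr{T}$, and part (2), under condition $(a)$ — precisely $\fd(\mathscr{S}\opp)<\infty$, which holds by hypothesis — gives that this $t$-structure $\big(\text{Coprod}(\mathscr{S}^{\leqslant 0}),(\text{Coprod}(\mathscr{S}^{\leqslant 0})[1])^{\perp}\big)$ lies in the preferred equivalence class. That is exactly the assertion of the corollary.

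The only genuinely non-formal point is verifying that the standing hypotheses of Theorem \ref{Equivalent $t$-structure} are in force, and here the potential obstacle is the hypothesis $\Hom_\mathscr{T}(G[i],G)=0$ for $i\gg 0$: it is not part of the data of the corollary and must be deduced. As indicated above, it follows from Lemma \ref{HBTS} applied to the bounded $t$-structure on $\mathscr{S}=\mathscr{T}^c$ (whose classical generator is $G$ by Lemma \ref{Smd}), since $\mathscr{S}_{\rm tc}=\mathscr{S}$ means $G\in\mathscr{S}_{\rm tc}$, i.e. $\Hom_\mathscr{S}(G[n],G)=0$ for $n\gg 0$, and $\Hom_\mathscr{S}=\Hom_\mathscr{T}$ on compact objects. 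With that in hand, everything else is a bookkeeping matter of matching $\widehat{\mathscr{S}}$ with $\mathscr{S}$ and quoting the two theorems; I would keep the write-up to a few lines, spelling out the Lemma \ref{HBTS} reduction and then citing Theorem \ref{BABB}(2) and Theorem \ref{Equivalent $t$-structure}(2)(a) in sequence.
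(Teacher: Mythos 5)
Your proof is correct and follows essentially the same route as the paper, which simply cites Theorem \ref{BABB}(2) to identify $\mathscr{S}=\widehat{\mathscr{S}}$ and then applies Theorem \ref{Equivalent $t$-structure}(2)(a). Your explicit check via Lemma \ref{HBTS} that the standing hypothesis $\Hom_\mathscr{T}(G[i],G)=0$ for $i\gg 0$ is indeed available is a careful detail that the paper leaves implicit, but it is the right justification.
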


\section{Finitistic dimensions of triangulated categories}

In this section, we concentrate on the key assumption in Theorem \ref{Main result} - \emph{the finiteness of finitistic dimension of a triangulated category at objects}. Finitistic dimension for general triangulated categories (in our sense) is a new concept and accurately generalizes the finitistic dimension for ordinary rings: the finitistic dimension of the derived category of perfect complexes over a ring at the regular module is equal to the finitistic dimension of the ring (Lemma \ref{Finite}(5)). We also show that several classes of triangulated categories have finite finitistic dimension. These finiteness results are of independent interest, and in particular will be applied to proving all the corollaries in the Introduction.

\subsection{Finiteness of finitistic dimensions of triangulated categories}\label{FDTC}

In this section, we discuss some basic properties of finitistic dimension for triangulated categories and provide several classes of triangulated categories with finite finitistic dimension. These classes include triangulated categories with an algebraic $t$-structure (Lemma \ref{Finite}(3)) or with a strong generator (Proposition \ref{ST}), the singularity category of a Gorenstein Artin algebra or of a self-injective DG algebra (Corollary \ref{Artin algebra}), the derived category of perfect complexes on a scheme with finite fintistic dimension (Proposition \ref{SCMFD}) and the category of compact objects in the derived category of a differential graded ring with some cohomological restrictions (Corollary \ref{nonpositive DG ring} and Example \ref{differential graded}). So, we can apply Theorem \ref{Main result} to these triangulated categories. Further, in Appendix \ref{Section B}, we discuss other ways of defining finitistic dimension for triangulated categories that exist in the literature (Definitions \ref{Def-BFD} and \ref{K-FD}), and explain some differences and commonalities between these dimensions.

Throughout this section, let $\mathscr{S}$ be a triangulated category.
Recall from Definition \ref{Def-FD} that the finitistic dimension of $\mathscr{S}$ at an object $G\in\mathscr{S}$ is defined as:
$$\fd(\mathscr{S},G):=\inf\big\{n\in\mathbb{N}\mid G(-\infty, -1]^{\bot}\subseteq\langle G\rangle ^{[0,\infty)}[n]\big\}.$$
If there is an object $G$ with $\fd(\mathscr{S}, G)<\infty$, then we say that $\mathscr{S}$ has \emph{finite finitistic dimension} and denote this by $\fd(\mathscr{S})<\infty$.
In this paper, we care more about when the finitistic dimension of a triangulated category is finite, rather than the precise value of this dimension. In fact, if $\fd(\mathscr{S}, G)=n<\infty$, then $0\leqslant\fd(\mathscr{S}, G\oplus G[-i])\leqslant n-i$ for  $0\leqslant i\leqslant n$. Moreover, the finiteness of finitistic dimension is invariant under triangle equivalences.

We first collect basic properties of finitistic dimension of triangulated categories\textcolor{blue}{,} and establish its finiteness for some common triangulated categories. Recall that a bounded $t$-structure over a triangulated category $\mathscr{S}$ is said to be \emph{algebraic} (see \cite{AMY}) if its heart is a length category  (that is, objects in the heart admits finite filtrations) with finitely many isomorphism classes of simple objects.

\begin{Lem} \label{Finite}
$(1)$ Let $G, H\in\mathscr{S}$ and $G\in\langle H \rangle$. If $\fd(\mathscr{S},G)<\infty$, then $\fd(\mathscr{S}, H)<\infty$.

$(2)$ Suppose that $\mathscr{S}$ has a bounded $t$-structure. For any $G\in\mathscr{S}$, if either $\fd(\mathscr{S}, G)<\infty$ or $\fd(\mathscr{S}\opp, G\opp)<\infty$, then $G$ is a classical generator of $\mathscr{S}$.

$(3)$ Suppose that $\mathscr{S}$ has an algebraic $t$-structure $(\mathscr{S}^{\leqslant 0}  \mathscr{S}^{\geqslant 0})$. Let $G$ be the direct sum of the isomorphism classes of simple objects in the heart of $(\mathscr{S}^{\leqslant 0}, \mathscr{S}^{\geqslant 0})$. Then $\fd(\mathscr{S}, G)=\fd(\mathscr{S}\opp, G\opp)=0$.

$(4)$ Suppose that $\mathscr{S}={\langle G \rangle}^{(-\infty, m]}$ for some $G\in\mathscr{S}$ and $m\in\mathbb{Z}$. Then $\fd(\mathscr{S}, G)=0$. In particular, if there are integers $n\leqslant m$
with $\mathscr{S}={\langle G \rangle}^{[n, m]}$, then both $\fd(\mathscr{S})$ and $\fd(\mathscr{S}\opp)$ are finite.

$(5)$ Let $R$ be a ring.  Then $\fd(\Kb{\prj{R}}, R)=\fd(R)$.
In particular, $\fd(R)<\infty$ if and only if $\fd(\Kb{\prj{R}})<\infty$.
\end{Lem}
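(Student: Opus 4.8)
The plan is to prove Lemma \ref{Finite}(5) by unwinding both definitions and matching them term by term, using the standard identification $\mathscr{S}:=\Kb{\prj{R}}\simeq\mathscr{D}(R)^c$ and the well-known facts about left modules over $R$. Write $G={}_RR$ viewed as the stalk complex in degree $0$. The key translation is: for $M$ a finitely generated left $R$-module, $\Hom_{\mathscr{S}}(G[i],X)\cong H^{-i}(X)$ for $X\in\mathscr{S}$ under the embedding, and more importantly an object $X$ of $\mathscr{D}(R)$ lies in $G(-\infty,-1]^{\perp}=\{G[-i]\mid i\leqslant -1\}^{\perp}$ precisely when $\Hom(G[j],X)=0$ for all $j\geqslant 1$, i.e. when $H^{-j}(X)=0$ for $j\geqslant 1$, i.e. when $X$ has cohomology concentrated in nonnegative degrees. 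So $G(-\infty,-1]^{\perp}\cap\mathscr{S}$ is exactly the set of perfect complexes of $R$-modules with cohomology in degrees $\geqslant 0$. On the other side, $\langle G\rangle^{[0,\infty)}$ is the smallest full subcategory containing $G$ closed under extensions, direct summands and negative shifts; this is the subcategory of $\mathscr{S}$ whose objects are (summands of) bounded complexes of finitely generated projectives sitting in degrees $\leqslant 0$, equivalently the perfect complexes with projective-dimension-type resolutions in nonpositive degrees. Then $\langle G\rangle^{[0,\infty)}[n]$ consists of perfect complexes isomorphic in $\mathscr{D}(R)$ to a bounded complex of finitely generated projectives concentrated in degrees $[-\infty,n]$ — hmm, more carefully, in degrees $\leqslant n$.

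First I would make precise the following claim: a perfect complex $X$ with $H^i(X)=0$ for $i<0$ belongs to $\langle G\rangle^{[0,\infty)}[n]$ if and only if $X$ is isomorphic in $\mathscr{D}(R)$ to a complex $P^{\bullet}$ of finitely generated projectives with $P^i=0$ for $i<-n$ and $P^i=0$ for $i\gg 0$; since $X$ is already known to be bounded with nonnegative cohomology, the truncation $\tau^{\leqslant 0}$-type argument shows this is equivalent to $X$ (as a module, in the case $X=M[0]$) having a finite projective resolution of length $\leqslant n$, i.e. $\pd_R M\leqslant n$. Thus, taking the supremum over $M$ with finite projective dimension on one hand and the infimum of the bound $n$ that works on the other, we get $\fd(\mathscr{S},G)=\sup\{\pd_R M\mid M\in R\modcat,\ \pd_R M<\infty\}=\fd(R)$. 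The ``in particular'' statement is then immediate: $\fd(R)<\infty\iff\fd(\mathscr{S},{}_RR)<\infty$, and by Lemma \ref{Finite}(1) together with the fact that ${}_RR$ is a classical generator of $\Kb{\prj{R}}$, finiteness of $\fd(\mathscr{S})$ at \emph{some} object is equivalent to finiteness at ${}_RR$.

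The steps in order: (i) identify $\Kb{\prj{R}}$ with $\mathscr{D}(R)^c$ and spell out the cohomological description of $G(-\infty,-1]^{\perp}$ inside $\mathscr{S}$; (ii) give the module-theoretic description of $\langle G\rangle^{[0,\infty)}$ and its shifts, being careful about direct summands (note $G\in\langle G\rangle^{[0,\infty)}$ and the category is closed under negative shifts, so Eilenberg--Krull-Schmidt-type summand closure just says a summand of a bounded-below complex of projectives concentrated in degrees $\leqslant 0$ is again represented by such, up to homotopy); (iii) reduce the containment $G(-\infty,-1]^{\perp}\subseteq\langle G\rangle^{[0,\infty)}[n]$ to a statement about projective resolutions of finitely generated modules by a truncation/dévissage argument — every object of $G(-\infty,-1]^{\perp}\cap\mathscr{S}$ is built from its cohomology modules (which are finitely presented since $R$ need not be coherent — actually they are finitely generated, and subobjects of finitely generated need not be finitely generated, so one must be a little careful here) by finitely many extensions; (iv) conclude the numerical equality and then the ``in particular'' via part (1).

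The main obstacle I expect is step (iii), the reduction from the triangulated containment to the classical statement $\pd_R M\leqslant n$ for finitely generated $M$ with finite projective dimension. The subtlety is that an arbitrary object $X$ of $\Kb{\prj{R}}$ with cohomology in degrees $[0,N]$ need not itself be a module, and its cohomology modules $H^i(X)$, while finitely generated, need not have finite projective dimension individually even when $X\in\langle G\rangle^{[0,\infty)}[n]$; conversely one must show that if $X\in G(-\infty,-1]^{\perp}$ then $X\in\langle G\rangle^{[0,\infty)}[n]$ \emph{follows} from each cohomology module having projective dimension $\leqslant n$. The clean way around this is: observe that the objects of $G(-\infty,-1]^{\perp}\cap\mathscr{S}$ are exactly the perfect complexes quasi-isomorphic to bounded complexes of finitely generated projectives in degrees $[-N',0]$ for some $N'$ — wait, that's not automatic either. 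Instead one argues directly with the minimal-length projective resolution: for $X\in\mathscr{S}$ perfect with $H^i(X)=0$ for $i<0$, choose a bounded complex $P^{\bullet}$ of finitely generated projectives representing $X$; the condition $X\in\langle G\rangle^{[0,\infty)}[n]$ is equivalent (by a standard ``deleting negative tails by splitting off contractible summands'' argument, since $H^i(X)=0$ for $i<0$ lets one replace $P^{\bullet}$ up to homotopy by one with $P^i=0$ for $i<-n$) to the condition that $P^{\bullet}$ can be chosen with $P^i=0$ for $i<-n$, which is exactly the assertion that the ``projective dimension'' of the complex $X$ (in the derived sense) is $\leqslant n$. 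One then invokes the standard fact that for perfect complexes with bounded cohomology this derived projective dimension equals the supremum of $\pd_R H^i(X)-i$, and that the relevant supremum over all such $X$ coincides with the supremum over finitely generated modules of finite projective dimension. I would cite \cite{XC, Kr2} for this last equivalence rather than reprove it.
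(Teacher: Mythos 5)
Your overall plan—translate both sides into module theory and compare—matches the paper's approach, and steps (i) and (ii) are on target, as is the use of part (1) for the ``in particular'' statement. But step (iii) has a genuine gap: you invoke a ``standard fact'' that the derived projective dimension of a perfect complex with bounded cohomology equals $\sup_i\{\pd_R H^i(X)-i\}$. That formula is false. Take $R=k[x]/(x^2)$, which is self-injective with $\fd(R)=0$, and let $X$ be the complex $0\to R\xrightarrow{\cdot x}R\to 0$ concentrated in degrees $0,1$. Then $X$ is a bounded complex of finitely generated projectives in nonnegative degrees, hence $X\in\langle R\rangle^{[0,\infty)}$ and $X\in R(-\infty,-1]^{\perp}$, but $H^0(X)\cong H^1(X)\cong k$ both have infinite projective dimension, so your proposed supremum is $\infty$. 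The cohomology modules of a perfect complex simply cannot control membership in $\langle R\rangle^{[0,\infty)}[n]$, and your own earlier worry (``its cohomology modules\ldots need not have finite projective dimension individually'') was pointing exactly at this problem, which the invoked ``standard fact'' does not resolve.

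The paper's proof dodges this entirely: for $Y^{\bullet}\in R(-\infty,-1]^{\perp}\cap\Kb{\prj{R}}$, it looks at the single module $\Coker(d_Y^{-1})$ rather than at the cohomology modules. Since $H^i(Y^{\bullet})=0$ for $i<0$, the negative-degree part of $Y^{\bullet}$ together with the map into $Y^0$ is a finite projective resolution of $\Coker(d_Y^{-1})$ by finitely generated projectives, so $\pd_R\Coker(d_Y^{-1})\leqslant\fd(R)$. Replacing that tail by a projective resolution of length $\leqslant\fd(R)$ lands $Y^{\bullet}$ in $\langle R\rangle^{[0,\infty)}[\fd(R)]$; the terms and cohomology in positive degrees never matter. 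The lower bound $\fd(R)\leqslant\fd(\mathscr{S},R)$ is then handled by exhibiting, for each $n<\fd(R)+1$, a module $M_n$ with a nonsplit length-$n$ projective resolution: its deleted resolution lies in $R(-\infty,-1]^{\perp}$ but not in $\langle R\rangle^{[0,\infty)}[n-1]$, again because of the cokernel at degree $0$. You should replace your dévissage-over-cohomology step with this cokernel argument.
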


\begin{proof}
$(1)$ Assume $\fd(\mathscr{S},G)=d<\infty$. Since $G\in\langle H \rangle$, there are integers $a\leqslant b$ and a positive integer $n$ such that $G \in {\langle H \rangle}_{n}^{[a,b]}$. This implies that if $X \in {H(-\infty, b]}^{\perp}$,  then $\Hom_\mathscr{S}(G[k],X)=0$ for all $k\geqslant 0$. In other words, ${H(-\infty, b]}^{\perp} \subseteq {G(-\infty,0]}^{\perp}$. By $G \in {\langle H \rangle}_{n}^{[a, b]}$, we have $G[a]\in\langle H\rangle_n^{[0,b-a]}$. This gives rise to
${\langle G \rangle}^{[-a, \infty)} \subseteq {\langle H \rangle}^{[0, \infty)}$. Since
${G(-\infty,-1]}^{\perp} \subseteq {\langle G \rangle}^{[0, \infty)}[d]$, it follows that
\[
{H(-\infty,d+b-a-1]}^{\perp} \subseteq {G(-\infty,d-a-1]}^{\perp} \subseteq {\langle G \rangle}^{[-a,\infty)} \subseteq {\langle H \rangle}^{[0,\infty)}.
\]
Consequently, ${H(-\infty,-1]}^{\perp}\subseteq {\langle H \rangle}^{[0,\infty)}[b-a+d]$. Thus $\fd(\mathscr{S}, H) \leqslant b-a+d<\infty$.

$(2)$ It suffices to show $(2)$ in the case $\fd(\mathscr{S}, G)<\infty$ since $\mathscr{S}\opp$ also has a bounded $t$-structure.

Let $\mathscr{D}:=(\mathscr{S}^{\leqslant 0},\mathscr{S}^{\geqslant 0})$ be a bounded $t$-structure on $\mathscr{S}$. Since $\mathscr{D}$ is bounded above, there exists a positive integer $r$ with $G\in\mathscr{S}^{\leqslant r}$. This forces
$G(-\infty, -1]\subseteq\mathscr{S}^{\leqslant r-1}$, and therefore $\mathscr{S}^{\geqslant r}=(\mathscr{S}^{\leqslant r-1})^{\bot}\subseteq G(-\infty, -1]^{\bot}$.
Let $n:=\fd(\mathscr{S}, G)<\infty$. Then $G(-\infty, -1]^{\bot}\subseteq\langle G\rangle ^{[0,\infty)}[n]$. It follows that $\mathscr{S}^{\geqslant r}\subseteq\langle G\rangle$.
Since $\mathscr{D}$ is bounded below, $\mathscr{S}$ is generated by $\mathscr{S}^{\geqslant 0}$ under taking shifts. This implies $\mathscr{S}\subseteq \langle G\rangle$ and thus $G$ is a classical generator of $\mathscr{S}$.

$(3)$ Let $\mathscr{H}$ be the heart of the $t$-structure $(\mathscr{S}^{\leqslant 0}, \mathscr{S}^{\geqslant 0})$. By \cite[1.3.13.1]{BBD},
$$\mathscr{S}=\bigcup_{n\geqslant 0}\mathscr{H}[n]\ast\mathscr{H}[n-1]\ast\cdots \ast \mathscr{H}[-n],$$
$$\mathscr{S}^{\leqslant 0}=\bigcup_{n\geqslant 0}\mathscr{H}[n]\ast\mathscr{H}[n-1]\ast\cdots \ast \mathscr{H}\quad \mbox{and}\quad\mathscr{S}^{\geqslant 0}=\bigcup_{n\geqslant 0}\mathscr{H}\ast\mathscr{H}[-1]\ast\cdots \ast \mathscr{H}[-n],$$
where the $\ast$ operator is associative.
Since each object in $\mathscr{H}$ admits a finite filtration by simple objects in $\mathscr{H}$, it follows that $G$ is a classical generator of $\mathscr{S}$ and
$${G(-\infty,-1]}^{\perp}=\mathscr{H}(-\infty, -1]^{\perp}=(\mathscr{S}^{\leqslant -1})^{\bot}=\mathscr{S}^{\geqslant 0}=\langle\mathscr{H}\rangle^{[0, \infty)}=\langle G\rangle^{[0, \infty)}.$$
Thus $\fd(\mathscr{S}, G)=0$. Since
$\big((\mathscr{S}^{\geqslant 0})\opp, (\mathscr{S}^{\leqslant 0})\opp \big)$ is an algebraic $t$-structure on $\mathscr{S}\opp$ with the heart $\mathscr{H}\opp$, we have $\fd(\mathscr{S}\opp, G\opp)=0$.

$(4)$ Since $\mathscr{S}={\langle G \rangle}^{(-\infty, m]}$, we have $({\langle G \rangle}^{(-\infty, m]})^\bot=0$. It follows from
${\langle G \rangle}^{(-\infty, -1]}={\langle G \rangle}^{(-\infty, m]}[m+1]$ that $({\langle G \rangle}^{(-\infty, -1]})^\bot=0$. Observe that $({\langle G \rangle}^{(-\infty, -1]})^\bot=G(-\infty, -1]^{\bot}$. Thus $\fd(\mathscr{S}, G)=0$.

$(5)$ Let $\mathscr{S}:=\Kb{\prj{R}}$. Clearly, ${R(-\infty,-1]}^{\perp}=\{\cpx{P}\in\mathscr{S}\mid H^n(\cpx{P})=0, \forall \; n<0\}$. By Definition \ref{coprod}(4), ${\langle R \rangle}^{[0,+\infty)}=\bigcup_{n>0}\text{smd}\big(\text{coprod}_{n}(R[0, +\infty))\big)$ that is the smallest full subcategory of $\mathscr{S}$ containing $\prj{R}$ and closed under negative shifts, extensions and direct summands.
It follows that ${\langle R \rangle}^{[0,+\infty)}\subseteq \mathscr{S}$ consists of all those complexes
$\cpx{P}$ which are isomorphic in $\mathscr{S}$ to a complex $\cpx{Q}\in\mathscr{S}$ with $Q^n=0$ for $n<0$.

Let $d:=\fd(R)\in\mathbb{N}\cup\{\infty\}$.
 For any $n\in\mathbb{N}$ with $n<d+1$, there exists an $R$-module $M_{n}$ which has a deleted projective resolution of length $n$ by finitely generated projective $R$-modules
\[P^{\bullet}_{M_{n}}: 0 \lra P_{n} \lraf{f_n} P_{n-1} \lraf{f_{n-1}} \cdots \lra P_{1} \lraf{f_{1}}P_{0} \lra 0\]
such that $f_n$ does not split.
This means that the complex $P^{\bullet}_{M_{n}} $ is in ${R(-\infty,-1]}^{\perp}$, but not in
${\langle R \rangle}^{[0, \infty)}[n-1]$. Thus
$d\leqslant \fd(\mathscr{S}, R)$.  In particular, if
$d=\infty$, then $\fd(\mathscr{S}, R)=\infty$.

Suppose $d<\infty$. Let $\cpx{Y}:=(Y^i,d_Y^i)_{i\in\mathbb{Z}} \in {R(-\infty,-1]}^{\perp}$. Then $H^i(Y^{\bullet})=0$ for all $i<0$, and therefore  $\Coker(d^{-1}_Y)$ has a finite projective resolution by finitely generated projective $R$-modules. Consequently, the projective dimension of
$\Coker(d^{-1}_Y)$ is at most $d$, and further $Y^{\bullet} \in {\langle R \rangle}^{[0, \infty)}[d]$. This shows $\fd(\mathscr{S}, R) \leqslant d $. Thus $d=\fd(\mathscr{S}, R)$.

If $\fd(R)<\infty$, then $\fd(\mathscr{S})<\infty$. Conversely, assume $\fd(\mathscr{S})<\infty$. Then there exists an object $G\in\mathscr{S}$ with $\fd(\mathscr{S}, G)<\infty$. Since $\mathscr{S}=\langle R \rangle$, we see from $(1)$ that $\fd(\mathscr{S}, R)<\infty$. Thus $\fd(R)<\infty$.
\end{proof}

Lemma \ref{Finite}(1) implies that the finiteness of the finitistic dimension of a triangulated category with a classical generator is independent of the choice of a classical generator.

\begin{Koro}\label{Some-Any}
Let $\mathscr{S}$ be a triangulated category with a classical generator $G$. Then the following
are equivalent:
$(a)$ $\fd(\mathscr{S})<\infty$; $(b)$ $\fd(\mathscr{S}, G)<\infty$; $(c)$ $\fd(\mathscr{S}, H)<\infty$ for any other classical generator $H$ of $\mathscr{S}$.
\end{Koro}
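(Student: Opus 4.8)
The plan is to prove the equivalence of $(a)$, $(b)$, $(c)$ by a short cycle of implications, leaning entirely on Lemma \ref{Finite}(1) and the definition of a classical generator. The key observation is that if $G$ is a classical generator of $\mathscr{S}$, then $\mathscr{S} = \langle G\rangle$, so \emph{every} object of $\mathscr{S}$ lies in $\langle G\rangle$; in particular any other classical generator $H$ satisfies $G\in\langle G\rangle = \mathscr{S} = \langle H\rangle$, and symmetrically $H\in\langle G\rangle$. This symmetry is what makes the statement essentially immediate once Lemma \ref{Finite}(1) is in hand.

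Concretely, first I would show $(b)\Rightarrow(c)$: suppose $\fd(\mathscr{S},G)<\infty$ and let $H$ be any classical generator of $\mathscr{S}$. Since $G\in\mathscr{S}=\langle H\rangle$, Lemma \ref{Finite}(1) (applied with the roles ``$G,H$'' of that lemma taken by our $G$ and $H$) gives $\fd(\mathscr{S},H)<\infty$. Next, $(c)\Rightarrow(a)$ is trivial: $(c)$ asserts finiteness of $\fd(\mathscr{S},H)$ for every classical generator $H$, and $\mathscr{S}$ has at least one classical generator (namely $G$), so in particular there exists an object at which the finitistic dimension is finite, which is precisely $\fd(\mathscr{S})<\infty$. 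Finally, $(a)\Rightarrow(b)$: if $\fd(\mathscr{S})<\infty$, pick an object $H\in\mathscr{S}$ with $\fd(\mathscr{S},H)<\infty$; since $G$ is a classical generator we have $H\in\mathscr{S}=\langle G\rangle$, and then — now applying Lemma \ref{Finite}(1) with its ``$G$'' being our $H$ and its ``$H$'' being our $G$, i.e. using $H\in\langle G\rangle$ and $\fd(\mathscr{S},H)<\infty$ to conclude $\fd(\mathscr{S},G)<\infty$ — we are done. This closes the cycle $(b)\Rightarrow(c)\Rightarrow(a)\Rightarrow(b)$, establishing all three equivalences.

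There is really no hard part here; the only thing to be careful about is bookkeeping the direction in which Lemma \ref{Finite}(1) is invoked, since that lemma is stated as ``$G\in\langle H\rangle$ and $\fd(\mathscr{S},G)<\infty$ imply $\fd(\mathscr{S},H)<\infty$'', and in the step $(a)\Rightarrow(b)$ one wants to transfer finiteness \emph{into} $G$, so one applies it with the membership hypothesis $H\in\langle G\rangle$ (which holds because $G$ is a classical generator) and the finiteness hypothesis at $H$. The symmetry inherent in ``$G$ and $H$ are both classical generators, hence each lies in the thick subcategory generated by the other'' is what lets Lemma \ref{Finite}(1) be used in both directions without any extra work. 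One could also phrase the whole argument as a single remark: finiteness of $\fd(\mathscr{S},-)$ at any object of $\langle G\rangle$ that generates the same thick subcategory is equivalent to finiteness at $G$, and all classical generators satisfy this; but spelling out the three-step cycle is cleaner for the reader.
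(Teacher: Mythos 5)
Your proof is correct and matches the paper's intent exactly: the paper gives no explicit proof of this corollary, simply noting before its statement that ``Lemma \ref{Finite}(1) implies that the finiteness of the finitistic dimension of a triangulated category with a classical generator is independent of the choice of a classical generator,'' and your three-step cycle $(b)\Rightarrow(c)\Rightarrow(a)\Rightarrow(b)$, applying Lemma \ref{Finite}(1) in both directions via the mutual memberships $G\in\langle H\rangle$ and $H\in\langle G\rangle$, is precisely the argument the authors have in mind.
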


Now, we apply Lemma \ref{Finite} to bounded derived categories and singularity categories.

Let $d$ be a positive integer. Following \cite[Definition 2.2.]{HB}, a DG $k$-algebra $S$ over a field $k$ is said to be \emph{$d$-self-injective} if $S$ is nonpositive, proper (that is,
$\bigoplus_{i\in\mathbb{Z}}H^i(S)$ is a finite-dimensional $k$-module) and
${\langle S \rangle}_1^{\{0\}}={\langle D(S_S)[d-1] \rangle}_1^{\{0\}}$ (see Definition \ref{coprod}(4) for notation) in $\D{S}$, where $D:=\Hom_k(-,k)$ is the $k$-duality.
A class of self-injective DG algebras is given by trivial extension.
For instance, given a finite-dimension $k$-algebra $A$, the trivial extension DG algebra $A\oplus D(A)[d-1]$, with the multiplication of the usual trivial extension of $A$ and with zero differential, is $d$-self-injective (see \cite[Section 6]{HB}).

\begin{Koro}\label{Artin algebra}
$(1)$ Let $R$ be an Artin algebra. Then $\fd\big(\mathscr{D}^b(R\modcat)\big)<\infty$. If $R$ is a Gorenstein algebra, then $\fd\big(\mathscr{D}_{\rm sg}(R)\big)<\infty$.

$(2)$ Let $S$ be a $d$-self-injective DG algebra over a field $k$ with $d\geqslant 1$. Then
$\fd\big(\mathscr{D}_{\rm sg}(S)\big)<\infty$.
\end{Koro}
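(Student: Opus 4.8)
The plan is to deduce Corollary \ref{Artin algebra} from Lemma \ref{Finite}(4) by exhibiting, in each case, a classical generator $G$ of the relevant triangulated category together with integers $n \leqslant m$ such that the category equals $\langle G\rangle^{[n,m]}$. Once such a finite ``amplitude'' generator is produced, Lemma \ref{Finite}(4) immediately yields $\fd(\mathscr{S}) < \infty$ (and $\fd(\mathscr{S}\opp) < \infty$ too, though here we only need the former). So the whole problem reduces to a strong-generation statement in each of the two settings, with explicit control on the degrees in which the iterated cones and summands occur.

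For part (1), first take $R$ an Artin algebra and consider $\mathscr{S} = \mathscr{D}^b(R\modcat)$. Let $G$ be the direct sum of the (finitely many) simple $R$-modules, regarded as a complex in degree $0$. Every finitely generated $R$-module has a finite composition series, so it lies in $\langle G\rangle^{\{0\}}_\ell$ for $\ell$ the Loewy length of $R$; and every bounded complex with finitely generated cohomology is built from its cohomology modules by finitely many extensions, placing it in $\langle G\rangle^{[n,m]}$ for $n,m$ the bounds of its cohomology. This does not yet give a \emph{single} pair $(n,m)$ working for all objects — but that is exactly what Lemma \ref{Finite}(4)'s hypothesis ``$\mathscr{S} = \langle G\rangle^{[n,m]}$'' demands, so a more careful argument is needed; in fact the cleaner route is to invoke that $\mathscr{D}^b(R\modcat)$ carries an algebraic $t$-structure (the standard one, whose heart $R\modcat$ is a length category with finitely many simples) and apply Lemma \ref{Finite}(3) directly, giving $\fd(\mathscr{D}^b(R\modcat), G) = 0$. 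For the singularity category $\mathscr{D}_{\rm sg}(R) = \mathscr{D}^b(R\modcat)/\Kb{\prj R}$ of a Gorenstein algebra $R$, use that $\mathscr{D}_{\rm sg}(R) \simeq \underline{\Gproj}(R)$, the stable category of finitely generated Gorenstein-projective modules (Buchweitz's theorem), and that over a Gorenstein algebra this is a triangulated category in which the image $\bar G$ of $G$ is a classical generator; one then shows $\underline{\Gproj}(R) = \langle \bar G\rangle^{[-\mu, \mu]}$ for $\mu$ related to the Gorenstein dimension $\mathrm{injdim}\, {}_RR$ and the Loewy length, using that any Gorenstein-projective module has a complete resolution and hence syzygies in bounded degrees relative to $\bar G$. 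Then Lemma \ref{Finite}(4) applies.

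For part (2), let $S$ be a $d$-self-injective DG algebra over a field $k$ with $d \geqslant 1$, and consider $\mathscr{D}_{\rm sg}(S) = \mathscr{D}^b_f(S)/\langle S\rangle$. The key input is the defining relation $\langle S\rangle_1^{\{0\}} = \langle D(S_S)[d-1]\rangle_1^{\{0\}}$ in $\mathscr{D}(S)$, together with properness of $S$ (so $\bigoplus_i H^i(S)$ is finite-dimensional). Take $G$ to be (the image in $\mathscr{D}_{\rm sg}(S)$ of) the direct sum of the simple modules over $H^0(S)$, viewed as DG $S$-modules concentrated in degree $0$; one checks this is a classical generator of $\mathscr{D}^b_f(S)$ and hence of $\mathscr{D}_{\rm sg}(S)$. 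The self-injectivity hypothesis provides a degree-shift periodicity: it forces $S$ to be, up to the relation, a ``Frobenius-like'' object, so that syzygies (cones of truncations) over $S$ can be shifted back into a bounded window using $D(S_S)[d-1]$. Concretely, one shows that in $\mathscr{D}_{\rm sg}(S)$ every object is a finite extension–and–summand of shifts of $\bar G$ lying in degrees between $-N$ and $N$ for some $N$ depending only on $d$ and $\dim_k \bigoplus_i H^i(S)$; the periodicity is what keeps $N$ finite rather than letting the degrees drift. Then $\mathscr{D}_{\rm sg}(S) = \langle \bar G\rangle^{[-N,N]}$ and Lemma \ref{Finite}(4) gives $\fd(\mathscr{D}_{\rm sg}(S)) < \infty$.

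The main obstacle I anticipate is part (2): making precise and rigorous the claim that $d$-self-injectivity yields a \emph{uniform} bound on the degree window in which objects of $\mathscr{D}_{\rm sg}(S)$ are generated. The subtlety is that ``$\langle S\rangle_1^{\{0\}} = \langle D(S_S)[d-1]\rangle_1^{\{0\}}$'' is a statement about a single homological degree $\{0\}$, and one must propagate it — via rotating triangles and the minimal-resolution structure over $S$ — to control arbitrarily deep syzygies without the generation count $m$ blowing up. I expect this to require the explicit complete-resolution machinery for self-injective DG algebras from \cite{HB}, or an Auslander–Reiten–style argument bounding the ``level'' of objects in $\mathscr{D}_{\rm sg}(S)$. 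Part (1) is comparatively routine once one commits to the stable-category description $\mathscr{D}_{\rm sg}(R) \simeq \underline{\Gproj}(R)$ and invokes Lemma \ref{Finite}(3)/(4).
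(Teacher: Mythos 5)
Your overall plan --- reduce each statement to Lemma \ref{Finite}(3) or (4) by producing a classical generator in a bounded amplitude window --- is precisely the paper's strategy, and your treatment of $\mathscr{D}^b(R\modcat)$ via the standard algebraic $t$-structure and Lemma \ref{Finite}(3) coincides with the paper's proof. However, you leave both strong-generation claims as gaps, and in each case the actual resolution differs from what you anticipate.

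For the Gorenstein Artin algebra $R$, your sketch that ``any Gorenstein-projective module has a complete resolution and hence syzygies in bounded degrees relative to $\bar G$'' does not by itself control the number of extension steps, which is what Lemma \ref{Finite}(4) actually requires. The paper's argument is both sharper and more concrete: since $R$ is $n$-Gorenstein, every object of the stable category $\mathscr{S}$ of finitely generated Gorenstein-projective modules is, up to isomorphism in $\mathscr{S}$, of the form $\Omega_R^n(X)$ for some $X\in R\modcat$. Take $G:=\Omega_R^n(R/J)$ with $J=\rad(R)$ and $m$ the Loewy length of $R$. Every $X$ admits a radical filtration of length $\leqslant m$ with semisimple subquotients in $\add(R/J)$; applying $\Omega^n$ termwise converts this filtration into an iterated sequence of triangles in $\mathscr{S}$, yielding $\mathscr{S}=\langle G\rangle_m^{\{0\}}$. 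Thus $\mathscr{S}=\langle G\rangle^{[0,0]}$ with \emph{no shifts at all}, and Lemma \ref{Finite}(4) gives $\fd(\mathscr{S},G)=0$ --- strictly stronger than the $[-\mu,\mu]$ window you were aiming for, and obtained by the radical filtration rather than the complete resolution.

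For the $d$-self-injective DG case, the ``main obstacle'' you flag is in fact a direct citation rather than a new argument: {\rm \cite[Theorem 0.3(4)]{HB}} identifies $\mathscr{D}_{\rm sg}(S)$ with the stable category $S\mbox{-}\underline{{\rm CM}}$ of Cohen--Macaulay DG $S$-modules, and {\rm \cite[Theorem 0.6 and Definition 0.4]{HB}} state that $S\mbox{-}\underline{{\rm CM}}=\langle G\rangle^{[1-d,0]}$ for $G$ the direct sum of the simple $H^0(S)$-modules. No propagation through syzygies or periodicity needs to be re-derived; Lemma \ref{Finite}(4) then finishes immediately.
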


\begin{proof}
$(1)$ Note that $\mathscr{D}^{b}(R\text{-}{\rm mod})$ has a canonical algebraic $t$-structure with the heart $R\modcat$. By Lemma \ref{Finite}(3), $\fd(\mathscr{D}^{b}(R\text{-}{\rm mod}))<\infty$.
Suppose that $R$ is $n$-Gorenstein, that is, the injective dimensions of ${_R}R$ and $R_R$ are the same and equal to $n$. Let $\mathscr{S}$ be the stable category of the Frobenius category of finitely generated Gorenstein-projective $R$-modules. Then there is a triangle equivalence $\mathscr{S}\simeq \mathscr{D}_{\rm sg}(R)$, due to Buchweitz. So, we can identify these two equivalent categories. Since $R$ is $n$-Gorenstein, $\mathscr{S}$ consists of $n$-th syzygies $\Omega_R^n(X)$ for all $X\in R\modcat$. Now, let $J$ be the radical of $R$ and $m$ the Loewy length of $R$, and let $G:=\Omega_R^n(R/J)$. Observe that each finitely generated $R$-module has a radical series of length less than or equal $m$, and taking $n$-th syzygy of this series produces an iterated sequence of triangles in $\mathscr{S}$. This implies $\mathscr{S}={\langle G \rangle}_m^{\{0\}}$. By Lemma \ref{Finite}(4), $\fd(\mathscr{S}, G)=0$. Thus  $\fd\big(\mathscr{D}_{\rm sg}(R)\big)<\infty$.

$(2)$ Clearly, self-injective DG algebras are Gorenstein in the sense that $\langle S \rangle=\langle D(S)\rangle$ in $\D{S}$.  By \cite[Theorem 0.3(4)]{HB}, the category $\mathscr{D}_{\rm sg}(S)$ is triangle equivalent to the stable category $S\mbox{-}\underline{{\rm CM}}$ of left Cohen-Macaulay DG $S$-modules. Let $G$ be the direct sum of (finitely many) isomorphism classes of simple $H^0(S)$-modules. It follows from \cite[Theorem 0.6 and Definition 0.4]{HB} that $S\mbox{-}\underline{{\rm CM}}={\langle G \rangle}^{[1-d, 0]}$. By Lemma \ref{Finite}(4), $\fd(S\mbox{-}\underline{{\rm CM}}, G)<\infty$. Thus $\fd\big(\mathscr{D}_{\rm sg}(S)\big)<\infty$.
\end{proof}

We point out that the opposite categories of all triangulated categories in Corollary \ref{Artin algebra} have finite finitistic dimension. More examples of triangulated categories satisfying the conditions of Lemma \ref{Finite}(4) can also be found in \cite{XC2} which is related to \emph{Tachikawa’s second conjecture} (that is, all finitely generated self-orthogonal modules over a self-injective Artin algebra are projective).

\begin{Bsp}\label{Self-orthogonal}
Let $A$ be a self-injective Artin algebra and let $M\in A\modcat$ be a \emph{self-orthogonal} module (that is, $\Ext_A^i(M, M)=0$ for $i>0$) containing the module ${_A}A$ as a direct summand. We consider the category $A\Modcat$ of left $A$-modules and its stable category $\mathscr{T}:=\Stmc{A}$ that is a compactly generated triangulated category. Let $\Gamma:=\End_\mathscr{T}(M)$ and
$$\mathscr{E}:=\{X\in A\Modcat\mid \Hom_\mathscr{T}(M, X[n])=0, \;n\neq 0, -1;\; \Hom_\mathscr{T}(M,\, X\oplus X[-1])\in\Gamma\modcat\}.$$ Clearly, $\mathscr{E}$ contains all projective $A$-modules. We denote by  $\underline{\mathscr{E}}$ the stable category of $\mathscr{E}$, which is a full subcategory of $\mathscr{T}$. Following Definition \ref{coprod}(5), the full subcategories
$\overline{\langle M\rangle}^{\{0\}}_1\subseteq \overline{\langle M\rangle}\subseteq\mathscr{T}$ are defined. Now, we focus on a quotient category of additive subcategories of $\mathscr{T}$:
$$\mathscr{S}:=\big(\underline{\mathscr{E}}\cap\overline{\langle M\rangle}\big)/\,\overline{\langle M \rangle}^{\{0\}}_1$$
This category is closely related to \emph{Tachikawa’s second conjecture} in the following sense:

Suppose that ${\langle M \rangle}_1^{\{0\}}={\langle D(A)\otimes_AM\rangle}_1^{\{0\}}$ in $\mathscr{T}$ (this is always true if $A$ is symmetric), where $D$ is the usual duality on $A\modcat$. Then $\mathscr{S}$ vanishes if and only if ${_A}M$ is projective, due to \cite[Corollary 4.9]{XC2}.  Moreover, by \cite[Proposition 1.6(2)(3)]{XC2}, $\mathscr{S}$ is a triangulated category and there exists an $A$-module $S$ with $\mathscr{S}=\langle S\rangle_{2n}^{[-1,0]}$, where $n$ is the Loewy length of $\Gamma$. Thus $\fd(\mathscr{S})<\infty$ and $\fd(\mathscr{S}\opp)<\infty$ by Lemma \ref{Finite}(4).
\end{Bsp}

The following result shows that if a triangulated category has a \emph{strong generator} (see Definition \ref{coprod}(6)) with a negative self-extension vanishing condition, then it has finite finitistic dimension. This condition is not very restrictive in practice. For instance, it holds for the case that the category is a full triangulated subcategory of $\mathscr{T}^{b}$ for a triangulated category $\mathscr{T}$ with a $t$-structure.

\begin{Prop}\label{ST}
If $\mathscr{S}={\langle G \rangle}_{n+1}$ for some $n\in\mathbb{N}$ such that $\Hom_{\mathscr{S}}(\Sigma^iG,G)=0$ for $i\geqslant d+1$ with $d\in\mathbb{N}$. Then $\fd(\mathscr{S}, G)\leqslant n(d+1)<\infty$.
\end{Prop}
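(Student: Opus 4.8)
The statement asks us to show that if $\mathscr{S} = \langle G\rangle_{n+1}$ (i.e. $G$ is a strong generator in $n+1$ steps) and $\Hom_{\mathscr{S}}(G[i],G)=0$ for $i\geqslant d+1$, then $\fd(\mathscr{S},G)\leqslant n(d+1)$. Unwinding Definition~\ref{Def-FD}, we must prove the inclusion $G(-\infty,-1]^{\bot}\subseteq\langle G\rangle^{[0,\infty)}[n(d+1)]$, equivalently $\big(G(-\infty,-1]^{\bot}\big)[-n(d+1)]\subseteq\langle G\rangle^{[0,\infty)}$. The plan is to take an arbitrary object $F\in G(-\infty,-1]^{\bot}$, note that $F\in\mathscr{S}=\langle G\rangle_{n+1}$, and analyze the canonical identity morphism $\mathrm{id}_F\colon F\to F$ using Lemma~\ref{Strong generator}.

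First I would invoke Lemma~\ref{Strong generator} directly: since $F\in\langle G\rangle_{n+1}$ (viewed as $D$ in that lemma) and $F\in G(-\infty,-1]^{\bot}$ (viewed as the target), the identity morphism $F\to F$ factors through some object $F'\in\langle G\rangle_{n+1}^{[-n(d+1),\infty)}$. That is, there are maps $F\xrightarrow{u}F'\xrightarrow{v}F$ with $vu=\mathrm{id}_F$. Hence $F$ is a direct summand of $F'$. Since $\langle G\rangle^{[-n(d+1),\infty)}$ is closed under direct summands (it is defined via $\mathrm{smd}$ in Definition~\ref{coprod}(4)), we conclude $F\in\langle G\rangle^{[-n(d+1),\infty)}$.

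The final step is a shift bookkeeping identity. By Definition~\ref{coprod}(4), $\langle G\rangle^{[-n(d+1),\infty)} = \langle G\rangle^{[0,\infty)}[n(d+1)]$: both sides are the smallest full subcategory containing $G[n(d+1)]$ and closed under extensions, direct summands, and negative shifts. Therefore $F\in\langle G\rangle^{[0,\infty)}[n(d+1)]$. Since $F\in G(-\infty,-1]^{\bot}$ was arbitrary, this gives $G(-\infty,-1]^{\bot}\subseteq\langle G\rangle^{[0,\infty)}[n(d+1)]$, so by definition $\fd(\mathscr{S},G)\leqslant n(d+1)$, which is finite.

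**Main obstacle.** There is essentially no obstacle here — Lemma~\ref{Strong generator} has already done all the real work (the induction on $n$ and the application of the dual of \cite[Lemma~1.6]{Neeman3}). The only things to be careful about are: (i) correctly matching the hypotheses of Lemma~\ref{Strong generator} — note it requires $\Hom_{\mathscr{S}}(G[i],G)=0$ for $i\geqslant d+1$, which is exactly our hypothesis; (ii) the splitting argument, which uses that $\mathrm{id}_F\colon F\to F$ is a morphism with source in $\langle G\rangle_{n+1}$ and target in $G(-\infty,-1]^{\bot}$; and (iii) the shift identity $\langle G\rangle_{n+1}^{[-n(d+1),\infty)}\subseteq\langle G\rangle^{[-n(d+1),\infty)}=\langle G\rangle^{[0,\infty)}[n(d+1)]$, which is immediate from the definitions but worth stating explicitly. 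So the proof is short: it is a one-line corollary of Lemma~\ref{Strong generator} once one observes that the identity of $F$ provides a splitting.
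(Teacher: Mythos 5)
Your proof is correct and takes essentially the same route as the paper: take $F\in G(-\infty,-1]^{\bot}$, apply Lemma~\ref{Strong generator} to $\mathrm{id}_F$ (with $D=F\in\langle G\rangle_{n+1}=\mathscr{S}$), and observe that the resulting factorization exhibits $F$ as a direct summand of an object of $\langle G\rangle_{n+1}^{[-n(d+1),\infty)}$, which is closed under summands by Definition~\ref{coprod}(4). Your version just spells out the splitting and the shift identity $\langle G\rangle^{[-n(d+1),\infty)}=\langle G\rangle^{[0,\infty)}[n(d+1)]$ more explicitly than the paper does, but the content is identical.
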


\begin{proof}
Let $F\in G(-\infty,-1]^{\bot}$. Assume $\mathscr{S}={\langle G \rangle}_{n+1}$. Then $F\in{\langle G \rangle}_{n+1}$.  By Lemma \ref{Strong generator}, the identity map $F\rightarrow F$ factors through an object of $\langle G\rangle_{n+1}^{[-n(d+1),\infty)}$. This implies  $F\in \langle G\rangle_{n+1}^{[-n(d+1),\infty)}\subseteq \langle G\rangle^{[-n(d+1),\infty)}$. Thus $\fd(\mathscr{S}, G)\leqslant n(d+1)<\infty$.
\end{proof}

\begin{Rem}
In \cite[Definition 3.2]{RR}, Rouquier introduced a \emph{dimension} for a triangulated category. It turns out that a triangulated category has a finite dimension if and only if it has a strong generator. In the literature, there appear many classes of (algebraic or geometric) triangulated categories with finite dimensions (for example, see \cite{RR, Neeman3}). We mention two examples:
$(a)$ for an artin ring $R$, the dimension of $\mathscr{D}^b(R\modcat)$ is less than the Loewy length of $R$, and thus $\mathscr{D}^b(R\modcat)$ has a finite dimension (see \cite[Proposition 7.37]{RR});
$(b)$ for a separated scheme $X$ of finite type over a (perfect) field,
$\mathscr{D}^{b}_{{\rm coh}}(X)$ has a finite dimension (see \cite[Theorem 7.38]{RR}).
For these triangulated categories, we have finite finitistic dimension by Proposition \ref{ST}.
In particular, this implies the first statement of Corollary \ref{Artin algebra}(1).
\end{Rem}

When objects of a triangulated category are strongly generated by taking arbitrary coproducts of a special object in a ``big'' triangulated category, we still have finite finitistic dimension.

\begin{Koro}\label{STG}
Let $\mathscr{T}$ be a triangulated category with coproducts, $(\mathscr{T}^{\leq0},\mathscr{T}^{\geq0})$ a $t$-structure on $\mathscr{T}$ such that $\mathscr{T}^{\geq0}\subseteq \mathscr{T}$ is closed under coproducts, $\mathscr{S}$ a full triangulated subcategory of $\mathscr{T}$ closed under direct summands. Suppose $\mathscr{S}\subseteq{\rm Coprod}_m(H(-\infty,\infty))$ for some object $H\in\mathscr{S}\subseteq\mathscr{T}_c^b$ (see Definition \ref{TC}) and some positive integer $m$. Then $\fd(\mathscr{S}, H)<\infty$.
\end{Koro}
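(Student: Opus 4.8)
The plan is to deduce Corollary \ref{STG} from Proposition \ref{ST} by showing that, under the stated hypotheses, the object $H$ is actually a \emph{strong generator} of $\mathscr{S}$ with a uniform bound, and that it has a vanishing negative self-extension condition. First I would observe that $H\in\mathscr{S}\subseteq\mathscr{T}_c^b\subseteq\mathscr{T}^b$, so in particular $H$ lies in $\mathscr{T}^{+}$ as well as $\mathscr{T}^{-}$; this means there is a $d\in\mathbb{N}$ with $H\in\mathscr{T}^{\geqslant -d}$ and also $H\in\mathscr{T}^{\leqslant d}$ (enlarging $d$ if necessary), whence $\Hom_{\mathscr{S}}(H[i],H)=\Hom_{\mathscr{T}}(H[i],H)=0$ for $i\geqslant 2d+1$ because $H[i]\in\mathscr{T}^{\leqslant d-i}\subseteq \mathscr{T}^{\leqslant -d-1}$ and $H\in\mathscr{T}^{\geqslant -d}$, so $\Hom(\mathscr{T}^{\leqslant -d-1},\mathscr{T}^{\geqslant -d})=0$ by (T2). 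This supplies the hypothesis ``$\Hom_{\mathscr{S}}(\Sigma^iH,H)=0$ for $i\gg 0$'' needed by Proposition \ref{ST} (and also the hypothesis $\Hom_{\mathscr{S}}(H[i],H)=0$ for $i\geqslant d'+1$ needed by Lemma \ref{last lemma}(3)).

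Next I would invoke Lemma \ref{last lemma}(3): since $\mathscr{S}$ is a full triangulated subcategory of $\mathscr{T}$ with $H\in\mathscr{S}\subseteq\mathscr{T}_c^b$, we get $\mathscr{S}\cap\mathrm{Coprod}_m\big(H(-\infty,\infty)\big)\subseteq\langle H\rangle_m$. By hypothesis $\mathscr{S}\subseteq\mathrm{Coprod}_m\big(H(-\infty,\infty)\big)$, so combined with $\mathscr{S}$ being closed under direct summands, the ``in addition'' clause of Lemma \ref{last lemma}(3) gives exactly $\mathscr{S}=\langle H\rangle_m$. In the notation of Definition \ref{coprod}, $\langle H\rangle_m=\langle H\rangle^{(-\infty,\infty)}_m$, i.e. $H$ is a strong generator of $\mathscr{S}$ in the sense that $\mathscr{S}=\langle H\rangle_{m}$ (with the index $m$ playing the role of $n+1$ in Proposition \ref{ST}, so $n=m-1$).

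Finally I would apply Proposition \ref{ST} with $G=H$, $n=m-1$ and $d$ chosen as above so that $\Hom_{\mathscr{S}}(\Sigma^iH,H)=0$ for $i\geqslant d+1$: the conclusion is $\fd(\mathscr{S},H)\leqslant (m-1)(d+1)<\infty$, which is the assertion of Corollary \ref{STG}. The bookkeeping that $\mathscr{T}^{\geqslant 0}$ is closed under coproducts is exactly what Lemma \ref{last lemma} requires, so its hypotheses are met verbatim; the only mildly delicate point is making sure the integer $d$ one extracts from $H\in\mathscr{T}^b$ simultaneously serves both the ``$\Hom$ vanishing for Lemma \ref{last lemma}(3)'' role and the ``$\Hom$ vanishing for Proposition \ref{ST}'' role, but since both are of the same shape ($\Hom(H[i],H)=0$ for large $i$) a single choice works. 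I do not anticipate a genuine obstacle here: the corollary is essentially the observation that the two ingredients --- ``strongly coproduct-generated inside $\mathscr{T}_c^b$ implies strongly generated'' (Lemma \ref{last lemma}(3)) and ``strongly generated with bounded self-extensions implies finite finitistic dimension'' (Proposition \ref{ST}) --- compose cleanly, the mild subtlety being to verify that $H\in\mathscr{T}^b$ yields the self-orthogonality bound used in both.
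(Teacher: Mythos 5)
Your proposal is correct and follows essentially the same argument as the paper: deduce $\mathscr{S}=\langle H\rangle_m$ from Lemma \ref{last lemma}(3), extract the self-Hom vanishing $\Hom(H[i],H)=0$ for $i\gg 0$ from $H\in\mathscr{T}_c^b\subseteq\mathscr{T}^b$ via (T2), and then apply Proposition \ref{ST}. One small inaccuracy: Lemma \ref{last lemma}(3) does not require the Hom-vanishing of $H$ against its shifts as a hypothesis; what it requires is $H\in\mathscr{T}^b$ (already implied by $H\in\mathscr{T}_c^b$), and the vanishing you derive is needed only for the subsequent application of Proposition \ref{ST}. That parenthetical is harmless and does not affect the validity of your argument.
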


\begin{proof}
By Lemma \ref{last lemma}(3), we have $\mathscr{S}={\langle H \rangle}_m$.
Since $H\in\mathscr{S}\subseteq\mathscr{T}_c^b\subseteq \mathscr{T}^b$, there exists a positive integer $n$ such that $H[n]\in \mathscr{T}^{\leqslant 0}$ and $H[-n]\in\mathscr{T}^{\geqslant 0}$. As $\mathscr{T}^{\geqslant 0}\subseteq\mathscr{T}$ is closed under negative shifts, $H[i]\in \mathscr{T}^{\geqslant 0}$ for $i\leqslant -n$. It follows from $\Hom_\mathscr{T}(\mathscr{T}^{\leqslant 0}, \mathscr{T}^{\geqslant 1})=0$ that $\Hom_\mathscr{T}(H[j], H)=0$ for $j\geqslant 2n+1$. Thus $\fd(\mathscr{S}, H)<\infty$ by Proposition \ref{ST}.
\end{proof}

A combination of Example \ref{quasiexcellent} and Corollary \ref{STG} yields the following result.

\begin{Koro}\label{NSQS}
Let $X$ be a noetherian, separated, finite-dimensional, quasiexcellent scheme. Then we have
$\fd(\mathscr{D}^{b}_{\rm coh}(X))<\infty$.
\end{Koro}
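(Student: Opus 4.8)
The plan is to obtain this as an immediate application of Corollary \ref{STG}, with the strong-generation input supplied by Example \ref{quasiexcellent}. First I would set $\mathscr{T}:=\mathscr{D}_{\rm qc}(X)$, a triangulated category with small coproducts, and --- invoking Theorem \ref{CCST} --- fix a compact generator $G$ of $\mathscr{T}$ and form the $t$-structure $(\mathscr{T}^{\leqslant 0}_G,\mathscr{T}^{\geqslant 0}_G)$ generated by $G$ (Example \ref{Typical example}). The reason for this specific choice is that the coaisle $\mathscr{T}^{\geqslant 0}_G$ is closed under coproducts in $\mathscr{T}$, matching one of the hypotheses of Corollary \ref{STG}.

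Next I would pin down the subcategory to which Corollary \ref{STG} is applied. Since $X$ is noetherian, Example \ref{Perfect complex} gives $\mathscr{T}^{c}=\mathscr{D}^{{\rm perf}}(X)$ and $\mathscr{T}_c^{b}=\mathscr{D}^{b}_{{\rm coh}}(X)$, so I set $\mathscr{S}:=\mathscr{D}^{b}_{{\rm coh}}(X)$; this is a full triangulated subcategory of $\mathscr{T}$ closed under direct summands (bounded complexes with coherent cohomology being closed under cones, shifts and summands), and it coincides with $\mathscr{T}_c^{b}$. Then Example \ref{quasiexcellent}, which rests on the strong-generation theorem of \cite{KA} for a noetherian, separated, finite-dimensional, quasiexcellent scheme, furnishes an object $H\in\mathscr{D}^{b}_{{\rm coh}}(X)=\mathscr{S}\subseteq\mathscr{T}_c^{b}$ and a positive integer $m$ with $\mathscr{T}={\rm Coprod}_m\big(H(-\infty,\infty)\big)$; in particular $\mathscr{S}\subseteq\mathscr{T}={\rm Coprod}_m\big(H(-\infty,\infty)\big)$. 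All the hypotheses of Corollary \ref{STG} are now verified, so it yields $\fd(\mathscr{S},H)<\infty$, i.e. $\fd\big(\mathscr{D}^{b}_{{\rm coh}}(X),H\big)<\infty$, and hence $\fd\big(\mathscr{D}^{b}_{{\rm coh}}(X)\big)<\infty$ by Definition \ref{Def-FD}.

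I do not expect a genuine obstacle here: all the substantive content has been absorbed into the cited statements, namely the strong-generation result of \cite{KA} (which is what forces $\mathscr{D}_{\rm qc}(X)$ to be built from coproducts of shifts of a single bounded coherent complex in a bounded number of extension steps) and the factorization machinery of Lemma \ref{last lemma} and Proposition \ref{ST} underlying Corollary \ref{STG}. The only points requiring care are bookkeeping: confirming that $\mathscr{D}^{b}_{{\rm coh}}(X)$ is a thick subcategory of $\mathscr{D}_{\rm qc}(X)$, that the $t$-structure generated by the compact generator has a coproduct-closed coaisle (Example \ref{Typical example}), and that the pair $(H,m)$ produced by Example \ref{quasiexcellent} indeed lands in $\mathscr{T}_c^{b}=\mathscr{D}^{b}_{{\rm coh}}(X)$ as Corollary \ref{STG} demands.
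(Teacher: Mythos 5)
Your proposal is correct and follows the paper's intended argument exactly: the paper's proof is a one-line pointer to combining Example \ref{quasiexcellent} with Corollary \ref{STG}, and you have simply spelled out the hypothesis verification. Your bookkeeping (coproduct-closedness of the coaisle from Example \ref{Typical example}, identification $\mathscr{T}_c^b=\mathscr{D}^b_{\rm coh}(X)$ from Example \ref{Perfect complex}, and the pair $(H,m)$ from Example \ref{quasiexcellent}) is all accurate.
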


\begin{Rem}
In general, for a finite-dimensional, noetherian scheme $X$, the category $\mathscr{D}^{b}_{\rm coh}(X)$ may have infinite finitistic dimension. In fact, by Lemma \ref{Finite}(2), a necessary condition for
$\fd(\mathscr{D}^{b}_{\rm coh}(X))<\infty$ is that $\mathscr{D}^{b}_{\rm coh}(X)$ has a classical generator. However, this condition does not hold even for some affine schemes.  For a commutative noetherian $R$, the existence of a classical generator for $\Db{R\modcat}$ implies the openness of the regular locus of $R$. Thus, for any commutative, noetherian, local ring $R$ whose regular locus
is not open in the spectrum of $R$, we have $\fd(\Db{R\modcat})=\infty$. In this case,
$\fd(\Kb{\prj{R}})<\infty$ by Lemma \ref{Finite}(5) since $\fd(R)\leqslant \dim(R)<\infty$.
\end{Rem}

The following result shows that, under a finiteness condition on a classical generator (not necessarily a strong generator) of a triangulated category and a ring theoretic condition on its endomorphism ring, we also have finite finitistic dimension.

\begin{Prop}\label{Generating properties}
Let $G$ be a  nonzero, classical generator of $\mathscr{S}$. Suppose that the ring
$R:=\End_{\mathscr{S}}(G)$ is left coherent and the direct sum $\bigoplus_{i\in\mathbb{Z}}\Hom_{\mathscr{S}}(G, G[i])$ of left $R$-modules is finitely presented.
If $\mathscr{S}$ is idempotent complete, $R$ is semisimple and $\Hom_{\mathscr{S}}(G[i], G)=0$ for $i>0$, then $\fd(\mathscr{S}, G)=0$.
\end{Prop}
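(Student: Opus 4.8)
The plan is to exploit the hypotheses to show that $G(-\infty,-1]^{\bot}\subseteq\langle G\rangle^{[0,\infty)}$, i.e.\ that $\fd(\mathscr{S},G)=0$. The two crucial structural inputs are: (i) the functor $\Hom_{\mathscr{S}}(G,-)\colon\mathscr{S}\to R\Modcat$ induces, because $G$ is a classical generator and $R=\End_{\mathscr{S}}(G)$ is semisimple, strong control over objects of $\mathscr{S}$; and (ii) the vanishing $\Hom_{\mathscr{S}}(G[i],G)=0$ for $i>0$ (equivalently $\Hom_{\mathscr{S}}(G,G[i])=0$ for $i<0$) together with finite presentation of $\bigoplus_i\Hom_{\mathscr{S}}(G,G[i])$, which forces each graded piece $\Hom_{\mathscr{S}}(G,G[i])$ to be a finitely generated $R$-module and to be zero for $i<0$. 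First I would record that, since $R$ is semisimple, every $R$-module is projective and injective, and a finitely generated $R$-module is a finite direct sum of simple summands of $R$; in particular $\add({}_RR)=R\modcat$. This is the ring-theoretic engine.

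Next I would set up the key reduction. Let $F\in G(-\infty,-1]^{\bot}$, so $\Hom_{\mathscr{S}}(G[j],F)=0$ for all $j\geqslant 0$; equivalently the graded $R$-module $H^{*}(F):=\bigoplus_{i\in\mathbb{Z}}\Hom_{\mathscr{S}}(G,F[i])$ is concentrated in positive degrees $i\geqslant 1$. The goal is to show $F\in\langle G\rangle^{[0,\infty)}$, i.e.\ $F$ is built from $\add$-copies of the $G[-n]$ with $n\geqslant 0$ (equivalently, shifts of $G$ into nonpositive cohomological positions) by extensions and summands, with \emph{no} negative shifts of $G$ needed. The strategy is a ``minimal resolution'' / Postnikov-tower argument against $G$: since $G$ is a classical generator and $\Hom_\mathscr{S}(G,F[i])$ is finitely generated (hence in $\add({}_RR)$, hence of the form $\Hom_\mathscr{S}(G,P)$ for some $P\in\add(G)$) and vanishes in degrees $\leqslant 0$, one builds a tower
$$\cdots\to F_{2}\to F_{1}\to F_{0}=F,\qquad \Cone(F_{k+1}\to F_{k})\simeq P_{k}[-k-1]\ \text{with}\ P_{k}\in\add(G),$$
where each stage kills the lowest surviving cohomology of $F$ against $G$. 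The vanishing $\Hom_{\mathscr{S}}(G,G[i])=0$ for $i<0$ is exactly what guarantees that attaching $P_k[-k-1]$ does not reintroduce cohomology in lower degrees, so the tower terminates (or becomes eventually split): after finitely many steps all Hom-groups against $G$ vanish, and since $G$ generates, the top of the tower is $0$. Reading the tower backwards exhibits $F$ as an iterated extension of the $P_k[-k-1]$'s, all of which lie in $\langle G\rangle^{[0,\infty)}$ since $k+1\geqslant 1$; hence $F\in\langle G\rangle^{[0,\infty)}$. Idempotent completeness of $\mathscr{S}$ is used to split off $\add(G)$-summands cleanly at each stage and to realise $\add({}_RR)$-modules as $\Hom_{\mathscr{S}}(G,P)$ with $P\in\add(G)$ genuinely inside $\mathscr{S}$.

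The main obstacle I anticipate is the \emph{termination} of this tower: a priori $F$ might have nonzero cohomology against $G$ in infinitely many positive degrees. This is where finite presentation of $\bigoplus_i\Hom_{\mathscr{S}}(G,G[i])$ as a left $R$-module must be decisively used. The point is that finite presentation of the graded ring $\Lambda:=\bigoplus_i\Hom_{\mathscr{S}}(G,G[i])$ over $R$, combined with $R$ semisimple and $\Lambda$ concentrated in degrees $\geqslant 0$ with $\Lambda^{0}=R$, should force $\Lambda$ to be \emph{bounded} (concentrated in finitely many degrees): finite presentation bounds the degrees of generators and relations, and semisimplicity of $R$ makes ``relations'' rigid enough that no new degrees can appear beyond the generators — one shows $\Lambda$ is generated in a bounded range of degrees over $R$ and then that higher graded pieces vanish, perhaps by a Koszul-type or degree-counting argument. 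Once $\Lambda$ lives in degrees $[0,N]$, the vanishing $\Hom_\mathscr{S}(G,G[i])=0$ for $i>N$ caps how long the resolution of $F$ against $G$ can be, so after at most $N+1$ stages the Cauchy-type tower stabilises and $F\in\langle G\rangle^{[0,\infty)}$. Assembling: $G(-\infty,-1]^{\bot}\subseteq\langle G\rangle^{[0,\infty)}=\langle G\rangle^{[0,\infty)}[0]$, so $\fd(\mathscr{S},G)=0$. I would double-check the edge cases (e.g.\ $F=0$, and that ``$=0$'' rather than ``$\leqslant$ something'' really holds, i.e.\ that $\langle G\rangle^{[0,\infty)}$ already suffices with no shift), but the heart of the argument is the boundedness of $\Lambda$ forced by coherence-plus-semisimplicity.
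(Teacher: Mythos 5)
Your high-level plan — resolve an object $F\in G(-\infty,-1]^{\perp}$ by a Postnikov-type tower with cells from $\add(G)$, using semisimplicity of $R$ to upgrade surjections on $\Hom(G,-)$ to isomorphisms so that each stage stays in $\mathscr{S}^{\geqslant 0}$ — is in the same spirit as the paper's proof. The gap is in your termination argument. You correctly note that $\Lambda:=\bigoplus_i\Hom_{\mathscr{S}}(G,G[i])$ is concentrated in degrees $[0,N]$ for some $N$ (though this is immediate: a finitely generated module over a semisimple ring has only finitely many nonzero graded summands, no Koszul-type argument is needed), and you then assert that this caps the tower at $N+1$ stages. That does not follow. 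When you attach a cell $P_k\in\add(G)$, the long exact sequence for the triangle $P_k\to X_k\to X_{k+1}$ shows that $\Hom(G,X_{k+1}[j])$ can acquire contributions from $\Hom(G,P_k[j+1])$, which is nonzero for $j+1$ up to $N$; semisimplicity lets you arrange $\Hom(G,P_k)\to\Hom(G,X_k)$ to be an isomorphism in the lowest degree, but it does not prevent the cohomology against $G$ from propagating upward. So the number of stages is not controlled by $N$, and your ``at most $N+1$ stages'' claim is unjustified.

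What actually forces termination is a ghost-lemma argument, which your sketch omits entirely. Each map $g_k\colon X_k\to X_{k+1}$ produced in the construction lies in the ideal $\mathcal{C}$ of morphisms killed by $\Hom_{\mathscr{S}}(G[i],-)$ for every $i$; the basic nilpotence fact is that any composition of $m$ morphisms in $\mathcal{C}$ starting from an object of $\langle G\rangle_m$ is zero. Since $G$ is a classical generator, $F\in\langle G\rangle_{n+1}$ for some $n$, so $g_0g_1\cdots g_n=0$; reading the octahedron-assembled triangle $G_{n+1}\to X_0\to X_{n+1}$, this exhibits $X_0=F$ as a direct summand of $G_{n+1}\in\langle G\rangle_{n+1}^{[0,\infty)}$ (idempotent completeness is used precisely here), giving $F\in\langle G\rangle^{[0,\infty)}$. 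Thus the correct bound on the tower length is the generation time $n$ of $F$, not the degree range $N$ of $\Lambda$. One further point you assert without justification: that $\Hom_{\mathscr{S}}(G,F[i])$ is finitely generated for every $F\in\mathscr{S}$. This is where left coherence of $R$ enters; it holds for $F=G$ by hypothesis and propagates to all of $\langle G\rangle=\mathscr{S}$ by a standard long-exact-sequence induction, but this needs to be said.
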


\begin{proof}
Let $\mathscr{S}^{\geqslant i}:=G(-\infty,i-1]^{\bot}$ for $i\in\mathbb{Z}$. Then
$\mathscr{S}^{\geqslant 0}=G(-\infty,-1]^{\bot}$ and $\mathscr{S}^{\geqslant i+1}=\mathscr{S}^{\geqslant i}[-1]\subseteq\mathscr{S}^{\geqslant i}$. Since $\bigoplus_{i\in\mathbb{Z}}\Hom_{\mathscr{S}}(G, G[i])\in R\modcat$, it is clear that ${\rm Hom}_{\mathscr{S}}(G,G[i])=0$ for $\mid i\mid \gg 0$ and ${\rm Hom}_{\mathscr{S}}(G,G[i])\in R\modcat$ for $i\in\mathbb{Z}$. Now, let $$d:=\sup\big\{i\in\mathbb{N}\mid \Hom_\mathscr{S}(G[i],G)\not=0\big\}.$$ Then $d<\infty$.
For $X\in\mathscr{S}$, we define
$\Lambda_X:=\{i\in\mathbb{Z}\,|\,\Hom_{\mathscr{S}}(G[i],X)\not=0\}.$
Since $R$ is left coherent and $G$ is a classical generator of $\mathscr{S}$, the following property holds:
$(\ast)$ The set $\Lambda_X$ is finite and $\Hom_{\mathscr{S}}(G[i],X)\in R\modcat$ for $i\in\mathbb{Z}$.
Further, we define the following ideal of morphisms:
$$\mathcal{C}:=\{f\in\Hom_{\mathscr{S}}(X,Y)\mid X,Y\in\mathscr{S}\,\text{and}\;\;\Hom_{\mathscr{S}}(G[i],f)=0,\;\forall\; i\in\mathbb{Z}\}.$$
Note that a morphism $f:X\rightarrow Y\in\mathcal{C}$ if and only if  for any $i\in\mathbb{Z}$ and for any morphism $g:G[i]\rightarrow X$, there exists a morphism $h:G[i]\rightarrow{\rm Cone}(f)[-1]$ such that
$g=hf'$, where $f'$ appears in the triangle ${\rm Cone}(f)[-1]\lraf{f'}X \lraf{f} Y\lra {\rm Cone}(f)$. The ideal $\mathcal{C}$ has the following nice property $(\ast\ast)$: If $X\in\langle G\rangle_m$ for some $m\geqslant 1$ and $f:X\to Y$ is the composition of $m$ morphisms in $\mathcal{C}$, then $f=0$.

Let $X_0\in\mathscr{S}^{\geqslant 0}$. Then $\Lambda_{X_0}$ is finite and consists of nonpositive
integers. By the property $(\ast)$, for each $i\in\Lambda_{X_0}$, there exists an object
$G_{i,X_0}\in {\langle G[i] \rangle}_1^{\{0\}}$ and a morphism $f_{i,X_0}: G_{i,X_0}\to X_0$ in $\mathscr{S}$ such that
$\Hom_{\mathscr{S}}(G[i],f_{i,X_0}):{\rm Hom}_{\mathscr{S}}(G[i],G_{i,X_0})\to
\Hom_{\mathscr{S}}(G[i],X_0)$ is a surjective homomorphism of $R$-modules.
Now, we define $X'_0:=\bigoplus_{i\in\Lambda_{X_0}}G_{i,X_0}$ and let $f:X'_0\rightarrow X_0$ be the morphism induced by the family $\{f_{i,X_0}\}_{i\in\Lambda_{X_0}}$. Then $f$ is extended to a triangle in $\mathscr{S}$:
$$
\begin{tikzcd}
(\dag)\quad X'_0 \arrow[r, "f"] & X_0 \arrow[r, "g_0"] & X_1 \arrow[r] & {X'_0[1]}
\end{tikzcd}
$$
The construction of $f$ implies that $g_0\in\mathcal{C}$ and $X'_0\in\langle G\rangle_1^{[0,\infty)}$.
Since $\Hom_{\mathscr{S}}(G[i],G)=0$ for all $i>d$, we have $G[-d]\in\mathscr{S}^{\geqslant 0}$.
As $\mathscr{S}^{\geqslant 0}\subseteq\mathscr{S}$ is closed under extensions, negative shifts and direct summands,  $\langle G\rangle^{[d,\infty)} \in \mathscr{S}^{\geqslant 0}$. Thus $X'_0\in\langle G\rangle_1^{[0,\infty)}\subseteq\mathscr{S}^{\geqslant -d}$ and $X_1\in\mathscr{S}^{\geqslant -d-1}$. The above procedure can be carried out analogously for $X_1$, yielding a distinguished triangle
$X_1' \to X_1 \lraf{g_1}X_2 \to X_1'[1]$,
where $X_1'\in\langle G\rangle_1^{[-d-1,\infty)}\subseteq\mathscr{S}^{\geqslant-2d-1}$, $g_1\in\mathcal{C}$ and $X_2\in\mathscr{S}^{-2d-2}$. Therefore the octahedron axiom of triangulated category produces
a triangle
$G_2 \to X_0 \lraf{g_0g_1}X_2\to G_2[1]$,
where $G_2\in X_0'\ast X_1'\subseteq\langle G\rangle_2^{[-d-1,\infty)}$. More generally, for any $j\in\mathbb{N}$, we obtain a triangle
$$\begin{tikzcd}
G_{j+1} \arrow[r] & X_0 \arrow[r, "g_0g_1\cdots g_j"] & X_{j+1} \arrow[r] & {G_{j+1}[1]}
\end{tikzcd}$$
where $G_{j+1}\subseteq\langle G\rangle_{j+1}^{[-j(d+1),\infty)}$ and $g_i\in\mathcal{C}$ for all $0\leqslant i\leqslant j$.

In the following, we consider the \emph{special case}: $\mathscr{S}$ is idempotent complete, $R$ is semisimple and $\Hom_{\mathscr{S}}(G[i], G)=0$ for $i>0$. Then $d=0$ and $G\in\mathscr{S}^{\geqslant 0}$. This implies
$X_1\in\mathscr{S}^{\geqslant -1}$. Now, we claim that $X_1\in\mathscr{S}^{\geqslant 0}$.
It suffices to show $\Hom_\mathscr{S}(G[1], X_1)=0$. Applying $\Hom_\mathscr{S}(G[1],-)$
to the triangle $(\dag)$ yields an exact sequence
$$
0=\Hom_\mathscr{S}(G[1], X_0)\lra \Hom_\mathscr{S}(G[1], X_1)\lra \Hom_\mathscr{S}(G[1], X_0'[1])\lraf{(f[1])^*}  \Hom_\mathscr{S}(G[1], X_0[1]).
$$
Since $X'_0\in\langle G\rangle_1^{[0,\infty)}$ and $d=0$, we have  $\Hom_\mathscr{S}(G[1], X_0'[1])=\Hom_\mathscr{S}(G[1], G_{0, X_0}[1])$.  It follows that $\Hom_\mathscr{S}(G[1], X_1)=0$ if and only if $\Hom_\mathscr{S}(G, f_{0, X_0}):\Hom_\mathscr{S}(G, G_{0, X_0})\to \Hom_\mathscr{S}(G, X_0)$ is injective. Since $\mathscr{S}$ is idempotent complete, the functor $\Hom_\mathscr{S}(G,-): {\langle G \rangle}_1^{\{0\}}\rightarrow \prj{R}$ is an equivalence. As $R$ is semisimple, $R\modcat=\prj{R}$.
In particular, $\Hom_{\mathscr{S}}(G,X_0)\in\prj{R}$. Consequently, the morphism $f_{0,X_0}: G_{0,X_0}\to X_0$ can be chosen such that $\Hom_{\mathscr{S}}(G,f_{0,X_0})$ is an isomorphism. This implies $X_1\in\mathscr{S}^{\geqslant 0}$.
Since the construction of $X_1$ from $X_0$ does not change the original category
$\mathscr{S}^{\geqslant 0}$, the iterated procedure produces a triangle
$$\begin{tikzcd}
G_{j+1} \arrow[r] & X_0 \arrow[r, "g_0g_1\cdots g_j"] & X_{j+1} \arrow[r] & {G_{j+1}[1]}
\end{tikzcd}$$
satisfying that $G_{j+1}\subseteq\langle G\rangle_{j+1}^{[0,\infty)}$,  $X_{j+1}\in\mathscr{S}^{\geqslant 0}$ and $g_i\in\mathcal{C}$ for all $0\leqslant i\leqslant j\in\mathbb{N}$.
Similarly, if $X_0\in\langle G\rangle_{n+1}$ for some $n\in\mathbb{N}$, then $X_0$ is a direct summand of $G_{n+1}$ and therefore $X_0\in\langle G\rangle_{n+1}^{[0,\infty)}$. This shows $\mathscr{S}^{\geqslant 0}\cap\langle G\rangle_{n+1}\subseteq\langle G\rangle_{n+1}^{[0,\infty)}$. Since $\mathscr{S}^{\geqslant 0}$ contains $G$ and is closed under negative shifts, extensions and direct summands in $\mathscr{S}$, we have
$\langle G\rangle_{n+1}^{[0,\infty)}\subseteq\mathscr{S}^{\geqslant 0}\cap\langle G\rangle_{n+1}$. Consequently,
$\langle G\rangle_{n+1}^{[0,\infty)}=\mathscr{S}^{\geqslant 0}\cap\langle G\rangle_{n+1}$. It follows that $$\mathscr{S}^{\geqslant 0}=\mathscr{S}^{\geqslant 0}\cap\langle G\rangle=\mathscr{S}^{\geqslant 0}\cap(\bigcup_{n\in\mathbb{N}}\langle G\rangle_{n+1})=\bigcup_{n\in\mathbb{N}}(\mathscr{S}^{\geqslant 0}\cap\langle G\rangle_{n+1})=\bigcup_{n\in\mathbb{N}}\langle G\rangle_{n+1}^{[0,\infty)}=\langle G\rangle^{[0,\infty)}.$$
Thus $\fd(\mathscr{S}, G)=0$.

The above proof also offers a different way to prove the inequality $\fd(\mathscr{S}, G)\leqslant n(d+1)$ if 
the category $\mathscr{S}$ in Proposition \ref{Generating properties} satisfies that $\mathscr{S}={\langle G \rangle}_{n+1}$ for some $n\in\mathbb{N}$ (compared with the proof of Proposition \ref{ST}).

In fact, since $\mathscr{S}={\langle G \rangle}_{n+1}$, we have $X_0\in {\langle G \rangle}_{n+1}$.
It follows from the property $(\ast\ast)$ that $g_0g_1\cdots g_n=0$. Consequently, $X_0$ is a summand of $G_{n+1}$ and therefore $X_0\in\langle G\rangle_{n+1}^{[-n(d+1),\infty)}\subseteq\langle G\rangle^{[0,\infty)}[n(d+1)]$. Thus $G(-\infty,-1]^{\bot}\subseteq \langle G\rangle^{[0,\infty)}[n(d+1)]$. This implies $\fd(\mathscr{S}, G)\leqslant n(d+1)$.
\end{proof}

\begin{Bsp}\label{differential graded}
Let $S$ be a DG algebra over a commutative ring $k$. Suppose that $H^0(S)$ is a semisimple $k$-algebra, $H^n(S)=0$ for $n<0$, and $H^n(S)$ is a finitely generated $H^0(S)$-module for $n\in\mathbb{N}$.
We show that $\fd(\mathscr{D}(S)^c)<\infty$.

In fact, by \cite[Theorem 7.1]{KN}, the pair $\big(\langle S\rangle^{(-\infty, 0]}, \langle S\rangle^{[0, \infty)}\big)$ of full subcategories of $\mathscr{D}(S)^c$ is a bounded $t$-structure on $\mathscr{D}(S)^c$. This implies $(\langle S\rangle^{(-\infty, -1]})^{\bot}=\langle S\rangle^{[0, \infty)}.$ Since $(\langle S\rangle^{(-\infty, -1]})^{\bot}=S(-\infty, -1]^{\bot}$, we obtain $\fd(\mathscr{D}(S)^c, S)=0$. Thus $\fd(\mathscr{D}(S)^c)<\infty$.

We mention that if additionally $H^n(S)=0$ for $n\gg 0$, then the triangulated category $\mathscr{D}(S)^c$ with the classical generator $S$ satisfies the assumptions of Proposition \ref{Generating properties}.
In this case, we can directly show that $\fd(\mathscr{D}(S)^c)<\infty$ by Proposition \ref{Generating properties}.
\end{Bsp}

To establish the finiteness of the finitistic dimension of the derived category of perfect complexes on a general scheme, we propose the following notion which is a generalization of finite-dimensional, noetherian schemes.

\begin{Def}\label{SMFFD}
Let $X$ be a quasicompact, quasiseparated scheme. We say that $X$ has \emph{finite finitistic dimension} if it has a finite affine open covering $X=\bigcup_{i=1}^nV_i$ such that for each $i$, $V_i$ is isomorphic to the spectrum of $R_i$ for some commutative ring $R_i$ of finite finitistic dimension.
\end{Def}

Now, we state a finiteness result on the finitistic dimension for schemes.

\begin{Prop}\label{SCMFD}
Let $X$ be a quasicompact, quasiseparated scheme, and $Z\subseteq X$ a closed subset with quasicompact complement. Suppose that $X$ has finite finitistic dimension. Then $\fd(\mathscr{D}^{\rm perf}_Z(X))<\infty$ and $\fd(\mathscr{D}^{\rm perf}_Z(X)^{\rm op})<\infty$.
\end{Prop}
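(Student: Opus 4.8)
The plan is to reduce the global statement to a local one, where we can invoke the known fact $\fd(\Kb{\prj{R_i}}, R_i) = \fd(R_i) < \infty$ from Lemma \ref{Finite}(5), and then glue the finiteness across the finite affine cover using a Mayer--Vietoris / induction-on-the-number-of-opens argument. First I would fix the affine open cover $X = \bigcup_{i=1}^n V_i$ with $V_i \simeq \Spec(R_i)$ and $\fd(R_i) < \infty$. The category $\mathscr{D}^{\rm perf}_Z(X)$ is the category of compact objects of the compactly generated category $\mathscr{T} := \mathscr{D}_{{\rm qc},Z}(X)$ (Theorem \ref{CCST}), and it has a single classical generator, say $G$, by Lemma \ref{Smd} applied to a compact generator of $\mathscr{T}$. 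By Corollary \ref{Some-Any}, it suffices to prove $\fd(\mathscr{D}^{\rm perf}_Z(X), G) < \infty$ for \emph{some} choice of classical generator; likewise for the opposite category. So the entire statement is insensitive to the choice of generator, and we may engineer a convenient one.

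The heart of the argument is the following: for each $i$, restriction along the open immersion $j_i : V_i \hookrightarrow X$ gives a triangle functor $\mathscr{D}^{\rm perf}_Z(X) \to \mathscr{D}^{\rm perf}_{Z\cap V_i}(V_i) = \Kb{\prj{R_i}}$, and these restriction functors jointly detect vanishing of objects (an object of $\mathscr{D}_{\rm qc}(X)$ is zero iff all its restrictions to the $V_i$ are zero). I would first treat the affine case $X = \Spec R$, $Z \subseteq X$ arbitrary closed with quasicompact complement: here $\mathscr{D}^{\rm perf}_Z(X)$ is a thick subcategory of $\Kb{\prj{R}}$, and one shows directly (using that $\fd(R) = \fd(\Kb{\prj{R}}, R)$ from Lemma \ref{Finite}(5), together with the observation that for $F \in G(-\infty,-1]^\perp$ the relevant coherence/resolution argument localizes) that $\fd(\mathscr{D}^{\rm perf}_Z(X))$ and $\fd(\mathscr{D}^{\rm perf}_Z(X)^{\rm op})$ are finite. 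For the opposite direction one uses that $\fd(R^{\rm op})$ also appears — actually since $R_i$ is commutative, $R_i = R_i^{\rm op}$, which is exactly why Definition \ref{SMFFD} only demands $\fd(R_i) < \infty$ and still controls both $\mathscr{S}$ and $\mathscr{S}^{\rm op}$.

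Then I would run an induction on $n$, the number of affine opens. Write $U = \bigcup_{i=1}^{n-1} V_i$ and $V = V_n$, so $X = U \cup V$ with $U$, $V$, $U \cap V$ all quasicompact quasiseparated schemes of finite finitistic dimension (the intersection $U \cap V$ is covered by the affine opens $V_i \cap V_n$, each a principal open in $V_n = \Spec R_n$, hence an affine scheme whose ring is a localization of $R_n$, and localizations do not increase finitistic dimension of commutative noetherian-type rings — more carefully, one needs that $\fd$ of a ring of fractions is bounded by $\fd$ of the ring, which holds in our setting). The gluing triangle relating $\mathscr{D}^{\rm perf}_Z(X)$, $\mathscr{D}^{\rm perf}(U)$, $\mathscr{D}^{\rm perf}(V)$, and $\mathscr{D}^{\rm perf}(U\cap V)$ (the Thomason--Trobaugh / Mayer--Vietoris square for perfect complexes) lets one bound $\fd$ of the total category by a finite expression in the $\fd$'s of the three pieces: an object in $G(-\infty,-1]^\perp$ can be built, up to a uniformly bounded number of shifts and cones, from its restrictions to $U$ and $V$ and a "difference" term on $U \cap V$, each of which lies in $\langle G|_? \rangle^{[0,\infty)}$ shifted by the finitistic dimension of that piece, by the inductive hypothesis. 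Assembling these via finitely many extensions keeps the shift bounded, giving $F \in \langle G \rangle^{[0,\infty)}[N]$ for a uniform $N$.

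The main obstacle I anticipate is making the Mayer--Vietoris bounding precise enough to control the \emph{number of shifts} uniformly: the subtlety is that $\langle G \rangle^{[0,\infty)}$ is only closed under extensions, direct summands, and negative shifts, so when one reconstructs $F$ on $X$ from its restrictions on $U$, $V$, $U\cap V$ via the homotopy pullback square, one must check that each building block, after transporting back along the (non-exact on small categories) pushforward, still lands in a bounded shift of the thick generating subcategory — and that the gluing itself contributes only a bounded number of extension steps. This is where one needs the explicit control afforded by Lemma \ref{last lemma}(2)--(3) or a direct Čech-complex argument: the Čech complex of $F$ with respect to the cover has length $n$, so $F$ is an iterated extension of (shifts of) the terms $Rj_{i*} j_i^* F$ and their higher analogues, $n$ stages of extensions, each term controlled by the affine case; taking $N := \max_i \fd(R_i) + (n-1)$ (or a similar explicit bound) should work. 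Once the shift bound is in hand, finiteness for both $\mathscr{D}^{\rm perf}_Z(X)$ and its opposite follows, completing the proof.
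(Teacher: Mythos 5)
Your proposal takes a genuinely different route from the paper, and it has a gap that I do not see how to close as written.

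\textbf{What the paper actually does.} The paper's proof makes no use of Mayer--Vietoris gluing at all. It works with the duality functor $(-)^\vee := \mathscr{R}\mathscr{H}om(-,\mathscr{O}_X)$, which restricts to a triangle equivalence $\mathscr{D}^{\rm perf}_Z(X)^{\rm op} \to \mathscr{D}^{\rm perf}_Z(X)$ and exchanges bounded-below with bounded-above with respect to the standard $t$-structure. The only local input is the single inclusion $\big(\mathscr{D}^{\rm perf}(X)\cap\mathscr{D}_{\rm qc}(X)^{\geqslant 0}\big)^{\vee} \subseteq \mathscr{D}^{\rm perf}(X)\cap\mathscr{D}_{\rm qc}(X)^{\leqslant m}$ with $m := \max_i \fd(R_i)$, which is checked on each $V_i$ because both perfection and the standard $t$-structure are local (an object lies in $\mathscr{D}_{\rm qc}(X)^{\leqslant m}$ iff all its affine restrictions do). Then, given $F \in G(-\infty,-1]^\perp \cap \mathscr{S}$, one passes to $F^\vee$, applies the inclusion plus Lemma \ref{Compact generator} to land in $\langle G^\vee\rangle^{(-\infty, r+s+m]}$ for uniform $r,s$, and dualizes back to conclude $F \in \langle G\rangle^{[0,\infty)}[r+s+m]$. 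The opposite category is handled for free because $(-)^\vee$ identifies $\mathscr{S}$ with $\mathscr{S}^{\rm op}$. No induction on the cover, no gluing of perfect complexes.

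\textbf{The gap in your approach.} Your plan is to Čech-resolve $F$ as an iterated extension of (shifts of) terms $Rj_{i_0\cdots i_k *}\,j_{i_0\cdots i_k}^* F$ and control each term by the affine case. But these pushforward terms are \emph{not} perfect complexes on $X$: $Rj_*$ along an open immersion does not preserve compactness. So the Čech filtration of $F$ does not stay inside $\mathscr{D}^{\rm perf}_Z(X)$, and you cannot conclude that the pieces lie in shifts of $\langle G\rangle^{[0,\infty)}$, which is a subcategory of $\mathscr{S}$ alone. Passing to the Verdier-quotient form of Thomason--Trobaugh does not rescue this, because the localization sequence decomposes $\mathscr{S}$ up to quotient, not up to extension, and controlling $\fd$ along a Verdier quotient is a different problem. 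There is a second, smaller issue in your ``affine base case'': Lemma \ref{Finite}(5) computes $\fd(\Kb{\prj{R}}, R)$ at the generator $R$, whereas $\mathscr{D}^{\rm perf}_Z(\Spec R)$ with $Z$ proper closed is a thick subcategory with a different classical generator (a Koszul-type object), so the lemma does not directly apply; you would need a separate argument there as well. The duality functor in the paper's proof handles both issues simultaneously, because $(-)^\vee$ preserves support on $Z$, preserves perfection, and trades the ``bounded below'' hypothesis for a ``bounded above'' conclusion, which is exactly the shape needed by Definition \ref{Def-FD} without ever leaving $\mathscr{D}^{\rm perf}_Z(X)$.
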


\begin{proof}
Let $X=\bigcup_{i=1}^sV_i$ be a finite affine open covering of $X$, where $V_i\simeq {\rm Spec}(R_i)$,
the spectrum of $R_i$ for some commutative ring $R_i$ with $\fd(R_i)<\infty$ for each $i$.
Define $m:={\rm max}\{\fd(R_i)\mid 1\leqslant i\leqslant s\}.$ Since $X$ has finite finitistic dimension, we have $m<\infty$. Let $(-)^{\vee}:=\mathscr{R}\mathscr{H}om(-, \mathscr{O}_X)$, the right derived functor defined by the structure sheaf $\mathscr{O}_X$. We first prove the inclusion
$$(\ast):\;\; \big(\mathscr{D}^{{\rm perf}}(X)\cap\mathscr{D}_{\rm qc}(X)^{\geqslant 0}\big)^{\vee}\subseteq\mathscr{D}^{{\rm perf}}(X)\cap\mathscr{D}_{\rm qc}(X)^{\leqslant m}.$$
For this aim, we identify $\mathscr{D}_{\rm qc}(V_i)$ and  $\mathscr{D}^{\rm {perf}}(V_i)$
with $\mathscr{D}(R_i)$ and  $\Kb{\prj{R_i}}$ (up to triangle equivalence), respectively.
Let $\cpx{M}\in\mathscr{D}^{{\rm perf}}(X)\cap\mathscr{D}_{\rm qc}(X)^{\geqslant 0}$ and let $\cpx{M}_i$ be the restriction of $\cpx{M}$ to $V_i$. Then $\cpx{M}{^{\vee}}\in\mathscr{D}^{{\rm perf}}(X)$, and $\cpx{M}_i\in\Kb{\prj{R_i}}\cap \mathscr{D}(R_i)^{\geqslant 0}$.  It follows from $\fd(R_i)\leqslant m$ that
$\cpx{M}_i$ is isomorphic in $\mathscr{D}(R_i)$ to a bounded complex of finitely generated projective $R_i$-modules with nonzero terms concentrated in degrees $\geqslant -m$. Now, we denote by $(\cpx{M}{^{\vee}})_i$ the restriction of the complex $\cpx{M}{^{\vee}}$ to $V_i$. Then
$(\cpx{M}{^{\vee}})_i$ is isomorphic in $\mathscr{D}(R_i)$ to $\mathscr{R}\mathscr{H}om(\cpx{M}_i, R_i)$.
This implies that $(\cpx{M}{^{\vee}})_i\in \mathscr{D}(R_i)^{\leqslant m}$ for each $i$, and therefore
$\cpx{M}{^{\vee}}\in \mathscr{D}_{\rm qc}(X)^{\leqslant m}$. Thus the inclusion $(\ast)$ holds.

Let $\mathscr{T}:=\mathscr{D}_{{\rm qc}, Z}(X)$ and $\mathscr{S}:=\mathscr{D}^{{\rm perf}}_{Z}(X)$. It follows from Theorem \ref{CCST} that $\mathscr{T}^c=\mathscr{S}$,  $\mathscr{T}$ has a compact generator $G$ and the standard $t$-structure $(\mathscr{D}_{{\rm qc}, Z}(X)^{\leqslant 0}, \mathscr{D}_{{\rm qc}, Z}(X)^{\geqslant 0})$ on $\mathscr{T}$ is in the preferred equivalence class. Thus  $G$ is a classical generator of $\mathscr{S}$ by Lemma \ref{Smd} and there exists a natural number $r$ such that
$\mathscr{D}_{{\rm qc},Z}(X)^{\geqslant r}\subseteq\mathscr{T}_G^{\geqslant 0}\subseteq\mathscr{D}_{{\rm qc},Z}(X)^{\geqslant -r}.$
Note that the functor  $(-)^{\vee}: \big(\mathscr{D}^{{\rm perf}}(X)\big)\opp\to\mathscr{D}^{{\rm perf}}(X) $ is an equivalence of triangulated categories which restricts to an equivalence $\mathscr{S}\opp\to \mathscr{S}$. This implies that $G^{\vee}$ is a classical generator of $\mathscr{S}$. Since $\mathscr{T}$ is compactly generated by $\mathscr{S}$, the object $G^{\vee}$ is a compact generator of $\mathscr{T}$. Then the $t$-structures on $\mathscr{T}$ generated by $G$ and $G^{\vee}$ (see Example \ref{Typical example}) are equivalent, and therefore both $t$-structures are in the preferred equivalence class. Thus there exists a natural number $s$ with
$\mathscr{D}_{{\rm qc},Z}(X)^{\leqslant -s}\subseteq
\mathscr{T}_{G^\vee}^{\leqslant 0}\subseteq\mathscr{D}_{{\rm qc},Z}(X)^{\leqslant s}.$
Let $X\in G(-\infty, -1]^{\bot}\cap\mathscr{S}$. Since $\mathscr{T}_G^{\geqslant 0}=G(-\infty, -1]^{\bot}$, we have
$X\in\mathscr{T}_G^{\geqslant 0}\cap\mathscr{S}\subseteq\mathscr{D}_{{\rm qc},Z}^{\geqslant -r}\cap\mathscr{S}$.
It follows from $(\ast)$ that
$$X^{\vee}\in (\mathscr{D}_{{\rm qc},Z}^{\geqslant -r}\cap \mathscr{S})^{\vee}=(\mathscr{D}_{{\rm qc},Z}^{\geqslant -r}\cap \mathscr{D}^{{\rm perf}}(X))^{\vee}\subseteq\mathscr{S}\cap
\mathscr{D}_{{\rm qc},Z}(X)^{\leqslant r+m}\subseteq\mathscr{S}\cap\mathscr{T}_{G^\vee}^{\leqslant r+s+m}.$$
Since $G^{\vee}$ is a compact generator of $\mathscr{T}$, we see from Lemma \ref{Compact generator} that
$\mathscr{S}\cap\mathscr{T}_{G^\vee}^{\leqslant r+s+m}=\langle G^\vee\rangle^{(-\infty,\, r+s+m]}.$ This forces
$X^{\vee}\in\langle G^\vee\rangle^{(-\infty, \,r+s+m]}$, and therefore $X\simeq (X^{\vee})^{\vee}\in
\langle G\rangle^{[-(r+s+m), \infty)}$. Thus
$$G(-\infty, -1]^{\bot}\cap\mathscr{S}\subseteq \langle G\rangle^{[-(r+s+m), \infty)}=\langle G\rangle^{[0, \infty)}[r+s+m].$$
This implies $\fd(\mathscr{S}, G)\leqslant r+s+m<\infty$ and shows $\fd(\mathscr{S})<\infty$.
Since $\mathscr{S}$ and  $\mathscr{S}^{\rm op}$ are equivalent as triangulated categories,  $\fd(\mathscr{S}^{\rm op})<\infty$.
\end{proof}

%\section{Bounded $t$-structures and completion-invariant triangulated categories}\label{BTSSC}

\subsection{Proofs of consequences of our main result}\label{CMGE}

With the preparations in Section \ref{FDTC}, we now give proofs of a series of consequences of Theorem \ref{Main result}.

\begin{Koro}\label{Compactly generated}
Let $\mathscr{T}$ be a compactly generated triangulated category which has a compact generator $G$.
Suppose that $\Hom_\mathscr{T}(G, G[i])=0$ for $i\gg 0$ and $(\mathscr{T}^{c})\opp$ has finite finitistic dimension.

$(1)$ If $\mathscr{T}^{c}$ has a bounded $t$-structure, then $\mathscr{T}^{c}=\mathscr{T}_c^b$.

$(2)$ If $\mathscr{X}$ is a full triangulated subcategories of $\mathscr{T}$ with $\mathscr{T}^{c}\subseteq \mathscr{X}\subseteq \mathscr{T}_c^b$, then all bounded $t$-structures on $\mathscr{X}$ are equivalent.
\end{Koro}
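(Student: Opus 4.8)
The plan is to deduce Corollary \ref{Compactly generated} directly from Theorem \ref{Main result} (equivalently Theorem \ref{BABB}) by identifying the relevant completion. Set $\mathscr{S}:=\mathscr{T}^c$ and let $G$ be the given compact generator; by Lemma \ref{Smd}, $G$ is a classical generator of $\mathscr{S}$. Pick a $t$-structure $(\mathscr{T}^{\leqslant 0},\mathscr{T}^{\geqslant 0})$ on $\mathscr{T}$ in the preferred equivalence class and form the good metric $\mathscr{M}_n:=\mathscr{S}\cap\mathscr{T}^{\leqslant -n}$ on $\mathscr{S}$; by Lemma \ref{Compact generator}(2) and Example \ref{Induced} this is a $G$-good metric, so its completion is $\mathfrak{S}_G(\mathscr{S})$. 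Taking $F:\mathscr{S}\hookrightarrow\mathscr{T}$ as the canonical inclusion, which is a good extension with respect to $\mathscr{M}$, Theorem \ref{MCP}(1) gives a triangle equivalence $\widehat{\mathfrak{S}}(\mathscr{S})\xrightarrow{\simeq}\mathfrak{S}(\mathscr{S})=\mathfrak{S}_G(\mathscr{S})$, and under the hypothesis $\Hom_\mathscr{T}(G,G[i])=0$ for $i\gg 0$, Theorem \ref{MCP}(2)(b) identifies $\widehat{\mathfrak{S}}(\mathscr{S})=\mathscr{T}_c^b$. Thus $\mathfrak{S}_G(\mathscr{S})\simeq\mathscr{T}_c^b$ as triangulated categories, and moreover Lemma \ref{Inclusion} shows $\mathscr{S}\subseteq\mathfrak{S}_G(\mathscr{S})$ corresponds to the inclusion $\mathscr{T}^c\subseteq\mathscr{T}_c^b$ inside $\mathscr{T}$.

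Next I would translate the finiteness hypothesis. Since $(\mathscr{T}^c)\opp=\mathscr{S}\opp$ has finite finitistic dimension and $G\opp$ is a classical generator of $\mathscr{S}\opp$, Corollary \ref{Some-Any} (applied to $\mathscr{S}\opp$) gives $\fd(\mathscr{S}\opp,G\opp)<\infty$. Now part $(1)$ follows: if $\mathscr{T}^c=\mathscr{S}$ has a bounded $t$-structure, Theorem \ref{BABB}(2) gives $\mathscr{S}=\mathfrak{S}_G(\mathscr{S})$, and transporting along the equivalence above, $\mathscr{T}^c=\mathscr{T}_c^b$. For part $(2)$, given a full triangulated subcategory $\mathscr{X}$ of $\mathscr{T}$ with $\mathscr{T}^c\subseteq\mathscr{X}\subseteq\mathscr{T}_c^b$, the equivalence $\mathscr{T}_c^b\simeq\mathfrak{S}_G(\mathscr{S})$ carries $\mathscr{X}$ to a full triangulated subcategory $\mathscr{X}'$ of $\mathfrak{S}_G(\mathscr{S})$ containing $\mathscr{S}$; Theorem \ref{BABB}(3) then says all bounded $t$-structures on $\mathscr{X}'$ are equivalent, hence the same holds for $\mathscr{X}$ since equivalences of triangulated categories preserve $t$-structures and their equivalence classes.

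The one point requiring a little care — and the likely main obstacle in writing this cleanly — is making sure the identifications are compatible: that the abstract completion $\mathfrak{S}_G(\mathscr{S})\subseteq\mathscr{S}\Modcat$ and the concrete subcategory $\mathscr{T}_c^b\subseteq\mathscr{T}$ are matched by a single equivalence that also restricts to the identity on $\mathscr{S}=\mathscr{T}^c$, so that ``$\mathscr{S}\subseteq\mathscr{X}\subseteq\mathfrak{S}_G(\mathscr{S})$'' and ``$\mathscr{T}^c\subseteq\mathscr{X}\subseteq\mathscr{T}_c^b$'' really are the same condition. This is exactly what the combination of Theorem \ref{MCP}(1)+(2)(b) provides: the functor $\mathfrak{y}_F$ restricts to the equivalence $\widehat{\mathfrak{S}}(\mathscr{S})\to\mathfrak{S}(\mathscr{S})$ and $\mathfrak{y}\simeq\mathfrak{y}_F\circ F$, so on $\mathscr{S}$ it agrees with the Yoneda embedding that we are using to regard $\mathscr{S}$ inside $\mathfrak{S}_G(\mathscr{S})$. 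Once this bookkeeping is spelled out, both statements are immediate consequences of Theorem \ref{BABB}. I would also remark that this corollary is precisely Corollary \ref{CPG} of the introduction, now proved, and note that $\mathscr{T}_c^b$ and $\mathscr{T}_c^-$ being thick (hence the subcategory condition being non-vacuous) is guaranteed by Theorem \ref{MCP}(2)(b) under the standing hypothesis $\Hom_\mathscr{T}(G,G[i])=0$ for $i\gg0$.
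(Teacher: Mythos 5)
Your proposal is correct and follows essentially the same route as the paper's proof: set $\mathscr{S}:=\mathscr{T}^c$, use Lemma \ref{Smd} and Corollary \ref{Some-Any} to get $\fd(\mathscr{S}\opp,G\opp)<\infty$, identify $\mathfrak{S}_G(\mathscr{S})$ with $\mathscr{T}_c^b$ via Theorem \ref{MCP}(1)+(2)(b) and the fact that $\mathfrak{y}_F$ restricts to the Yoneda embedding on $\mathscr{S}$, and then apply Theorem \ref{BABB}(2)(3) (equivalently Theorem \ref{Main result}). The extra attention you pay to the compatibility of the two embeddings of $\mathscr{S}$ is the same observation the paper makes in passing, so there is no genuine difference.
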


\begin{proof}
Let $\mathscr{S}:=\mathscr{T}^{c}$. By Lemma \ref{Smd}, $\mathscr{S}=\langle G\rangle$.
This implies $\mathscr{S}\opp=\langle G\opp\rangle$. Since $\fd(\mathscr{S}\opp)<\infty$, it follows from Corollary \ref{Some-Any} that $\fd(\mathscr{S}\opp, G\opp)<\infty$. Now, we consider the inclusion $F:\mathscr{S}\subseteq\mathscr{T}$ which is a good extension. By Lemma \ref{Compact generator}(2),  $\{\mathscr{S}\cap\mathscr{T}^{\leqslant -n}\}_{n\in\mathbb{N}}$ is a $G$-good metric on $\mathscr{S}$.
Moreover, by Theorem \ref{MCP}(1), the functor $\mathfrak{y}_F:\mathscr{T}\to\mathscr{S}\Modcat$ (see Definition \ref{Good extension}) restricts to a triangle equivalence $\widehat{\mathfrak{S}}(\mathscr{S})\rightarrow \mathfrak{S}_G(\mathscr{S})$.
Since the restriction of $\mathfrak{y}_F$ to $\mathscr{S}$ is exactly the Yoneda functor $\mathfrak{y}$. the image of $\mathscr{S}$ under $\mathfrak{y}$ is equal to $\mathfrak{S}_G(\mathscr{S})$ if and only if $\mathscr{S}=\widehat{\mathfrak{S}}(\mathscr{S})$.  Further, $\widehat{\mathfrak{S}}(\mathscr{S})=\mathscr{T}_c^b$ by Theorem \ref{MCP}(2)(b). Thus Corollary \ref{Compactly generated} follows from Theorem \ref{Main result}.
\end{proof}

Next, we apply Corollary \ref{Compactly generated} to derived categories of
schemes or ordinary rings. For the analog of those results for connective $\mathbb{E}_1$-ring spectra,
or in particular, nonpositive DG rings, we refer to Corollaries \ref{LCR} and \ref{b6-dg}.

\begin{Koro}\label{S-1}
Let $X$ be a quasicompact, quasiseparated scheme and let $Z$ be a closed subset of $X$ such that $X-Z$ is quasicompact. Suppose that $X$ has finite finitistic dimension. Then:

$(1)$ If $\mathscr{D}^{{\rm perf}}_Z(X)$  has a bounded $t$-structure, then $\mathscr{D}^{{\rm perf}}_Z(X)=\mathscr{D}^{p, b}_{{\rm qc}, Z}(X)$. In particular, if $X$ is noetherian, then  $\mathscr{D}^{{\rm perf}}_Z(X)$ has a bounded $t$-structure if and only if $Z$ is contained in the regular locus of $X$.

$(2)$ All bounded $t$-structures on any triangulated category between $\mathscr{D}^{{\rm perf}}_Z(X)$ and $\mathscr{D}^{p, b}_{{\rm qc}, Z}(X)$ are equivalent. In particular, if $X$ is noetherian, then all bounded $t$-structures on $\mathscr{D}^{b}_{{\rm coh}, Z}(X)$ are equivalent.
\end{Koro}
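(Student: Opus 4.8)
The plan is to deduce the statement from Corollary~\ref{Compactly generated} applied to the compactly generated triangulated category $\mathscr{T}:=\mathscr{D}_{{\rm qc}, Z}(X)$ together with its standard $t$-structure. By Theorem~\ref{CCST}, $\mathscr{T}$ has a single compact generator $G$, one has $\mathscr{T}^{c}=\mathscr{D}^{{\rm perf}}_Z(X)$, and the standard $t$-structure lies in the preferred equivalence class; then Example~\ref{Perfect complex} identifies the intrinsic subcategories as $\mathscr{T}_c^{-}=\mathscr{D}^{p}_{{\rm qc}, Z}(X)$ and $\widehat{\mathfrak{S}}(\mathscr{T}^{c})=\mathscr{T}_c^{b}=\mathscr{D}^{p, b}_{{\rm qc}, Z}(X)$. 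It therefore remains only to verify the two hypotheses of Corollary~\ref{Compactly generated}, namely that $\Hom_\mathscr{T}(G, G[i])=0$ for $i\gg 0$ and that $\big(\mathscr{T}^{c}\big)\opp$ has finite finitistic dimension.

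For the first, since $G$ is a perfect complex, so is $\mathscr{R}\mathscr{H}om_X(G, G)$; in particular it is cohomologically bounded. As $X$ is quasicompact and quasiseparated, derived global sections have finite cohomological amplitude on $\mathscr{D}_{{\rm qc}}(X)$, so from $\Hom_\mathscr{T}(G, G[i])\cong H^{i}\big(\mathscr{R}\Gamma(X, \mathscr{R}\mathscr{H}om_X(G,G))\big)$ we obtain $\Hom_\mathscr{T}(G, G[i])=0$ for all $i\gg 0$ (indeed for $|i|\gg 0$). The second hypothesis is exactly the content of Proposition~\ref{SCMFD}, using that $X$ has finite finitistic dimension. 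With both hypotheses verified, Corollary~\ref{Compactly generated}(1) yields that a bounded $t$-structure on $\mathscr{D}^{{\rm perf}}_Z(X)=\mathscr{T}^{c}$ forces $\mathscr{T}^{c}=\mathscr{T}_c^{b}=\mathscr{D}^{p, b}_{{\rm qc}, Z}(X)$, and Corollary~\ref{Compactly generated}(2) yields that all bounded $t$-structures on any triangulated category $\mathscr{X}$ with $\mathscr{D}^{{\rm perf}}_Z(X)\subseteq\mathscr{X}\subseteq\mathscr{D}^{p, b}_{{\rm qc}, Z}(X)$ are equivalent. This gives the main assertions of $(1)$ and $(2)$.

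For the ``in particular'' statements, suppose in addition that $X$ is noetherian, so that $\mathscr{D}^{p, b}_{{\rm qc}, Z}(X)=\mathscr{D}^{b}_{{\rm coh}, Z}(X)$. Then $(2)$ follows by taking $\mathscr{X}=\mathscr{D}^{b}_{{\rm coh}, Z}(X)$. For $(1)$ one uses the equivalence: $Z$ is contained in the regular locus of $X$ if and only if $\mathscr{D}^{{\rm perf}}_Z(X)=\mathscr{D}^{b}_{{\rm coh}, Z}(X)$. Indeed, if $Z$ lies in the regular locus then, locally around each point of $Z$, every coherent sheaf has finite projective dimension, so each object of $\mathscr{D}^{b}_{{\rm coh}, Z}(X)$ is locally, hence globally, perfect; conversely, if the two categories agree then, for $x\in Z$, the pushforward of the residue field $k(x)$ is perfect on an affine neighbourhood of $x$, which forces $\mathscr{O}_{X, x}$ to be a regular local ring. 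Combining this with the first part of $(1)$: if $\mathscr{D}^{{\rm perf}}_Z(X)$ has a bounded $t$-structure, then $\mathscr{D}^{{\rm perf}}_Z(X)=\mathscr{D}^{b}_{{\rm coh}, Z}(X)$ and so $Z$ is contained in the regular locus; conversely, if $Z$ is contained in the regular locus, then $\mathscr{D}^{{\rm perf}}_Z(X)=\mathscr{D}^{b}_{{\rm coh}, Z}(X)$ carries the evident bounded $t$-structure.

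As for the principal difficulty: there is in fact no deep new obstacle here, since the substantive work has already been carried out elsewhere — the completion-theoretic input encoded in Corollary~\ref{Compactly generated} (ultimately Theorem~\ref{Main result}), the structural identifications of Theorem~\ref{CCST} and Example~\ref{Perfect complex}, and the finiteness $\fd\big(\mathscr{D}^{{\rm perf}}_Z(X)\opp\big)<\infty$ from Proposition~\ref{SCMFD}. The only genuinely scheme-theoretic step to be careful with is the localization argument identifying the regular-locus condition with the equality $\mathscr{D}^{{\rm perf}}_Z(X)=\mathscr{D}^{b}_{{\rm coh}, Z}(X)$ in the noetherian case, together with the (standard, but worth stating) fact that a perfect complex on a quasicompact quasiseparated scheme has only finitely many nonvanishing self-extension groups, which is what makes the hypothesis $\Hom_\mathscr{T}(G,G[i])=0$ for $i\gg 0$ automatic.
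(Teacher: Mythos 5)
Your proof takes essentially the same route as the paper's: identify $\mathscr{T}:=\mathscr{D}_{{\rm qc},Z}(X)$, $\mathscr{T}^c=\mathscr{D}^{\rm perf}_Z(X)$, $\mathscr{T}_c^b=\mathscr{D}^{p,b}_{{\rm qc},Z}(X)$ via Theorem~\ref{CCST} and Example~\ref{Perfect complex}, invoke Proposition~\ref{SCMFD} for the finiteness of $\fd(\mathscr{D}^{\rm perf}_Z(X)^{\rm op})$, and then apply Corollary~\ref{Compactly generated}. One small point in your favour: you explicitly check the remaining hypothesis of Corollary~\ref{Compactly generated}, namely $\Hom_\mathscr{T}(G,G[i])=0$ for $i\gg 0$, whereas the paper's proof of this corollary does not spell this out (it is of course standard, since $G$ is perfect hence cohomologically bounded and lies in $\mathscr{T}^b$ for the standard, preferred $t$-structure); you likewise spell out the Auslander--Buchsbaum--Serre localization argument behind the regular-locus equivalence, which the paper only states as a known fact.
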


\begin{proof}
Let $\mathscr{T}:=\mathscr{D}_{{\rm qc}, Z}(X)$ and $\mathscr{S}:=\mathscr{D}^{\rm perf}_Z(X)$. By Theorem \ref{CCST}, $\mathscr{T}^c=\mathscr{S}$. By Example \ref{Perfect complex}, $\mathscr{T}_c^b=\mathscr{D}^{p, b}_{{\rm qc}, Z}(X)$. Moreover, by Proposition \ref{SCMFD}, $\fd(\mathscr{S}\opp)<\infty$. Thus the first parts of Corollary \ref{S-1}(1) and Corollary \ref{S-1}(2) follow from Corollary \ref{Compactly generated}(1) and Corollary \ref{Compactly generated}(2), respectively. The assertions in Corollary \ref{S-1} for a noetherian scheme $X$ are true by combining the facts:  $\mathscr{D}^{p, b}_{{\rm qc}, Z}(X)=\mathscr{D}^{b}_{{\rm coh}, Z}(X)$; $Z$ is contained in the regular locus of $X$ if and only if $\mathscr{D}^{{\rm perf}}_Z(X)=\mathscr{D}^{b}_{{\rm coh}, Z}(X)$; the category $\mathscr{D}^{b}_{{\rm coh}, Z}(X)$ has an obvious bounded $t$-structure.
\end{proof}

\begin{Koro}\label{R-1}
Let $R$ be a ring. Suppose $\fd(R\opp)<\infty$. Then:

$(1)$ If $\Kb{\prj{R}}$ has a bounded $t$-structure, then $\Kb{\prj{R}}=\mathscr{K}^{-, b}(\prj{R})$. In particular, if $R$ is left coherent, then
$\Kb{\prj{R}}$ has a bounded $t$-structure if and only if $\Kb{\prj{R}}=\Db{R\modcat}$.

$(2)$ All bounded $t$-structures on any triangulated category between $\Kb{\prj{R}}$ and $\mathscr{K}^{-,b}(\prj{R})$ are equivalent.
\end{Koro}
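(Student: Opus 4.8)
The plan is to deduce Corollary \ref{R-1} from Corollary \ref{Compactly generated} by specializing to the compactly generated triangulated category $\mathscr{T}:=\mathscr{D}(R)$, exactly as Corollary \ref{S-1} was deduced in the scheme case. First I would fix $\mathscr{S}:=\mathscr{T}^c$ and recall from Example \ref{Ordinary ring} the three identifications we need: under the standard equivalence $\mathscr{T}^c\simeq\Kb{\prj{R}}$, one has $\mathscr{T}_c^- = \mathscr{K}^-(\prj R)$ and $\widehat{\mathfrak S}(\mathscr{T}^c)=\mathscr{T}_c^b=\mathscr{K}^{-,b}(\prj R)$ (the latter coming from Theorem \ref{MCP}(2), where the $t$-structure in play is the standard one on $\mathscr{D}(R)$, which is in the preferred equivalence class with compact generator $G={}_RR$). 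Next I would verify the two hypotheses of Corollary \ref{Compactly generated}. The hypothesis $\Hom_\mathscr{T}(G,G[i])=0$ for $i\gg 0$ holds trivially since $G={}_RR$ is concentrated in degree $0$, so $\Hom_{\mathscr{D}(R)}(R,R[i])=\Ext^i_R(R,R)=0$ for all $i\neq 0$. The hypothesis $\fd((\mathscr{T}^c)\opp)<\infty$ is precisely the given assumption $\fd(R\opp)<\infty$: indeed $\mathscr{T}^c\opp\simeq\Kb{\prj R}\opp\simeq \Kb{\prj{R\opp}}$ as triangulated categories, and by Lemma \ref{Finite}(5) $\fd\big(\Kb{\prj{R\opp}}\big)<\infty$ if and only if $\fd(R\opp)<\infty$.

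With both hypotheses in hand, Corollary \ref{Compactly generated}(1) gives that if $\Kb{\prj R}$ has a bounded $t$-structure then $\mathscr{T}^c=\mathscr{T}_c^b$, i.e. $\Kb{\prj R}=\mathscr{K}^{-,b}(\prj R)$; this proves the first sentence of part (1). For the "in particular" clause, I would invoke the standard fact (already recalled in Example \ref{Ordinary ring}) that when $R$ is left coherent, $\mathscr{K}^{-,b}(\prj R)$ is triangle equivalent to $\Db{R\modcat}$ via the natural functor, and that $\Db{R\modcat}$ always carries the obvious (algebraic) bounded $t$-structure with heart $R\modcat$. Hence for left coherent $R$: if $\Kb{\prj R}$ has a bounded $t$-structure then $\Kb{\prj R}=\mathscr{K}^{-,b}(\prj R)=\Db{R\modcat}$; conversely if $\Kb{\prj R}=\Db{R\modcat}$ then it inherits that obvious bounded $t$-structure. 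This establishes the equivalence in part (1). For part (2), Corollary \ref{Compactly generated}(2) directly gives that all bounded $t$-structures on any full triangulated subcategory $\mathscr{X}$ with $\mathscr{T}^c\subseteq\mathscr{X}\subseteq\mathscr{T}_c^b$ are equivalent; translating through the identifications $\mathscr{T}^c=\Kb{\prj R}$ and $\mathscr{T}_c^b=\mathscr{K}^{-,b}(\prj R)$ yields exactly the assertion.

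There is essentially no hard step here — the corollary is a dictionary translation of the already-proved Corollary \ref{Compactly generated} through the well-known computation of the closure and bounded closure of $\Kb{\prj R}$ inside $\mathscr{D}(R)$. The only point deserving care is the bookkeeping around opposite categories: one must make sure the finitistic dimension hypothesis is applied to $(\mathscr{T}^c)\opp$ and not $\mathscr{T}^c$, and that the triangle equivalence $\Kb{\prj R}\opp\simeq\Kb{\prj{R\opp}}$ (given by $\Hom_R(-,R)$ on finitely generated projectives) together with Lemma \ref{Finite}(5) correctly converts "$\fd(R\opp)<\infty$" into "$\fd((\mathscr{T}^c)\opp)<\infty$". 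One should also note that Corollary \ref{Compactly generated} is stated for a compact \emph{generator}, so it is worth recording explicitly that ${}_RR$ is a compact generator of $\mathscr{D}(R)$ and, by Lemma \ref{Smd}, a classical generator of $\Kb{\prj R}$. Beyond that, the proof is a short assembly of cited results, mirroring the proof of Corollary \ref{S-1} line for line.
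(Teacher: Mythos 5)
Your proposal is correct and takes essentially the same route as the paper: specialize Corollary \ref{Compactly generated} to $\mathscr{T}=\mathscr{D}(R)$ with compact generator $G={}_RR$, use Example \ref{Ordinary ring} to identify $\mathscr{T}_c^b$ with $\mathscr{K}^{-,b}(\prj R)$, convert the hypothesis $\fd(R\opp)<\infty$ into $\fd((\mathscr{T}^c)\opp)<\infty$ via $\Kb{\prj R}\opp\simeq\Kb{\prj{R\opp}}$ and Lemma \ref{Finite}(5), and finish the "in particular" clause with the coherence observation. You are a bit more explicit than the paper in verifying the vanishing hypothesis $\Hom_\mathscr{T}(G,G[i])=0$ for $i\gg0$ and in recording that $G$ is a compact generator, but these are the same proof.
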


\begin{proof}
Let $\mathscr{T}:=\D{R}$ and $\mathscr{S}:=\Kb{\prj{R}}$. Then $\mathscr{T}^c=\mathscr{S}$.
By Example \ref{Ordinary ring}, $\mathscr{T}_c^b=\mathscr{K}^{-, b}(\prj{R}).$
Since $\fd(R\opp)<\infty$ and $\mathscr{S}\opp$ as a triangulated category is equivalent to $\Kb{\prj{R\opp}}$, we have $\fd(\mathscr{S}\opp)<\infty$ by Lemma \ref{Finite}(5). By Corollary \ref{Compactly generated}(1), the first assertion of Corollary \ref{R-1}(1) holds. If $R$ is left coherent, then $R\modcat$ is an abelian category and $\mathscr{K}^{-, b}(\prj{R})=\Db{R\modcat}$ (up to triangle equivalence) which has an obvious bounded $t$-structure. Thus the second assertion of Corollary \ref{R-1}(1) holds. Corollary \ref{R-1}(2) is a direct consequence of Corollary \ref{Compactly generated}(2).
\end{proof}

\begin{Koro}\label{AT-1}
Let $\mathscr{S}$ be an essentially small triangulated category. If $\mathscr{S}$ has an algebraic $t$-structure, then $\mathscr{S}=\mathfrak{S}_G(\mathscr{S})$ for any classical generator $G$ of $\mathscr{S}$.
\end{Koro}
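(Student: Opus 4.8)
The plan is to reduce the statement to Theorem \ref{Main result}$(a)$ applied to a conveniently chosen classical generator, and then to transport the conclusion to an arbitrary one using the independence of the completion on the choice of classical generator.

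First I would fix an algebraic $t$-structure $(\mathscr{S}^{\leqslant 0},\mathscr{S}^{\geqslant 0})$ on $\mathscr{S}$ and let $S$ be the direct sum of a set of representatives of the (finitely many) isomorphism classes of simple objects in its heart. By Lemma \ref{Finite}$(3)$, $S$ is a classical generator of $\mathscr{S}$ and $\fd(\mathscr{S},S)=\fd(\mathscr{S}\opp,S\opp)=0$; in particular $\fd(\mathscr{S}\opp,S\opp)<\infty$. Since an algebraic $t$-structure is, by definition, bounded, the category $\mathscr{S}$ carries a bounded $t$-structure. Hence Theorem \ref{Main result}$(a)$ applies with the object $G$ there taken to be $S$, yielding $\mathscr{S}=\mathfrak{S}_S(\mathscr{S})$.

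Next I would take an arbitrary classical generator $G$ of $\mathscr{S}$. Then $\langle G\rangle=\mathscr{S}=\langle S\rangle$, so the $G$-good metric $\{\langle G\rangle^{(-\infty,-n]}\}_{n\in\mathbb{N}}$ and the $S$-good metric $\{\langle S\rangle^{(-\infty,-n]}\}_{n\in\mathbb{N}}$ on $\mathscr{S}$ are equivalent, as different classical generators of a triangulated category yield equivalent good metrics (see the discussion after Definition \ref{PEC}). By Lemma \ref{Properties}$(4)$, equivalent good metrics produce the same completion, so $\mathfrak{S}_G(\mathscr{S})=\mathfrak{S}_S(\mathscr{S})=\mathscr{S}$, which is the desired conclusion.

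There is essentially no obstacle here: all the substantive content is already packaged in Lemma \ref{Finite}$(3)$ (finiteness, indeed vanishing, of the finitistic dimension in the presence of an algebraic $t$-structure) and in Theorem \ref{Main result}$(a)$ (a bounded $t$-structure forces invariance under completion). The only point I would be careful to spell out is that $\mathfrak{S}_G(\mathscr{S})$ does not depend on the choice of classical generator $G$, so that proving $\mathscr{S}=\mathfrak{S}_S(\mathscr{S})$ for the specific generator $S$ already gives $\mathscr{S}=\mathfrak{S}_G(\mathscr{S})$ for every classical generator $G$.
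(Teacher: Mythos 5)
Your proof is correct and follows essentially the same route as the paper: invoke Lemma \ref{Finite}(3) for the direct sum of simples in the heart, then apply Theorem \ref{Main result}$(a)$ and transport the conclusion to an arbitrary classical generator. The paper performs the transport slightly earlier — moving the finiteness of $\fd(\mathscr{S}\opp,-)$ from $S\opp$ to $G\opp$ using $\langle H\rangle=\langle G\rangle$ before applying Theorem \ref{Main result}$(a)$ — whereas you apply the theorem at $S$ first and then use the independence of $\mathfrak{S}_G(\mathscr{S})$ on the classical generator; these are interchangeable.
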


\begin{proof}
Suppose that $\mathscr{S}$ has an algebraic $t$-structure $(\mathscr{S}^{\leqslant 0},\mathscr{S}^{\geqslant 0})$. Then $\mathscr{S}\opp$ has an algebraic $t$-structure $\big((\mathscr{S}^{\geqslant 0})\opp, (\mathscr{S}^{\leqslant 0})\opp\big)$. Let $H$ be the direct sum of the isomorphism classes of simple objects in the heart of $(\mathscr{S}^{\leqslant 0}, \mathscr{S}^{\geqslant 0})$.
Then $H\opp$ is a classical generator of $\mathscr{S}\opp$, and $\fd(\mathscr{S}\opp, H\opp)<\infty$ by Lemma \ref{Finite}(3). Since $\langle H \rangle=\langle G \rangle$, Theorem \ref{Main result}$(a)$ implies Corollary \ref{AT-1}.
\end{proof}

Corollary \ref{AT-1} generalizes the implication of $(ii)$ to $(i)$ in \cite[Proposition 4.12]{AMY} which deals with $\Kb{\prj{R}}$ for a finite-dimensional algebra over a field.
Note that, in Theorem \ref{Main result}, the only condition on our triangulated category is on the finiteness of the finitistic dimension. One may wonder to what extent Theorem \ref{Main result} $(a)$ and $(b)$ hold without this finiteness assumption. A completion-invariant triangulated category can have bounded t-structures without having finite finitistic dimension. We give an example of such a category below. In this example, the conclusion of Theorem \ref{Main result}$(b)$ is not satisfied.

\begin{Prop}\label{CE}
Let $R:=k[x_1,x_2,x_3,\cdots]$ be the polynomial ring in countably many variables over a field $k$. Then:

$(1)$ $R$ is coherent, $\fd(R)$ is infinite and $\Kb{\prj{R}} = \Db{R\modcat}$. Thus the singularity category of $R$ is trivial.

$(2)$ $\Kb{\prj{R}}$ has two bounded $t$-structures which are not equivalent. In particular, one of the bounded $t$-structures on $\Kb{\prj{R}}$ generates a $t$-structure on $\D{R}$ that is not in the preferred equivalence class.
\end{Prop}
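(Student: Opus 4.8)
\textbf{Proof proposal for Proposition \ref{CE}.}

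\emph{Part (1).} The plan is to verify each of the three claims separately. Coherence of $R=k[x_1,x_2,\dots]$ is classical: every finitely generated ideal of $R$ is actually an ideal of some finitely generated polynomial subring $k[x_1,\dots,x_n]$ after a change of variables (each finite generating set involves only finitely many variables), and one uses the faithful flatness of $R$ over $k[x_1,\dots,x_n]$ together with the fact that noetherian rings are coherent to conclude that finitely generated ideals of $R$ are finitely presented. For $\fd(R)=\infty$: the residue field $k=R/(x_1,x_2,\dots)$ cannot be the issue since that ideal is not finitely generated, so instead I would exhibit, for each $n$, a finitely presented module of finite projective dimension exactly $n$ — for instance $R/(x_1,\dots,x_n)$, whose Koszul resolution on the regular sequence $x_1,\dots,x_n$ has length $n$ and whose projective dimension is exactly $n$ because $\Ext_R^n(R/(x_1,\dots,x_n),R)\neq 0$ (again reducible to the noetherian subring computation via flat base change). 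This forces $\fd(R)\geqslant n$ for all $n$, hence $\fd(R)=\infty$. Finally, the equality $\Kb{\prj R}=\Db{R\modcat}$ amounts, by Example \ref{Ordinary ring}, to saying that every bounded complex with finitely presented cohomology is perfect, i.e. that $R$ is left regular in the sense of Example \ref{Ring and Scheme}(1); for this it suffices that every finitely presented $R$-module has finite projective dimension, which follows from the previous point since a finitely presented module is a module over some $k[x_1,\dots,x_n]$ extended up along the (faithfully flat) inclusion, and finitely generated modules over the regular ring $k[x_1,\dots,x_n]$ have projective dimension at most $n$. Triviality of the singularity category is then immediate from $\mathscr{S}=\mathfrak S_G(\mathscr{S})$ (Example \ref{Ring and Scheme}(1)).

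\emph{Part (2), construction of two $t$-structures.} One bounded $t$-structure on $\Kb{\prj R}\simeq \Db{R\modcat}$ is the standard one, with heart $R\modcat$; call its aisle $\mathscr{T}^{\leqslant 0}_{\mathrm{std}}$. To build a genuinely different one I would use a torsion pair / tilt: pick the idempotent-generated ideal coming from one variable, say the pair of subcategories of $R\modcat$ given by the hereditary torsion theory associated to the multiplicative localization at $x_1$, or more transparently use the recollement $\Db{(R/x_1R)\modcat}\to\Db{R\modcat}\to\Db{R[x_1^{-1}]\modcat}$ and glue the standard $t$-structures shifted against each other (this is the classical construction of Beilinson--Bernstein--Deligne for a closed/open decomposition of $\operatorname{Spec}R$). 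The glued $t$-structure is bounded, and its aisle differs from $\mathscr{T}^{\leqslant 0}_{\mathrm{std}}$ by an unbounded amount precisely because the gluing shift can be taken along the closed subscheme $\operatorname{Spec}(R/x_1R)$, which already has infinite finitistic dimension by the same argument as in Part (1) applied to $R/x_1R\cong k[x_2,x_3,\dots]$. Concretely: the objects $R/(x_1,\dots,x_n)$ lie in the standard heart but, after the gluing, sit in cohomological degree roughly $-n$, so no single shift of $\mathscr{T}^{\leqslant 0}_{\mathrm{std}}$ contains the glued aisle.

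\emph{Part (2), inequivalence and the statement about $\D{R}$.} To prove the two $t$-structures are not equivalent I would argue by contradiction: if they were equivalent, then (by Theorem \ref{Equivalent $t$-structure}(1), which applies since $G=R$ satisfies $\Hom(G[i],G)=0$ for $i\gg 0$) both would generate $t$-structures on $\D{R}$ in the \emph{same} equivalence class, and in particular the glued aisle would be trapped between two shifts of the preferred aisle; restricting back to compacts and using Lemma \ref{Compact generator}(1), this would squeeze the objects $R/(x_1,\dots,x_n)$ into $\langle R\rangle^{(-\infty,s]}$ for a fixed $s$ independent of $n$, contradicting the unbounded degree computation above. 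This simultaneously yields the final assertion: the glued $t$-structure on $\Kb{\prj R}$ extends (via $\mathrm{Coprod}$, Theorem \ref{Equivalent $t$-structure}(1)) to a $t$-structure on $\D{R}$ which cannot be in the preferred equivalence class, since if it were, restricting to $\mathscr{T}^c$ via Lemma \ref{Compact generator}(2) would make it equivalent to the standard one. \emph{The main obstacle} I anticipate is making the ``unbounded degree'' estimate fully rigorous — i.e. showing cleanly that the syzygy objects $R/(x_1,\dots,x_n)$ genuinely escape every fixed shift of the second aisle; this is where one must carefully track how the gluing interacts with the (non-noetherian but coherent) structure of $R$, and it is essentially a restatement of $\fd(R/x_1R)=\infty$ packaged through the recollement, so the bulk of the work is bookkeeping rather than a new idea. $\hfill\square$
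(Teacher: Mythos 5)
Your Part (1) is essentially the paper's argument: reduce to a finite polynomial subring $S=k[x_1,\dots,x_m]$ via the flat (indeed free) extension $S\subseteq R$, use that $S$ is regular noetherian of finite global dimension, and note the Koszul computations pin down $\pd_R$ of the relevant ideals/quotients exactly, forcing $\fd(R)=\infty$ while still making every finitely presented module have finite projective dimension.

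Part (2), however, has a genuine gap, and it is precisely the one you flagged. The BBD gluing of a $t$-structure on $\Db{(R/x_1R)\modcat}$ shifted by some fixed integer $m$ with the standard $t$-structure on $\Db{R[x_1^{-1}]\modcat}$ always lands within bounded distance of the standard $t$-structure on $\Db{R\modcat}$, because $Li^*=(-)\otimes_R^{\mathbb L}R/x_1$ has cohomological amplitude $[-1,0]$ (it is a two-term complex). Concretely, for $M_n:=R/(x_1,\dots,x_n)$ one computes $Li^*M_n\simeq M_n\oplus M_n[1]$ (as $x_1$ kills $M_n$), so $M_n$ stays in the glued aisle for every $m\geqslant 0$; the claim that it ``sits in cohomological degree roughly $-n$'' is false. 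Consequently the contradiction you set up in the last paragraph never materializes, and the route through a recollement does not produce an inequivalent $t$-structure.

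What the paper does instead is to apply the triangulated anti-equivalence $(-)^\vee=\rHom_R(-,R)$ of $\Kb{\prj R}$. This sends the standard $t$-structure $(\mathscr{A},\mathscr{B})$ to $(\mathscr{B}^\vee,\mathscr{A}^\vee)$, and the Koszul computation $(R/I_n)^\vee\simeq K_n^\vee\simeq K_n[-n]\simeq(R/I_n)[-n]$ shows $\mathscr{B}^\vee$ contains objects with nonzero cohomology in arbitrarily high positive degree, so $\mathscr{B}^\vee\not\subseteq\mathscr{A}[-t]$ for any $t$ and the two $t$-structures are inequivalent. Your closing argument about $\D{R}$ is recoverable once you have an honest pair of inequivalent $t$-structures: equivalent $t$-structures on $\Kb{\prj R}$ give equivalent compactly generated $t$-structures on $\D{R}$ via ${\rm Coprod}(-)$, and one can instead read off directly (as the paper does, using \cite[Lemma 2.5]{Neeman4}) that the $t$-structure generated by $\mathscr{B}^\vee$ restricts back to $(\mathscr{B}^\vee,\mathscr{A}^\vee)$, hence is not in the preferred class. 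The key idea you are missing is the duality; any gluing with bounded shifts cannot escape the equivalence class.
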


\begin{proof}
$(1)$ It is known that $R$ is coherent. To show $(1)$, it suffices to show that each finitely presented $R$-module has finite projective dimension. Since $R\modcat$ is an abelian subcategory of $R\Modcat$ and finitely generated submodules of finitely presented $R$-modules are finitely presented, we only need to show that each finitely generated ideal $I$ of $R$ has finite projective dimension.

For an $R$-module $X$, we denote by $\pd(_RX)$ the projective dimension of $_RX$.
Let $I:=\Sigma_{i=1}^n Rr_i$ be the ideal of $R$ generated by finitely many elements $r_1,r_2,\cdots,r_n$ of $R$. Then there exists a positive integer $m$ such that $\{r_1,r_2,\cdots,r_n\}\subseteq S:=k[x_1, x_2,\cdots, x_m]$. Let $J:=\Sigma_{i=1}^n Sr_i$ that is a finitely generated ideal of $S$. Note that $S$ has global dimension $m$, and therefore $\pd(_SJ)\leqslant m-1$. Since $R$ is a free $S$-module, $R\otimes_S J\simeq I$ as $R$-modules and further ${_S}I$ is a coproduct of countably many copies of ${_S}J$. This implies that $\pd({_R}I)=\pd({_S}J)\leqslant m-1$. Thus $R$ is regular. For $n\geqslant 1$, let $S_n:=k[x_1, x_2,\cdots x_n]$. Then $\pd(_R\Sigma_{i=1}^n Rx_i)=\pd(_{S_n}\Sigma_{i=1}^n S_nx_i)=\pd(_{S_n}k)-1=n-1$. It follows that $\pd(_R\Sigma_{i=1}^n Rx_i)=n-1$, and therefore $\fd(R)\geqslant n-1$. This forces $\fd(R)=\infty$.

$(2)$ Let $\mathscr{S}:=\Db{R\modcat}$, $\mathscr{A}:=\{\cpx{X}\in\mathscr{S}\mid H^i(\cpx{X})=0, \forall\; i>0\}$ and $\mathscr{B}:=\{\cpx{X}\in\mathscr{S}\mid H^i(\cpx{X})=0, \forall\; i<0\}$. Then $(\mathscr{A}, \mathscr{B})$ is a bounded $t$-structure on $\mathscr{S}$. Let $(-)^\vee=\rHom_R(-, R):\mathscr{S}\to \mathscr{S}$ be the right derived functor defined by the $R$-module $R$.
This functor is an auto-duality of triangulated categories, and thus $(\mathscr{B}^\vee, \mathscr{A}^\vee)$ is also a bounded $t$-structure on $\mathscr{S}$. Now, we show that these two $t$-structures on $\mathscr{S}$ are not equivalent.

For an element $x\in R$, we denote by $K(x)$ the two-term complex $0\to R\lraf{\cdot x} R\to 0$ with nonzero terms in degrees $-1$ and $0$ and with the differential given by the multiplication by $x$. For $n\geqslant 1$, the Koszul complex of the regular sequence $\{x_1, x_2,\cdots, x_n\}$ in $R$ is defined as the tensor complex $K_n:=K(x_1)\otimes_RK(x_2)\otimes_R\cdots\otimes_RK(x_n)$ of all these $K(x_i)$ over $R$. Let $I_n:=\Sigma_{i=1}^n Rx_n$. It is known that ${_R}R/I_n$ is quasi-isomorphic to $K_n$.
Clearly, $R/I_n\in \mathscr{B}$ and thus $K_n^\vee\in\mathscr{B}^\vee$.
Since $K(x_i)^\vee=\Hom_R(K(x_i), R)\simeq K(x_i)[-1]$, there are isomorphisms
$K_n^\vee\simeq K(x_1)^\vee\otimes_RK(x_2)^\vee\otimes_R\cdots\otimes_RK(x_n)^\vee\simeq K_n[-n]\simeq (R/I_n)[-n]$ in $\mathscr{S}$.  This implies $H^n(K_n^\vee)\simeq R/I_n\neq 0$. Consequently,
there is no positive integer $t$ such that $\mathscr{B}^\vee[t]\subseteq \mathscr{A}$.
So, $(\mathscr{A}, \mathscr{B})$ and $(\mathscr{B}^\vee, \mathscr{A}^\vee)$ are not equivalent.

Let $\mathscr{D}':=(\D{R}^{\leqslant 0}, \D{R}^{\geqslant 0})$ be the standard $t$-structure on $\D{R}$. It is in the preferred equivalence class and its restriction to $\mathscr{S}$ is the $t$-structure
$(\mathscr{A}, \mathscr{B})$. Since $\mathscr{B}^\vee$ consists of compact objects, it generates a $t$-structure $\mathscr{D}:=\big(\text{Coprod}(\mathscr{B}^\vee),\; (\text{Coprod}(\mathscr{B}^\vee)[1])^{\perp}\big)$ on $\D{R}$ by Example \ref{Typical example}.
Moreover, by \cite[Lemma 2.5]{Neeman4}, the restriction of $\mathscr{D}$ to $\mathscr{S}$ is equal to $(\mathscr{B}^\vee, \mathscr{A}^\vee)$. Since $(\mathscr{A}, \mathscr{B})$ and $(\mathscr{B}^\vee, \mathscr{A}^\vee)$ are not equivalent, $\mathscr{D}'$ and $\mathscr{D}$ are not equivalent. Thus  $\mathscr{D}$ is not in the preferred equivalence class.
\end{proof}

We also borrow an example of a triangulated category from \cite{Muro} which is neither algebraic nor topological, and show that this category has no bounded t-structure, but its completion is zero.

\begin{Bsp}
Let $R:=\mathbb{Z}/4\mathbb{Z}$ and let $\mathscr{S}$ be the category of finitely generated free $R$-modules. In \cite{Muro}, it was shown that $\mathscr{S}$ is a triangulated category with the identity functor as its shift functor, and more surprisingly, it is neither the stable category of a Frobenius category nor a full triangulated subcategory of the homotopy category of a stable model category.

Clearly, the module $R$ is a classical generator of $\mathscr{S}$ and $R[n]=R$ for any integer $n$. This implies that $\Hom_{\mathscr{S}}(R[n],R)=\End_{\mathscr{S}}(R)\simeq R\neq 0$, and therefore
$\mathscr{S}_{\rm tc}=0$. By Lemma \ref{HBTS}, $\mathscr{S}$ has no bounded $t$-structure. However, the completion $\mathfrak{S}_R(\mathscr{S})$ of $\mathscr{S}$ with respect to the $R$-good metric $\{\mathscr{M}_n\}_{n\in \mathbb{N}}$, where $\mathscr{M}_n=\mathscr{S}$ for all $n$, is zero.
Thus $\mathscr{S}\neq \mathfrak{S}_R(\mathscr{S})$, but the almost singularity category of $\mathscr{S}$ vanishes (see Definition \ref{Singularity}).
In this example, we also have $\fd(\mathscr{S}, R)=0=\fd(\mathscr{S}^{\opp}, R^{\opp})$, since $R$ is commutative and $R[n]^{\bot}=R^{\bot}=0$ in $\mathscr{S}$.
Note that Krause's completion (see \cite{Kr3}) does not apply to non-algebraic triangulated categories, and therefore does not apply here.
\end{Bsp}

\medskip
{\bf Acknowledgements.} H. X. Chen and J. H. Zheng would like to thank Xiaohu Chen and Yaohua Zhang for discussions on the completions of triangulated categories. The work of H. X. Chen was supported by the National Natural Science Foundation of China (Grant 12122112 and 12031014). R. Biswas would like to thank the BIREP group at Universit{\"a}t Bielefeld where he was a postdoc when he started working on this project, and the Hausdorff Research Institute for Mathematics for their hospitality during the ``Spectral Methods in Algebra, Geometry, and Topology" Trimester Program funded by the Deutsche Forschungsgemeinschaft under Germany’s Excellence Strategy – EXC-2047/1 – 390685813. The work of C. J. Parker was supported by the Deutsche Forschungsgemeinschaft (SFB-TRR 358/1 2023 - 491392403). K. Manali Rahul was partially supported by the same SFB grant, the ERC Advanced Grant 101095900-TriCatApp, the Australian Research Council Grant DP200102537, and is a recipient of the AGRTP scholarship. He would also like to thank Università degli Studi di Milano and the BIREP group at Universit\"{a}t Bielefeld for their hospitality.

\begin{appendices}

\section{Completions of perfect modules over connective ring spectra}\label{ECTC}

In this appendix, we calculate the completion of the homotopy category of perfect modules over a connective ring spectrum (Theorem \ref{Connective case} and Corollary \ref{Spectrum}), which generalizes the case of the sphere spectrum. For an introduction to structured ring spectra and their module spectra, we refer to \cite[Chapter 7]{JL}.

For a spectrum $E$, we denote by $\pi_n(E)$  the $n$-th homotopy group of $E$ for each $n\in\mathbb{Z}$.  Recall that an \emph{$\mathbb{E}_1$-ring} $R$ is by definition an $\mathbb{E}_1$-algebra object in the $\infty$-category ${\rm Sp}$ of spectra. In other words, $R$ is a spectrum equipped with a multiplication which is associative up to coherent homotopy. An $\mathbb{E}_1$-ring $R$ is said to be \emph{connective} if $\pi_n(R)=0$ for all $n<0$. An important class of connective $\mathbb{E}_1$-rings is the \emph{Eilenberg\mbox{–}Mac Lane ring spectrum} $H\Lambda$ of an ordinary ring $\Lambda$, where $\pi_n(H\Lambda)=0$ for $n\neq 0$ and $\pi_0(H\Lambda)=\Lambda$.

Let $R$ be an $\mathbb{E}_1$-ring. We denote by ${\rm LMod}_R$ the \emph{stable $\infty$-category of left $R$-module spectra}.  Unless stated otherwise, all module spectra in this section are left module spectra. It is known that ${\rm LMod}_R$ is a compactly generated stable $\infty$-category that has $_RR$ as a compact generator. We denote by ${\rm LMod}_R^{\rm perf}$ the smallest stable subcategory of ${\rm LMod}_R$ containing  ${_R}R$ and closed under direct summands (or retracts in other terminology). An $R$-module $M$ is said to be  \emph{perfect} if it belongs to ${\rm LMod}_R^{\rm perf}$. Roughly speaking, an $R$-module $M$ is perfect if it can be obtained as a successive extension of finitely many (possibly shifted) copies of $R$ or is a direct summand of such an $R$-module. By \cite[Proposition 7.2.4.2]{JL}, an object of ${\rm LMod}_R$ is compact if and only if it is perfect.

For a general stable $\infty$-category $\mathcal{C}$, we denote by ${\rm Ho}(\mathcal{C})$ the \emph{homotopy category} of $\mathcal{C}$. This is a triangulated category (for example, see \cite[Section 1.1.2]{JL} or \cite[Section 2]{BGT}). A morphism $f:X\to Y$ in $\mathcal{C}$ is called an \emph{equivalence} if it is an isomorphism in
${\rm Ho}(\mathcal{C})$. Following \cite[Definition 1.2.1.4]{JL}, a \emph{$t$-structure} on $\mathcal{C}$ is by definition a $t$-structure on ${\rm Ho}(\mathcal{C})$. For $t$-structures, connective $\mathbb{E}_1$-rings stand out since their module spectra have canonical $t$-structures.

From now on, let $R$ be a connective $\mathbb{E}_1$-ring. For each integer $n$, there is a pair of $\infty$-categories:
$$
 ({\rm LMod}_R)_{\geqslant n}:=\{M\in {\rm LMod}_R |\ \pi_i(M)=0,\ \forall\; i<n \},\;\;
({\rm LMod}_R)_{\leqslant n}:=\{M\in {\rm LMod}_R |\ \pi_i(M)=0,\ \forall\; i>n \},
$$
admitting the following nice property (see {\rm \cite[Proposition 7.1.1.13]{JL}}).

\begin{Lem}\label{LMod}
The pair $\big(({\rm LMod}_R)_{\geqslant n},({\rm LMod}_R)_{\leqslant n}  \big)$ is a $t$-structure on ${\rm LMod}_R$ with the heart equivalent to the nerve of $\pi_0(R)\Modcat $.
Moreover, $({\rm LMod}_R)_{\leqslant n}$ and $({\rm LMod}_R)_{\geqslant n}$
are stable under (small) products and filtered colimits in $ {\rm LMod}_R$.
\end{Lem}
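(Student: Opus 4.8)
The plan is to reduce the statement to the general theory of $t$-structures on presentable stable $\infty$-categories; this is in fact \cite[Proposition 7.1.1.13]{JL}, so I would only lay out the shape of the argument. After relabelling to pass between the homological convention used here and the cohomological convention of Definition \ref{DTS} --- so that $({\rm LMod}_R)_{\geqslant 0}$ plays the role of the aisle and $({\rm LMod}_R)_{\leqslant 0}$ that of the coaisle --- it suffices to treat the case $n=0$, the rest following by applying the shift functor $[1]$.

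First I would isolate the only place where connectivity of $R$ is used: since $\pi_i(R)=0$ for $i<0$, each compact object $R[k]$ with $k\geqslant 0$ lies in $({\rm LMod}_R)_{\geqslant 0}$, and $({\rm LMod}_R)_{\geqslant 0}$ is precisely the smallest full subcategory of ${\rm LMod}_R$ containing ${_R}R$ and closed under colimits and extensions. One inclusion is immediate from the long exact sequences of homotopy groups together with the fact that $\pi_i$ commutes with filtered colimits; the reverse inclusion is the Postnikov cell-attachment construction, producing for each $M$ a connective cover $\tau_{\geqslant 0}M\to M$ with fibre in $({\rm LMod}_R)_{\leqslant -1}$. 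By the $\infty$-categorical analogue of Example \ref{Typical example} (see \cite{JL}), this accessible aisle determines a $t$-structure; verifying (T1)--(T3) of Definition \ref{DTS} then amounts to: (T1), routine from the homotopy-group descriptions; (T2), which by the generation statement and compactness of the $R[k]$ reduces to $\Hom_{{\rm Ho}({\rm LMod}_R)}(R[k],N)\cong\pi_k(N)=0$ for $k\geqslant 0$ and $N\in({\rm LMod}_R)_{\leqslant -1}$; and (T3), the truncation triangle $\tau_{\geqslant 0}M\to M\to\tau_{\leqslant -1}M$ coming from the localization attached to the aisle.

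Next I would identify the heart $({\rm LMod}_R)_{\geqslant 0}\cap({\rm LMod}_R)_{\leqslant 0}$ as the module spectra with homotopy concentrated in degree $0$: the functor $\pi_0$ is lax monoidal, hence sends $R$ to the ordinary ring $\pi_0(R)$ and an $R$-module spectrum to a $\pi_0(R)$-module, and together with the Eilenberg--MacLane construction it gives the asserted equivalence of the heart with (the nerve of) $\pi_0(R)\Modcat$. For the closure properties I would use that products and filtered colimits in ${\rm LMod}_R$ are computed on underlying spectra and that $\pi_i$ commutes with both (filtered colimits of spectra are exact; the homotopy groups of a product of spectra are the products of the homotopy groups), so each of the vanishing conditions defining $({\rm LMod}_R)_{\geqslant n}$ and $({\rm LMod}_R)_{\leqslant n}$ is preserved under products and under filtered colimits.

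The main obstacle --- everything downstream being formal once it is settled --- is the existence of the truncation functors, equivalently that $({\rm LMod}_R)_{\geqslant 0}$ really is an aisle and that its inclusion admits a right adjoint. This is exactly where the connectivity hypothesis on $R$ is indispensable, and it is dealt with either by the Postnikov argument sketched above or by appealing to the construction in \cite{JL} directly.
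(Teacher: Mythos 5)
The paper offers no proof of this lemma: it is cited verbatim from \cite[Proposition 7.1.1.13]{JL}, and everything downstream in the appendix takes the truncation functors and heart description as given. Your sketch faithfully reconstructs the standard argument behind Lurie's proposition, and you note yourself at the end that one may instead cite \cite{JL} directly, which is exactly what the paper does. One small slip worth flagging: the \emph{fibre} of the connective cover $\tau_{\geqslant 0}M\to M$ lies in $({\rm LMod}_R)_{\leqslant -2}$, not in $({\rm LMod}_R)_{\leqslant -1}$; what you mean is that the \emph{cofibre} is in $({\rm LMod}_R)_{\leqslant -1}$, giving the fibre sequence $\tau_{\geqslant 0}M\to M\to\tau_{\leqslant -1}M$ needed for (T3). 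This does not affect the argument.
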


By Lemma \ref{LMod}, the inclusion $({\rm LMod}_R)_{\geqslant n} \to {\rm LMod}_R$ has a right adjoint $\tau_{\geqslant n}: {\rm LMod}_R\to ({\rm LMod}_R)_{\geqslant n}$, the inclusion $({\rm LMod}_R)_{\leqslant n} \to {\rm LMod}_R$ has a left adjoint $\tau_{\leqslant n}:{\rm LMod}_R\to ({\rm LMod}_R)_{\leqslant n}$, and both inclusions preserve filtered colimits. Thus $\tau_{\leqslant n}$ preserves compact objects and each $R$-module $M$ is endowed with a fiber sequence $\tau_{\geqslant n}(M)\to M\to \tau_{\leqslant n-1}(M)$ such that $\tau_{\geqslant n}(M)\in({\rm LMod}_R)_{\geqslant n}$ and $\tau_{\leqslant n-1}(M)\in ({\rm LMod}_R)_{\leqslant n-1}$.  For a morphism $f:M\to N$ in ${\rm LMod}_R$, we denote by ${\rm fib}(f)$ and ${\rm cofib}(f)$ the \emph{fiber} and \emph{cofiber} of $f$, respectively.  Then ${\rm fib}(f)[1]$ is equivalent to ${\rm cofib}(f)$.

In general, the condition that an $R$-module is perfect is very strong. The following definition generalizes the concept of perfect modules over connective $\mathbb{E}_1$-rings.
	
\begin{Def}{\rm \cite[Definition 7.2.4.10]{JL}}
An $R$-module  $M$ is \emph{almost perfect} if  there exists an integer $k$ with $M\in ({\rm LMod}_R)_{\geqslant k} $ such that, for each $n\in\mathbb{N}$, the $R$-module $\tau_{\leqslant n}(M)$ is a compact object of the $\infty$-category $({\rm LMod}_R)_{\geqslant k}\cap ({\rm LMod}_R)_{\leqslant n}$.
\end{Def}

The full subcategory of ${\rm LMod}_R$ consisting of almost perfect $R$-modules is denoted by
${\rm LMod}_R^{\rm aperf}$. By \cite[Proposition 7.2.4.11(1)]{JL}, ${\rm LMod}_R^{\rm {aperf}}$
is a stable subcategory of ${\rm LMod}_R$ containing ${\rm LMod}_R^{\rm perf}$ and closed under direct summands. In general, ${\rm LMod}_R^{\rm perf}$ is not easy to describe. But for a left coherent $\mathbb{E}_1$-ring $R$, it can be characterized in terms of objects of $\pi_0(R)\modcat$, \emph{the category of finitely presented left modules over the ring $\pi_0(R)$}.

\begin{Def}{\rm \cite[Definition 7.2.4.16]{JL}}
A connective $\mathbb{E}_1$-ring $R$ is \emph{left coherent} if $\pi_0(R)$ is left coherent as an ordinary ring and $\pi_n(R)$ is a finitely presented left $\pi_0(R)$-module for any $n\in\mathbb{N}$.
\end{Def}

By \cite[Propositions 7.2.4.17]{JL}, an $R$-module $M$ over a left coherent $\mathbb{E}_1$-ring $R$ is \emph{almost perfect} if and only if $\pi_n(M)\in\pi_0(R)\modcat$ for $n\in\mathbb{Z}$ and $\pi_n(M)=0$ for $n\ll 0$.

In the following, we concentrate on some special classes of left coherent $\mathbb{E}_1$-rings.

\begin{Def}{\rm \cite[Definitions 1.1, 1.2, 1.4]{BL}}\label{truncated}
Let $R$ be a left coherent $\mathbb{E}_1$-ring. An $R$-module $M$ is said to be \emph{truncated} if $\pi_n(M)=0$ for $n\gg 0$; and \emph{coherent} if $M$ is both truncated and almost perfect, that is,
$\pi_n(M)\in\pi_0(R)\modcat$ for $n\in\mathbb{Z}$ and $\pi_n(M)=0$ for $|n|\gg 0$.

The $\mathbb{E}_1$-ring $R$ is said to be \emph{truncated} if ${_R}R$ is truncated; \emph{almost regular} if each coherent $R$-module is perfect; and \emph{regular} if $\pi_0(R)$ is left regular  and $H\pi_0(R)$ as a left $R$-module is perfect.
\end{Def}

Coherent $R$-modules are referred to \emph{bounded $R$-module spectra} in \cite[Appendix A]{Kr3} that are pseudo-coherent $R$-modules with bounded homotopy (see \cite[Proposition A1]{Kr3}).
By \cite[Proposition 1.3]{BL}, regular $\mathbb{E}_1$-rings are exactly almost regular $\mathbb{E}_1$-rings $R$ with $\pi_0(R)$ a left regular ring. Examples of regular $\mathbb{E}_1$-rings contain the connective real $K$-theory spectrum, the topological modular forms spectrum and the truncated Brown-Peterson spectrum (see \cite{BL} for details).

Now, we consider the following categories, of which the first three are triangulated categories:
$$\mathscr{T}:={\rm Ho(LMod}_R),\; \;\mathscr{S}:={\rm Ho(LMod}_R^{\rm perf}),\;\;
\mathscr{S}^{\rm ap}:={\rm Ho(LMod}_R^{\rm aperf}),$$
$$\mathscr{T}^{\leqslant 0}:=\{M\in {\rm Ho(LMod}_R) |\ \pi_i(M)=0,\ \forall\; i<0\},\;\;
\mathscr{T}^{\geqslant 0}:=\{M\in {\rm Ho(LMod}_R) |\ \pi_i(M)=0,\ \forall\; i>0\}.
$$
Since $R$ is connective, the module ${_R}R$ is a compact generator of $\mathscr{T}$ and in
$\mathscr{T}^{\leqslant 0}$. This implies $\mathscr{S}=\langle {_R}R\rangle\subseteq\mathscr{T}^{-}$.  By Lemma \ref{LMod}, the pair $(\mathscr{T}^{\leqslant 0}, \mathscr{T}^{\geqslant 0})$ is a $t$-structure on $\mathscr{T}$ in the preferred equivalence class.

With the above preparations, we give the main result of this appendix as follows.

\begin{Theo}\label{Connective case}
Let $R$ be a connective $\mathbb{E}_1$-ring. Then:

$(1)$ $\mathfrak{L}'(\mathscr{S})=\mathscr{T}_c^{-}=\mathscr{S}^{\rm ap}$ and $\widehat{\mathfrak{S}}(\mathscr{S})=\mathscr{T}_c^b=\mathscr{S}^{\rm ap}\cap \mathscr{T}^b$.

$(2)$ $\mathscr{S}\subseteq \widehat{\mathfrak{S}}(\mathscr{S})$
if and only if $R$ is truncated.
\end{Theo}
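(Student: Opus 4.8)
\textbf{Proof proposal for Theorem \ref{Connective case}(2).}

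The plan is to leverage Theorem \ref{Connective case}(1), which we may assume, identifying $\widehat{\mathfrak{S}}(\mathscr{S})$ with $\mathscr{T}_c^b = \mathscr{S}^{\rm ap}\cap\mathscr{T}^b$, and recall from Theorem \ref{MCP}(2)$(a)$ (or from the general theory in Section \ref{Completion-TC}) that the inclusion $\mathscr{S}\subseteq\widehat{\mathfrak{S}}(\mathscr{S})$ holds if and only if the metric $\mathscr{M}_n := \mathscr{S}\cap\mathscr{T}^{\leqslant -n}$ is embeddable, which by the discussion following Lemma \ref{Properties} and by Lemma \ref{Inclusion} is equivalent to $\mathscr{S} = \mathscr{S}_{\rm tc}$, i.e.\ to $\Hom_\mathscr{T}({_R}R[i], {_R}R)=0$ for $i\gg 0$. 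First I would rewrite this Hom-group: since $[i]$ is the shift in $\mathscr{T} = {\rm Ho(LMod}_R)$, we have $\Hom_\mathscr{T}({_R}R[i],{_R}R) \cong \pi_{-i}(R)$ by the standard computation of maps out of (shifts of) the free module. Thus the condition $\Hom_\mathscr{T}({_R}R[i],{_R}R)=0$ for $i\gg 0$ says precisely that $\pi_{-i}(R)=0$ for $i\gg 0$, i.e.\ $\pi_n(R)=0$ for $n\ll 0$; but $R$ is connective, so $\pi_n(R)=0$ for $n<0$ already, and the content of the condition is exactly $\pi_n(R)=0$ for $n\gg 0$ — that is, ${_R}R$ is truncated, which by Definition \ref{truncated} is the statement that $R$ is a truncated $\mathbb{E}_1$-ring.

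So the core of the argument is just the identification $\Hom_\mathscr{T}({_R}R[i],{_R}R)\cong\pi_{-i}(R)$ together with Lemma \ref{Inclusion} (applied with $\mathscr{T} = {\rm Ho(LMod}_R)$, compact generator $G = {_R}R$, and the preferred $t$-structure $(\mathscr{T}^{\leqslant 0},\mathscr{T}^{\geqslant 0})$ from Lemma \ref{LMod}); everything else is bookkeeping to match the definitions. For the reader's convenience I would also give the more hands-on forward direction: if $R$ is truncated, say $\pi_n(R)=0$ for $n>N$, then ${_R}R\in\mathscr{T}^{\leqslant 0}\cap\mathscr{T}^{\geqslant -N}\subseteq\mathscr{T}^b$, hence $\mathscr{S} = \langle{_R}R\rangle\subseteq\mathscr{T}^b$; combined with $\mathscr{S}\subseteq\mathscr{S}^{\rm ap}$ (as ${\rm LMod}_R^{\rm perf}\subseteq{\rm LMod}_R^{\rm aperf}$, \cite[Proposition 7.2.4.11]{JL}) this gives $\mathscr{S}\subseteq\mathscr{S}^{\rm ap}\cap\mathscr{T}^b = \widehat{\mathfrak{S}}(\mathscr{S})$. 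Conversely, if $\mathscr{S}\subseteq\widehat{\mathfrak{S}}(\mathscr{S})\subseteq\mathscr{T}^b$, then in particular ${_R}R\in\mathscr{T}^b = \mathscr{T}^-\cap\mathscr{T}^+$, so ${_R}R\in\mathscr{T}^{\geqslant -N}$ for some $N$, forcing $\pi_n(R)=0$ for $n>N$, i.e.\ $R$ is truncated.

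The main obstacle — really the only non-formal point — is making the identification $\Hom_\mathscr{T}({_R}R[i],{_R}R)\cong\pi_{-i}(R)$ precise with the correct conventions (the homotopy category of module spectra has $\Hom_{{\rm Ho(LMod}_R)}(M,N) = \pi_0\,{\rm Map}_{{\rm LMod}_R}(M,N)$ and ${\rm Map}_{{\rm LMod}_R}({_R}R,N)\simeq\Omega^\infty N$, so $\Hom_\mathscr{T}({_R}R[i],N)\cong\pi_i(N)$ and hence $\Hom_\mathscr{T}({_R}R[i],{_R}R[0]) = \Hom_\mathscr{T}({_R}R, {_R}R[-i])\cong\pi_{-i}(R)$; one must be careful not to introduce a sign error in the shift). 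Once this is pinned down, the equivalence of ``$\mathscr{S}\subseteq\widehat{\mathfrak{S}}(\mathscr{S})$'', ``${_R}R\in\mathscr{T}^b$'', and ``$R$ truncated'' is immediate, and no genuine difficulty remains.
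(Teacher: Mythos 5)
Your second, ``hands-on'' argument is essentially the paper's own proof and is correct: granting (1), $\widehat{\mathfrak{S}}(\mathscr{S})=\mathscr{S}^{\rm ap}\cap\mathscr{T}^b$, and since $\mathscr{S}\subseteq\mathscr{S}^{\rm ap}$ is automatic, the inclusion $\mathscr{S}\subseteq\widehat{\mathfrak{S}}(\mathscr{S})$ holds iff $\mathscr{S}=\langle {_R}R\rangle\subseteq\mathscr{T}^b$, iff ${_R}R\in\mathscr{T}^b$, iff (by connectivity) $R$ is truncated.

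The first route you lead with, via Lemma \ref{Inclusion}, is a legitimate alternative, but as written it contains a sign error. You correctly record the formula $\Hom_\mathscr{T}({_R}R[i],N)\cong\pi_i(N)$, and then specialize it to $\Hom_\mathscr{T}({_R}R[i],{_R}R)\cong\pi_{-i}(R)$; but taking $N={_R}R$ in your own general formula gives $\pi_i(R)$, not $\pi_{-i}(R)$ (and the same answer comes from $\Hom_\mathscr{T}({_R}R,{_R}R[-i])\cong\pi_0(\Sigma^{-i}R)=\pi_i(R)$). With the incorrect $\pi_{-i}(R)$, the criterion of Lemma \ref{Inclusion} becomes ``$\pi_n(R)=0$ for $n\ll 0$'', which is vacuous since $R$ is connective, and your subsequent claim that ``the content of the condition is exactly $\pi_n(R)=0$ for $n\gg 0$'' is a non sequitur: nothing you wrote before it justifies that jump. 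With the correct $\pi_i(R)$, the criterion reads ``$\pi_i(R)=0$ for $i\gg 0$'', which together with connectivity is precisely the definition of $R$ being truncated, and the Lemma~\ref{Inclusion} route then closes. So the detour through Lemma~\ref{Inclusion} is a valid variant of the paper's argument (the paper instead argues directly through ${_R}R\in\mathscr{T}^b$ using (1)), but the shift bookkeeping must be done with the care you yourself flagged.
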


\begin{proof}
$(1)$ Since $\Hom_{\mathscr{T}}(R,R[i])\simeq \pi_{-i}(R)=0$ for $i>0$, we see from Theorem \ref{MCP}(2) that $\mathfrak{L}'(\mathscr{S})=\mathscr{T}_c^{-}$ and $\widehat{\mathfrak{S}}(\mathscr{S})=\mathscr{T}_c^b$.
It suffices to show $\mathscr{T}_c^{-}=\mathscr{S}^{\rm ap}$.

Let $\mathscr{C}:=({\rm LMod}_R)_{\geqslant 0}\cap {\rm LMod}_R^{\rm {aperf}}$. By \cite[Proposition 7.2.4.11(5)]{JL} and its proof, for each $M\in\mathscr{C}$, there exists a sequence of morphisms
$$
\xymatrix{
D(0)\ar[r]^-{f_1}&D(1)\ar[r]^-{f_2}&D(2)\ar[r]&\cdots\ar[r]&D(n-1)
\ar[r]^-{f_n}&D(n)\ar[r]^-{f_{n+1}}&D(n+1)\ar[r]&\cdots}
$$
together with morphisms $g_n: D(n)\to M$ in ${\rm LMod}_R$, where $g_n=f_{n+1}g_{n+1}$, ${\rm fib}(g_n)\in ({\rm LMod}_R)_{\geqslant n}$ and ${\rm cofib}(f_n)[-n]$ is a free $R$-module of finite rank ($f_0$ denotes the zero map $0\to D(0)$) for $n\in\mathbb{N}$, such that the natural map $\mathop{{\rm colim}} \limits_{\longrightarrow}D(n)\to M$ is an equivalence.
This yields a triangle $\text{fib}(g_n)\to D(n)\to M\to \text{cofib}(g_n)$ in $\mathscr{T}$
with $D(n)\in \mathscr{S} $ and $\text{cofib}(g_n)\simeq \text{fib}(g_n)[1]\in(\text{LMod}_R)_{\geqslant n+1}$. Thus $M\in \mathscr{T}^-_c$. Since almost perfect $R$-modules can be obtained from $\mathscr{C}$ by taking shifts, we have $\mathscr{S}^{\rm ap}\subseteq \mathscr{T}_c^{-}$.

It remains to show $\mathscr{T}_c^{-}\subseteq\mathscr{S}^{\rm ap}$. For this aim, we take $X\in \mathscr{T}_c^{-}$. By Definition \ref{TC}, there exists a fiber sequence
$X_n\lraf{h_n} X\to Y_n$ in ${\rm LMod}_R$ with $X_n\in {\rm LMod}_R^{\rm perf}$ and $Y_n\in ({\rm LMod}_R)_{\geqslant n}$ for each $n\in\mathbb{N}$. We fix an integer $i$ and apply $\pi_i$ to this sequence.
Then for $n\geqslant i+2$, the map $\pi_i(h_n):\pi_i(X_n)\to \pi_i(X)$ is an isomorphism, due to $\pi_i(Y_n)=0=\pi_{i+1}(Y_n)$. Let $m_i:=\max\{0,i+2\}$. We first show that the morphism $\tau_{\leqslant i}(h_{m_i}):\tau_{\leqslant i}(X_{m_i})\to\tau_{\leqslant i}(X)$ is an equivalence in ${\rm LMod}_R$.

Clearly, $\pi_j(h_{m_i}): \pi_j(X_{m_i})\to\pi_j(X)$ are isomorphisms for $j\leqslant i$. Recall that, for an $R$-module $M$, the associated unit adjunction $M\to \tau_{\leqslant i}(M)$ satisfies that $\pi_j(M)\simeq \pi_j(\tau_{\leqslant i}(M))$ for any $j\leqslant i$. This implies that the maps
$\pi_j\big(\tau_{\leqslant i}(h_{m_i})\big): \pi_j\big(\tau_{\leqslant i}(X_{m_i})\big)\to\pi_j\big(\tau_{\leqslant i}(X)\big)$ are isomorphisms for $j\leqslant i$. As the $k$-th homotopy of any object of $({\rm LMod}_R)_{\leqslant i}$ vanishes for $k>i$, the maps   $\pi_j\big(\tau_{\leqslant i}(h_{m_i})\big)$ are isomorphisms for all integers $j$. Consequently, the fiber of $\tau_{\leqslant i}(h_{m_i})$ has no nonzero homotopy groups; in other words, it is equivalent to zero. Thus $\tau_{\leqslant i}(h_{m_i})$ is an equivalence in ${\rm LMod}_R$.

Since $X_{m_i}$ is compact in ${\rm LMod}_R$ and $\tau_{\leqslant i}:{\rm LMod}_R\to ({\rm LMod}_R)_{\leqslant i}$ preserves compact objects, $\tau_{\leqslant i}(X_{m_i})$ is compact in $({\rm LMod}_R)_{\leqslant i}$. It follows from the equivalence $\tau_{\leqslant i}(h_{m_i})$ that $\tau_{\leqslant i}(X)$ is also compact in $({\rm LMod}_R)_{\leqslant i}$. Observe that if $i\leqslant -2$, then $m_i=0$ and $\tau_{\leqslant i}(X_0)\to \tau_{\leqslant i}(X)$ is an equivalence. Since $X_0\in{\rm LMod}_R^{\rm perf}$ and $R\in ({\rm LMod}_R)_{\geqslant 0}$, there exists an integer $k\leqslant -1$ with $X_0\in({\rm LMod}_R)_{\geqslant k}$. Then
$\tau_{\leqslant k-1}(X_0)$ is equivalent to zero. Since $\tau_{\leqslant k-1}(X_0)\simeq\tau_{\leqslant k-1}(X)$, we see that $\tau_{\leqslant k-1}(X)$ is also equivalent to zero.
Now the fiber sequence $\tau_{\geqslant k}(X)\to X\to \tau_{\leqslant k-1}(X)$ implies $X\simeq \tau_{\geqslant k}(X)\in({\rm LMod}_R)_{\geqslant k}$.
Thus $X$ is almost perfect. This shows $\mathscr{T}_c^{-}\subseteq\mathscr{S}^{\rm ap}$.

$(2)$ Since $\mathscr{S}\subseteq \mathscr{S}^{\rm ap}$, we see from $(1)$ that $\mathscr{S}\subseteq \widehat{\mathfrak{S}}(\mathscr{S})$ if and only $\mathscr{S}\subseteq \mathscr{T}^b$.
Recall that $\mathscr{S}$ is generated by ${_R}R$. Thus  $\mathscr{S}\subseteq \mathscr{T}^b$ if and only if $R\in\mathscr{T}^b$. The latter is equivalent to the vanishing of $\pi_m(R)$ for $m\gg 0$. This shows $(2)$.
\end{proof}

A direct consequence of Theorem \ref{Connective case} is the following result. This conveys that our definition of almost regular triangulated categories in Section \ref{Section 1.1}, when specialized to
the case of connective $\mathbb{E}_1$-rings, is consistent with the one of almost regular $\mathbb{E}_1$-rings.

\begin{Koro}\label{Spectrum}
Let $R$ be a left coherent $\mathbb{E}_1$-ring. Then:

$(1)$ $\mathscr{S}_{\rm tc}$ (resp., $\widehat{\mathfrak{S}}(\mathscr{S})$) is the full triangulated subcategory of $\mathscr{T}$ consisting of all perfect and truncated $R$-modules (resp., all coherent $R$-modules).

$(2)$ $\mathscr{S}$ is almost regular (resp., regular) if and only if $R$ is almost regular (resp., $R$ is
almost regular and truncated). In particular, if $\pi_0(R)$ is left regular, then $\mathscr{S}$ is almost regular if and only if $R$ is regular.
\end{Koro}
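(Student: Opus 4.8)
The plan is to derive the whole statement from Theorem \ref{Connective case} together with the general identifications $\mathscr{S}_{\rm tc}=\mathscr{S}\cap\mathscr{T}^b$ and $\widehat{\mathfrak{S}}(\mathscr{S})=\mathscr{T}_c^b=\mathscr{S}^{\rm ap}\cap\mathscr{T}^b$, so that all that remains is to translate membership in $\mathscr{S}$, $\mathscr{S}^{\rm ap}$ and $\mathscr{T}^b$ into conditions on homotopy groups. First I would record that $\Hom_\mathscr{T}(R,R[i])\simeq\pi_{-i}(R)=0$ for $i>0$ since $R$ is connective, so the standing hypothesis ``$\Hom_\mathscr{T}(G,G[i])=0$ for $i\gg 0$'' needed in Theorem \ref{MCP}(2) and in the last paragraph of Example \ref{Ring and Scheme} holds automatically here; this makes those results directly available.

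For $(1)$: unwinding the $t$-structure $(\mathscr{T}^{\leqslant 0},\mathscr{T}^{\geqslant 0})$ one checks $\mathscr{T}^{\leqslant n}=({\rm LMod}_R)_{\geqslant -n}$ and $\mathscr{T}^{\geqslant -n}=({\rm LMod}_R)_{\leqslant n}$, hence $\mathscr{T}^-$ consists of the $R$-modules with homotopy bounded below, $\mathscr{T}^+$ of those with homotopy bounded above (i.e.\ truncated), and $\mathscr{T}^b$ of those with bounded homotopy. Since $\mathscr{S}=\langle {_R}R\rangle\subseteq\mathscr{T}^-$, we get $\mathscr{S}_{\rm tc}=\mathscr{S}\cap\mathscr{T}^b=\mathscr{S}\cap\mathscr{T}^+$, which is exactly the category of perfect truncated $R$-modules. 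For $\widehat{\mathfrak{S}}(\mathscr{S})=\mathscr{S}^{\rm ap}\cap\mathscr{T}^b$ I would invoke the characterization of almost perfect modules over a left coherent $\mathbb{E}_1$-ring (``$\pi_n(M)\in\pi_0(R)\modcat$ for all $n$ and $\pi_n(M)=0$ for $n\ll 0$'', \cite[Proposition 7.2.4.17]{JL}); intersecting with $\mathscr{T}^b$ adds the condition $\pi_n(M)=0$ for $n\gg 0$, and the resulting list of conditions is precisely the definition of a coherent $R$-module in Definition \ref{truncated}.

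For $(2)$: by the last paragraph of Example \ref{Ring and Scheme} (which uses Theorem \ref{MCP}(2)), $\mathscr{S}$ is almost regular iff $\mathscr{S}\cap\mathscr{T}^b=\widehat{\mathfrak{S}}(\mathscr{S})$ and regular iff $\mathscr{S}=\widehat{\mathfrak{S}}(\mathscr{S})$. Feeding in the descriptions from $(1)$, ``$\mathscr{S}\cap\mathscr{T}^b=\widehat{\mathfrak{S}}(\mathscr{S})$'' reads ``perfect truncated $=$ coherent''; since perfect modules are always almost perfect (\cite[Proposition 7.2.4.11(1)]{JL}), the inclusion perfect truncated $\subseteq$ coherent is automatic, and coherent modules automatically have bounded (hence bounded above) homotopy, so the identity is equivalent to ``every coherent $R$-module is perfect'', i.e.\ to $R$ being almost regular. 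Likewise ``$\mathscr{S}=\widehat{\mathfrak{S}}(\mathscr{S})$'' reads ``perfect $=$ coherent'': the inclusion coherent $\subseteq$ perfect is again ``$R$ almost regular'', while perfect $\subseteq$ coherent forces the perfect module ${_R}R$ to be coherent, i.e.\ truncated, and conversely if $R$ is truncated then (using that $\mathscr{T}^+$ is a thick subcategory containing $R$) every perfect module is truncated, hence coherent. So $\mathscr{S}$ is regular iff $R$ is almost regular and truncated. Finally, if $\pi_0(R)$ is left regular then $R$ is almost regular iff $R$ is regular by \cite[Proposition 1.3]{BL}, which gives the last assertion.

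The argument is essentially bookkeeping; there is no deep obstacle, the real content being Theorem \ref{Connective case}. The one place to be careful is the interplay of the two index conventions --- the homological ``connective cover'' filtration $({\rm LMod}_R)_{\geqslant n}$ versus the cohomological $t$-structure axioms of Definition \ref{DTS} --- when computing $\mathscr{T}^{\leqslant n}$, $\mathscr{T}^{\geqslant n}$, $\mathscr{T}^+$ and $\mathscr{T}^b$, and the fact that the clean homotopy-group description of $\mathscr{S}^{\rm ap}$ genuinely relies on $R$ being left coherent.
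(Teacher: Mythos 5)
Your proof is correct and follows exactly the route the paper intends (the paper simply asserts the corollary as ``a direct consequence of Theorem \ref{Connective case}'' without spelling out the bookkeeping). You correctly match the index conventions between the cohomological $t$-structure notation and the homological filtration $({\rm LMod}_R)_{\geqslant n}$, correctly reduce via $\mathscr{S}_{\rm tc}=\mathscr{S}\cap\mathscr{T}^b$, $\widehat{\mathfrak{S}}(\mathscr{S})=\mathscr{S}^{\rm ap}\cap\mathscr{T}^b$, and the last paragraph of Example \ref{Ring and Scheme}, and correctly invoke \cite[Propositions 7.2.4.11(1), 7.2.4.17]{JL} and \cite[Proposition 1.3]{BL} for the module-spectrum characterizations.
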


Finally, left coherent rings in Corollary \ref{Ring case} can be generalized to left coherent $\mathbb{E}_1$-rings. As usual, for an $\mathbb{E}_1$-ring $R$, we denote by $R\opp$ the opposite of $R$ which is also an $\mathbb{E}_1$-ring.

\begin{Koro}\label{LCR}
Let $R$ be a left coherent $\mathbb{E}_1$-ring. Suppose that ${\rm Ho(LMod}_{R\opp}^{\rm perf})$ has finite finitistic dimension. Then the following statements are true.

$(1)$ $\mathscr{S}$ has a bounded $t$-structure if and only if $R$ is almost regular and truncated.

$(2)$ Suppose that $R$ is truncated. Then all bounded $t$-structures on any triangulated category between $\mathscr{S}$ and $\widehat{\mathfrak{S}}(\mathscr{S})$ are equivalent.
\end{Koro}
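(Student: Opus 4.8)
The plan is to realize Corollary~\ref{LCR} as an instance of Corollary~\ref{Compactly generated}, applied to the compactly generated triangulated category $\mathscr{T}={\rm Ho}({\rm LMod}_R)$ with compact generator $G={}_RR$ and $\mathscr{S}=\mathscr{T}^c={\rm Ho}({\rm LMod}_R^{\rm perf})$, reading off the relevant subcategories of $\mathscr{T}$ from Theorem~\ref{Connective case} and Corollary~\ref{Spectrum}. First I would verify the two hypotheses of Corollary~\ref{Compactly generated}. Since $R$ is connective, $\Hom_\mathscr{T}(G,G[i])\simeq\pi_{-i}(R)=0$ for all $i>0$, so the vanishing condition $\Hom_\mathscr{T}(G,G[i])=0$ for $i\gg0$ holds. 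For the finiteness of the finitistic dimension of $(\mathscr{T}^c)\opp=\mathscr{S}\opp$, I would invoke the $R$-linear duality $(-)^{\vee}=\rHom_R(-,R)$, which restricts to a triangle equivalence $\mathscr{S}\opp\simeq{\rm Ho}({\rm LMod}_{R\opp}^{\rm perf})$ (cf.\ \cite{JL}); since the finiteness of finitistic dimension is invariant under triangle equivalences and the right-hand side has finite finitistic dimension by assumption, so does $\mathscr{S}\opp$. By Theorem~\ref{MCP}(2)(b) (whose hypothesis is the vanishing just checked) together with Theorem~\ref{Connective case}(1), $\widehat{\mathfrak{S}}(\mathscr{S})=\mathscr{T}^b_c$, and by Corollary~\ref{Spectrum}(1) this is the full subcategory of coherent $R$-modules, which carries the bounded $t$-structure obtained by restricting the standard $t$-structure of $\mathscr{T}$ (truncation by homotopy degree, heart $\pi_0(R)\modcat$).

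For $(1)$, in the forward direction I would apply Corollary~\ref{Compactly generated}(1) to get $\mathscr{S}=\mathscr{T}^b_c=\widehat{\mathfrak{S}}(\mathscr{S})$ as subcategories of $\mathscr{T}$; since $\mathfrak{y}_F\colon\widehat{\mathfrak{S}}(\mathscr{S})\lraf{\simeq}\mathfrak{S}_G(\mathscr{S})$ restricts to the Yoneda embedding on $\mathscr{S}$ (Theorem~\ref{MCP}(1)), this forces $\mathfrak{y}(\mathscr{S})=\mathfrak{S}_G(\mathscr{S})$, i.e.\ $\mathscr{S}$ is regular, and Corollary~\ref{Spectrum}(2) then yields that $R$ is almost regular and truncated. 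Conversely, if $R$ is almost regular and truncated, then $\mathscr{S}$ is regular by Corollary~\ref{Spectrum}(2), hence $\mathscr{S}=\mathfrak{S}_G(\mathscr{S})\simeq\widehat{\mathfrak{S}}(\mathscr{S})=\mathscr{T}^b_c$, which has a bounded $t$-structure by the first paragraph; transporting it along the equivalence shows $\mathscr{S}$ has a bounded $t$-structure. (As a sanity check one can also recover the truncatedness of $R$ directly from Lemma~\ref{HBTS} together with Corollary~\ref{Spectrum}(1), since a bounded $t$-structure forces $\mathscr{S}_{\rm tc}=\mathscr{S}$ and hence ${}_RR$ truncated.)

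For $(2)$, assuming $R$ truncated I would use Theorem~\ref{Connective case}(2) to get $\mathscr{S}\subseteq\widehat{\mathfrak{S}}(\mathscr{S})=\mathscr{T}^b_c$, so any triangulated category $\mathscr{X}$ with $\mathscr{S}\subseteq\mathscr{X}\subseteq\widehat{\mathfrak{S}}(\mathscr{S})$ satisfies $\mathscr{T}^c\subseteq\mathscr{X}\subseteq\mathscr{T}^b_c$, and Corollary~\ref{Compactly generated}(2) applies verbatim to give that all bounded $t$-structures on $\mathscr{X}$ are equivalent. The whole argument is thus a formal assembly of the quoted results; the only points that require a little care are the compatibility of the duality $\mathscr{S}\opp\simeq{\rm Ho}({\rm LMod}_{R\opp}^{\rm perf})$ with the triangulated structures, and the bookkeeping in $(1)$ that turns the equality $\mathscr{S}=\mathscr{T}^b_c$ inside $\mathscr{T}$ into the equality $\mathscr{S}=\mathfrak{S}_G(\mathscr{S})$ of subcategories of $\mathscr{S}\Modcat$ — precisely the passage already handled in the proof of Corollary~\ref{Compactly generated}. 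I do not expect a genuine obstacle beyond that.
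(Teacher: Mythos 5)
Your proposal is correct and follows essentially the same route as the paper's proof: verify the hypotheses of Corollary~\ref{Compactly generated} (connectivity gives the vanishing $\Hom_\mathscr{T}(G,G[i])=0$ for $i>0$, and the $R$-linear duality $(-)^\vee$ from \cite[Proposition 7.2.4.4]{JL} transfers the finitistic-dimension hypothesis to $\mathscr{S}\opp$), identify $\widehat{\mathfrak{S}}(\mathscr{S})=\mathscr{T}^b_c$ with the coherent $R$-modules via Theorem~\ref{Connective case} and Corollary~\ref{Spectrum}, and then read off $(1)$ and $(2)$ from Corollary~\ref{Compactly generated} together with Corollary~\ref{Spectrum}(2). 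The only cosmetic divergence is in the ``if'' direction of $(1)$: the paper observes directly that almost regular and truncated means coherent equals perfect, so $\mathscr{S}=\widehat{\mathfrak{S}}(\mathscr{S})$ carries the bounded $t$-structure coming from \cite[Proposition 7.2.4.18]{JL}, whereas you route through the regularity statement $\mathscr{S}=\mathfrak{S}_G(\mathscr{S})\simeq\widehat{\mathfrak{S}}(\mathscr{S})$ and describe the $t$-structure as a restriction of the standard one; the content is identical and both avoid the finitistic-dimension hypothesis in this direction, as the paper explicitly remarks.
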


\begin{proof}
$(1)$ Since $R$ is connective, $\Hom_{\mathscr{S}}(R,R[i])\simeq \pi_{-i}(R)=0$ for all $i>0$. Moreover, the triangulated categories $\mathscr{S}\opp$ and ${\rm Ho(LMod}_{R\opp}^{\rm perf})$ are equivalent, due to \cite[Propositions 7.2.4.4]{JL}. So, the ``only if" part of Corollary \ref{LCR}(1) follows from Corollary \ref{CPG}(1) and Corollary \ref{Spectrum}(2).

Since $R$ is left coherent,  we see from  \cite[Propositions 7.2.4.18]{JL} that the pair of $\infty$-categories
$$\big({\rm LMod}_R^{{\rm aperf}}\cap({\rm LMod}_R)_{\geqslant 0},\; {\rm LMod}_R^{\rm aperf}\cap({\rm LMod}_R)_{\leqslant 0}\big)$$ is a $t$-structure on ${\rm LMod}_R^{{\rm aperf}}$.
Restricting this $t$-structure to the stable $\infty$-category of coherent $R$-modules, we obtain an obvious bounded $t$-structure on $\widehat{\mathfrak{S}}(\mathscr{S})$. If
$R$ is almost regular and truncated, then $\mathscr{S}=\widehat{\mathfrak{S}}(\mathscr{S})$ and thus $\mathscr{S}$ has a bounded $t$-structure. This shows the ``if" part of Corollary \ref{LCR}(1).
Note that, for the “if” part, we don't need the finite finitistic dimension assumption.

$(2)$ Since $R$ is truncated, $\mathscr{S}\subseteq \widehat{\mathfrak{S}}(\mathscr{S})$ by Theorem \ref{Connective case}(2). Now, Corollary \ref{LCR}(2) is a consequence of Corollary \ref{CPG}(2).
\end{proof}

\begin{Rem}\label{Known cases}
$(1)$ Let $R$ be the sphere spectrum. Then it is a left coherent $\mathbb{E}_1$-ring with $\pi_0(R)=\mathbb{Z}$. The description of $\widehat{\mathfrak{S}}(\mathscr{S})$ in Corollary \ref{Spectrum} has been given in \cite[Example 22]{Neeman2}. Since $\mathscr{S}_{\rm tc}=0$, there is no bounded $t$-structure on $\mathscr{S}$ by Lemma \ref{HBTS}. Moreover, by Definition \ref{Singularity}, the \emph{almost singularity category} of $\mathscr{S}$ is equivalent to $\widehat{\mathfrak{S}}(\mathscr{S})$.

$(2)$ It is shown in \cite[Corollary A2]{Kr3} that the completion of $\mathscr{S}$ in the sense of Krause is also equivalent to the triangulated subcategory of $\mathscr{T}$ consisting of all coherent $R$-modules by taking homotopy colimits. Interestingly, different types of completions of $\mathscr{S}$ produce the same triangulated category.
\end{Rem}

\section{Different notions of finitistic dimension of categories}\label{Section B}

In this appendix, we mention some other ways of defining finitistic dimension for triangulated categories. In some cases, these notions were only defined for specific classes of triangulated categories, and we extend these notions to general triangulated categories.

If higher extension groups of objects are considered, then the projective dimensions of objects and the finitistic dimensions of compactly generated triangulated categories can be defined. This imitates the definitions of projective dimensions of modules and the finitistic dimensions of ordinary rings, or more generally, of nonpositive DG rings (see \cite{BSSW}).

Let $\mathscr{T}$ be a compactly generated triangulated category which has a compact generator $G$.
We consider the $t$-structure $(\mathscr{T}^{\leqslant 0}_G, \mathscr{T}^{\geqslant 0}_G)$ on $\mathscr{T}$ generated by $G$. Denote by $\mathscr{T}^{b}_G$ the full subcategory of $\mathscr{T}$ consisting of all \emph{bounded} objects $X$, that is, there is a positive integer $n$ with $X[n]\in\mathscr{T}^{\leqslant 0}_G$ and $X[-n]\in \mathscr{T}^{\geqslant 0}_G$. Let $\mathscr{H}_G:=\mathscr{T}^{\leqslant 0}_G\cap \mathscr{T}^{\geqslant 0}_G$ be the \emph{heart} of the $t$-structure. Then each object of $\mathscr{T}^{b}_G$ can be obtained from objects of $\mathscr{H}_G$ by taking shifts and extensions.

\begin{Def}
The \emph{projective dimension} of an object $X$ in $\mathscr{T}^{b}_G$ with respect to $G$ is defined to
$$\pd_G(X):=\inf\{n\in\mathbb{Z}\mid \Hom_\mathscr{T}(X, Y[i])=0, \;\forall i>n,\; Y\in \mathscr{T}^{b}_G\cap\mathscr{T}^{\leqslant 0}_G \}$$
$$\quad\quad\;\;=\inf\{n\in\mathbb{Z}\mid \Hom_\mathscr{T}(X, Y[i])=0, \;\forall i>n,\; Y\in \mathscr{H}_G\}.$$
\end{Def}
Clearly, $\pd_G(X[n])=\pd_G(X)+n$ for any $n\in\mathbb{Z}$. Moreover, for a triangle $X_1\to X_2\to X_3\to X_1[1]$ in $\mathscr{T}$, $\pd_G(X_2)\leqslant \max\{\pd_G(X_1), \pd_G(X_3)\}$.
Thus the full subcategory of $\mathscr{T}^{b}_G$ consisting of all objects $X$ with
$\pd_G(X)<\infty$ is a triangulated subcategory of $\mathscr{T}^{b}_G$ closed under direct summands.

\begin{Def}\label{Def-BFD}
The \emph{big finitistic dimension} and \emph{finitistic dimension} of $\mathscr{T}$ with respect to $G$ are defined:
$$
{\rm FPD}(\mathscr{T}, G):=\sup\{\pd_G(X)\mid X\in \mathscr{T}^{b}_G\cap\mathscr{T}^{\geqslant 0}_G,\; \pd_G(X)<\infty\},
$$
$$
\quad\quad\quad {\rm fpd}(\mathscr{T}, G):=\sup\{\pd_G(X)\mid X\in \mathscr{T}^c\cap\mathscr{T}^{b}_G\cap\mathscr{T}^{\geqslant 0}_G,\; \pd_G(X)<\infty\},
$$
\end{Def}

Similar to Corollary \ref{Some-Any}, the finiteness of ${\rm FPD}(\mathscr{T}, -)$ and ${\rm fpd}(\mathscr{T}, -)$ is independent of the choice of different compact generators of $\mathscr{T}$.
Moreover, for a ring $R$, ${\rm FPD}(\D{R}, R)=\Fd(R)$ and ${\rm fpd}(\D{R}, R)=\fd(R).$
A close relationship between Definitions \ref{Def-FD} and \ref{Def-BFD} is given in the following proposition:

\begin{Prop}\label{Nonpositive}
Let $\mathscr{T}$ be a compactly generated triangulated category which has a (nonzero) compact generator $G$ with $\Hom_\mathscr{T}(G, G[i])=0$ for $|i|\gg 0$. Suppose that $(\mathscr{T}^c)\opp$ is triangle equivalent to $\mathscr{U}^c$, where $\mathscr{U}$ is a compactly generated triangulated category. Then:

$(1)$ ${\rm fpd}(\mathscr{T}, G)\leqslant \fd(\mathscr{T}^c, G)+a$, where
$a:=\inf\{n\in\mathbb{N}\mid \Hom_\mathscr{T}(G, G[i])=0, i>n\}$.

$(2)$ If $\mathscr{U}$ has a compact generator $H$ with
$H[b, \infty)^{\bot}\subseteq\mathscr{U}^{\leqslant 0}_H$ for some integer $b$ (for example, $\mathscr{U}$ is a weakly approximable triangulated category in the sense of Neeman, see \cite{Neeman5, CNS}), then there is an integer $c$ such that $\fd(\mathscr{T}^c, G)\leqslant{\rm fpd}(\mathscr{T}, G)+c$.

$(3)$ If $\Hom_\mathscr{T}(G, G[i])=0$ for all $i>0$, then
${\rm fpd}(\mathscr{T}, G)\leqslant \fd(\mathscr{T}^c, G)\leqslant\max\{0,\, {\rm fpd}(\mathscr{T}, G)\}.$
In particular, $\fd(\mathscr{T}^c, G)$ is finite if and only if ${\rm fpd}(\mathscr{T}, G)$ is finite.
\end{Prop}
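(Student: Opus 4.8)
The plan is to prove $(1)$ by a direct computation, read off the first inequality of $(3)$ from it, and derive $(2)$ together with the second inequality of $(3)$ by passing to $\mathscr{U}$ through the equivalence $(\mathscr{T}^c)\opp\simeq\mathscr{U}^c$. For $(1)$, assume $e:=\fd(\mathscr{T}^c,G)<\infty$ (otherwise there is nothing to prove) and fix $X\in\mathscr{T}^c\cap\mathscr{T}^{\geqslant 0}_G$. As $\mathscr{T}^c$ is a full subcategory and $\mathscr{T}^{\geqslant 0}_G=G(-\infty,-1]^{\bot}$, the right orthogonal of $G(-\infty,-1]$ computed inside $\mathscr{T}^c$ equals $\mathscr{T}^c\cap\mathscr{T}^{\geqslant 0}_G$; hence $X$ lies in that orthogonal and, by the definition of $\fd(\mathscr{T}^c,G)$, we get $X\in\langle G\rangle^{[0,\infty)}[e]$, i.e. $X$ is built from $\{G[j]\mid j\leqslant e\}$ by finitely many extensions and direct summands. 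For $Y$ in the heart $\mathscr{H}_G$ one has $Y\in\mathscr{T}^{\leqslant 0}_G=\overline{\langle G\rangle}^{(-\infty,0]}$, so $Y$ is assembled by coproducts and extensions from $\{G[j]\mid j\geqslant 0\}$; since $G$ is compact, $\Hom_\mathscr{T}(G,Y[\ell])$ is assembled from the groups $\Hom_\mathscr{T}(G,G[j+\ell])$ with $j\geqslant 0$, all of which vanish once $j+\ell>a$, so $\Hom_\mathscr{T}(G,Y[\ell])=0$ for all $\ell>a$. Combining the two facts, $\Hom_\mathscr{T}(X,Y[i])$ is assembled from the groups $\Hom_\mathscr{T}(G[j],Y[i])=\Hom_\mathscr{T}(G,Y[i-j])$ with $j\leqslant e$, which all vanish as soon as $i>a+e$; thus $\pd_G(X)\leqslant a+e$ for every such $X$, so ${\rm fpd}(\mathscr{T},G)\leqslant\fd(\mathscr{T}^c,G)+a$. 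The same computation shows $\pd_G(X)<\infty$ for \emph{every} compact $X$, a fact needed below. In the situation of $(3)$ the integer $a$ equals $0$, so $(1)$ already yields the first inequality ${\rm fpd}(\mathscr{T},G)\leqslant\fd(\mathscr{T}^c,G)$.

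For $(2)$ and the second inequality of $(3)$ I would pass to $\mathscr{U}$; by Corollary~\ref{Some-Any}, finiteness of these dimensions is insensitive to the chosen compact generator, so I may take the equivalence $(\mathscr{T}^c)\opp\simeq\mathscr{U}^c$ to send $G$ to $H$, whereupon $G(-\infty,-1]^{\bot}$ in $\mathscr{T}^c$ corresponds to ${}^{\bot}\big(H(-\infty,-1]\big)$ in $\mathscr{U}^c$ and $\langle G\rangle^{[0,\infty)}[n]$ to $\langle H\rangle^{(-\infty,n]}$. By $(1)$, $\pd_G$ of every compact object is finite and, on the objects relevant to $\fd(\mathscr{T}^c,G)$, bounded by ${\rm fpd}(\mathscr{T},G)$; so the task is to show that such bounded relative projective dimension forces the object into $\langle G\rangle^{[0,\infty)}[{\rm fpd}(\mathscr{T},G)+c]$ for a suitable $c$. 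I would do this by building, from the right, a relative projective resolution of $X\in G(-\infty,-1]^{\bot}$ by objects of $\langle G\rangle^{(-\infty,0]}$: the hypothesis $H[b,\infty)^{\bot}\subseteq\mathscr{U}^{\leqslant 0}_H$ (a weak-approximability-type condition, cf.\ \cite{Neeman5,CNS}) is what keeps such a resolution inside a bounded window of generators whose width is governed by $b$, while ${\rm fpd}(\mathscr{T},G)$ bounds the number of steps before the resolution stabilizes; reassembling gives $(2)$. For $(3)$, the hypothesis $\Hom_\mathscr{T}(G,G[i])=0$ for $i>0$ is equivalent to $H$ lying in the heart of the $H$-$t$-structure on $\mathscr{U}$, which forces $\mathscr{U}^{\leqslant 0}_H=H[1,\infty)^{\bot}$; thus the hypothesis of $(2)$ holds automatically with $b=1$, and following the constants through with this optimal $b$ should collapse $c$ to yield the sharp bound $\fd(\mathscr{T}^c,G)\leqslant\max\{0,{\rm fpd}(\mathscr{T},G)\}$, whence, together with the first inequality, the stated equivalence of finiteness.

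The routine part is $(1)$. The main obstacle is $(2)$ and the second inequality of $(3)$: in general the truncations of a compact object of $\mathscr{T}^c$ relative to the $G$-$t$-structure need not be compact, so one cannot simply decompose $X$ into heart layers inside $\mathscr{T}^c$, and it is precisely the weak-approximability-type hypothesis on $\mathscr{U}$ that repairs this. The most delicate point is the careful bookkeeping of the shift constants — in particular arranging, in $(3)$, that the output bound is the sharp $\max\{0,{\rm fpd}(\mathscr{T},G)\}$ rather than merely a finite one.
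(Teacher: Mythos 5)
Your part $(1)$ is sound and is essentially the paper's argument (the paper phrases the vanishing $\Hom_\mathscr{T}(G,Y[\ell])=0$ for $\ell>a$ by observing that $G[a+1,\infty)^{\bot}$ contains $G$ and is closed under coproducts, extensions, and positive shifts, hence contains $\mathscr{T}^{\leqslant 0}_G = \text{Coprod}(G(-\infty,0])$; your phrasing ``assembled from the groups'' is informal but correct in spirit). The useful observation that $\pd_G(X)<\infty$ for all compact $X$ is also in the paper.

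For $(2)$ there is a genuine gap. First, you cannot ``take the equivalence $(\mathscr{T}^c)\opp\simeq\mathscr{U}^c$ to send $G$ to $H$'': the equivalence $\Phi$ is part of the data, and $N:=\Phi(G)$ is in general a \emph{different} classical generator of $\mathscr{U}^c$ than the hypothesized $H$. Corollary \ref{Some-Any} only says finiteness of $\fd$ is generator-independent; it does not let you re-choose $\Phi$. The paper deals with this explicitly: since $N$ and $H$ are both compact generators of $\mathscr{U}$, the preferred $t$-structures $(\mathscr{U}^{\leqslant 0}_N, \mathscr{U}^{\geqslant 0}_N)$ and $(\mathscr{U}^{\leqslant 0}_H, \mathscr{U}^{\geqslant 0}_H)$ are equivalent, and that equivalence produces a constant $e$ that must be tracked through to the final bound $c$. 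Second, your proposed proof — iteratively building a relative projective resolution on the $\mathscr{T}$ side and controlling the window via the hypothesis on $\mathscr{U}$ — is not the paper's route and is not clearly workable, since the hypothesis $H[b,\infty)^{\bot}\subseteq\mathscr{U}^{\leqslant 0}_H$ lives entirely on the $\mathscr{U}$ side and does not translate into a statement controlling truncations inside $\mathscr{T}^c$, which (as you correctly note) need not be compact. What the paper actually does is purely a translation through $\Phi$: from $\pd_G(X)\leqslant m$ and $G\in\mathscr{T}^b_G\cap\mathscr{T}^{\leqslant 0}_G$ it deduces $\Hom_{\mathscr{T}^c}(X,G[j])=0$ for $j\geqslant m+1$, hence $\Phi(X)\in N[m+1,\infty)^{\bot}$; then a chain of inclusions combining the hypothesis on $H$ with the $N$-vs-$H$ comparison yields $N[m+1,\infty)^{\bot}\subseteq\mathscr{U}^{\leqslant 0}_N[-d]$ for $d=m+c$; intersecting with $\mathscr{U}^c$ via Lemma \ref{Compact generator}(1) gives $\Phi(X[-d])\in\langle N\rangle^{(-\infty,0]}$, and pulling back gives $X\in\langle G\rangle^{[0,\infty)}[d]$. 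Your sketch has none of these steps, so $(2)$ remains unproved.

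Part $(3)$ is conceptually right but rests on $(2)$. The paper's specific bookkeeping — $\Hom_{\mathscr{U}}(N,N[i])=0$ for $i>0$ implies $N$ is itself a compact generator satisfying the hypothesis with $H:=N$, $b=1$, $e=0$, $c=\max\{0,-m\}$, whence $d=\max\{0,m\}$ — is what produces the sharp constant. So your plan for $(3)$ should in principle work once $(2)$ is repaired along the paper's lines, but it does not work as stated.
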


\begin{proof}
We first show ${\rm fpd}(\mathscr{T}, G)=\sup\{\pd_G(X)\mid X\in\mathscr{T}^{c}\cap\mathscr{T}^{\geqslant 0}_G\}$.

By Example \ref{Typical example}, $\mathscr{T}^{\leqslant 0}_G=\overline{\langle G \rangle}^{(-\infty, 0]}$ and $\mathscr{T}^{\geqslant 0}_G=G(-\infty,-1]^{\bot}$. Since $\Hom_\mathscr{T}(G[i],G)=0$ for $i\gg 0$, we have $G\in\mathscr{T}^{b}_G$. Clearly, $G$ is a classical generator of $\mathscr{T}^c$. This implies $\mathscr{T}^c\subseteq\mathscr{T}^{b}_G$. Since $\Hom_\mathscr{T}(G, G[i])=0$ for $i>a$,
$G\in G[a+1, \infty)^{\bot}$. Observe that $G[a+1, \infty)^{\bot}\subseteq \mathscr{T}$ is closed under extensions, positive shifts and coproducts. Thus $\mathscr{T}^{\leqslant 0}_G\subseteq G[a+1, \infty)^{\bot}$. In particular, $\Hom_\mathscr{T}(G, Y[i])=0$ for $i>a$ and $Y\in \mathscr{T}^{b}_G\cap\mathscr{T}^{\leqslant 0}_G$. This forces $\pd_G(G)\leqslant a$. Obviously, $\pd_G(G)\geqslant 0$ by $0\neq G\in\mathscr{T}^{b}_G\cap \mathscr{T}^{\leqslant 0}_G$. Since $G$ generates $\mathscr{T}^c$, $\pd_G(X)<\infty$ for any $X\in\mathscr{T}^c$. So,
${\rm fpd}(\mathscr{T}, G)=\sup\{\pd_G(X)\mid X\in\mathscr{T}^{c}\cap\mathscr{T}^{\geqslant 0}_G\}$.
As $G[t]\in\mathscr{T}^{c}\cap\mathscr{T}^{\geqslant 0}_G$ for some integer $t$, we obtain ${\rm fpd}(\mathscr{T}, G)\geqslant \pd_G(G[t])=\pd_G(G)+t\geqslant t>-\infty$.

$(1)$ Note that $\mathscr{T}^c\cap\mathscr{T}^{\geqslant 0}_G=\mathscr{T}^c\cap G(-\infty, -1]^{\bot}$, and that $\pd_G(X)\leqslant\pd_G(G)$ for any $X\in \langle G\rangle ^{[0,\infty)}$, due to $\pd_G(G[i])=\pd_G(G)+i\leqslant \pd_G(G)$ for any $i\leqslant 0$. If $\mathscr{T}^c\cap G(-\infty, -1]^{\bot}\subseteq\langle G\rangle ^{[0,\infty)}[n]$ for some $n\geqslant 0$, then $\pd_G(X)\leqslant n+a$ for any $X\in \mathscr{T}^c\cap\mathscr{T}^{\geqslant 0}_G$.
Thus ${\rm fpd}(\mathscr{T}, G)\leqslant \fd(\mathscr{T}^c, G)+a$.

$(2)$ If ${\rm fpd}(\mathscr{T}, G)=\infty$, then the inequality in $(2)$ holds trivially. So, we assume
$m:={\rm fpd}(\mathscr{T}, G)<\infty$. Let $X\in \mathscr{T}^c\cap\mathscr{T}^{\geqslant 0}_G$. Then $\pd_G(X)\leqslant m$. Since $G\in\mathscr{T}^{b}_G\cap \mathscr{T}^{\leqslant 0}_G$, $\Hom_{\mathscr{T}^c}(X, G[j])=0$ for $j\geqslant m+1$. Now, let $\Phi:(\mathscr{T}^c)\opp\to \mathscr{U}^c$ be a triangle equivalence and $N:=\Phi(G)$. Then $N$ is a classical generator of $\mathscr{U}^c$, and $\Hom_{\mathscr{U}^c}(N, \Phi(X)[j])=0$ for $j\geqslant m+1$, that is, $\Phi(X)\in N[m+1, \infty)^{\bot}$.
Suppose that $\mathscr{U}$ has a compact generator $H$ with
$H[b, \infty)^{\bot}\subseteq\mathscr{U}^{\leqslant 0}_H$ for some integer $b$. One one hand, since $H$ and $N$ generate equivalent $t$-structures on $\mathscr{U}$, there is a nonnegative integer $e$ with
$\mathscr{U}^{\leqslant -e}_H\subseteq \mathscr{U}^{\leqslant 0}_N$. On the other hand, since $\langle H \rangle=\mathscr{U}^c=\langle N \rangle$, there is an integer $c\geqslant -m$ with
$N[1-c,\infty)^{\bot}\subseteq H[b-e,\infty)^{\bot}$.
Thus $$N[1-c,\infty)^{\bot}\subseteq H[b, \infty)^{\bot}[e]\subseteq\mathscr{U}^{\leqslant 0}_H[e]=\mathscr{U}^{\leqslant -e}_H\subseteq\mathscr{U}^{\leqslant 0}_N.$$
Let $d:=m+c$. Then
$N[m+1, \infty)^{\bot}=\big(N[1-c, \infty)^{\bot}\big)[-d]\subseteq \mathscr{U}^{\leqslant 0}_N[-d].$
From $\Phi(X)\in N[m+1, \infty)^{\bot}$ and $X\in\mathscr{T}^c$, we see that $\Phi(X)\in \mathscr{U}^c\cap\big(\mathscr{U}^{\leqslant 0}_N[-d]\big)$, and therefore $\Phi(X[-d])\in \mathscr{U}^c\cap\mathscr{U}^{\leqslant 0}_N$. Moroever, by Lemma \ref{Compact generator}(1), $\mathscr{U}^c\cap\mathscr{U}^{\leqslant 0}_N=\langle N\rangle ^{(-\infty,0]}$. Hence $\Phi(X[-d])\in\langle N\rangle ^{(-\infty,0]}$. Since $\Phi$ is a triangle equivalence,  we obtain $X[-d]\in \langle G\rangle ^{[0, \infty)}$. Consequently, $\mathscr{T}^c\cap G(-\infty, -1]^{\bot}=\mathscr{T}^c\cap\mathscr{T}^{\geqslant 0}_G\subseteq \langle G\rangle ^{[0, \infty)}[d]$.
Thus $\fd(\mathscr{T}^c, G)\leqslant d$.

$(3)$ By the equivalence $\Phi$, $\Hom_\mathscr{T}(G, G[i])\simeq\Hom_\mathscr{U}(N, N[i])$ for $i\in\mathbb{Z}$. Suppose $\Hom_\mathscr{T}(G, G[i])=0$ for $i>0$. Then $a=0$ in $(1)$ and $\Hom_\mathscr{U}(N, N[i])=0$ for $i>0$. Since $G$ generates $\mathscr{T}^c$, $N$ generates $\mathscr{U}^c$. It follows that $N$ is  a compact generator of $\mathscr{U}$. By \cite[Chapter III, Proposition 2.8]{BI},
$\mathscr{U}^{\leqslant 0}_N=N[1, \infty)^{\bot}$. So, in the proof of $(2)$, we can take $H:=N$, $b:=1$, $e:=0$ and $c:=\max\{0, -m\}$. Now, $(3)$ is a combination of $(1)$ and $(2)$.
\end{proof}

Now, we apply Proposition \ref{Nonpositive}(3) to derived categories of nonpositive DG rings.

Let $R:=\bigoplus_{i\in\mathbb{Z}}R^i$ be a DG ring. For a left DG $R$-module $X$ and for each integer $n$, the $n$-th cohomology group of $X$ is denoted by $H^n(X)$. We say that $R$ is \emph{nonpositive} if $R^i=0$ for $i>0$; \emph{bounded} if $H^i(R)=0$ for almost all $i$; \emph{left noetherian} if $H^0(R)$ is left noetherian and $H^i(R)\in H^0(R)\modcat$ for $i\in\mathbb{Z}$. Each ordinary ring can be regarded as a bounded and nonpositive DG ring concentrated in degree $0$.

Note that the (unbounded) derived category $\D{R}$ of left DG $R$-modules is a compactly generated triangulated category with ${_R}R$ as a compact generator. Thus both ${\rm FPD}(\D{R}, R)$ and ${\rm fpd}(\D{R}, R)$ are defined. They have connections with the following definition. Recall that $\mathscr{D}^b_f(R)$ denotes the full subcategory of $\D{R}$ consisting of objects $X$ with $H^i(X)=0$ whenever $|i|\gg 0$, and $H^i(X)\in H^0(R)\modcat$ for all $i\in\mathbb{Z}$ (see Section \ref{EXCT}).

\begin{Def} {\rm \cite[Definition 7.1]{BSSW}} Let $R$ be a left noetherian and nonpositive DG ring.

The \emph{big finitistic dimension} of $R$, denoted by ${\rm FPD}(R)$, is defined to be the supremum of the projective dimensions of left DG $R$-modules $X$ which satisfies $\pd_R(X)<\infty$ and $H^i(X)=0$ whenever $i<0$ and $i\gg 0$.

The \emph{finitistic dimension} of $R$, denoted by ${\rm fpd}(R)$, is defined to be the supremum of the projective dimensions of left DG $R$-modules $X\in\mathscr{D}^b_f(R)$ which satisfies $\pd_R(X)<\infty$ and $H^i(X)=0$ for $i<0$.
\end{Def}

By definition, ${\rm fpd}(\D{R}, R)\leqslant{\rm fpd}(R)\leqslant{\rm FPD}(R)={\rm FPD}(\D{R}, R).$
A further relationship between ${\rm fpd}(R)$ and $\fd(\mathscr{D}(R)^c, R)$ is given as follows.

\begin{Koro}\label{nonpositive DG ring}
Let $R$ be a left noetherian, nonpositive and bounded DG ring.

$(1)$ If ${\rm fpd}(R)<\infty$, then $\fd(\mathscr{D}(R)^c, R)\leqslant \max\{0, {\rm fpd}(R)\}$. In this case, $\fd(\mathscr{D}(R)^c)<\infty$.

$(2)$ If $R$ is commutative and $\dim(H^0(R))<\infty$, then $\fd(\mathscr{D}(R)^c, R)\leqslant \dim(H^0(R))$.
\end{Koro}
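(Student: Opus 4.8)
The plan is to derive both parts from Proposition \ref{Nonpositive}(3), applied with $\mathscr{T}:=\D{R}$ and compact generator $G:={_R}R$, together with the elementary inequality ${\rm fpd}(\D{R},R)\leqslant{\rm fpd}(R)$ recorded just before the statement. First I would verify the hypotheses of Proposition \ref{Nonpositive}(3). We may assume $R$ is not acyclic, so $\D{R}$ is a compactly generated triangulated category with nonzero compact generator $G$; the acyclic case is trivial since then $\D{R}=0$. Next, $\Hom_{\D{R}}(R,R[i])\simeq H^{i}(R)$, which vanishes for $i>0$ because $R$ is nonpositive and for $|i|\gg 0$ because $R$ is in addition bounded; hence both the standing hypothesis $\Hom_{\mathscr{T}}(G,G[i])=0$ for $|i|\gg 0$ and the extra hypothesis of part $(3)$, namely $\Hom_{\mathscr{T}}(G,G[i])=0$ for all $i>0$, are satisfied. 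Finally, one must produce a compactly generated triangulated category $\mathscr{U}$ with $(\mathscr{T}^{c})\opp\simeq\mathscr{U}^{c}$; I would take $\mathscr{U}:=\D{R\opp}$, which is compactly generated with compact generator ${_{R\opp}}R\opp$, and invoke the self-duality $(\mathscr{D}(R)^{c})\opp\simeq\mathscr{D}(R\opp)^{c}$ of perfect DG modules. Since a nonpositive DG ring is a connective $\mathbb{E}_1$-ring and $\mathscr{D}(R)^{c}$ is its homotopy category of perfect module spectra, this self-duality is \cite[Proposition 7.2.4.4]{JL} (the same input used for Corollary \ref{LCR}).

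Granting these hypotheses, Proposition \ref{Nonpositive}(3) yields
$$\fd(\mathscr{D}(R)^{c},R)\;\leqslant\;\max\{0,\,{\rm fpd}(\D{R},R)\}\;\leqslant\;\max\{0,\,{\rm fpd}(R)\},$$
which is precisely the inequality asserted in part $(1)$. When ${\rm fpd}(R)<\infty$, the right-hand side is finite, so $\fd(\mathscr{D}(R)^{c},R)<\infty$; as $R\in\mathscr{D}(R)^{c}$, this exhibits an object at which the finitistic dimension is finite, hence $\fd(\mathscr{D}(R)^{c})<\infty$ by Definition \ref{Def-FD}.

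For part $(2)$ I would reduce to part $(1)$. Noting $\dim(H^{0}(R))\geqslant 0$, it suffices to establish ${\rm fpd}(R)\leqslant\dim(H^{0}(R))$: this makes ${\rm fpd}(R)$ finite, so part $(1)$ applies and gives $\fd(\mathscr{D}(R)^{c},R)\leqslant\max\{0,\,{\rm fpd}(R)\}\leqslant\dim(H^{0}(R))$. The bound ${\rm fpd}(R)\leqslant\dim(H^{0}(R))$ for a commutative noetherian, nonpositive, bounded DG ring is the DG analogue of the classical inequality $\fd(A)\leqslant\Fd(A)=\dim(A)$ for commutative noetherian rings (Raynaud--Gruson, Bass), and I would cite it from \cite{BSSW}.

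The two verifications that carry real content are external rather than internal to this argument: the self-duality $(\mathscr{D}(R)^{c})\opp\simeq\mathscr{D}(R\opp)^{c}$ feeding Proposition \ref{Nonpositive}(3) — this is where the DG-ring hypothesis, as opposed to an abstract triangulated category, is essential, through dualizability of perfect DG modules — and, for part $(2)$, the imported finitistic-dimension bound over commutative DG rings from \cite{BSSW}. Once these are in place, the remaining steps are bookkeeping with the inequalities already collected in the excerpt, so no genuinely new argument is needed.
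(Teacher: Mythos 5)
Your proposal is correct and follows essentially the same route as the paper: both arguments reduce everything to Proposition~\ref{Nonpositive}(3) applied to $(\D{R},{_R}R)$, both verify the cohomological hypotheses from nonpositivity and boundedness, both use the self-duality $(\D{R}^c)\opp\simeq\D{R\opp}^c$, and both import the bound on the big finitistic dimension of a commutative DG-ring from \cite{BSSW} for part~(2). The one place you deviate is in justifying the self-duality: you route through Lurie's \cite[Proposition 7.2.4.4]{JL} by viewing $R$ as a connective $\mathbb{E}_1$-ring, whereas the paper simply observes that the derived functor $\rHom_R(-,R):\D{R}\to\D{R\opp}$ restricts to an equivalence on perfect objects, which is more elementary and avoids passing through the ring-spectrum formalism. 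Also, for part~(2) the paper factors the estimate as ${\rm fpd}(R)\leqslant{\rm FPD}(R)\leqslant\dim(H^0(R))$, citing \cite[Theorem~C]{BSSW} for the second inequality, rather than asserting ${\rm fpd}(R)\leqslant\dim(H^0(R))$ directly; your version of the citation is slightly imprecise but the chain of inequalities is already recorded in the text immediately preceding the corollary, so nothing is missing in substance.
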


\begin{proof}
Note that $\Hom_{\D{R}}(R[i], R)\simeq H^{-i}(R)$ for $i\in\mathbb{Z}$. Since $R$ is nonpositive and bounded, $H^{-i}(R)=0$ for $i\gg 0$ or $i<0$. Clearly, the derived functor $\rHom_R(-,R): \D{R}\to \D{R\opp}$ restricts to an equivalence $({\D{R}}^c)\opp\to {\D{R\opp}}^c$ of triangulated categories.
Now, we apply Proposition \ref{Nonpositive}(3) to the pair $(\D{R}, {_R}R)$ and obtain
$\fd({\D{R}}^c,\, R)\leqslant\max\{0,\, {\rm fpd}(\D{R},\, R)\}\leqslant \max\{0, {\rm fpd}(R)\}$.
Then $(1)$ holds by Corollary \ref{Some-Any}. Recall that ${\rm fpd}(R)\leqslant {\rm FPD}(R)$. Further, if $R$ is commutative, then ${\rm FPD}(R)\leqslant\dim(H^0(R))$, due to \cite[Theorem C]{BSSW}. Thus $(2)$ holds.
\end{proof}

Note that each pretriangulated DG category can be converted to a stable $\infty$-category, for example, by taking differential graded nerve (see \cite[Section 1.3.1]{JL}), such that both categories have equivalent homotopy categories. In this way the derived category of a DG ring (i.e. a DG category with one object) can be realized as the homotopy category of an $\mathbb{E}_1$-ring spectrum. Now, a combination of Corollaries \ref{LCR} and \ref{nonpositive DG ring}(1) yields the following result.

\begin{Koro}\label{b6-dg}
Let $R$ be a left noetherian, nonpositive and bounded DG ring. Suppose ${\rm fpd}(R\opp)<\infty$. Then:

$(1)$ The category $\mathscr{D}(R)^c$ has a bounded $t$-structure if and only if
$\mathscr{D}(R)^c=\mathscr{D}^b_f(R)$.

$(2)$ All bounded $t$-structures on any triangulated category between $\mathscr{D}(R)^c$ and $\mathscr{D}^b_f(R)$ are equivalent.
\end{Koro}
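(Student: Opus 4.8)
The plan is to reduce everything to Corollary \ref{LCR} by realizing $\mathscr{D}(R)$ as the homotopy category of left module spectra over a connective $\mathbb{E}_1$-ring. As remarked just before the statement, the one-object pretriangulated DG category $R$ gives rise, via the differential graded nerve, to an $\mathbb{E}_1$-ring spectrum whose derived category of perfect modules has homotopy category $\mathscr{D}(R)^c$; since $R$ is nonpositive we have $\pi_n(R)=H^{-n}(R)=0$ for $n<0$, so this $\mathbb{E}_1$-ring is connective, and since $R$ is left noetherian, $\pi_0(R)=H^0(R)$ is left coherent and $\pi_n(R)=H^{-n}(R)\in H^0(R)\modcat$ for every $n$, so it is left coherent in the sense of the appendix. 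Boundedness of $R$ gives $\pi_n(R)=H^{-n}(R)=0$ for $n\gg 0$, hence $R$ is truncated. First I would record that under this identification $\mathscr{S}:={\rm Ho(LMod}_R^{\rm perf})=\mathscr{D}(R)^c$ and, by Example \ref{nonpositive} (equivalently Corollary \ref{Spectrum}(1)), the bounded closure $\widehat{\mathfrak{S}}(\mathscr{S})$ is exactly $\mathscr{D}^b_f(R)$.

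Next I would verify the hypothesis of Corollary \ref{LCR}, namely that ${\rm Ho(LMod}_{R\opp}^{\rm perf})$ has finite finitistic dimension. Under the identification above this category is $\mathscr{D}(R\opp)^c$, and $R\opp$ is again left noetherian, nonpositive and bounded, so Corollary \ref{nonpositive DG ring}(1) applied to $R\opp$ — using the hypothesis ${\rm fpd}(R\opp)<\infty$ — gives $\fd\big(\mathscr{D}(R\opp)^c\big)\leqslant\max\{0,{\rm fpd}(R\opp)\}<\infty$. Thus Corollary \ref{LCR} is applicable to our $R$.

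For $(1)$: Corollary \ref{LCR}(1) says $\mathscr{D}(R)^c$ admits a bounded $t$-structure if and only if $R$ is almost regular and truncated. Since $R$ is truncated, this is equivalent to $R$ being almost regular, which by Corollary \ref{Spectrum}(2) is equivalent to $\mathscr{S}$ being regular, i.e. $\mathscr{S}=\mathfrak{S}_G(\mathscr{S})=\widehat{\mathfrak{S}}(\mathscr{S})=\mathscr{D}^b_f(R)$; here $\mathscr{S}\subseteq\widehat{\mathfrak{S}}(\mathscr{S})$ because truncatedness of $R$ makes the defining metric embeddable, so the equality $\mathscr{S}=\mathfrak{S}_G(\mathscr{S})$ is literally $\mathscr{D}(R)^c=\mathscr{D}^b_f(R)$. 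This proves $(1)$; the reverse implication is immediate anyway, since $\mathscr{D}^b_f(R)=\widehat{\mathfrak{S}}(\mathscr{S})$ carries an obvious bounded $t$-structure restricted from the standard one on almost perfect modules, and needs no finiteness hypothesis. For $(2)$: truncatedness of $R$ gives $\mathscr{S}\subseteq\widehat{\mathfrak{S}}(\mathscr{S})$, so Corollary \ref{LCR}(2) applies directly and all bounded $t$-structures on any triangulated category between $\mathscr{D}(R)^c$ and $\mathscr{D}^b_f(R)$ are equivalent.

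The main obstacle is not conceptual but organizational: one must carefully match the DG-ring objects $\mathscr{D}(R)^c$, $\mathscr{D}^b_f(R)$ and $\mathscr{D}(R\opp)^c$ with the spectral ones (perfect, coherent, and opposite perfect modules), and confirm that the opposite DG ring $R\opp$ inherits all the finiteness and connectivity hypotheses entering Corollary \ref{nonpositive DG ring}(1) — in particular that ${\rm fpd}(R\opp)$ is meaningful, which tacitly requires $R\opp$ to be left noetherian as well. Once this dictionary is in place, no new argument beyond Corollaries \ref{LCR} and \ref{nonpositive DG ring}(1) is needed.
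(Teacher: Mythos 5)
Your proposal is correct and follows exactly the route the paper intends: it instantiates the DG-to-spectra dictionary sketched in the paragraph preceding the corollary, feeds the hypothesis ${\rm fpd}(R\opp)<\infty$ through Corollary \ref{nonpositive DG ring}(1) to get $\fd(\mathscr{D}(R\opp)^c)<\infty$, and then invokes Corollary \ref{LCR}. The paper's own proof is literally the one-line remark that the result is ``a combination of Corollaries \ref{LCR} and \ref{nonpositive DG ring}(1)''; you have simply filled in the translation details (connectivity and coherence of the $\mathbb{E}_1$-ring, truncatedness from boundedness, and the identification $\widehat{\mathfrak{S}}(\mathscr{S})=\mathscr{D}^b_f(R)$ from Example \ref{nonpositive}), and you correctly flag the tacit requirement that $R\opp$ be left noetherian for ${\rm fpd}(R\opp)$ to make sense.
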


Finally, we recall a different notion of finitistic dimensions of triangulated categories introduced by Krause
(see \cite{Kr2}), and compare it with Definition \ref{Def-FD}.

Let $\mathscr{S}$ be a triangulated category. For a pair $(X,Y)$ of objects in $\mathscr{S}$, let
$$\Lambda_{X,Y}:=\{n\in\mathbb{Z}\mid \Hom_\mathscr{S}(X, Y[n])\neq 0\}\;\;\mbox{and}\;\;h(X,Y):=\sup\{|i-j|\mid i, j\in \Lambda_{X,Y}\}+1.$$
We understand $h(X,Y)=-\infty$ if $\Lambda_{X,Y}=\emptyset$.
For $X\in\mathscr{S}$ and $m\geqslant 0$, let
${\rm amp}(X):=\sup\{|n|\mid n\in \Lambda_{X, X}\}$ and $
{\rm hom}^m(X):=\{Y\in\mathscr{S}\mid h(X,Y)\leqslant m\}.$
We say that $X$ is \emph{homologically finite} if $h(X,Y)<\infty$ for any $Y\in\mathscr{S}$; equivalently, the set $\Lambda_{X,Y}$ is finite for any $Y\in\mathscr{S}$.

\begin{Def}{\rm \cite[Definition 3]{Kr2}}\label{K-FD}
An object $X$ of the category $\mathscr{S}$ is called a
\emph{finitistic generator} of $\mathscr{S}$ if $X$ is homologically finite and
${\rm hom}^m(X)\subseteq \langle X\rangle_m$ for all $m\geq 0$, where $\langle X\rangle_0:=0$.

The \emph{finitistic dimension} of $\mathscr{S}$ is defined as
${\rm fin.dim}(\mathscr{S}):=\inf\{{\rm amp}(X)\mid X\; \mbox{is a finitistic generator of}\; \mathscr{S}\}.$
If no finitistic generator of $\mathscr{S}$ exists, then ${\rm fin.dim}(\mathscr{S})=\infty$.
\end{Def}
By \cite[Remark 4]{Kr2}, a necessary condition for the existence of a finitistic generator of $\mathscr{S}$ is that \emph{all objects of $\mathscr{S}$ are homologically finite}. This means that our definition of finitistic dimension (see Definition \ref{Def-FD}) is \emph{different} from Krause's definition. For example, the category $\mathscr{D}^{b}(R\text{-}{\rm mod})$ for an Artin algebra $R$ of infinite global dimension does not satisfy this necessary condition, but has finitistic dimension zero in our sense (see Corollary \ref{Artin algebra}). But Krause's definition and our definition still have some commonalities:

Each finitistic generator must be a classical generator. In our discussions, we also prefer the finitistic dimension of a triangulated category at a classical generator (see Corollary \ref{Some-Any}).
Further, the existence of a strong generator implies the one of a finitistic generator provided that all objects of the triangulated category are homologically finite (see \cite[Remark 4]{Kr2}). This can be compared with Proposition \ref{ST}. Finally, by \cite[Theorem 6]{Kr2}, for an ordinary ring $R$, $\fd(R)<\infty$ if and only if ${\rm fin.dim}(\Kb{\prj{R}})<\infty$ in the sense of Definition \ref{K-FD}. The same statement in our sense also holds true (see Lemma \ref{Finite}(5)).

\end{appendices}

{\footnotesize
}

{\footnotesize
\medskip

Rudradip Biswas,

Mathematics Institute, Zeeman Building, University of Warwick, Coventry CV4 7AL, United Kingdom.

{\tt Email: Rudradip.Biswas@warwick.ac.uk (R.Biswas)}

\smallskip
Hongxing Chen,

School of Mathematical Sciences  \&  Academy for Multidisciplinary Studies, Capital Normal University, 100048 Beijing,

P. R. China

{\tt Email: chenhx@cnu.edu.cn (H.X.Chen)}

\smallskip

Chris J. Parker,

Fakult\"at f\"ur Mathematik, Universit\"at Bielefeld, 33501, Bielefeld, Germany

{\tt Email: cparker@math.uni-bielefeld.de (C.Parker)}

\smallskip

Kabeer Manali Rahul,

Center for Mathematics and its Application, Mathematical Science Institute, Building 145,
The Australian National

University, Canberra, ACT 2601, Australia

{\tt Email: kabeer.manalirahul@anu.edu.au (K. Manali Rahul)}

\smallskip

Junhua Zheng,

School of Mathematical Sciences, Capital Normal University, 100048
Beijing, P. R. China;

Institute of Algebra and Number Theory,
University of Stuttgart, Pfaffenwaldring 57, 70569 Stuttgart, Germany

{\tt Email: zheng.junhua@mathematik.uni-stuttgart.de (J.H.Zheng)}
}

\end{document}